\numberwithin{equation}{section}
\newcommand\RSA{Random Structures and Algorithms}
\newcommand\CPC{Combinatorics, Probability and Computing}
\renewcommand{\vec}[1]{\boldsymbol{#1}}
\renewcommand{\subset}{\subseteq}
\newcommand\disteq{\,\stacksign{d}=\,}
\newcommand{\BP}{\mathrm{BP}}
\newcommand{\DE}{\BP}  % density evolution
\newcommand{\LDE}{\mathrm{LL}}
\newcommand{\LDEP}{\mathrm{LL}^+}
\newcommand{\LDEPl}{\mathrm{LL}^{+\,(\ell)}}
\newcommand{\LDEPk}{\mathrm{LL}^{+\,(k)}}
\newcommand\vX{\vec X}
\newcommand\vd{\vec d}
\newcommand\vy{\vec y}
\newcommand\vz{\vec z}
\newcommand\vI{\vec I}
\newcommand\MU{\vec\mu}
\newcommand\ETA{\vec\eta}
\newcommand\vY{\vec Y}
\newcommand\vm{{\vec m}}
\newcommand\vs{{\vec s}}
\newcommand\vT{{\vec T}}
\newcommand\CHI{{\vec\chi}}
\newcommand\DELTA{{\vec\Delta}}
\newcommand\PHI{\vec\Phi}
\newcommand\nix{\,\cdot\,}
\newcommand\vA{\vec A}
\newcommand\dd{{\mathrm d}}
\newcommand\SIGMA{\vec\sigma}
\newcommand\TAU{\vec\tau}
\newcommand\fB{\mathfrak{B}}
\newcommand\cC{\mathcal{C}}
\newcommand\cF{\mathcal{F}}
\newcommand\cG{\mathcal{G}}
\newcommand\cE{\mathcal{E}}
\newcommand\cU{\mathcal{U}}
\newcommand\cI{\mathcal{I}}
\newcommand\cP{\mathcal{P}}
\newcommand\cW{\mathcal{W}}
\def\cC{{\mathcal C}}
\def\cE{{\mathcal E}}
\newcommand\eps{\varepsilon}
\newcommand\Erw{\mathbb{E}}
\newcommand{\vecone}{\vec{1}}
\newcommand{\Po}{{\rm Po}}
\newcommand{\Bin}{{\rm Bin}}
\newcommand\dTV{d_{\mathrm{TV}}}
\newcommand\bc[1]{\left({#1}\right)}
\newcommand\cbc[1]{\left\{{#1}\right\}}
\newcommand\bcfr[2]{\bc{\frac{#1}{#2}}}
\newcommand\brk[1]{\left\lbrack{#1}\right\rbrack}
\newcommand\abs[1]{\left|{#1}\right|}
\newcommand\RR{\mathbb{R}}
\newcommand{\whp}{w.h.p.}
\newcommand{\stacksign}[2]{{\stackrel{\mbox{\scriptsize #1}}{#2}}}
\newcommand{\Erdos}{Erd\H os}
\newcommand{\Renyi}{R\'enyi}
\newcommand{\Bollobas}{Bollob\'as}
\newcommand{\Chvatal}{Chv\'{a}tal}
\newcommand{\Luczak}{\L uczak}
\newcommand\pr{\mathbb{P}} 
\renewcommand\Pr{\pr} 
\newcommand\Lem{Lemma}
\newcommand\Prop{Proposition}
\newcommand\Thm{Theorem}
\newcommand\Cor{Corollary}
\newcommand\Sec{Section}
\newcommand\Chap{Chapter}
\newtheorem{definition}{Definition}[section]
\newtheorem{claim}[definition]{Claim}
\newtheorem{theorem}[definition]{Theorem}
\newtheorem{lemma}[definition]{Lemma}
\newtheorem{proposition}[definition]{Proposition}
\newtheorem{corollary}[definition]{Corollary}
\newtheorem{fact}[definition]{Fact}
\newtheoremstyle{case}{}{}{}{}{}{:}{ }{}
\theoremstyle{case}
\DeclareMathOperator{\sign}{sign}
\newcommand{\vn}{{\vec n}}
\newcommand{\vN}{\vec N}
\begin{document}

\title{The random $2$-SAT partition function}

\author{Dimitris~Achlioptas, Amin~Coja-Oghlan, %Andreas~Galanis, Leslie~Goldberg,
Max~Hahn-Klimroth, Joon~Lee, No\"ela~M\"uller, Manuel~Penschuck, Guangyan~Zhou}

\address{Dimitris Achlioptas, {\tt optas@di.uoa.gr}, University of Athens, Department of Computer Science \& Telecommunications, Pan\-e\-pisti\-mi\-opolis, Ilissia,
Athens 15784, Greece.}

\address{Amin Coja-Oghlan, {\tt acoghlan@math.uni-frankfurt.de}, Goethe University, Mathematics Institute, 10 Robert Mayer St, Frankfurt 60325, Germany.}

% \address{Andreas Galanis, {\tt andreas.galanis@cs.ox.ac.uk}, University of Oxford, Department of Computer Science, Wolfson Building, Parks Road, Oxford OX1 3QD, United Kingdom}

% \address{Lelie Goldberg, {\tt leslie.goldberg@seh.ox.ac.uk}, University of Oxford, Department of Computer Science, Wolfson Building, Parks Road, Oxford OX1 3QD, United Kingdom}

\address{Max Hahn-Klimroth, {\tt hahnklim@math.uni-frankfurt.de}, Goethe University, Mathematics Institute, 10 Robert Mayer St, Frankfurt 60325, Germany.}

\address{Joon Lee, {\tt lee@math.uni-frankfurt.de}, Goethe University, Mathematics Institute, 10 Robert Mayer St, Frankfurt 60325, Germany.}

\address{No\"ela M\"uller, {\tt nmueller@math.uni-frankfurt.de}, Goethe University, Mathematics Institute, 10 Robert Mayer St, Frankfurt 60325, Germany.}

\address{Manuel Penschuck, {\tt manuel@ae.cs.uni-frankfurt.de}, Goethe University, Computer Science Institute, 11--15 Robert Mayer St, Frankfurt 60325, Germany.}

\address{Guangyan Zhou, {\tt gyzhou76@gmail.com}, 
School of Mathematics and Statistics, Beijing Technology and Business University, Beijing 100048, China.}

\thanks{Amin Coja-Oghlan's research received support under DFG CO 646/4.
Max Hahn-Klimroth has been supported by Stiftung Polytechnische Gesellschaft.
Manuel Penschuck's research received support under DFG ME 2088/3-2 and ME 2088/4-2.
Guangyan Zhou is supported by National Natural Science Foundation of China, No.\ 61702019.
}

\begin{abstract}
We show that throughout the satisfiable phase the normalised number of satisfying assignments of a random $2$-SAT formula converges in probability to an expression predicted by the cavity method from statistical physics.
The proof is based on showing that the Belief Propagation algorithm renders the correct marginal probability that a variable is set to `true' under a uniformly random satisfying assignment.
\hfill
%\medskip\noindent
{\em MSC:} 05C80, 	60C05, 68Q87 
\end{abstract}

\maketitle

\section{Introduction}\label{Sec_intro}

\subsection{Background and motivation}
\noindent
The random $2$-SAT problem was the first random constraint satisfaction problem whose satisfiability threshold could be pinpointed precisely, an accomplishment attained independently by \Chvatal\ and Reed~\cite{CR} and Goerdt~\cite{Goerdt} in 1992.
The proofs evince the link between the $2$-SAT threshold and the percolation phase transition of a random digraph.
This connection subsequently enabled  \Bollobas,  Borgs, Chayes, Kim and Wilson~\cite{BBCKW} to identify the size of the scaling window, which matches that of the giant component phase transition of the \Erdos-\Renyi{} random graph~\cite{B,Luczak}.
Ramifications and extensions	 of these results pertain to random $2$-SAT formulas with given literal degrees~\cite{CFS},
the random MAX $2$-SAT problem~\cite{CGHS} and the performance of algorithms~\cite{SS}.
But despite the great attention devoted to random $2$-SAT over the years, a fundamental question, mentioned prominently in the survey~\cite{Lalo}, remained conspicuously open:
{\em how many satisfying assignments does a random $2$-SAT formula typically possess?}
While percolation-type arguments have been stretched to derive (rough) bounds~\cite{BD}, the exact answer remained beyond the reach of elementary techniques.

In addition to the mathematical literature, the $2$-SAT problem attracted the interest of statistical physicists, who brought to bear a canny but non-rigorous approach called the cavity method~\cite{MZ,MZ2}.
Instead of relying on percolation ideas, the physics {\em ansatz} seizes upon a heuristic message passing scheme called Belief Propagation.
Its purpose is to calculate the marginal probabilities that a random satisfying assignment sets specific variables of the $2$-SAT formula to `true'.
According to physics intuition Belief Propagation reveals a far more fine-grained picture than  a mere percolation argument possibly could.
Indeed, in combination with a functional called the Bethe free entropy, Belief Propagation renders a precise conjecture as to the number of satisfying assignments.

We prove this conjecture.
Specifically, we show that for all clause-to-variable densities below the $2$-SAT threshold the number of satisfying assignments is determined by the Bethe functional applied to a particular solution of a stochastic fixed point equation that mimics Belief Propagation.
The formula that we obtain does not boil down to a simple algebraic expression, which may explain why the problem has confounded classical methods for nearly three decades.
Nonetheless, thanks to rapid convergence of the stochastic fixed point iteration, the formula can be evaluated numerically within arbitrary precision.
A crucial step towards the main theorem is to verify that Belief Propagation does indeed yield the correct marginals,  a fact that may be of independent interest.

By comparison to prior work on Belief Propagation in combinatorics (e.g.,~\cite{CKPZ,dembo,DM,MS}), we face the substantial technical challenge of dealing with the `hard' constraints of the $2$-SAT problems, which demands that {\em all} clauses be satisfied.
A second novelty is that in order to prove convergence of Belief Propagation to the correct marginals we need to investigate delicately constructed extremal boundary conditions.
Since these depend on the random $2$-SAT formula itself, we need to develop means to confront the ensuing stochastic dependencies between the construction of the boundary condition and the subsequent message passing iterations.
We proceed to state the main results precisely.
An outline of the proofs and a detailed discussion of related work follow in \Sec s~\ref{Sec_outline} and~\ref{Sec_related}.

\subsection{The main result}
Let $n>1$ be an integer, let $d>0$ be a positive real and let $\vm\disteq\Po(dn/2)$ be a Poisson random variable.
Further, let $\PHI=\PHI_n$ be a random $2$-SAT formula with Boolean variables $x_1,\ldots,x_n$ and $\vm$ clauses,
drawn uniformly and independently from the set of all $4n(n-1)$ possible clauses with two distinct variables.
Thus, each variable appears in $d$ clauses on the average and the satisfiability threshold occurs at $d=2$.
We aim to estimate the number $Z(\PHI)$ of satisfying assignments, the {\em partition function} in physics jargon.
More precisely, since $Z(\PHI)$ remains exponentially large  for all $d<2$ \whp, in order to obtain a well-behaved limit we compute the normalised logarithm $n^{-1}\log Z(\PHI)$.

The result comes in terms of the solution to a stochastic fixed point equation on the unit interval.
Hence, let $\cP(0,1)$ be the set of all Borel probability measures on $(0,1)$, endowed with the weak topology.
Further, define an operator $\DE_d:\cP(0,1)\to\cP(0,1)$, $\pi\mapsto\hat\pi$ as follows.
With $\vd^+,\vd^-$ Poisson variables with mean $d/2$ and $\MU_{\pi,1},\MU_{\pi,2},\ldots$ random variables with distribution $\pi$,  all mutually independent, let $\hat\pi$ be the distribution of the random variable
\begin{align}\label{eqdensityEv}
\frac{\prod_{i=1}^{\vd^-}\MU_{\pi,i}}
{\prod_{i=1}^{\vd^-}\MU_{\pi,i}+\prod_{i=1}^{\vd^+}\MU_{\pi,i+\vd^-}}\in(0,1).
\end{align}
Let $\delta_{1/2}\in\cP(0,1)$ signify the atom at $1/2$ and write $\DE_d^\ell(\nix)$ for the $\ell$-fold application of the operator $\DE_d$.

\begin{theorem}\label{Thm_Z}
For any $d<2$ the limit $\pi_d=\lim_{\ell\to\infty}\BP_d^\ell(\delta_{1/2})$ exists and
\begin{align}\label{eqThm_Z}
\lim_{n\to\infty}\frac{1}{n}\log Z(\PHI)=
\Erw\brk{\log\bc{\prod_{i=1}^{\vd^-}\MU_{\pi_d,i}+\prod_{i=1}^{\vd^+}\MU_{\pi_d,i+\vd^-}}
-\frac d2\log\bc{1-\MU_{\pi_d,1}\MU_{\pi_d,2}}}
&&\mbox{in probability}.
\end{align}
\end{theorem}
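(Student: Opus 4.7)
The plan is to follow the Aizenman--Sims--Starr scheme (equivalently, the cavity computation at the level of the partition function), which reduces proving \eqref{eqThm_Z} to the two questions of existence of the fixed point $\pi_d$ and of proving that the single-variable marginals under a uniformly random satisfying assignment of $\PHI$ agree with those predicted by Belief Propagation at $\pi_d$. The abstract already advertises that the latter is the technical crux, and this matches what I would expect the main obstacle to be.

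Existence and uniqueness of $\pi_d$ would come first. Starting from $\delta_{1/2}$, which is symmetric under the involution $\mu\mapsto 1-\mu$, every iterate $\BP_d^\ell(\delta_{1/2})$ remains invariant under this involution, so it suffices to work on $(0,1/2]$. I would set up a monotone coupling of $\BP_d$ with respect to the convex stochastic order (comparing $\Erw[f(\MU)]$ for convex $f$), exploiting that the map $(\mu_1,\ldots,\mu_{d^-},\mu_{d^-+1},\ldots)\mapsto \prod\mu_i/(\prod\mu_i+\prod\mu_j)$ in \eqref{eqdensityEv} is coordinatewise monotone. From monotonicity plus boundedness I would obtain a weak limit $\pi_d$, and then prove it is the unique symmetric fixed point by showing that $\BP_d$ is a strict contraction on the subspace of symmetric measures when $d<2$, via a Jacobian/Lipschitz computation in which the factor $d/2<1$ enters as a branching factor (the same way $d<2$ governs the critical point of the underlying random digraph).

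The second, and hardest, step is showing that for a random root $x_1$ of $\PHI$, the marginal $\Erw\mu_{\PHI,x_1}(\mathbf{1})$ under the uniform satisfying assignment equals the expectation of a sample from $\pi_d$. The standard route is to couple the depth-$\ell$ neighbourhood of $x_1$ in $\PHI$ with a Galton--Watson branching process whose offspring are $\mathrm{Po}(d/2)$-many positive and negative neighbours (matching the operator $\DE_d$), and to run BP on this tree. The subtlety unique to 2-SAT is that the boundary condition at depth $\ell$ cannot be taken to be free, because hard implications propagate: one has to identify \emph{extremal} boundary conditions (the two deterministic conditions that push the root marginal up and down the most subject to satisfiability of the leaf clauses) and show that, in the $\ell\to\infty$ limit on the Galton--Watson tree and then as $n\to\infty$, both extremal messages converge to the same $\pi_d$-distributed random variable. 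Because the extremal boundary configuration is measurable with respect to the realised $\PHI$, I would decouple by revealing the neighbourhood in two stages—first the combinatorial structure, then the at-the-boundary conditional data—and use a union bound over the $O(\log n)$-sized neighbourhood together with tail estimates for the Galton--Watson process (recall $d<2$). Sandwiching the true Gibbs marginal between the two extremal BP estimates yields the desired identification.

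With correct marginals established, the Aizenman--Sims--Starr step is largely mechanical but requires care around satisfiability. I would compute
\[
\Erw[\log Z(\PHI_{n+1})]-\Erw[\log Z(\PHI_n)]
\]
by coupling $\PHI_{n+1}$ with $\PHI_n$ plus one extra variable attached to $\mathrm{Po}(d/2)$ new positive and negative clauses, together with $\mathrm{Po}(d/2)$ ``replacement'' clauses that compensate for the increase in $n$. Conditioning on the event that the new clauses attach to variables with nearly $\pi_d$-distributed cavity marginals (which holds w.h.p.\ by the previous step), each new clause contributes $\log(1-\MU_{\pi_d,1}\MU_{\pi_d,2})$ on average and the new variable contributes $\log(\prod\MU_{\pi_d,i}+\prod\MU_{\pi_d,j})$, reproducing exactly the Bethe expression \eqref{eqThm_Z}. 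The hard-constraint issue here is that, a priori, the new clauses could render the formula unsatisfiable; one checks that this failure occurs with probability $o(1)$ for $d<2$ via the same percolation-style bound underlying the satisfiability threshold. Finally, concentration of $n^{-1}\log Z(\PHI)$ around its mean follows from an Azuma--Hoeffding estimate on the clause-exposure martingale, once one restricts to the whp event that $\PHI$ is satisfiable, on which deleting a clause changes $\log Z$ by at most $\log 2$. Upgrading ``expectation'' to ``in probability'' then yields \eqref{eqThm_Z}.
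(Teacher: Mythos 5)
Your overall architecture matches the paper's (fixed-point existence, Gibbs uniqueness via extremal boundary conditions, Aizenman--Sims--Starr, concentration), but two of the four steps contain genuine errors and a third glosses over the paper's central technical insight.

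The most serious flaw is the concentration argument. You claim that ``once one restricts to the whp event that $\PHI$ is satisfiable, deleting a clause changes $\log Z$ by at most $\log 2$,'' so Azuma applies. This is false. Removing one clause from a satisfiable $2$-SAT formula can increase $\log Z$ by $\Theta(n)$: if the deleted clause was closing a long implication chain that forced $\Theta(n)$ variables, its removal frees all of them. Conversely, inserting a clause into a satisfiable formula can reduce $\log Z$ by $\Theta(n)$, or send $Z$ to $0$. So the Lipschitz constant of $\log(Z\vee 1)$ with respect to clause changes is $\Theta(n)$, not $O(1)$, even on the satisfiable event, and Azuma gives nothing. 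The paper in fact explicitly remarks that Azuma fails and instead derives concentration from the Franz--Leone/Panchenko--Talagrand interpolation upper bound \eqref{eqInterpolation} combined with \Cor~\ref{Cor_Bethe}: the interpolation bound gives a deterministic whp upper bound on $n^{-1}\log Z(\PHI)$ that matches the mean, and a one-sided estimate then yields $L_1$-concentration. Your route to \Prop~\ref{Prop_conc} would not close.

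The fixed-point step also has a gap. You propose a monotone coupling in the convex stochastic order, appealing to ``coordinatewise monotonicity'' of the map in \eqref{eqdensityEv}. But that map $A/(A+B)$, with $A=\prod_{i\le \vd^-}\mu_i$ and $B=\prod_{i\le\vd^+}\mu_{i+\vd^-}$, is \emph{increasing} in the first block and \emph{decreasing} in the second, so a single stochastic ordering is not preserved, and the monotone iteration you describe does not obviously converge. The paper sidesteps this by passing to log-likelihood coordinates via $\varphi(p)=\log(p/(1-p))$ and showing that the induced operator $\LDE_d$ is a $W_2$-contraction (\Lem~\ref{Lemma_BP1}), with the Lipschitz constant $d/2<1$ coming from a Cauchy--Schwarz bound on the integrand.

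Finally, on Gibbs uniqueness you correctly identify that extremal boundary conditions are needed, but your proposal to ``decouple by revealing the neighbourhood in two stages'' plus a union bound over the $O(\log n)$-sized neighbourhood does not engage with the real obstruction, which the paper states explicitly: the extremal boundary condition $\SIGMA^+$ is a function of the realised tree, so after the top--down pass exposes the tree there is no randomness left for the bottom--up pass. Simply revealing in two stages does not restore independence. The paper's resolution is to encode both passes into a single quantity $\ETA^{(\ell)}_x$ whose law is the $\ell$-fold iterate of a \emph{new} distributional operator $\LDEP_d$, and to prove that $\LDEP_d$ is a $W_1$-contraction with the same fixed point $\rho_d$ as $\LDE_d$; this is the crux and your sketch is missing it. Similarly, your treatment of hard constraints in the ASS step -- ``one checks that this failure occurs with probability $o(1)$'' -- misses the real quantitative issue: adding a clause can drop $\log Z$ by $\Theta(n)$ with probability $\Theta(1/n)$, so one needs second-moment control of Unit Clause Propagation (the paper's $\vA_\chi$ analysis) to make the expected change finite, not just a qualitative whp statement.
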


\noindent 
Of course, the fact that the r.h.s.\ of \eqref{eqThm_Z} is well-defined is part of the statement of \Thm~\ref{Thm_Z}.

\begin{figure}
\includegraphics[height=50mm]{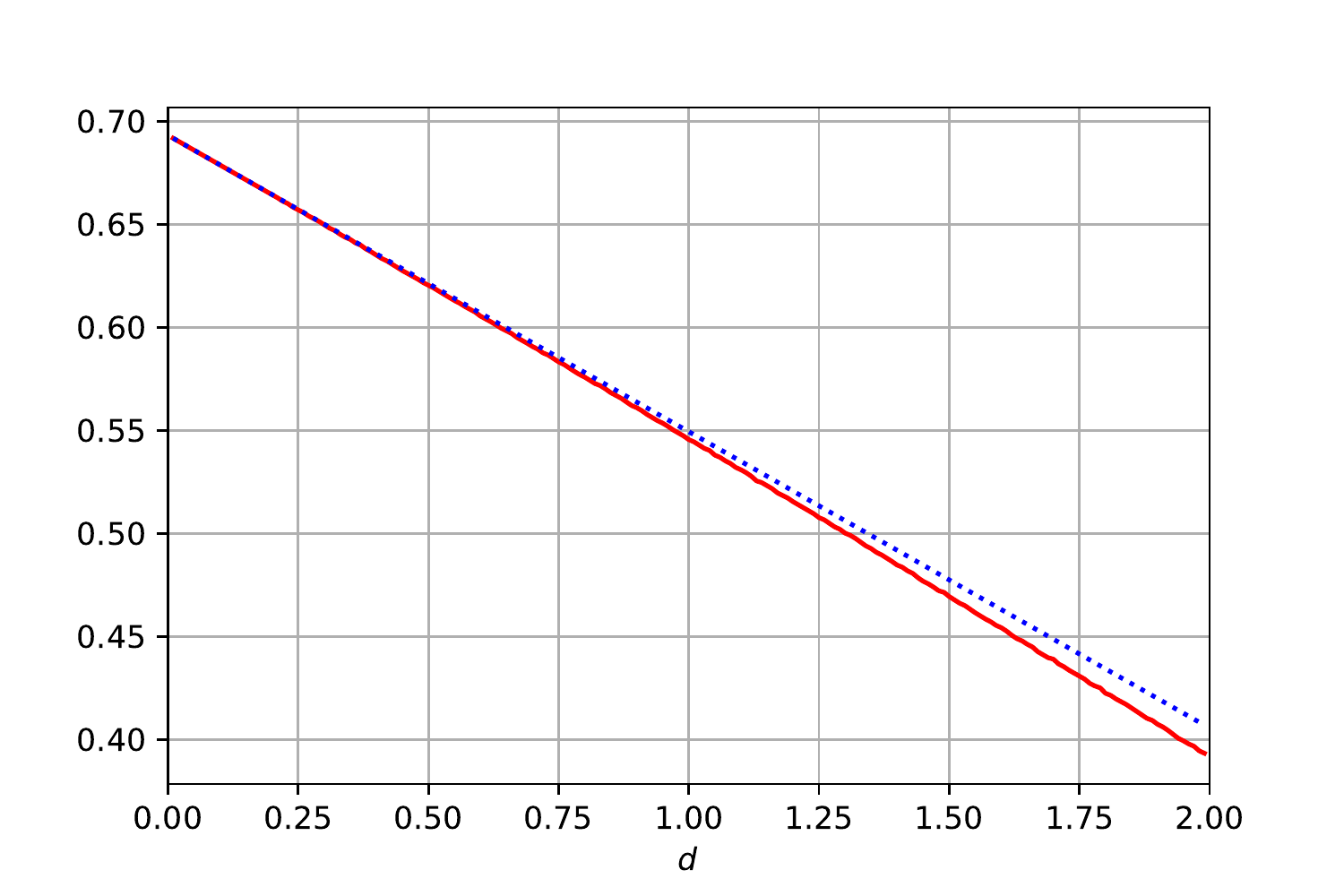}
\hfill\includegraphics[height=50mm]{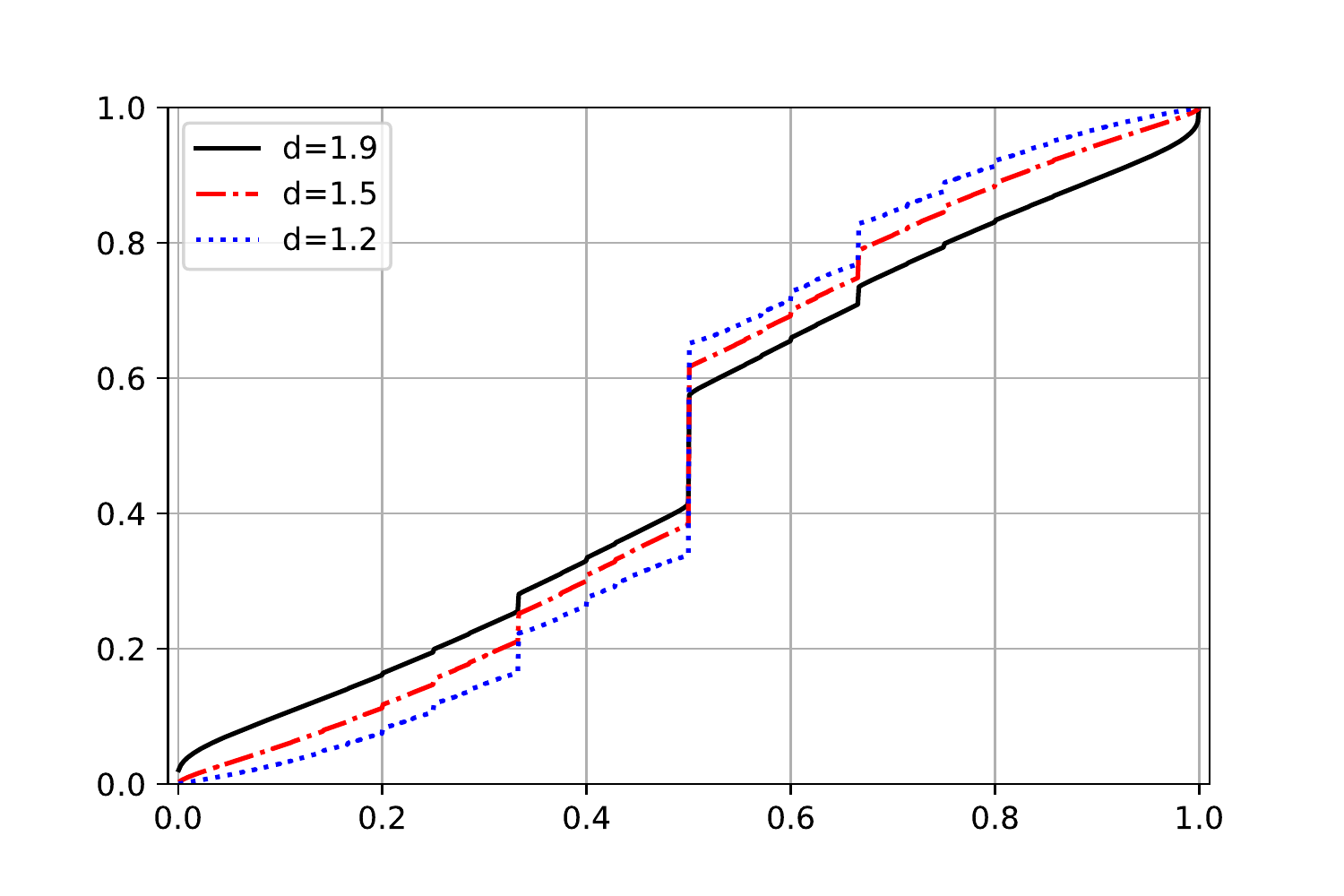}
\caption{{\em Left:} the red line depicts a numerical approximation to the r.h.s.\ of~\eqref{eqThm_Z} after 24 iterations of $\DE_d(\nix)$.
The dotted blue line displays the first moment bound.
{\em Right:} the cumulative density functions of numerical approximations to $\BP_d^{24}(\delta_{1/2})$ for various $d$.}\label{Fig_Z}
\end{figure}

By construction, the distribution $\pi_d$ is a solution to the stochastic fixed point equation
\begin{align}\label{eqDE}
\pi_d&=\BP_d(\pi_d).
\end{align}
The equation \eqref{eqDE} is known as the {\em density evolution} equation in physics lore, while the expression on the r.h.s.\ of \eqref{eqThm_Z} is called the Bethe free entropy~\cite{MM}.
Hence, \Thm~\ref{Thm_Z} matches the conjecture from~\cite{MZ}.
By comparison, Markov's inequality yields the elementary first moment bound
\begin{align}\label{eqannealed}
\frac1n\log Z(\PHI)\leq \frac1n\log \Erw[Z(\PHI)]+o(1)=(1-d)\log 2+\frac d2\log3+o(1)&&\mbox\whp,
\end{align}
which, however, fails to be tight for any $0<d<2$~\cite{PanchenkoTalagrand}.
Furthermore, while~\eqref{eqThm_Z} may appear difficult to evaluate, the proof reveals that the fixed point iteration $\DE^\ell_d(\delta_{1/2})$ converges geometrically (in an appropriate metric).
In effect, decent numerical approximations can be obtained; see Figure~\ref{Fig_Z}.

For $d<1$ the random digraph on $\{x_1,\neg x_1,\ldots,x_n,\neg x_n\}$ obtained by inserting for each clause $l_1\vee l_2$ of $\PHI$ the two directed edges $\neg l_1\to l_2$, $\neg l_2\to l_1$ is sub-critical and the distribution $\pi_d$ is supported on a countable set.
In effect, for $d<1$ the formula \eqref{eqThm_Z} can be obtained via elementary counting arguments.
By contrast, the emergence of a weak giant component for $1<d<2$ turns the computation of $Z(\PHI)$ into a challenge.
Finally, for $d>2$ the digraph contains a strongly connected giant component \whp{}
Its long directed cycles likely cause contradictions, which is why satisfying assignments cease to exist.

An asymptotically tight upper bound on $n^{-1}\log Z(\PHI)$ could be obtained via the interpolation method from mathematical physics~\cite{FranzLeone,PanchenkoTalagrand}.
We will revisit this point in \Sec~\ref{Sec_related}.
Thus, the principal contribution of \Thm~\ref{Thm_Z} is the lower bound on $\log Z(\PHI)$.
The best prior lower bound was obtained by Boufkhad and Dubois~\cite{BD} in 1999 via percolation arguments.
However, this bound drastically undershoots the actual value from \Thm~\ref{Thm_Z}.
For instance, for $d=1.2$, \cite{BD} gives $n^{-1}\log Z(\PHI)\geq 0.072\ldots\enspace$, while actually $n^{-1}\log Z(\PHI)= 0.515\ldots$ \whp

\subsection{Belief Propagation}\label{Sec_BP}
To elaborate on the combinatorial meaning of the distribution $\pi_d$, we need to look into the Belief Propagation heuristic.
Instantiated to $2$-SAT, Belief Propagation is a message passing algorithm designed to approximate the marginal probability that a specific variable takes the value `true' under a random satisfying assignment.
While finding satisfying assignments of a given $2$-SAT formula is an easy computational task, calculating these marginals is not.
In fact, the problem is $\#$P-hard~\cite{Valiant}.
Nonetheless, we are going to prove that Belief Propagation approximates the marginals well on random formulas \whp{}

To introduce Belief Propagation, we associate a bipartite graph $G(\PHI)$ with the formula $\PHI$.
One vertex class $V_n=\{x_1,\ldots,x_n\}$ represents the propositional variables, the other class $F_{\vm}=\{a_1,\ldots,a_{\vm}\}$ represents the clauses.
Each clause $a_i$ is adjacent to the two variables that it contains.
We write $\partial v=\partial(\PHI,v)$ for the set of neighbours of a vertex $v$ of $G(\PHI)$.
Moreover, for $\ell\geq1$ let $\partial^\ell v$ signify the set of all vertices at distance precisely $\ell$ from $v$.

Associated with the edges of $G(\PHI)$, the Belief Propagation messages are probability distributions on the Boolean values `true' and `false'.
To be precise, any adjacent clause/variable pair $a,x$ comes with two messages, one directed from $a$ to $x$ and a reverse one from $x$ to $a$.
Encoding `true' and `false' by $\pm1$, we initialise all messages by
\begin{align}\label{eqBPinit}
\nu_{\PHI,a\to x}^{(0)}(\pm1)=\nu_{\PHI,x\to a}^{(0)}(\pm1)=1/2.
\end{align}
For $\ell\geq1$ the messages $\nu_{\PHI,a\to x}^{(\ell)},\nu_{\PHI,x\to a}^{(\ell)}$ are defined inductively.
Specifically, suppose that clause $a$ contains the two variables $x,y$.
Let $r,s\in\{\pm1\}$ indicate whether $x,y$ appear as positive or negative literals in $a$.
Then for $t=\pm1$ let
\begin{align}\label{eqBPrec}
\nu_{\PHI,a\to x}^{(\ell)}(t)&=
	\frac{1-\vecone\cbc{r\neq t}\nu_{\PHI,y\to a}^{(\ell-1)}(-s)}
				{1+\nu_{\PHI,y\to a}^{(\ell-1)}(s)},&
\nu_{\PHI,x\to a}^{(\ell)}(t)&=\frac{\prod_{b\in\partial x\setminus\cbc a}\nu_{\PHI,b\to x}^{(\ell)}(t)}
 	{\prod_{b\in\partial x\setminus\cbc a}\nu_{\PHI,b\to x}^{(\ell)}(1)+\prod_{b\in\partial x\setminus\cbc a}\nu_{\PHI,b\to x}^{(\ell)}(-1)}.
\end{align}
The last expression is deemed to equal $1/2$ if the denominator vanishes (which does not happen if $\PHI$ is satisfiable).
Finally, the Belief Propagation estimate of the marginal of a variable $x$ after $\ell$ iterations reads
\begin{align}\label{eqBPmarg}
\nu_{\PHI,x}^{(\ell)}(t)&=\frac{\prod_{a\in\partial x}\nu_{\PHI,a\to x}^{(\ell)}(t)}
 	{\prod_{a\in\partial x}\nu_{\PHI,a\to x}^{(\ell)}(1)+\prod_{a\in\partial x}\nu_{\PHI,a\to x}^{(\ell)}(-1)},
\end{align}
again interpreted to yield $1/2$ if the denominator vanishes.
For an excellent exposition of Belief Propagation, including the derivation of \eqref{eqBPrec}--\eqref{eqBPmarg}, we point to~\cite[\Chap~14]{MM}.

The next theorem establishes that \eqref{eqBPmarg} approximates the true marginals well for large $\ell$.
In fact, we prove a significantly stronger result.
To set the stage, let $S(\PHI)$ be the set of all satisfying assignments of $\PHI$.
Assuming $S(\PHI)\neq\emptyset$, let
\begin{align}\label{eqGibbs}
\mu_{\PHI}(\sigma)&=\vecone\cbc{\sigma\in S(\PHI)}/Z(\PHI)&(\sigma\in\{\pm1\}^{\{x_1,\ldots,x_n\}})
\end{align}
be the uniform distribution on $S(\PHI)$.
Further, write $\SIGMA$ for a sample from $\mu_{\PHI}$.
Then for a satisfying assignment $\tau\in S(\PHI)$ and $\ell\geq1$ the conditional distribution
$\mu_{\PHI}(\nix\mid\SIGMA_{\partial^{2\ell}x_1}=\tau_{\partial^{2\ell}x_1})=
\mu_{\PHI}(\nix\mid \forall y\in\partial^{2\ell}x_1:\SIGMA_{y}=\tau_{y})
$
imposes the `boundary condition' $\tau$ on all variables $y$ at distance $2\ell$ from $x_1$.
The following theorem shows that Belief Propagation does not just approximate the plain, unconditional marginals well \whp, but even the conditional marginals given any conceivable boundary condition.
Recall that $\pr\brk{Z(\PHI)>0}=1-o(1)$ for $d<2$.

\begin{theorem}\label{Thm_BP}
If $d<2$, then
\begin{align}\label{eqThm_BP}
\lim_{\ell\to\infty}\limsup_{n\to\infty}\ \Erw\brk{\max_{\tau\in S(\PHI)}
		\abs{\mu_{\PHI}(\SIGMA_{x_1}=1\mid\SIGMA_{\partial^{2\ell}x_1}=\tau_{\partial^{2\ell}x_1})
			-\nu^{(\ell)}_{\PHI,x_1}(1)}\,\big|\, Z(\PHI)>0}&=0.
\end{align}
\end{theorem}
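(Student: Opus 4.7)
The plan is to push the problem onto a branching-tree model via local weak convergence and then control the influence of the boundary through a contraction estimate for the BP operator $\DE_d$. First, since $d<2$ the factor graph $G(\PHI)$ is sparse, so for each fixed $\ell$ the depth-$2\ell$ neighbourhood of $x_1$ in $G(\PHI)$ is acyclic with probability $1-o_n(1)$. Moreover, its law converges to that of a Galton--Watson tree $\mathbb{T}_\ell$ whose offspring numbers of positive and negative clause-slots are independent $\Po(d/2)$ random variables, exactly matching the recursion behind $\DE_d$. This localisation reduces \eqref{eqThm_BP} to a corresponding tree statement at the cost of a vanishing error.

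On a tree the two quantities being compared admit a common algebraic description. The BP estimate $\nu^{(\ell)}_{\PHI,x_1}(1)$ is the output of the recursion \eqref{eqBPrec} when all messages at depth $2\ell$ are initialised to $1/2$, while the conditional marginal $\mu_{\PHI}(\SIGMA_{x_1}=1\mid\SIGMA_{\partial^{2\ell}x_1}=\tau_{\partial^{2\ell}x_1})$ is the exact output of the same recursion when those boundary messages are replaced by the Dirac masses $\delta_{\tau_y}$. Hence \eqref{eqThm_BP} amounts to showing that no admissible choice of boundary messages at depth $2\ell$ can shift the root marginal by more than $o_\ell(1)$. By monotonicity of tree BP, it suffices to produce extremal satisfying boundary assignments $\tau^+,\tau^-\in S(\PHI)$ achieving the largest and smallest root marginal, and to show that the two sandwiching values converge together as $\ell\to\infty$. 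I would construct $\tau^\pm$ by a greedy/peeling procedure on $\PHI$ itself so that they are guaranteed to extend to global satisfying assignments.

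The final ingredient is a contraction estimate: I would prove that $\DE_d$ is a strict contraction on $\cP(0,1)$ in a suitable Wasserstein-type metric throughout $0<d<2$, with a rate $\rho=\rho(d)<1$ uniform over the iteration. Iterating this bound, any two input distributions $\pi,\pi'$ satisfy a distance bound shrinking geometrically in $\ell$. Applied to $\pi=\delta_{1/2}$ and to the (random) law of the boundary messages induced by $\tau^\pm$, this collapses the sandwich as $\ell\to\infty$ and delivers \eqref{eqThm_BP}; as a by-product it also supplies the unique fixed point $\pi_d$ and the convergence claim required in \Thm~\ref{Thm_Z}.

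The main obstacle, and the feature that distinguishes 2-SAT from previously treated soft-constraint models, is that the extremal assignment $\tau^\pm$ is a function of \emph{all} of $\PHI$, not just of the variables within distance $2\ell$ of $x_1$. The messages it induces on the boundary are therefore stochastically entangled with the tree structure above them, so one cannot simply feed an i.i.d.\ boundary into the contracting BP recursion. To break this entanglement I would re-randomise the formula beyond depth $2\ell$ via a switching/contiguity argument, so that in the $n\to\infty$ limit the law of the boundary messages becomes a well-defined functional of an independent copy of the subtree beyond $\partial^{2\ell}x_1$. Establishing contraction uniformly up to the threshold $d=2$ is delicate as well, and I expect that per-edge contraction constants must be averaged against the Poisson offspring distributions rather than bounded in the worst case. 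Once these two hurdles are cleared, the contraction of $\DE_d$ may be iterated cleanly and the theorem follows.
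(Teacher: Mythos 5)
Your high-level skeleton---localise to the Galton--Watson tree, sandwich the conditional root marginal between two extremal boundary conditions, and collapse the sandwich by a contraction estimate in Wasserstein distance---matches the structure of the paper's proof (Proposition~\ref{Prop_uniqueness} via Lemma~\ref{Lemma_extremal} and Lemma~\ref{Lemma_nudge}), and you have correctly identified the main obstacle: the extremal boundary condition is a function of the same tree one must subsequently traverse bottom--up to evaluate the conditional marginal. However, your proposed resolution of that obstacle is where the argument breaks down.

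You propose (i) building $\tau^\pm$ as restrictions of \emph{global} satisfying assignments of $\PHI$ obtained by peeling, and (ii) breaking the resulting entanglement by ``re-randomising the formula beyond depth $2\ell$ via a switching/contiguity argument'' so that the boundary messages become an i.i.d.\ input to the contracting operator $\DE_d$. Point~(i) is an unnecessary complication: since any $\tau\in S(\PHI)$ restricted to $\partial^{2\ell}x_1$ is a satisfying boundary condition of the local tree $\nabla^{2\ell}(\PHI,x_1)\approx\vT^{(2\ell)}$, one may relax to the larger, \emph{locally determined} set $S(\vT^{(2\ell)})$; the extremal $\SIGMA^+$ is then constructed top--down inside the tree by nudging each child towards its parent's designated value, and shown extremal by an inductive comparison of conditional counts. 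Point~(ii) is the genuine gap. Re-randomisation cannot decouple the boundary from the tree, because the very quantities that determine $\SIGMA^+$ (the signs along the path) are exactly the quantities appearing in the bottom--up BP recursion: the two are perfectly correlated, not merely asymptotically independent, and no contiguity argument changes that. What actually resolves it is the observation that the top--down construction and the bottom--up recursion fold into a \emph{single} stochastic fixed point equation governed by a \emph{different} operator $\LDEP_d$ (equation~\eqref{eqNudgedFixedPoint}), which uses one shared random sign $\vs_i$ per child, not the two independent signs $\vs_i,\vs_i'$ that define $\DE_d$ (equation~\eqref{Eq_BTreeOperator}). One then proves $\LDEP_d$ is a $W_1$-contraction with the same symmetric fixed point $\rho_d$ (Lemma~\ref{Lemma_nudge}), together with a truncation/percolation bound (Lemma~\ref{Lemma_boundary}) to tame the $\pm\infty$ log-likelihood ratios produced by the hard boundary. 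Showing only that $\DE_d$ contracts---which the paper also does, in Lemma~\ref{Lemma_BP1}, but for the separate purpose of Proposition~\ref{Prop_BP}---does not control the correlated extremal boundary, so your proof would stall at exactly the step you flagged as delicate.
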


Since  $\nu^{(\ell)}_{\PHI,x_1}$ does not depend on $\tau$, averaging \eqref{eqThm_BP} on the boundary condition $\tau\in S(\PHI)$ yields 
\begin{align}\label{eqMargApx}
\lim_{\ell\to\infty}\limsup_{n\to\infty}\ \Erw
\brk{\abs{\mu_{\PHI}(\SIGMA_{x_1}=\pm1)-\nu^{(\ell)}_{\PHI,x_1}(\pm1)}\mid Z(\PHI)>0}&=0.
\end{align}
Thus, Belief Propagation approximates the unconditional marginal of $x_1$ well in the limit of large $n$ and $\ell$.
Indeed, because the distribution of $\PHI$ is invariant under permutations of the variables $x_1,\ldots,x_n$, \eqref{eqMargApx} implies that the marginals of all but $o(n)$ variables $x_i$ are within $\pm o(1)$ of the Belief Propagation approximation \whp\ 

But thanks to the presence of the boundary condition $\tau$, \Thm~\ref{Thm_BP} leads to further discoveries.
For a start, applying the triangle inequality to \eqref{eqThm_BP} and \eqref{eqMargApx}, we obtain
\begin{align}\label{eqGU}
\lim_{\ell\to\infty}\limsup_{n\to\infty}\ \Erw\brk{\max_{\tau\in S(\PHI)}
		\abs{\mu_{\PHI}(\SIGMA_{x_1}=1\mid\SIGMA_{\partial^{2\ell}x_1}=\tau_{\partial^{2\ell}x_1})
			-\mu_{\PHI}(\SIGMA_{x_1}=1)}\,\big|\, Z(\PHI)>0}&=0.
\end{align}
Thus, no discernible shift of the marginal of $x_1$ is likely to ensue upon imposition of any possible boundary condition $\tau$.
The spatial mixing property~\eqref{eqGU} is colloquially known as {\em Gibbs uniqueness}~\cite{pnas}.
Further, \eqref{eqGU} rules out extensive long-range correlations.
Specifically, for any fixed $\ell$ the first two variables $x_1,x_2$ likely have distance greater than $4\ell$ in $G(\PHI)$.
Therefore, \eqref{eqGU} implies that for all $d<2$,
\begin{align}\label{eqRS}
\lim_{n\to\infty}\ \sum_{s,t\in\{\pm1\}}\Erw\brk{
		\abs{\mu_{\PHI}(\SIGMA_{x_1}=s,\SIGMA_{x_2}=t)
			-\mu_{\PHI}(\SIGMA_{x_1}=s)\cdot\mu_{\PHI}(\SIGMA_{x_2}=t)}\,\big|\, Z(\PHI)>0}&=0.
\end{align}
Thus, the truth values $\SIGMA_{x_1},\SIGMA_{x_2}$ are asymptotically independent.
Of course, once again by permutation invariance, \eqref{eqRS} implies that asymptotic independence extends to all but $o(n^2)$ pairs of variables $x_i,x_j$ \whp{} 
The decorrelation property \eqref{eqRS} is called {\em replica symmetry} in the physics literature~\cite{pnas}.

Finally, we can clarify the combinatorial meaning of the distribution $\pi_d$ from \Thm~\ref{Thm_Z}.
Namely, $\pi_d$ is the limit of the empirical distribution of the marginal probabilities $\mu_{\PHI}(\SIGMA_{x_i}=1)$.

\begin{corollary}\label{Cor_BP}
For any $0<d<2$ the random probability measure
\begin{align}\label{eqempirical}
\pi_{\PHI}&=\frac1n\sum_{i=1}^n\delta_{\mu_{\PHI}(\SIGMA_{x_i}=1)}
\end{align}
 converges to $\pi_d$ weakly in probability.%
 \footnote{That is, for any continuous function $f:[0,1]\to\RR$ we have 
 $\lim_{n\to\infty}\Erw\abs{\int_0^1f(z)\dd\pi_d(z)-\int_0^1f(z)\dd\pi_{\PHI}(z)}=0$.}
\end{corollary}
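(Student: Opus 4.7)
The plan is to interpose, for each $\ell\geq 1$, the empirical distribution of Belief Propagation estimates
\begin{align*}
\pi^{(\ell)}_\PHI=\frac{1}{n}\sum_{i=1}^n\delta_{\nu^{(\ell)}_{\PHI,x_i}(1)}
\end{align*}
and to bound $\pi_\PHI$ against $\pi_d$ via the triangle inequality in the Wasserstein-1 metric $W_1$ on $\cP([0,1])$:
\begin{align*}
W_1(\pi_\PHI,\pi_d)\leq W_1(\pi_\PHI,\pi^{(\ell)}_\PHI)+W_1(\pi^{(\ell)}_\PHI,\BP_d^\ell(\delta_{1/2}))+W_1(\BP_d^\ell(\delta_{1/2}),\pi_d).
\end{align*}
Since $W_1$ metrises weak convergence on the compact $[0,1]$ and since continuous functions on $[0,1]$ are uniformly approximable by Lipschitz ones, controlling the right-hand side (first $n\to\infty$, then $\ell\to\infty$) delivers the weak convergence in probability asserted by the corollary.

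The first summand I would handle via permutation symmetry: both measures are supported on $n$ atoms with a natural pairing, so
\begin{align*}
W_1(\pi_\PHI,\pi^{(\ell)}_\PHI)\leq\frac{1}{n}\sum_{i=1}^n|\mu_\PHI(\SIGMA_{x_i}=1)-\nu^{(\ell)}_{\PHI,x_i}(1)|,
\end{align*}
and the permutation invariance of the law of $\PHI$ reduces the expectation of the right-hand side (conditional on $\{Z(\PHI)>0\}$) to the single term $\Erw[|\mu_\PHI(\SIGMA_{x_1}=1)-\nu^{(\ell)}_{\PHI,x_1}(1)|\mid Z(\PHI)>0]$. By~\eqref{eqMargApx} this tends to zero upon letting $n\to\infty$ and then $\ell\to\infty$; the conditioning costs only $o(1)$ because $\Pr[Z(\PHI)>0]=1-o(1)$ for $d<2$. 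The third summand vanishes as $\ell\to\infty$ by the very definition of $\pi_d$ as the weak limit $\pi_d=\lim_\ell\BP_d^\ell(\delta_{1/2})$ from \Thm~\ref{Thm_Z}.

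The middle summand carries the genuinely combinatorial content. Here I would argue by local weak convergence: $\nu^{(\ell)}_{\PHI,x_i}(1)$ is a deterministic function of the labeled radius-$2\ell$ neighbourhood of $x_i$ in the factor graph $G(\PHI)$, where labels encode the sign of each variable-clause literal incidence. A routine computation for sparse Poisson random factor graphs shows that this neighbourhood converges in distribution to the corresponding neighbourhood of the root of a Galton-Watson tree wherein each variable independently spawns $\Po(d/2)$ positive-literal and $\Po(d/2)$ negative-literal adjacent clauses, each clause spawns a single further variable with a uniformly random literal sign, and so on. Propagating the uniform initialisation~\eqref{eqBPinit} through the recursion~\eqref{eqBPrec}--\eqref{eqBPmarg} from the boundary of this tree inward yields a root marginal whose law is, by direct inspection of~\eqref{eqdensityEv}, precisely $\BP_d^\ell(\delta_{1/2})$. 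A standard second-moment estimate based on the $O_\ell(1/n)$ probability that the depth-$2\ell$ neighbourhoods of two variables $x_i,x_j$ overlap then upgrades the single-vertex convergence in distribution to weak convergence in probability of the empirical measure $\pi^{(\ell)}_\PHI\to\BP_d^\ell(\delta_{1/2})$ for each fixed $\ell$.

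The main obstacle is choreographing the order of limits cleanly: the middle summand vanishes as $n\to\infty$ for each fixed $\ell$ but without uniformity in $\ell$, whereas the outer two summands vanish only under $\ell\to\infty$ after $n$. The resolution is the standard diagonal argument, choosing $\ell=\ell(n)$ growing slowly enough that both bounds remain simultaneously applicable.
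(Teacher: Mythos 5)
Your proof is correct and follows essentially the same route as the paper's: local weak convergence to the Galton--Watson tree $\vT^{(2\ell)}$, identification of $\BP_d^\ell(\delta_{1/2})$ with the law of the root BP belief on that tree (via \Prop~\ref{Fact_BP}), and invocation of \Thm~\ref{Thm_BP}/\eqref{eqMargApx} to pass from BP estimates to true marginals. The only organizational difference is that you make the empirical distribution of BP estimates $\pi^{(\ell)}_\PHI$ an explicit intermediate in a three-term $W_1$ triangle inequality, whereas the paper compares $\mu_\PHI(\SIGMA_{x_i}=1)$ directly with the coupled tree marginal $\mu_{\vT^{(2\ell)}}(\TAU_o=1)$ in a single step.
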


\noindent
Thus, the stochastic fixed point equation~\eqref{eqDE} that characterises $\pi_d$ simply expresses that the marginal probabilities $\mu_{\PHI}(\SIGMA_{x_i}=1)$ result from the Belief Propagation recurrence~\eqref{eqBPrec}.

\subsection{Preliminaries and notation}
Throughout we denote by $V_n=\{x_1,\ldots,x_n\}$ the variable set of $\PHI_n$.
Generally, given a $2$-SAT formula $\Phi$ we write $V(\Phi)$ for the set of variables and $F(\Phi)$ for the set of clauses.
The bipartite clause/variable-graph $G(\Phi)$ is defined as in \Sec~\ref{Sec_BP}.
For a vertex $v$ of $G(\Phi)$ we let $\partial(\Phi,v)$ be the set of neighbours.
Where $\Phi$ is apparent we just write $\partial v$.
Moreover, $\partial^\ell(\Phi,v)$ or briefly $\partial^\ell v$ stands for the set of vertices at distance exactly $\ell$ from $v$.
Additionally, $\nabla^{\ell}(\Phi,v)$ denotes the sub-formula obtained from $\Phi$ by deleting all clauses and variables at distance greater than $\ell$ from $v$.
This sub-formula may contain clauses of length less than two.
Further, for a clause $a$ and a variable $x$ of $\Phi$ we let $\sign(x,a)=\sign_\Phi(x,a)\in\{\pm1\}$ be the sign with which $x$ appears in $a$.
In addition, we let $S(\Phi)$ be the set of all satisfying assignments of $\Phi$, $Z(\Phi)=|S(\Phi)|$ and, assuming $Z(\Phi)>0$, we let $\mu_\Phi$ be the probability distribution on $\{\pm1\}^{V(\Phi)}$ that induces the uniform distribution on $S(\Phi)$ as in \eqref{eqGibbs}.
Moreover, $\SIGMA_\Phi=(\SIGMA_{\Phi,x})_{x\in V(\Phi)}$ signifies a uniformly random satisfying assignment; we drop $\Phi$ where the reference is apparent.

For any $\Phi$ we set up Belief Propagation as in  \eqref{eqBPinit}--\eqref{eqBPmarg}.
It is well known that Belief Propagation yields the correct marginals if $G(\Phi)$ is a tree.
To be precise, the {\em depth} of $x\in V(\Phi)$ is the maximum distance between $x$ and a leaf of $G(\Phi)$.

\begin{proposition}[{\cite[\Thm~14.1]{MM}}]\label{Fact_BP}
If $G(\Phi)$ is a tree and $x\in V(\Phi)$, then for any $\ell$ greater than or equal to the depth of $x$ we have
$\mu_{\Phi}(\SIGMA_x=\pm1)=\nu^{(\ell)}_{\Phi,x}(\pm1)$.
\end{proposition}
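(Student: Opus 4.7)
The plan is to root $G(\Phi)$ at $x$ and proceed by induction on the depth of $x$, proving a stronger claim that identifies each BP message along an oriented edge with a normalized partition function on the subtree below that edge. Concretely, for a clause $a\in\partial x$ containing the variables $x$ and $y$, let $T_{a\to x}$ denote the subtree of $G(\Phi)$ consisting of $a$, $y$, and everything dangling off $y$ other than $a$. Write $Z_{a\to x}(t)$ for the number of assignments of the variables of $T_{a\to x}$ that satisfy every clause of $T_{a\to x}$ when $\SIGMA_x=t$ is imposed, and define $Z_{y\to a}(u)$ analogously for the subtree $T_{y\to a}$ hanging off $y$ once the edge $\{a,y\}$ is removed. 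Since the subtrees $\{T_{a\to x}\}_{a\in\partial x}$ share only the variable $x$, fixing $\SIGMA_x$ decouples them, yielding the product formula
\begin{align*}
\mu_\Phi(\SIGMA_x=t)=\frac{\prod_{a\in\partial x}Z_{a\to x}(t)}{\prod_{a\in\partial x}Z_{a\to x}(1)+\prod_{a\in\partial x}Z_{a\to x}(-1)}.
\end{align*}

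Comparing with the BP marginal formula \eqref{eqBPmarg}, it therefore suffices to show by induction on the length of the path from the relevant edge to $x$ that, once $\ell$ is at least the depth of the subtree below that edge,
\begin{align*}
\nu^{(\ell)}_{\Phi,a\to x}(t)=\frac{Z_{a\to x}(t)}{Z_{a\to x}(1)+Z_{a\to x}(-1)}\qquad\text{and}\qquad\nu^{(\ell)}_{\Phi,y\to a}(u)=\frac{Z_{y\to a}(u)}{Z_{y\to a}(1)+Z_{y\to a}(-1)}.
\end{align*}
The base case is a leaf variable with no further clauses below it, for which $Z\equiv 1$ gives $1/2$, matching the initialization \eqref{eqBPinit}. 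For the inductive step, with $r=\sign_\Phi(x,a)$ and $s=\sign_\Phi(y,a)$, a case analysis on whether clause $a$ is automatically satisfied by $\SIGMA_x=r$ or instead forces $\SIGMA_y=s$ gives
\begin{align*}
Z_{a\to x}(r)=\sum_{u\in\{\pm1\}}\prod_{b\in\partial y\setminus\{a\}}Z_{b\to y}(u),\qquad Z_{a\to x}(-r)=\prod_{b\in\partial y\setminus\{a\}}Z_{b\to y}(s).
\end{align*}
Dividing by $Z_{a\to x}(1)+Z_{a\to x}(-1)$ and substituting the inductive hypothesis $\nu^{(\ell-1)}_{\Phi,y\to a}(u)\propto\prod_b Z_{b\to y}(u)$ reproduces the recurrence \eqref{eqBPrec} verbatim. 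The analogous identity for $\nu^{(\ell)}_{\Phi,y\to a}$ follows immediately from the normalized-product form in \eqref{eqBPrec} together with the factorization $Z_{y\to a}(u)=\prod_{b\in\partial y\setminus\{a\}}Z_{b\to y}(u)$ across the independent subtrees indexed by $\partial y\setminus\{a\}$.

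There is no serious obstacle: the argument is essentially bookkeeping of normalizations, exploiting that on a tree each outgoing message depends only on the subtree below its edge. The only detail worth monitoring is that $\ell$ be large enough—at least the depth of $x$—so that the inductive substitution propagates all the way down to the leaves, which is precisely the hypothesis of the proposition.
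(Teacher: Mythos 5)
The paper does not supply its own proof of this proposition; it is imported verbatim from M\'ezard and Montanari's book (Theorem~14.1 there) and then used as a black box. Your inductive argument---rooting $G(\Phi)$ at $x$, identifying each directed BP message with the normalized count of satisfying extensions of the subtree hanging below that edge, verifying the base case at leaves (where $Z\equiv 1$ gives $1/2$), and checking that the partition-function recursion $Z_{a\to x}(r)=\sum_u Z_{y\to a}(u)$, $Z_{a\to x}(-r)=Z_{y\to a}(s)$ reproduces \eqref{eqBPrec} after normalization (using $\nu_{y\to a}(s)+\nu_{y\to a}(-s)=1$)---is the standard proof of this fact and is correct; the minor phrasing ``induction on the length of the path from the relevant edge to $x$'' should more precisely read ``induction on the depth of the subtree below the edge,'' which is the invariant you in fact carry.
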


We will encounter the following functions repeatedly.
For $\eps>0$ let $\Lambda_\eps(z)=\log(z\vee\eps)$ be the log function truncated at $\log\eps$.
Moreover, we need the continuous and mutually inverse functions
\begin{align}\label{eqll}
\psi:\RR&\to(0,1),\quad z\mapsto\bc{1+\tanh(z/2)}/2,&
\varphi:(0,1)&\to\RR,\quad p\mapsto\log(p/(1-p)).
\end{align}

Let $\cP(\RR)$ be the set of all Borel probability measures on $\RR$ with the weak topology.
Moreover, for a real $q\geq1$ let $\cW_q(\RR)$ be the set of all $\rho\in\cP(\RR)$ such that $\int_{\RR}|x|^q\dd \rho(x)<\infty$.
We equip this space with the Wasserstein metric
\begin{align}
W_q(\rho, \rho')= \inf\cbc{\bc{\int_{\RR^2}|x-y|^q\dd \gamma(x,y)}^{1/q}: \gamma \text{ is a coupling of } \rho, \rho'},
\end{align}
thereby turning $\cW_q(\RR)$ into a complete separable space~\cite{Boga}.

For $\rho\in\cP(\RR)$ we denote by $\ETA_\rho,\ETA_{\rho,1},\ETA_{\rho,2},\ldots$ random variables with distribution $\rho$.
Similarly, for $\pi\in\cP(0,1)$ we let $\MU_\pi,\MU_{\pi,1},\MU_{\pi,2},\ldots$ be a sequence of random variables with distribution $\pi$.
We also continue to let $\vec d$ be a Poisson variable with mean $d$ and $\vd^+,\vd^-$ Poisson variables with mean $d/2$.
Moreover, $\vs_1,\vs_1',\vs_2,\vs_2',\ldots\in\cbc{\pm1}$ always denote uniformly distributed random variables.
All of these random variables are mutually independent as well as independent of any other sources of randomness.

{\bf\em Finally, from here on we tacitly assume that $0<d<2$.}

\section{Overview}\label{Sec_outline}

\noindent
The proof of \Thm~\ref{Thm_Z} proceeds in four steps.
First we show that the limit $\pi_d$ from \Thm~\ref{Thm_Z} exists.
Subsequently we establish the fact~\eqref{eqThm_BP} that Belief Propagation approximates the conditional marginals well.
This will easily imply the convergence of the empirical marginals~\eqref{eqempirical} to $\pi_d$.
Third, building upon these preparations, we will prove that the truncated mean  $n^{-1}\Erw[\log(Z(\PHI)\vee1)]$ converges to the r.h.s.\ of \eqref{eqThm_Z}.
The truncation is necessary to deal with the (unlikely) event that $Z(\PHI)=0$.
Finally, we will show that $\log(Z(\PHI)\vee1)$ concentrates about its mean to obtain convergence in probability, thus completing the proof of \Thm~\ref{Thm_Z}.

\subsection{Step 1: density evolution}
We begin by verifying that the distribution $\pi_d$ from \Thm~\ref{Thm_Z} is well-defined and that $\pi_d$ satisfies a tail bound.

\begin{proposition}\label{Prop_BP}
The weak limit $\pi_d=\lim_{\ell\to\infty}\BP_d^\ell(\delta_{1/2})$ exists and
 \begin{align}\label{eqProp_uniqueness2}
\Erw\brk{\log^2\frac{\MU_{\pi_d}}{1-\MU_{\pi_d}}}&<\infty.
\end{align}
Moreover, $\MU_{\pi_d}$ and $1-\MU_{\pi_d}$ are identically distributed and
\begin{align}\label{eqBFE}
\Erw\abs{\log\bc{\prod_{i=1}^{\vd^-}\MU_{\pi_d,i}+\prod_{i=1}^{\vd^+}\MU_{\pi_d,i+\vd^-}}}&<\infty,&
\Erw\abs{\log\bc{1-\MU_{\pi_d,1}\MU_{\pi_d,2}}}&<\infty.
\end{align}
\end{proposition}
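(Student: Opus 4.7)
The plan is to reparameterise in the logit $\varphi(\mu)=\log(\mu/(1-\mu))$ and work with $\rho_\ell:=\varphi_\ast\DE_d^\ell(\delta_{1/2})\in\cP(\RR)$, proceeding in three steps: symmetry, uniform $L^2$ control, and convergence. In logit coordinates $\DE_d$ sends $\rho$ to the law of
\[
H':=\sum_{i=1}^{\vd^-}g(H_i)-\sum_{j=1}^{\vd^+}g(H_j'),\qquad g(h):=-\log(1+e^{-h})=\log\psi(h),
\]
for i.i.d.\ $H_i,H_j'\sim\rho$. Note that $g$ is $1$-Lipschitz, non-positive, with $|g(h)|\le\log 2$ for $h\ge 0$ and $|g(h)|\le|h|+\log 2$ otherwise. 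Since swapping $\vd^+\leftrightarrow\vd^-$ and the two i.i.d.\ sequences negates $H'$, and $\rho_0=\delta_0$ is symmetric about $0$, every $\rho_\ell$ is symmetric about $0$; this will deliver $\MU_{\pi_d}\disteq 1-\MU_{\pi_d}$ once convergence is established.

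The second-moment bound comes from the compound-Poisson variance formula applied independently to both sums. Setting $V_\ell:=\Erw H_{\rho_\ell}^2$ and using symmetry (so $\Erw H_{\rho_\ell}=0$), one obtains $V_{\ell+1}=d\,\Erw[g(H_{\rho_\ell})^2]$. Splitting the expectation at $\{H\ge 0\}$ and plugging in the two pointwise bounds on $|g|$, together with symmetry, yields a scalar recursion of the shape $V_{\ell+1}\le\tfrac{d}{2}V_\ell+c_1\sqrt{V_\ell}+c_2$ for absolute constants $c_1,c_2$. The regime $d<2$ is precisely where the leading coefficient $d/2<1$, so the recursion is uniformly bounded and $\sup_\ell V_\ell<\infty$. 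By Prokhorov this gives tightness of $(\rho_\ell)$, and on any weak subsequential limit $\rho_\ast$ one inherits \eqref{eqProp_uniqueness2} via uniform integrability of $H^2$.

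To upgrade tightness to actual convergence, I would realise $\rho_\ell$ as the law of the exact Gibbs marginal (via Fact~\ref{Fact_BP}) at the root of a random Galton--Watson tree of depth $2\ell$ with free leaf boundary, and couple two depths $\ell<\ell'$ on the same infinite tree $\vT$. The Wasserstein distance of the two resulting root marginals is then controlled by the probability that the $2$-SAT implication digraph on $\vT$ propagates a constraint from depth $2\ell$ up to the root, and because the literal out-degree in the implication digraph is $\Po(d/2)$ with mean $<1$ throughout $d<2$, standard subcritical branching estimates give an exponentially decaying bound on such paths. Combined with the uniform $L^2$ control this yields $W_2$-Cauchy-ness and hence a unique weak limit $\pi_d$. \textbf{I expect this to be the main technical obstacle:} the naive estimate $W_2(\DE_d\rho,\DE_d\rho')\le\sqrt d\,W_2(\rho,\rho')$ already fails to contract at $d=1$, so the proof must genuinely exploit digraph subcriticality rather than a Banach fixed-point step. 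Finally, \eqref{eqBFE} follows from the elementary bounds $\bigl|\log(\prod\MU_i+\prod\MU_j')\bigr|\le\log 2+\min\bigl(\sum_i|\log\MU_i|,\sum_j|\log\MU_j'|\bigr)$ and $1-\MU_1\MU_2\ge\max(1-\MU_1,1-\MU_2)$, taken in expectation using independence of $\vd^\pm$, together with $\Erw|\log\MU_{\pi_d}|\le\Erw|H_{\rho_\ast}|+\log 2<\infty$ (from the $L^2$ bound) and the symmetry $\MU_{\pi_d}\disteq 1-\MU_{\pi_d}$.
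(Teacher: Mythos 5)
Your preparatory steps (logit reparameterisation, symmetry of $\rho_\ell$, the compound-Poisson recursion $V_{\ell+1}\le\tfrac d2 V_\ell+c_1\sqrt{V_\ell}+c_2$ giving $\sup_\ell V_\ell<\infty$, and the elementary bounds used to deduce \eqref{eqBFE}) are correct and actually give you more than you realise. However, the key step --- convergence of $(\rho_\ell)$ --- is where your plan goes wrong, and it goes wrong precisely because you stop at the ``naive'' Wasserstein estimate.

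You claim that the best one can hope for via a direct coupling is $W_2(\LDE_d\rho,\LDE_d\rho')\le\sqrt d\,W_2(\rho,\rho')$, and conclude that a Banach fixed-point argument cannot reach $d<2$, so one must instead go through the tree coupling and implication-digraph subcriticality. This misses a factor of $\sqrt2$ that the paper extracts by a short calculation, and with it, the whole Banach argument works for all $d<2$. Concretely, writing the update as $\sum_{i\le\vd}\vs_i\log\frac{1+\vs_i'\tanh(\ETA_i/2)}2$ with independent uniform signs $\vs_i,\vs_i'$, the cross terms in the squared sum vanish because $\Erw[\vs_h\vs_i]=0$ for $h\ne i$, leaving
\begin{align*}
W_2(\LDE_d\rho,\LDE_d\rho')^2 \le d\,\Erw\brk{\log^2\frac{1+\vs_1\tanh(\ETA_1/2)}{1+\vs_1\tanh(\ETA_1'/2)}}.
\end{align*}
Now the crucial point: averaging over the sign $\vs_1$ is a $\tfrac12$ average of two squared log-ratios with derivatives $(1-\tanh(z/2))/2$ and $(1+\tanh(z/2))/2$, and after Cauchy--Schwarz one uses the pointwise bound $((1-\tanh)/2)^2+((1+\tanh)/2)^2\le1$ to arrive at $\Erw[\log^2(\cdots)]\le\tfrac12\Erw[(\ETA_1-\ETA_1')^2]$. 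This gives $W_2(\LDE_d\rho,\LDE_d\rho')\le\sqrt{d/2}\,W_2(\rho,\rho')$, a strict contraction for all $d<2$, and Banach immediately supplies the unique $W_2$-limit $\rho_d$, hence $\pi_d=\psi(\rho_d)$ and \eqref{eqProp_uniqueness2}. Your intuition that each of the two branches of the derivative is individually only $1$-Lipschitz is right, but you failed to notice that the two branches are anticorrelated in a way that buys the missing factor $\tfrac12$.

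The alternative route you sketch (coupling root marginals of $\vT^{(2\ell)}$ across depths, controlling the discrepancy by subcritical implication-digraph percolation) is plausible in outline and is indeed close to what the paper does later for Gibbs uniqueness (\Sec~\ref{Sec_Prop_uniqueness}), but it is substantially heavier machinery than needed here, and as you yourself flag, you have no proof of $W_2$-Cauchyness from it: subcritical percolation gives a total-variation-type decoupling bound, and converting that into $W_2$-convergence requires an additional moment/uniform-integrability step that you gesture at but don't supply. The paper avoids all of this by keeping \Prop~\ref{Prop_BP} purely analytic. Your symmetry and $L^2$ observations, as well as the elementary estimates establishing \eqref{eqBFE} from \eqref{eqProp_uniqueness2} and $\MU_{\pi_d}\disteq1-\MU_{\pi_d}$, match the paper.
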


The proof of \Prop~\ref{Prop_BP}, which we carry out in \Sec~\ref{Sec_Prop_BP}, is based on a contraction argument.
This argument implies that the fixed point iteration converges rapidly to $\pi_d$, a fact that can be exploited to obtain numerical estimates.
The bounds \eqref{eqBFE} ensure that the expectation on the r.h.s.\ of \eqref{eqThm_Z} is well-defined.

\subsection{Step 2: Gibbs uniqueness}
As a next step we verify the Gibbs uniqueness property \eqref{eqGU}.
We proceed by way of analysing a multi-type Galton-Watson tree $\vT$ that mimics the local structure of the graph $G(\PHI)$ upon exploration from variable $x_1$.
The Galton-Watson process has five types: variable nodes and four types of clause nodes $(+1,+1),(+1,-1),(-1,+1),(-1,-1)$.
The root is a variable node $o$.
Moreover, each variable node spawns independent $\Po(d/4)$ numbers of clauses nodes of each of the four types.
Additionally, each clause has a single offspring, which is a variable.
The semantics of the clause types is that the first component indicates whether the parent variable appears in the clause positively or negatively.
The second component indicates whether the child variable appears as a positive or as a negative literal.
Clearly, for $d\leq1$ the tree $\vT$ is finite with probability one, while infinite trees appear with positive probability for $d>1$.

Let $\vT^{(\ell)}$ be the finite tree obtained from $\vT$ by dropping all nodes at distance greater than $\ell$ from the root.
For even $\ell$ it will be convenient to view $\vT^{(\ell)}$ interchangeably as a tree or as a $2$-SAT formula.
In particular, we write $\partial^{2\ell}o=\partial^{2\ell}(\vT,o)$ for the set of all variables at distance exactly $2\ell$ from $o$.
The following proposition, which is the linchpin of the entire proof strategy, establishes the Gibbs uniqueness property for the tree formula $\vT^{(2\ell)}$.

\begin{proposition}\label{Prop_uniqueness}
We have
\begin{align}\label{eqProp_uniqueness1}
\lim_{\ell\to\infty}\Erw\brk{\max_{\tau\in S(\vT^{(2\ell)})}
	\abs{\mu_{\vT^{(2\ell)}}(\SIGMA_o=1\mid\SIGMA_{\partial^{2\ell}o}=\tau_{\partial^{2\ell}o})-\mu_{\vT^{(2\ell)}}(\SIGMA_o=1)}}&=0.
\end{align}
\end{proposition}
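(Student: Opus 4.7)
To deduce \Prop~\ref{Prop_uniqueness}, we plan to combine two ingredients: the exactness of Belief Propagation on trees (\Prop~\ref{Fact_BP}) and the contraction property of the density evolution operator $\DE_d$ on $\cW_2(\RR)$ that underlies \Prop~\ref{Prop_BP}. Since $G(\vT^{(2\ell)})$ is a tree, \Prop~\ref{Fact_BP} identifies the unconditional marginal $\mu_{\vT^{(2\ell)}}(\SIGMA_o=1)$ with the BP output $\nu^{(\ell),1/2}_{\vT^{(2\ell)},o}(1)$ obtained by initialising every leaf-to-clause message to the uniform mass $(1/2,1/2)$ on $\cbc{\pm1}$. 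By the same token the conditional marginal $\mu_{\vT^{(2\ell)}}(\SIGMA_o=1\mid\SIGMA_{\partial^{2\ell}o}=\tau_{\partial^{2\ell}o})$ coincides with the BP output $\nu^{(\ell),\tau}_{\vT^{(2\ell)},o}(1)$ obtained by initialising those leaf messages with the Dirac masses $\delta_{\tau_y}$. Thus \Prop~\ref{Prop_uniqueness} reduces to a uniform spatial-mixing statement for BP on $\vT^{(2\ell)}$.

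To establish this mixing, we parameterise every message by its probability $p=\nu(1)\in[0,1]$. The clause-to-variable update then takes the fractional-linear form dictated by \eqref{eqBPrec}, and an elementary calculation shows that this map, viewed as a function of the incoming variable-to-clause probability, has Lipschitz constant bounded by $(1+q)^{-2}\leq 1$, where $q$ is the probability that the incoming message assigns to the literal opposite to its appearance in the clause. The variable-to-clause update is the standard multiplicative combination. Coupling the two BP iterations $\nu^{(\cdot),\tau}$ and $\nu^{(\cdot),1/2}$ on the same realisation of $\vT^{(2\ell)}$, a tree-recursion telescopes to
\[
\bigl|\nu^{(\ell),\tau}_{\vT^{(2\ell)},o}(1)-\nu^{(\ell),1/2}_{\vT^{(2\ell)},o}(1)\bigr|
\leq \tfrac12\sum_{y\in\partial^{2\ell}o}\kappa_{y\to o}(\vT^{(2\ell)}),
\]
where $\kappa_{y\to o}$ is the product of local Lipschitz constants along the unique tree-path from $y$ to $o$, evaluated along the uniform-boundary trajectory $\nu^{(\cdot),1/2}$. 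The central estimate we aim to establish is that the expected sensitivity along a single path to depth $\ell$, multiplied by the expected number $d^\ell$ of leaves, decays geometrically. This is where the hypothesis $d<2$ enters: the fixed-point distribution $\pi_d$ supplied by \Prop~\ref{Prop_BP} permits an exact evaluation of the mean per-clause contraction factor, and the inequality $d\cdot\Erw[(1+\MU_{\pi_d})^{-2}]<1$ should fall out of the same contraction-in-$W_2$ estimate that powers \Prop~\ref{Prop_BP}.

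The most delicate step is to turn this averaged bound into one that holds after maximising over $\tau\in S(\vT^{(2\ell)})$, whose cardinality can be exponential in $\ell$. For this we plan to invoke two ingredients. First, a monotonicity structure specific to $2$-SAT: because every clause-to-variable message is biased only towards the parent's sign, the sensitivity $\kappa_{y\to o}$ depends on $\tau$ exclusively through the maximal and minimal message values produced along the path, and these envelopes are themselves governed by the same BP recursion. Second, Poisson concentration of the offspring numbers in $\vT$, combined with an $L^2$ bound on $\kappa_{y\to o}$, lifts the averaged geometric decay to a high-probability statement. The principal obstacle we foresee is coupling the geometry of $\vT^{(2\ell)}$ to the conditional message dynamics: a badly chosen $\tau$ can, in principle, force long implicative sub-chains of variables reaching up toward $o$, and we must rule out such chains uniformly by exploiting the sub-critical implication structure of random $2$-SAT in the regime $d<2$.
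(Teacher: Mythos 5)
Your high-level plan — exactness of BP on trees, a telescoping bound over leaf-to-root paths, and an argument that the maximum over $\tau$ reduces to a single extremal boundary condition by monotonicity — is indeed the shape of the paper's argument. But there are two serious gaps that your proposal does not close.

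First, your claim that the local Lipschitz constants $\kappa_{y\to o}$ can be ``evaluated along the uniform-boundary trajectory $\nu^{(\cdot),1/2}$'' is not correct. The conditional iteration $\nu^{(\cdot),\tau}$ starts from Dirac masses at the leaves, so for the first few levels the incoming probabilities sit at $0$ or $1$; there the derivative $(1+q)^{-2}$ of the clause update is close to $1$, and one cannot replace it by the value computed along the uniform trajectory without further argument. More fundamentally, the contraction constant you anticipate, $d\cdot\Erw[(1+\MU_{\pi_d})^{-2}]<1$, is not the one that actually arises: the paper linearises the log-likelihood update and averages over the random clause sign, obtaining the constant $d/2$ for the operator $\LDEP_d$ in $W_1$ (\Lem~\ref{Lemma_nudge}); there is no obvious reason your quantity stays below $1$ all the way up to $d=2$.

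Second, and more importantly, you do not confront the central difficulty the paper flags. The extremal boundary condition that maximises the root marginal is \emph{tree-dependent}: it is built top--down (\Lem~\ref{Lemma_extremal} and the construction of $\SIGMA^+$), so by the time one tries to run BP bottom--up, the tree's randomness has been ``used up.'' Your statement that the envelopes ``are themselves governed by the same BP recursion'' elides precisely this point, and it is false as stated. The paper's key discovery is that the quantity $\ETA^{(\ell)}_x$ attached to the extremal boundary condition satisfies a \emph{different} recursion, captured by the operator $\LDEP_d$ of \eqref{eqNudgedFixedPoint}, in which a \emph{single} sign $\vs_i$ plays the role that two independent signs $\vs_i,\vs_i'$ play in $\LDE_d$ \eqref{Eq_BTreeOperator}. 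This correlation encodes the fact that the same clause sign determines both which child value nudges the parent toward $\SIGMA^+$ and how that child's log-likelihood enters the parent update. Establishing that $\LDEP_d$ is still a $W_1$-contraction with the same fixed point $\rho_d$ is what makes the argument go through; your ``Poisson concentration plus $L^2$ bound'' fallback does not supply a substitute for this identity, and without it the bottom--up analysis of the extremal boundary condition is not accessible. In short, the monotonicity/extremality reduction you gesture at must be made explicit (cf.\ Claim~\ref{Lemma_Noela}), and the joint law of the tree and its extremal boundary must be captured by the modified operator $\LDEP_d$, neither of which your proposal actually does.
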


\noindent
Thus, \whp{} no conceivable boundary condition is apt to significantly shift the marginal of  the root.

We prove \Prop~\ref{Prop_uniqueness} by a subtle contraction argument in combination with a construction of extremal boundary conditions of the tree formula $\vT^{(2\ell)}$.
More specifically, we will construct boundary conditions $\SIGMA^{\pm}$ that maximise or minimise the conditional probability
\begin{align}\label{eqextremal}
\mu_{\vT^{(2\ell)}}(\SIGMA_o=1\mid\SIGMA_{\partial^{2\ell}o}=\SIGMA_{\partial^{2\ell}o}^{\pm}),
\end{align}
respectively.
Then we will show that the difference of the conditional marginals induced by both these extremal boundary conditions vanishes with probability tending to one as $\ell\to\infty$.
The delicate point is that the extremal boundary conditions $\SIGMA^{\pm}$ depend on the tree $\vT^{(2\ell)}$.
Thus, at first glance it seems that we need to pass the tree twice, once top--down to construct $\SIGMA^{\pm}$ and then bottom--up to calculate the conditional marginals~\eqref{eqextremal}.
But such an analysis seems untenable because after the top--down pass the tree is exposed and `no randomness remains' to facilitate the bottom--up phase.
Fortunately, we will see that a single stochastic fixed point equation captures both the top--down and the bottom--up phase.
This discovery reduces the proof of \Prop~\ref{Prop_uniqueness} to showing that the fixed point iteration contracts. 
The details of this delicate argument can be found in \Sec~\ref{Sec_Prop_uniqueness}.

\Prop~\ref{Prop_uniqueness} easily implies the Gibbs uniqueness condition \eqref{eqGU} and thereby \Thm~\ref{Thm_BP}. 
A further consequence is the asymptotic independence of the joint truth values of bounded numbers of variables.

\begin{corollary}\label{Cor_uniqueness}
The statement \eqref{eqThm_BP} is true and for any integer $k\geq2$ we have
\begin{align*}
\lim_{n\to\infty}
\sum_{\sigma\in\{\pm1\}^k}
\Erw\brk{\abs{\mu_{\PHI}(\SIGMA_{x_1}=\sigma_1,\ldots,\SIGMA_{x_k}=\sigma_k)-
	\prod_{i=1}^k\mu_{\PHI}(\SIGMA_{x_i}=\sigma_i)}\mid Z(\PHI)>0}&=0.
\end{align*}
\end{corollary}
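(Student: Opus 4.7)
The plan is to transfer Gibbs uniqueness on the Galton--Watson tree (Proposition~\ref{Prop_uniqueness}) to the random formula $\PHI$ via local weak convergence, obtaining \eqref{eqThm_BP}, and then to upgrade this single-variable marginal control to joint asymptotic independence of any bounded tuple of variables through the Markov property of $\mu_\PHI$. For \eqref{eqThm_BP}, let $\cT_{\ell,n}$ be the event that the depth-$2\ell$ neighbourhood $\nabla^{2\ell}(\PHI,x_1)$ is a tree. A ball of fixed radius $2\ell$ in $G(\PHI)$ contains $O_\ell(1)$ vertices in expectation and the expected number of short cycles through $x_1$ is $O(1/n)$, so $\pr[\cT_{\ell,n}]=1-o_n(1)$, and moreover $\nabla^{2\ell}(\PHI,x_1)$ converges in the local weak sense to the signed Galton--Watson tree $\vT^{(2\ell)}$ rooted at $o$. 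On $\cT_{\ell,n}$ two identifications hold. First, Proposition~\ref{Fact_BP} applied to the tree $\nabla^{2\ell}(\PHI,x_1)$ yields $\nu^{(\ell)}_{\PHI,x_1}(\pm 1)=\mu_{\nabla^{2\ell}(\PHI,x_1)}(\SIGMA_{x_1}=\pm 1)$, since the BP estimate after $\ell$ rounds depends only on the depth-$2\ell$ ball. Second, every clause of $\PHI$ joining the interior of the ball to its exterior contains a variable in $\partial^{2\ell}x_1$, so this set is a separator in the constraint graph and the Markov property of $\mu_\PHI$ gives
\begin{align*}
\mu_\PHI\bc{\SIGMA_{x_1}=1\,\big|\,\SIGMA_{\partial^{2\ell}x_1}=\tau_{\partial^{2\ell}x_1}}=\mu_{\nabla^{2\ell}(\PHI,x_1)}\bc{\SIGMA_{x_1}=1\,\big|\,\SIGMA_{\partial^{2\ell}x_1}=\tau_{\partial^{2\ell}x_1}}
\end{align*}
for every $\tau\in S(\PHI)$. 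The maximum over $\tau\in S(\PHI)$ of $|\text{LHS}-\nu^{(\ell)}_{\PHI,x_1}(1)|$ is thus dominated on $\cT_{\ell,n}$ by the tree maximum from Proposition~\ref{Prop_uniqueness}; combining with local weak convergence delivers \eqref{eqThm_BP}, while $\cT_{\ell,n}^{\mathrm c}$ contributes only $o_n(1)$ (the integrand is bounded by $1$) and the conditioning on $\{Z(\PHI)>0\}$ is harmless since $\pr[Z(\PHI)>0]=1-o_n(1)$ for $d<2$.

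For the joint independence claim, fix $k\ge 2$ and let $\cT_{\ell,n}^{(k)}$ be the event that the neighbourhoods $\nabla^{2\ell}(\PHI,x_i)$, $i=1,\ldots,k$, are pairwise vertex-disjoint and each a tree. A first-moment estimate on short paths between pairs from $\{x_1,\ldots,x_k\}$ gives $\pr[\cT_{\ell,n}^{(k)}]=1-o_n(1)$. On this event the boundaries $\partial^{2\ell}x_i$ are disjoint and simultaneously separate each $x_i$ from every other variable, so the Markov property of $\mu_\PHI$ yields the conditional factorisation
\begin{align*}
\mu_\PHI\bc{\SIGMA_{x_1}=\sigma_1,\ldots,\SIGMA_{x_k}=\sigma_k\,\big|\,\SIGMA_{\partial^{2\ell}x_1},\ldots,\SIGMA_{\partial^{2\ell}x_k}}=\prod_{i=1}^k\mu_\PHI\bc{\SIGMA_{x_i}=\sigma_i\,\big|\,\SIGMA_{\partial^{2\ell}x_i}}.
\end{align*}
Averaging over the boundary values expresses the joint probability as the expectation of this product. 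By permutation invariance of the law of $\PHI$, \eqref{eqThm_BP} applies to each $x_i$ in place of $x_1$, and the triangle inequality (exactly as in the derivation of \eqref{eqGU}) shows that each conditional factor is within $o_\ell(1)$ in expected sup over boundary conditions of the unconditional marginal $\mu_\PHI(\SIGMA_{x_i}=\sigma_i)$. A telescoping bound, using the trivial bound $1$ on the remaining factors, therefore places the joint probability within $k\cdot o_\ell(1)$ of $\prod_i\mu_\PHI(\SIGMA_{x_i}=\sigma_i)$ in expectation after $n\to\infty$; letting $\ell\to\infty$ completes the argument.

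The main technical subtlety is the careful handling of the Markov property along $\partial^{2\ell}x_1$ (resp.\ $\bigcup_i \partial^{2\ell}x_i$) in the presence of 2-SAT's hard constraints: one must verify that every boundary restriction $\tau_{\partial^{2\ell}x_1}$ arising from some $\tau\in S(\PHI)$ extends to at least one satisfying assignment of the interior sub-formula (so that the conditional Gibbs measure is well-defined), and that each boundary-crossing clause is either already satisfied by the boundary value or else constrains only its exterior endpoint. Once this is in place, the rest assembles in a routine way from local weak convergence, Proposition~\ref{Prop_uniqueness}, and permutation invariance.
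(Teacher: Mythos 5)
Your argument is correct and follows essentially the same route as the paper: transfer Gibbs uniqueness on $\vT^{(2\ell)}$ to $\PHI$ via the local coupling of $\nabla^{2\ell}(\PHI,x_1)$ with the Galton--Watson tree, then for the joint-independence statement exploit disjointness of the radius-$2\ell$ balls around $x_1,\ldots,x_k$ together with the Markov property of $\mu_\PHI$ along the separators $\partial^{2\ell}x_i$. The paper phrases the second step as a two-stage resampling coupling (draw $\SIGMA'$, then independently resample each interior given its boundary) rather than writing out the conditional factorisation and telescoping explicitly, but these are the same argument.
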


\subsection{Step 3: the Aizenman-Sims-Starr scheme}
The aforementioned results pave the way for deriving an expression for the conditional expectation of $\log Z(\PHI)$ given that $\PHI$ is satisfiable.
Since $\PHI$ is satisfiable \whp\ for all $d<2$, an equivalent task is to calculate $\Erw[\log(Z(\PHI)\vee1)]$.
To this end we seize upon a simple but powerful strategy colloquially called the Aizenman-Sims-Starr scheme~\cite{Aizenman}.
Originally proposed in the context of the Sherrington-Kirkpatrick spin glass model, this proof strategy suggests to compute the asymptotic mean of a random variable on a `system' of size $n$ by carefully estimating the change of that mean upon going to a `system' of size $n+1$.
This difference is calculated by coupling the systems of size $n$ and $n+1$ such that the latter is obtained from the former by a small expected number of local changes.

We apply this idea to the random $2$-SAT problem by coupling the random formula $\PHI_n$ with $n$ variables and $\Po(dn/2)$ clauses and the random formula $\PHI_{n+1}$ with $n+1$ variables and $\Po(d(n+1)/2)$ clauses.
Roughly speaking, we obtain $\PHI_{n+1}$ from $\PHI_n$ by adding a new variable $x_{n+1}$ along with a few random adjacent clauses that connect $x_{n+1}$ with the variables $x_1,\ldots,x_n$ of $\PHI_n$.
Then the information about the joint distribution of the truth values of bounded numbers of variables furnished by  Corollaries~\ref{Cor_BP} and~\ref{Cor_uniqueness} and the tail bound~\eqref{eqProp_uniqueness2} will enable us to accurately estimate $\Erw\brk{\log(Z(\PHI_{n+1})\vee 1) - \log(Z(\PHI_{n})\vee 1)}$.

Needless to say, upon closer inspection matters will emerge to be rather subtle.
The main source of complications is that, in contrast to other models in mathematical physics such as the Sherrington-Kirkpatrick model or the Ising model, the 2-SAT problem has hard constraints.
Thus, the addition of a single clause could trigger a dramatic drop in the partition function.
In fact, in the worst case a single awkward clause could wipe out all satisfying assignments.
In \Sec~\ref{Sec_Prop_Bethe} we will iron out all these difficulties and prove the following.

\begin{proposition}\label{Prop_Bethe}
We have
\begin{align}\label{eqProp_Bethe_stmt}
\lim_{n\to\infty}\Erw[\log(Z(\PHI_{n+1})\vee1)]-\Erw[\log(Z(\PHI_{n})\vee1)]=
\Erw\brk{\log\bc{\prod_{i=1}^{\vd^-}\MU_{\pi_d,i}+\prod_{i=1}^{\vd^+}\MU_{\pi_d,i+\vd^-}}
-\frac d2\log\bc{1-\MU_{\pi_d,1}\MU_{\pi_d,2}}}.
\end{align}
\end{proposition}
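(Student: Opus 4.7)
My plan is to execute the Aizenman--Sims--Starr scheme by coupling $\PHI_n$ and $\PHI_{n+1}$ to a common base formula $\PHI'$ on the variable set $V_n$ carrying $\Po(d(n-1)/2)$ uniform random clauses. Poisson splitting produces two distributionally faithful augmentations of $\PHI'$: appending $\vh\sim\Po(d/2)$ further uniform clauses on $V_n$ recovers $\PHI_n$, while appending $\vd^+$ positive and $\vd^-$ negative occurrences of the new variable $x_{n+1}$, each partnered with a uniformly random literal of $V_n$ and with $\vd^+,\vd^-\sim\Po(d/2)$ independent, recovers $\PHI_{n+1}$. Since $\PHI'$ unsatisfiable forces $Z(\PHI_n)=Z(\PHI_{n+1})=0$, only the event $Z(\PHI')\geq 1$ contributes, and the left-hand side of~\eqref{eqProp_Bethe_stmt} reduces, up to $o(1)$, to
\begin{align*}
\Erw\brk{\log\frac{Z(\PHI_{n+1})\vee 1}{Z(\PHI')\vee 1}}-\Erw\brk{\log\frac{Z(\PHI_n)\vee 1}{Z(\PHI')\vee 1}}.
\end{align*}

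For the variable contribution I would sum over $\SIGMA_{x_{n+1}}\in\cbc{\pm 1}$ in the definition of $Z(\PHI_{n+1})$ to get the identity
\begin{align*}
\frac{Z(\PHI_{n+1})}{Z(\PHI')}=\Erw_{\SIGMA\sim\mu_{\PHI'}}\brk{\prod_{i=1}^{\vd^-}\vecone\cbc{\SIGMA_{y_i^-}=s_i^-}+\prod_{i=1}^{\vd^+}\vecone\cbc{\SIGMA_{y_i^+}=s_i^+}},
\end{align*}
because $x_{n+1}=+1$ automatically satisfies every positively-attached clause while forcing the partner literal of each negatively-attached one, and symmetrically for $x_{n+1}=-1$; here $y_i^\pm\in V_n$ and $s_i^\pm\in\cbc{\pm 1}$ are the uniformly random partners and signs of the new clauses. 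Corollary~\ref{Cor_uniqueness} asymptotically factorises the joint law of the finitely many $\SIGMA_{y_i^\pm}$, Corollary~\ref{Cor_BP} turns the individual marginals $\mu_{\PHI'}(\SIGMA_{y_i^\pm}=s_i^\pm)$ into i.i.d.\ samples from $\pi_d$ (the $\MU\leftrightarrow 1-\MU$ symmetry in Proposition~\ref{Prop_BP} absorbing the random signs), and the bracketed product collapses in expectation to $\prod_{i=1}^{\vd^-}\MU_{\pi_d,i}+\prod_{i=1}^{\vd^+}\MU_{\pi_d,i+\vd^-}$. Taking the outer $\log$ and expectation identifies the limit as the first summand of~\eqref{eqProp_Bethe_stmt}. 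An entirely analogous expansion of $Z(\PHI_n)/Z(\PHI')$ factorises, clause by clause, into indicator products whose $\mu_{\PHI'}$-expectation is asymptotically $1-\MU_{\pi_d,1}\MU_{\pi_d,2}$; Wald's identity with $\Erw[\vh]=d/2$ then delivers the second summand.

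The main obstacle will be promoting these distributional convergences into convergence of the truncated logarithmic expectations. Hard $2$-SAT constraints allow the added clauses to annihilate all satisfying assignments and thereby drive $\log((Z(\nix)\vee 1)/(Z(\PHI')\vee 1))$ down by $\log Z(\PHI')\asymp n$ in the worst case. I plan to work on a high-probability event $\cG$ on which $Z(\PHI')\geq 1$, the sub-formulas $\nabla^{2\ell}(\PHI',y_i^\pm)$ together with the analogous depth-$2\ell$ neighbourhoods entering the clause term are pairwise disjoint and tree-like, and the Gibbs uniqueness conclusion of Theorem~\ref{Thm_BP} applies at depth $2\ell$, so that the true conditional marginals can be replaced by the Belief Propagation proxies $\nu^{(\ell)}_{\PHI',y_i^\pm}$ whose law converges to $\pi_d$. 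On $\cG^c$ the crude bound $|\log(Z(\nix)\vee 1)|\leq n\log 2$ combined with sufficiently fast decay of $\pr[\cG^c]$ for $d<2$ renders the residual contribution $o(1)$, while uniform integrability on $\cG$ is secured by the tail estimate~\eqref{eqProp_uniqueness2} of Proposition~\ref{Prop_BP} paired with standard Poisson moment controls on $\vd^+,\vd^-,\vh$. Letting $n\to\infty$ and then $\ell\to\infty$ completes the argument.
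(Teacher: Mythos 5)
Your coupling, Poisson splitting, and the algebraic identities expressing $Z(\PHI_{n+1})/Z(\PHI')$ and $Z(\PHI_n)/Z(\PHI')$ as $\mu_{\PHI'}$-expectations over the attachment variables are all in line with the paper's proof (which works with $\PHI'$, $\PHI''\disteq\PHI_n$, $\PHI'''\disteq\PHI_{n+1}$), and the distributional passage to $\pi_d$ via Corollaries~\ref{Cor_BP} and~\ref{Cor_uniqueness} is likewise correct in spirit. However, the treatment of the bad event $\cG^c$ contains a genuine gap that you yourself flag as ``the main obstacle'' but then dismiss too quickly.

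You write that on $\cG^c$ the crude bound $\abs{\log(Z(\nix)\vee1)}\leq n\log 2$ combined with ``sufficiently fast decay of $\pr[\cG^c]$'' renders the residual contribution $o(1)$. This would require $\pr[\cG^c]=o(1/n)$, which is false. Several events forcing $\cG^c$ have probability $\Theta(1/n)$ or larger: two new clauses attaching to the same old variable has probability $\Theta(1/n)$ given $\vd^\pm$ bounded; a new clause closing a short contradictory cycle (a ``bicycle'') in $G(\PHI')$ and wiping out all satisfying assignments also has probability $\Omega(1/n)$; and the events $\vd^\pm>\xi^{-1/4}$ or an attachment variable having an extreme marginal $\mu_{\PHI'}(\SIGMA_y=s)\approx 1$ occur with \emph{constant} probability, so they can only be made small, not vanishing. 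In any of these cases $n\log 2\cdot\pr[\cG^c]$ is $\Omega(1)$, not $o(1)$.

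The paper's way around this is precisely the technical core of the section you are trying to reproduce: it establishes a \emph{uniform $L^2$ bound} $\Erw\brk{\log^2\frac{Z(\PHI'')\vee1}{Z(\PHI')\vee1}}=O(1)$ (and analogously for $\PHI'''$), valid on the entire probability space, and then controls the bad-event contribution via Cauchy--Schwarz, $\abs{\Erw[(1-\vecone\cE)\log(\nix)]}\leq\sqrt{\Erw[\log^2(\nix)]\,\pr[\cE^c]}\leq\delta/3$, where $\pr[\cE^c]\leq 4\xi$ can be made as small as desired but need not vanish with $n$. Proving that $L^2$ bound requires the Unit Clause Propagation analysis of \Sec~\ref{SubSec_UCP}: Fact~\ref{Fact_UC} bounds the drop in $Z$ in terms of the implication set size $\vI_\chi$, \Lem~\ref{Lemma_wild} gives only $\Erw[\vI_\chi]=O(|Y|^2)$ (which, as the paper points out explicitly, does \emph{not} suffice because $\vI_\chi=n$ with probability $\Omega(1/n)$), and the essential refinement is \Lem~\ref{Lemma_tame}: by defining $\vA_\chi$ in \eqref{eqAchi} as the minimum of $\vI_{\chi'}$ over $\chi'$ differing from $\chi$ in at most one coordinate --- exploiting that the free value of $x_{n+1}$ can always salvage one offending clause --- one obtains the second-moment bound $\Erw[\vA_\chi^2]=O(|Y|^4)$. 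The proof of that bound in turn requires the structural Claims~\ref{Lemma_size_of_implication_graphs}--\ref{Claim_A} about the excess $\vm^*-\vn^*$ of the implication graph. None of this machinery appears in your outline, and ``uniform integrability on $\cG$'' plus ``standard Poisson moment controls'' do not substitute for it, since the difficulty lies entirely on $\cG^c$.
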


We notice that \eqref{eqBFE} guarantees that the r.h.s.\ of \eqref{eqProp_Bethe_stmt} is well-defined.
As an immediate consequence of \Prop~\ref{Prop_Bethe} we obtain a formula for $\Erw[\log (Z(\PHI)\vee 1)]$.

\begin{corollary}\label{Cor_Bethe}
For any $d<2$ we have $$\lim_{n\to\infty}\frac1n\Erw[\log(Z(\PHI)\vee1)]=\Erw\brk{\log\bc{\prod_{i=1}^{\vd^-}\MU_{\pi_d,i}+\prod_{i=1}^{\vd^+}\MU_{\pi_d,i+\vd^-}}
-\frac d2\log\bc{1-\MU_{\pi_d,1}\MU_{\pi_d,2}}}
.$$
\end{corollary}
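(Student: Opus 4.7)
I would deduce Corollary~\ref{Cor_Bethe} from Proposition~\ref{Prop_Bethe} by a standard Ces\`aro (Stolz--Ces\`aro) argument on the sequence of increments. Write $a_n=\Erw[\log(Z(\PHI_n)\vee 1)]$ and let $L$ denote the right-hand side of~\eqref{eqProp_Bethe_stmt}, which is a finite real number by the integrability statements~\eqref{eqBFE} of Proposition~\ref{Prop_BP}.

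The first step is purely algebraic: telescoping yields
\[
a_n = a_1 + \sum_{k=1}^{n-1}\bc{a_{k+1}-a_k}.
\]
Since $1\leq Z(\PHI_n)\vee 1\leq 2^n$ deterministically, each $a_k$ lies in $[0,k\log 2]$; in particular $a_1$ is a finite constant and the boundary contribution $a_1/n\to 0$ as $n\to\infty$.

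The second step is to invoke Proposition~\ref{Prop_Bethe}, which says precisely that the increments $a_{k+1}-a_k$ converge to $L$ as $k\to\infty$. The classical Ces\`aro theorem for convergent real sequences then gives
\[
\frac{1}{n}\sum_{k=1}^{n-1}(a_{k+1}-a_k)\longrightarrow L.
\]
Adding the vanishing boundary term $a_1/n$, I obtain $n^{-1}a_n\to L$, which is exactly the statement of Corollary~\ref{Cor_Bethe}.

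I do not foresee any substantive obstacle in this deduction: once Proposition~\ref{Prop_Bethe} is in hand, the remaining work is routine analysis of a real sequence. The only conceptual checks are that $L$ is well-defined and finite (supplied by~\eqref{eqBFE}), and that the truncation $Z(\PHI_n)\vee 1$ keeps $a_n$ meaningful on the (asymptotically negligible) unsatisfiable event $\{Z(\PHI_n)=0\}$. All the genuine difficulty---controlling the one-step change $\Erw[\log(Z(\PHI_{n+1})\vee 1)-\log(Z(\PHI_n)\vee 1)]$ via the Aizenman--Sims--Starr coupling together with the marginal information from Corollaries~\ref{Cor_BP} and~\ref{Cor_uniqueness}---is absorbed into Proposition~\ref{Prop_Bethe} itself.
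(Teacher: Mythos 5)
Your proposal is correct and follows essentially the same route as the paper: telescope $\Erw[\log(Z(\PHI_n)\vee 1)]$ into a sum of one-step increments, then apply Ces\`aro convergence using Proposition~\ref{Prop_Bethe}. The paper's own proof is just a more compressed version of this same argument.
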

\begin{proof}
Writing $\Erw[\log(Z(\PHI)\vee1)]$ as a telescoping sum and applying \Prop~\ref{Prop_Bethe}, we obtain
\begin{align*}
\lim_{n\to\infty}\frac{1}{n}\Erw[\log(Z(\PHI_n)\vee1)]&=
\lim_{n\to\infty}\frac1n\sum_{N=2}^{n-1}\Erw[\log(Z(\PHI_{N+1})\vee1)]-\Erw[\log(Z(\PHI_{N})\vee1)]\\
&=\Erw\brk{\log\bc{\prod_{i=1}^{\vd^-}\MU_{\pi_d,i}+\prod_{i=1}^{\vd^+}\MU_{\pi_d,i+\vd^-}}
-\frac d2\log\bc{1-\MU_{\pi_d,1}\MU_{\pi_d,2}}},
\end{align*}
as desired.
\end{proof}

\subsection{Step 4: concentration}
The final step towards \Thm~\ref{Thm_Z} is to show that $\log(Z(\PHI)\vee1)$ concentrates about its mean.

\begin{proposition}\label{Prop_conc}
We have $\lim_{n\to\infty}n^{-1}\Erw\abs{\log(Z(\PHI) \vee 1)-\Erw[\log(Z(\PHI)\vee 1)]}=0$.
\end{proposition}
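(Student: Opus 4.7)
My plan is to prove $L^1$-concentration of $\log(Z(\PHI)\vee 1)$ about its mean via a Doob-martingale argument on the clause-exposure filtration, combined with a truncation that absorbs the hard-constraint feature of $2$-SAT. Since Corollary~\ref{Cor_Bethe} already pins down the asymptotic mean, the $L^1$-concentration immediately upgrades $n^{-1}\log(Z(\PHI)\vee 1)$ to convergence in probability and thus completes the proof of Theorem~\ref{Thm_Z}.

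I would first condition on $\vm$ and consider the Doob martingale $M_i = \Erw[\log(Z(\PHI) \vee 1) \mid \vm, a_1, \ldots, a_{i \wedge \vm}]$. The trivial increment bound $|M_i - M_{i-1}| \leq n \log 2$---arising from the possibility that a single clause annihilates every satisfying assignment---is too crude, as the naive Azuma--Hoeffding estimate only yields deviations of order $n^{3/2}$. The key refinement is that on a high-probability good event $\cG_i$, the partial formula $\Phi_{i-1} = \{a_1,\dots,a_{i-1}\}$ already exhibits the correlation-decay structure captured by Corollaries~\ref{Cor_BP} and~\ref{Cor_uniqueness}: its local marginals are asymptotically independent and their empirical distribution is close to $\pi_d$. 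On $\cG_i$, adding a random clause $\va$ changes $\log(Z\vee 1)$ by $\log \mu_{\Phi_{i-1}}(\va\text{ is satisfied})$, which by asymptotic independence is distributionally close to $\log(1 - \MU_{\pi_d,1}\MU_{\pi_d,2})$, a quantity with finite first moment by~\eqref{eqBFE}.

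I would then apply a refinement of Azuma--Hoeffding permitting typical (rather than worst-case) bounded differences: combining the bounded typical increments on $\cG = \bigcap_i \cG_i$---after a mild further truncation at a threshold $B(n)\to\infty$ growing slowly, needed because $\log(1-\MU_{\pi_d,1}\MU_{\pi_d,2})$ is integrable yet unbounded---with the trivial $n \log 2$ bound off $\cG$, and using the quantitative correlation-decay estimates underlying Corollaries~\ref{Cor_BP} and~\ref{Cor_uniqueness} to control $\Pr[\cG^c]$. The $L^1$-error introduced by the truncation is absorbed using~\eqref{eqBFE}. Poisson concentration $|\vm - dn/2| = O(\sqrt{n})$ transfers the conditional statement to the unconditional one, since the conditional mean $\Erw[\log(Z\vee 1)\mid\vm]$ has slope $O(1)$ in $\vm$ by Proposition~\ref{Prop_Bethe}.

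The main obstacle is the hard-constraint nature of $2$-SAT: when $\pi_d$ has atoms at $0$ or $1$---which occurs generically, since short implication chains force some variables---a random added clause can kill all satisfying assignments with non-negligible probability, so a bare Efron--Stein variance bound fails to be $o(n^2)$ (indeed, the second moment of $\log(1-\MU_{\pi_d,1}\MU_{\pi_d,2})$ is infinite in this regime). Quantifying the $L^1$-contribution of such catastrophic clause additions tightly enough---using the Bethe integrability~\eqref{eqBFE} together with the joint asymptotic independence of Corollary~\ref{Cor_uniqueness}---is the central technical step.
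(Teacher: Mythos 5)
Your proposal takes a genuinely different route from the paper. The paper avoids any martingale argument on the hard $\log Z(\PHI)$ altogether: it first observes that $\log Z_\beta(\PHI)$ (the soft partition function) has $O(\beta)$ bounded differences, so Azuma applies cleanly (\Lem~\ref{Lemma_Azuma}); combined with the interpolation upper bound of Panchenko--Talagrand (\Lem~\ref{Lemma_PT}) and $Z_\beta \geq Z$, this gives a one-sided whp upper bound $n^{-1}\log Z(\PHI) \leq \fB_\beta(\pi_d)+o(1)$. Sending $\beta\to\infty$ by monotone convergence shows the bound matches $\fB_\infty(\pi_d)=\lim_n n^{-1}\Erw[\log(Z\vee1)]$ (\Cor~\ref{Cor_Bethe}). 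An $L^1$ deviation of $X=n^{-1}\log(Z\vee1)$ from $\fB_\infty(\pi_d)$ then decomposes as $\Erw|X-c| = \Erw[c-X]+2\Erw[(X-c)_+]$; the first term vanishes because the mean converges, and the second is small from the one-sided bound (plus the crude deterministic bound $X\le\log2$). No two-sided Azuma on the hard constraints is ever needed.

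Your plan is plausible in spirit, but the central step is exactly the one you flag and do not carry out: controlling the rare, catastrophic clause additions inside an Azuma/McDiarmid scheme. The Doob-martingale increments $|M_i-M_{i-1}|$ on the clause filtration can be $\Theta(n)$ with probability $\Omega(n^{-2})$ (closing a long bicycle) and there is a continuum of intermediate drop magnitudes; a ``typical bounded differences'' inequality requires uniform quantitative control of this whole spectrum, which in turn would need quantitative versions of \Cor~\ref{Cor_BP} and \Cor~\ref{Cor_uniqueness} at \emph{every} intermediate clause density $d'\leq d$, with uniform rates---something the paper only establishes qualitatively at density $d$. The coupling/moment machinery the paper builds (\Lem s~\ref{Lemma_wild},~\ref{Lemma_tame}) is designed for a single clause-addition step in the Aizenman--Sims--Starr coupling, not for a global filtration argument, and adapting it is not a small step. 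Separately, two factual claims you use as motivation are false: $\pi_d$ is by construction supported on the open interval $(0,1)$ and has no atoms at $0$ or $1$ (the finite-$n$ empirical measure $\pi_{\PHI}$ may have tiny boundary atoms from forced variables, but these vanish under weak convergence); and the second moment $\Erw[\log^2(1-\MU_{\pi_d,1}\MU_{\pi_d,2})]$ is actually finite as a consequence of~\eqref{eqProp_uniqueness2}, since $|\log(1-\MU_{\pi_d,1}\MU_{\pi_d,2})|\leq|\log(1-\MU_{\pi_d,1})|$ and $\log(1-\MU_{\pi_d})$ is square-integrable. Removing these claimed obstacles does not, however, rescue the argument---the genuine difficulty is the one about rare giant drops, and your proposal leaves it unresolved. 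The paper's interpolation-based one-sided bound sidesteps it entirely; if you want to make the martingale route rigorous you would need to carry out the typical-bounded-differences analysis in full, with quantitative uniform-in-$d'$ correlation decay, which is substantially more work than the paper's argument.
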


\Prop~\ref{Prop_conc} does not easily follow from routine arguments such as the Azuma-Hoeffding inequality.
Once more the issue is that changing a single clause could alter $\log(Z(\PHI)\vee 1)$ by as much as $\Theta(n)$.
Instead we will resort to another technique from mathematical physics called the interpolation method.
The details can be found in \Sec~\ref{Sec_Prop_conc}.

\begin{proof}[Proof of \Thm~\ref{Thm_Z}]
The theorem follows from \Prop~\ref{Prop_BP}, \Cor~\ref{Cor_Bethe} and \Prop~\ref{Prop_conc}.
\end{proof}

\section{Discussion}\label{Sec_related}

\noindent
The random $2$-SAT satisfiability threshold was established mathematically shortly after the experimental work of Cheeseman, Kanefsky and~Taylor \cite{Cheeseman} that triggered the quest for satisfiability thresholds appeared.
The second successful example, nearly a decade later, was the random $1$-in-$k$-SAT threshold (to satisfy exactly one literal in each clause), which Achlioptas,  Chtcherba, Istrate and Moore pinpointed by analysing the Unit Clause algorithm~\cite{ACIM}.
In a subsequent landmark contribution Dubois and Mandler determined the $3$-XORSAT threshold via the second moment method~\cite{DuboisMandler}.
Subsequent work extended this result to random $k$-XORSAT~\cite{Cuckoo,PittelSorkin}.
Finally, the most notable success thus far has been the verification of the `1RSB cavity method' prediction~\cite{MPZ} of the random $k$-SAT threshold for large $k$ due to Ding, Sly and Sun~\cite{DSS3}, the culmination of a line of work that refined the use of the second moment method~\cite{nae,yuval,KostaSAT}.

Over the past two decades the general theme of estimating the partition functions of discrete structures has received a great deal of attention; e.g., \cite{Barvinok}.
With respect to random $2$-SAT (and, more generally, $k$-SAT), Montanari and Shah~\cite{MS}, Panchenko~\cite{Panchenko2} and  Talagrand~\cite{Talagrand} investigated `soft' versions of the partition function.
To be precise, introducing a parameter $\beta>0$ called the `inverse  temperature', these articles study the random variable
\begin{align}\label{eqZbeta}
Z_\beta(\PHI)&=\sum_{\sigma\in\{\pm1\}^n}\prod_{i=1}^{\vm}\exp\bc{-\beta\vecone\cbc{\sigma\mbox{ violates clause }a_i}}.
\end{align}
Thus, instead of dismissing assignments that fail to satisfy all clauses outright, there is an $\exp(-\beta)$ penalty factor for each violated clause.
Talagrand~\cite{Talagrand} computes $\lim_{n\to\infty}n^{-1}\Erw[\log Z_\beta(\PHI)]$ for $\beta$ not exceeding a small but unspecified $\beta_0>0$.
Panchenko~\cite{Panchenko2} calculates this limit under the assumption $(4\beta\wedge1) d<1$.
Thus, for $\beta>1/4$ the result is confined to $d<1$, in which case the random graph $G(\PHI)$ is sub-critical and both $Z_\beta(\PHI)$ and the actual number $Z(\PHI)$ of satisfying assignments could be calculated via elementary methods.
Furthermore, Montanari and Shah~\cite{MS} obtain $\lim_{n\to\infty}n^{-1}\Erw[\log Z_\beta(\PHI)]$ for all finite $\beta$ under the assumption $d<1.16\dots\enspace$.
Although for any fixed formula $\PHI$ the limit $\lim_{\beta\to\infty}Z_\beta(\PHI)$ is equal to the number of satisfying assignments, it is not possible to interchange the limits $\beta\to\infty$ and $n\to\infty$.
Thus, \cite{MS,Panchenko2} do not yield the the number of actual  satisfying assignments even for $d<1.16\dots$ or $d<1$, respectively.
Apart from estimating $\Erw\log Z_\beta(\PHI)$, Montanari and Shah~\cite{MS} also show that the Belief Propagation message passing scheme approximates the marginals of the Boltzmann distribution that goes with $Z_\beta(\PHI)$ well, i.e., they obtain a  `soft' version of \Thm~\ref{Thm_BP} for $d<1.16\dots\enspace$.

In terms of proof techniques, all three contributions~\cite{MS,Panchenko2,Talagrand} are based on establishing the Gibbs uniqueness property.
So is the present paper.
But while \cite{MS,Panchenko2,Talagrand} rely on relatively straightforward contraction arguments,
 a key distinction is that here we develop a more accurate (and delicate) method for verifying the Gibbs uniqueness property based on the explicit construction of an extremal boundary condition.
This is the key to pushing the range of $d$ all the way up to the satisfiability threshold $d=2$.

Specifically, in order to construct a boundary condition of the random tree $\vT^{(2\ell)}$ for large $\ell$ that maximises the conditional probability of observing the truth value $+1$ at the root we will work our way top--down from the root to level $2\ell$.
Exposing the degrees and the signs with which the variables appear, the construction assigns a `desired' truth value to each variable of the tree so as to nudge the parent variable towards its desired value as much as possible.
Subsequently, once this process reaches the bottom level of the tree, we go into reverse gear and study the Belief Propagation messages bottom--up to calculate the conditional marginal of the root.
Clearly, analysing this upwards process seems like a tall order because the tree was already exposed during the top-down phase, a challenge that is exacerbated by the presence of hard constraints.
Fortunately, in \Sec~\ref{Sec_Prop_uniqueness} we will see how this problem can be transformed into the study of another stochastic fixed point equation that captures the effect of the children's `nudging' their parents.
This fixed point problem is amenable to the contraction method.
A spatial mixing analysis from an extremal boundary condition was previously conducted in by Dembo and Montanari~\cite{DM} for the Ising model on random graphs.
But of course a crucial difference is that in the Ising model the extremal boundary conditions are constant (all-$+1$ and all-$-1$, respectively).

A second novelty of the present work is that we directly deal with the `hard' $2$-SAT problem.
 Montanari and Shah~\cite{MS} interpolate on the `inverse temperature' parameter $\beta>0$, effectively working their way from smaller to larger $\beta$.
Because the limits $\beta\to\infty$ and $n\to\infty$ do not commute, this approach does not seem applicable to problems with hard constraints.
Furthermore, while Panchenko~\cite{Panchenko,Panchenko2} applies the Aizenman-Sims-Starr scheme to the soft constraint version, the hard problem of counting actual satisfying assignments requires a far more careful analysis.
Indeed, adding one clause can shift $\log Z_\beta(\PHI)$ merely by $\pm\beta$.
By contrast, a single additional clause could very well reduce the logarithm $\log Z(\PHI)$ of the number of satisfying assignments by as much as $\Omega(n)$, or even render the formula unsatisfiable.
A few prior applications of the Aizenman-Sims-Starr scheme to problems with hard constraints exist~\cite{Ayre,CKM,CKPZ}, but these hinge on peculiar symmetry properties that enable an indirect approach via a `planted' version of the problem in question.
The required symmetries for this approach are absent in several important problems, with random satisfiability the most prominent example.
Thus, a significant technical contribution of the present work is that we show how to apply the Aizenman-Sims-Starr scheme directly to problems with hard constraints.
Among other things, this requires a careful quantification of the probabilities of rare, potentially cataclysmic events in comparison to their impact on $\log Z(\PHI)$.
That said, we should point out that~\cite{MS,Panchenko2,Talagrand} actually also deal with the (soft) $k$-SAT partition function for $k>2$ for certain regimes of clause/variable densities, while the technique that we develop here does not seem to  extend beyond binary problems.

A mathematical physics technique called the interpolation method, first proposed by Guerra for the study of the Sherrington-Kirkpatrick model~\cite{Guerra}, can be applied to the random $k$-SAT problem~\cite{FranzLeone,PanchenkoTalagrand} to bound the number of satisfying assignments from above.
For $k=2$ the interpolation method yields the upper bound
\begin{align}\label{eqInterpolation}
\frac{1}{n}\log Z(\PHI)&\leq\inf_{\pi\in\cP(0,1)}
\Erw\brk{\log\bc{\prod_{i=1}^{\vd^-}\MU_{\pi,i}+\prod_{i=1}^{\vd^+}\MU_{\pi,i+\vd^-}}
-\frac d2\log\bc{1-\MU_{\pi,1}\MU_{\pi,2}}}+o(1)&&\mbox\whp,
\end{align}
for all $0<d<2$; we will revisit this bound in \Sec~\ref{Sec_Prop_conc}.
Since the expression on the r.h.s.\ coincides with \eqref{eqThm_Z} for $\pi=\pi_d$, the main contribution of \Thm~\ref{Thm_Z} is the matching lower bound on $\log Z(\PHI)$.
Furthermore, Abbe and Montanari~\cite{AM} used the interpolation method to establish the {\em existence} of a  function $\phi$ such that 
\begin{align}\label{eqAM}
\lim_{n\to\infty}n^{-1}\log(Z(\PHI)\vee 1)&=\phi(d)&&\mbox{in probability}
\end{align}
for all but a countable number of $d\in(0,2)$.
\Thm~\ref{Thm_Z} actually determines $\phi(d)$ and shows that convergence holds for {\em all} $d\in(0,2)$.
Clearly, \eqref{eqAM} implies the concentration bound from \Prop~\ref{Prop_conc} for all $d$ outside the countable set.
But of course we need concentration for all $d$, and
in \Sec~\ref{Sec_Prop_conc} we will use the upper bound \eqref{eqInterpolation} to prove this concentration result.
As an aside, a conditional concentration inequality for $\log Z(\PHI)$, quoted in~\cite{Lalo}, was obtained by Sharell~\cite{Sharell} (unpublished).
But the necessary conditions  appear to be difficult to check.

In addition, several prior contributions deal with the combinatorial problem of counting solutions to random CSPs.
For problems such as $k$-NAESAT, $k$-XORSAT or graph colouring where the first moment provides the correct answer due to inherent symmetry properties, the second moment method and small subgraph conditioning yield very precise information as to the number of solutions~\cite{CKM,COW,Feli2}.
Verifying that the number of solutions is determined by the physicists' 1RSB formula~\cite{MM}, the contribution of Sly, Sun and Zhang~\cite{SSZ} on the random regular $k$-NAESAT problem near its satisfiability threshold~\cite{DSS1} deals with an even more intricate scenario.

Finally, returning to random $2$-SAT, as an intriguing question for future work determining the precise limiting distribution of $\log Z(\PHI)$ stands out.
This random variable has standard deviation $\Omega(\sqrt n)$ for all $0<d<2$ even once we condition on $\vm$, as is easily seen by re-randomising the signs of the literals in small components.
In effect, $\log Z(\PHI)$ is far less concentrated than the partition functions of symmetric random constraint satisfaction problems~\cite{CKM}.
May $n^{-1/2}(\log Z(\PHI)-\Erw[\log Z(\PHI)])$ be asymptotically normal?

\section{Proof of \Prop~\ref{Prop_BP}}\label{Sec_Prop_BP}

\noindent
We prove \Prop~\ref{Prop_BP} by means of a contraction argument.
The starting point is the following observation.
For $\ell\geq 0$ let $\pi_d^{(\ell)}=\DE_d^\ell(\delta_{1/2})$ be the probability measure obtained after $\ell$  iterations of the operator $\DE_d(\nix)$.

\begin{fact}\label{Lemma_sym}
For all $\ell\geq0$ the random variables $\MU_{\pi_d^{(\ell)}}$ and $1-\MU_{\pi_d^{(\ell)}}$ are identically distributed.
\end{fact}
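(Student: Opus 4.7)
The plan is a short direct symmetry argument based on the exchangeability of $\vd^+$ and $\vd^-$ in the definition of the operator $\DE_d(\nix)$. For the base case $\ell=0$ we have $\pi_d^{(0)}=\delta_{1/2}$, so $\MU_{\pi_d^{(0)}}=1/2$ almost surely and the assertion is trivial. For $\ell\geq 1$, I would establish the stronger claim that for \emph{any} $\pi\in\cP(0,1)$ the random variables $\MU_{\DE_d(\pi)}$ and $1-\MU_{\DE_d(\pi)}$ are identically distributed; specialising this to $\pi=\pi_d^{(\ell-1)}$ then delivers the fact.

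To prove the stronger claim, set $A=\prod_{i=1}^{\vd^-}\MU_{\pi,i}$ and $B=\prod_{i=1}^{\vd^+}\MU_{\pi,i+\vd^-}$, so that by~\eqref{eqdensityEv} we have $\MU_{\DE_d(\pi)}\stacksign{d}= A/(A+B)$ and therefore $1-\MU_{\DE_d(\pi)}\stacksign{d}= B/(A+B)$. Since $\vd^+$ and $\vd^-$ are i.i.d.\ $\Po(d/2)$ random variables, the $\MU_{\pi,j}$ are i.i.d.\ with distribution $\pi$, and these two families are mutually independent, swapping the labels $\vd^+\leftrightarrow\vd^-$ and relabelling the i.i.d.\ $\MU$'s accordingly shows that $(A,B)\stacksign{d}= (B,A)$. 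Consequently $A/(A+B)\stacksign{d}= B/(A+B)$, which is precisely the claimed symmetry.

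There is no genuine obstacle here: the symmetry between positive and negative literals in random $2$-SAT, encoded by the identity $\vd^+\stacksign{d}= \vd^-$, is already hard-wired into the operator $\DE_d(\nix)$. In particular the induction on $\ell$ is essentially a formality, and the inductive hypothesis on $\pi_d^{(\ell-1)}$ need not be invoked in the step; only the base case uses the fact that the initial distribution $\delta_{1/2}$ is itself symmetric.
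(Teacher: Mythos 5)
Your proposal is correct and follows the same route as the paper's proof: both arguments rest on the exchangeability of $\vd^+$ and $\vd^-$ (together with the i.i.d.\ structure of the $\MU_{\pi,i}$), which gives $(A,B)\disteq(B,A)$ and hence $A/(A+B)\disteq B/(A+B)$. Your formulation of the claim for arbitrary $\pi\in\cP(0,1)$ is a harmless generalisation that the paper's one-line argument implicitly contains as well, since it never uses any property of $\pi_d^{(\ell-1)}$ beyond it being a probability measure on $(0,1)$.
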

\begin{proof}
 This is because $\vd^-,\vd^+$ and hence the random variables 
\begin{align*}
\bc{\prod_{i=1}^{\vd^-}\MU_{\pi^{(\ell-1)}_d,i}, 
\prod_{i=1}^{\vd^-}\MU_{\pi_d^{(\ell-1)},i}+\prod_{i=1}^{\vd^+}\MU_{\pi_d^{(\ell-1)},i+\vd^-}} \quad \text{and} \quad \bc{\prod_{i=1}^{\vd^+}\MU_{\pi_d^{(\ell-1)},i+\vd^-}, 
\prod_{i=1}^{\vd^-}\MU_{\pi_d^{(\ell-1)},i}+\prod_{i=1}^{\vd^+}\MU_{\pi_d^{(\ell-1)},i+\vd^-}}
\end{align*}
from \eqref{eqdensityEv} are identically distributed.
\end{proof}

Due to Fact~\ref{Lemma_sym} we can rewrite the construction of the sequence $\pi_d^{(\ell)}$ in terms of another operator that is easier to analyse.
This operator describes the expression \eqref{eqdensityEv} in terms of log-likelihood ratios, a simple reformulation that proved useful in the context of Belief Propagation for random satisfiability before~\cite{Allerton}.
Thus,  we define an operator $\LDE_d:\cP(\RR)\to\cP(\RR)$, $\rho\mapsto\hat\rho$ by letting $\hat\rho$ be the distribution of the random variable
\begin{align}\label{Eq_BTreeOperator}
\sum_{i=1}^{\vd}
		\vs_i\log\frac{1+\vs_i'\tanh(\ETA_{\rho,i}/2)}2. 
\end{align}
Further, let $\rho_d^{(\ell)}=\LDE_d^\ell(\delta_0)\in\cP(\RR)$ be the result of $\ell$ iterations of  $\LDE_d$ launched from the atom at zero.
We recall the functions $\psi,\varphi$ from \eqref{eqll}. 
For a measure $\rho \in \cP(\RR)$ and a measurable $f: \RR \to \RR$ let $f(\rho)$ denote the pushforward measure of $\rho$ that assigns mass $\rho(f^{-1}(A))$ to Borel sets $A \subset \RR$.

\begin{lemma}\label{Lemma_loglikelihood}
For all $\ell\geq0$ we have $\pi_d^{(\ell)}=\psi(\rho_d^{(\ell)})$.
\end{lemma}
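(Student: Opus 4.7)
The plan is to prove Lemma~\ref{Lemma_loglikelihood} by induction on $\ell$, translating the operator $\BP_d$ into the log-likelihood operator $\LDE_d$ via the homeomorphism $\psi$ and its inverse $\varphi$. The base case $\ell=0$ is immediate: since $\psi(0)=(1+\tanh 0)/2=1/2$, the pushforward $\psi(\delta_0)$ is $\delta_{1/2}=\pi_d^{(0)}$.

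For the inductive step, suppose $\pi_d^{(\ell-1)}=\psi(\rho_d^{(\ell-1)})$; equivalently, $\MU_{\pi_d^{(\ell-1)}}\disteq\psi(\ETA_{\rho_d^{(\ell-1)}})$. Combining this with Fact~\ref{Lemma_sym} and the identity $\varphi(1-p)=-\varphi(p)$, we conclude that $\rho_d^{(\ell-1)}$ is symmetric about $0$. I then plug $\MU_i=\psi(\ETA_i)$ into the definition \eqref{eqdensityEv} of $\DE_d$ and apply $\varphi$. Since $\varphi$ reduces the ratio to a difference of logarithms, this yields
\begin{align*}
\varphi(\MU_{\pi_d^{(\ell)}})\disteq \sum_{i=1}^{\vd^-}\log\psi(\ETA_{\rho_d^{(\ell-1)},i})-\sum_{j=1}^{\vd^+}\log\psi(\ETA_{\rho_d^{(\ell-1)},j+\vd^-}).
\end{align*}

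The next step uses Poisson thinning to collapse the two sums into one. Because $\vd^\pm$ are independent $\Po(d/2)$, their sum is $\vd\sim\Po(d)$, and conditional on $\vd$ the two groups of indices arise from independent fair coin flips. Hence the above expression equals in distribution $\sum_{i=1}^{\vd}\vs_i\log\psi(\ETA_{\rho_d^{(\ell-1)},i})$ with iid signs $\vs_i$. Invoking the symmetry $\ETA_{\rho_d^{(\ell-1)},i}\disteq\vs_i'\ETA_{\rho_d^{(\ell-1)},i}$ established above, and using that $\tanh$ is odd, we get $\log\psi(\ETA_i)\disteq\log\frac{1+\vs_i'\tanh(\ETA_i/2)}{2}$. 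Substituting this back produces exactly the formula \eqref{Eq_BTreeOperator} defining $\LDE_d(\rho_d^{(\ell-1)})=\rho_d^{(\ell)}$. Thus $\varphi(\pi_d^{(\ell)})=\rho_d^{(\ell)}$, whence $\pi_d^{(\ell)}=\psi(\rho_d^{(\ell)})$.

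The only non-routine step is the double symmetrisation: one needs the inductive hypothesis together with Fact~\ref{Lemma_sym} to replace each $\ETA_i$ inside the logarithm by $\vs_i'\ETA_i$ without changing the distribution, which is precisely what turns the naive expression $\log\psi(\ETA_i)$ into the apparently richer integrand of $\LDE_d$. Everything else reduces to manipulating Poisson random variables and the pair of mutually inverse maps $\psi,\varphi$.
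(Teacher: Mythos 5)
Your proof is correct and follows essentially the same route as the paper: unpack $\DE_d$ via $\varphi$, collapse $\vd^\pm$ into $\vd$ with random signs by Poisson thinning, then use symmetry of $\rho_d^{(\ell-1)}$ to introduce the second family of signs $\vs_i'$ and recover \eqref{Eq_BTreeOperator}. The only cosmetic difference is that you deduce the symmetry of $\rho_d^{(\ell-1)}$ from Fact~\ref{Lemma_sym} together with the inductive hypothesis, whereas the paper reads it off directly from the form of \eqref{Eq_BTreeOperator}; both justifications are valid and equally short.
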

\begin{proof}
Since $\psi(\delta_0)=\delta_{1/2}$, the assertion is true for $\ell=0$.
Proceeding by induction, we obtain
\begin{align}\nonumber
\MU_{\pi_d^{(\ell+1)}}&\disteq
\frac{\prod_{i=1}^{\vd^+}\MU_{\pi_d^{(\ell)},i}}
{\prod_{i=1}^{\vd^-}\MU_{\pi_d^{(\ell)},i}+\prod_{i=1}^{\vd^+}\MU_{\pi_d^{(\ell)},i+\vd^-}}
=\psi\bc{\log\frac{\prod_{i=1}^{\vd^-}\MU_{\pi_d^{(\ell)},i}}{\prod_{i=1}^{\vd^+}\MU_{\pi_d^{(\ell)},i+\vd^-}}}\\
&\label{eqLemma_loglikelihood1}
=\psi\bc{\sum_{i=1}^{\vd^-}\log\bc{\MU_{\pi_d^{(\ell)},i}}-
	\sum_{i=1}^{\vd^+}\log\bc{\MU_{\pi_d^{(\ell)},i+\vd^-}}}
\disteq
\psi\bc{\sum_{i=1}^{\vd}\vs_i\log\MU_{\pi_d^{(\ell)},i}}
\disteq
\psi\bc{\sum_{i=1}^{\vd}\vs_i\log\bc{\psi(\ETA_{\rho_d^{(\ell)},i})}}.
\end{align}
Moreover, since $\vs_i\in\{\pm1\}$ is random, it is immediate from (\ref{Eq_BTreeOperator}) that 
$\ETA_{\rho_d^{(\ell)},i}\disteq-\ETA_{\rho_d^{(\ell)},i}$.
Consequently, \eqref{eqLemma_loglikelihood1} yields
\begin{align*}%\label{eqLemma_loglikelihood2}
\MU_{\pi_d^{(\ell+1)}}&\disteq\psi\bc{\sum_{i=1}^{\vd}\vs_i\log
	\bc{\psi(\vs_i'\ETA_{\rho_d^{(\ell)},i})}}\disteq\psi(\ETA_{\rho_d^{(\ell+1)}}),
\end{align*}
which completes the induction.
\end{proof}

Due to the continuous mapping theorem, to establish convergence of $(\pi^{(\ell)}_d)_{\ell\geq 0}$ it suffices to show that $(\rho_d^{(\ell)})_{\ell\geq 0}$ converges weakly.
To this end, we will prove that the operator $\LDE_d(\nix)$ is a contraction.

\begin{lemma}\label{Lemma_BP1}
If $d<2$, then $\LDE_d$ is a contraction on the space $\cW_2(\RR)$.
\end{lemma}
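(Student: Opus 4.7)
The plan is to exhibit an explicit coupling achieving the Wasserstein distance and to analyse the resulting second moment via a parallelogram--style identity that is specific to the Belief Propagation update.

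To be precise, given $\rho,\rho'\in\cW_2(\RR)$, pick an optimal $W_2$--coupling $\gamma^\star$ of $(\ETA_\rho,\ETA_{\rho'})$, and for each $i\geq 1$ let $(\ETA_{\rho,i},\ETA_{\rho',i})$ be i.i.d.\ samples from $\gamma^\star$, while sharing the Poisson variable $\vd$ and the signs $\vs_i,\vs_i'$ across the two sums. Writing
\begin{align*}
g_s(z)=\log\tfrac{1+s\tanh(z/2)}{2}\qquad(s\in\{\pm1\}),
\end{align*}
the coupled difference is $\sum_{i=1}^{\vd}Y_i$, where $Y_i=\vs_i\bigl[g_{\vs_i'}(\ETA_{\rho,i})-g_{\vs_i'}(\ETA_{\rho',i})\bigr]$. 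Because $\vs_i\in\{\pm1\}$ is a Rademacher sign independent of $(\vs_i',\ETA_{\rho,i},\ETA_{\rho',i})$, we have $\Erw[Y_i]=0$ and the $Y_i$ are conditionally independent given $\vd$. Hence, by Wald's identity for variances,
\begin{align*}
W_2\bigl(\LDE_d(\rho),\LDE_d(\rho')\bigr)^{2}\leq\Erw\bigl[(\textstyle\sum_{i=1}^{\vd}Y_i)^2\bigr]=\Erw[\vd]\cdot\Erw[Y_1^2]=d\cdot\Erw[Y_1^2].
\end{align*}

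The heart of the argument is the sharp pointwise bound
\begin{align*}
\tfrac12\bigl(g_{+1}(a)-g_{+1}(b)\bigr)^{2}+\tfrac12\bigl(g_{-1}(a)-g_{-1}(b)\bigr)^{2}\leq\tfrac12(a-b)^2\qquad(a,b\in\RR),
\end{align*}
which I intend to establish using two elementary identities:
\begin{align*}
g_{+1}(z)-g_{-1}(z)&=z,&
g_{+1}(z)+g_{-1}(z)&=-2\log\bigl(2\cosh(z/2)\bigr).
\end{align*}
Indeed, setting $h(z)=z$ and $f(z)=g_{+1}(z)+g_{-1}(z)$, the parallelogram identity $(x+y)^2+(x-y)^2=2(x^2+y^2)$ applied to $x=\tfrac12(f(a)-f(b))$ and $y=\tfrac12(h(a)-h(b))$ gives
\begin{align*}
(g_{+1}(a)-g_{+1}(b))^{2}+(g_{-1}(a)-g_{-1}(b))^{2}=\tfrac12\bigl[(f(a)-f(b))^{2}+(a-b)^{2}\bigr].
\end{align*}
Since $|f'(z)|=|\tanh(z/2)|<1$, the map $f$ is $1$--Lipschitz and so $(f(a)-f(b))^2\leq(a-b)^2$, yielding the displayed pointwise bound. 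Averaging over the Rademacher variable $\vs_1'$ and then over the coupling of $(\ETA_{\rho,1},\ETA_{\rho',1})$ produces
\begin{align*}
\Erw[Y_1^2]\leq\tfrac12\,\Erw\bigl[(\ETA_{\rho,1}-\ETA_{\rho',1})^2\bigr]=\tfrac12\,W_2(\rho,\rho')^2,
\end{align*}
so that $W_2(\LDE_d(\rho),\LDE_d(\rho'))\leq\sqrt{d/2}\,W_2(\rho,\rho')$, which is a strict contraction precisely when $d<2$.

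The one bookkeeping point to verify is that $\LDE_d$ actually maps $\cW_2(\RR)$ into itself, so that the contraction estimate above is meaningful. This follows from the same decomposition: taking $\rho'=\delta_0$ (which lies in $\cW_2(\RR)$) and using $g_{\pm1}(0)=-\log 2$, we obtain $\int z^2\dd\LDE_d(\rho)(z)\leq C(1+\int z^2\dd\rho(z))$ for a finite constant $C=C(d)$, via the same conditional second--moment computation. The only conceptual obstacle is the apparently naive Lipschitz bound $|g_s'(z)|\leq 1$, which would yield contraction only for $d<1$; the sharpening from $1$ to $1/2$ hinges on exploiting the cancellation between the $s=+1$ and $s=-1$ branches, captured by the identity $g_{+1}-g_{-1}=\mathrm{id}$, and this is what opens up the full satisfiable regime.
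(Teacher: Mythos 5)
Your argument is correct and shares the paper's scaffolding: both take a $W_2$-optimal coupling $(\ETA_{\rho,i},\ETA_{\rho',i})$ shared with the same $\vd,\vs_i,\vs_i'$, use the Rademacher signs $\vs_i$ to annihilate the cross terms in the squared sum, and thereby reduce the claim to the pointwise second-moment bound $\Erw[Y_1^2]\leq\tfrac12\Erw\bigl[(\ETA_{\rho,1}-\ETA_{\rho',1})^2\bigr]$. Where you diverge from the paper is in the proof of this last estimate. The paper proceeds analytically: each log-difference is written as an integral of the derivative, Cauchy--Schwarz is applied to the integral, and the computation $\bigl(\tfrac{1-\tanh(z/2)}{2}\bigr)^2+\bigl(\tfrac{1+\tanh(z/2)}{2}\bigr)^2=\tfrac{1+\tanh^2(z/2)}{2}\leq 1$ finishes. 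You argue algebraically, via the exact identities $g_{+1}-g_{-1}=\mathrm{id}$ and $g_{+1}+g_{-1}=-2\log\bigl(2\cosh(\cdot/2)\bigr)$, the latter being $1$-Lipschitz, so the parallelogram identity delivers the bound in closed form. These are in fact the same cancellation viewed at two different levels --- the paper's integrand bound is precisely the parallelogram identity applied to the derivatives $g_{\pm1}'$ using $g_{+1}'-g_{-1}'\equiv1$ --- but your version is self-contained, avoids the Cauchy--Schwarz step, and makes visible the algebraic structure that upgrades the naive Lipschitz constant $1$ (which would only yield $d<1$) to $1/\sqrt2$. Your remark that $\LDE_d$ maps $\cW_2(\RR)$ into itself is also correct and agrees with the paper's reasoning (a uniform $1$-Lipschitz bound on $g_s$).
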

\begin{proof}
The operator $\LDE_d$ maps the space  $\cW_2(\RR)$ into itself because the derivative of $x\mapsto\log((1+\tanh(x/2))/2)$ is bounded by one in absolute value for all $x\in\RR$.
To show contraction let $\rho,\rho'\in\cW_2(\RR)$ and consider a sequence of independent random pairs $(\ETA_i,\ETA_i')_{i\geq1}$ such that the $\ETA_i$ have distribution $\rho$ and the $\ETA_i'$ have distribution $\rho'$.
Because the signs $\vs_i$ are uniform and independent, we obtain
\begin{align}\nonumber
W_2(\LDE(\rho),\LDE(\rho'))^2&\leq
\Erw\brk{\bc{\sum_{i=1}^{\vd}\vs_i\log\frac{1+\vs_i'\tanh(\ETA_i/2)}{1+\vs_i'\tanh(\ETA_i'/2)}}^2}
=\Erw\brk{\sum_{h,i=1}^{\vd}\vs_h\vs_i\log\frac{1+\vs_h'\tanh(\ETA_h/2)}{1+\vs_h'\tanh(\ETA_h'/2)}\log\frac{1+\vs_i'\tanh(\ETA_i/2)}{1+\vs_i'\tanh(\ETA_i'/2)}}\\
&=\Erw\brk{\sum_{i=1}^{\vd}\log^2\frac{1+\vs_i'\tanh(\ETA_i/2)}{1+\vs_i'\tanh(\ETA_i'/2)}}
=d\Erw\brk{\log^2\frac{1+\vs_1\tanh(\ETA_1/2)}{1+\vs_1\tanh(\ETA_1'/2)}}.
\label{eqLemma_BP1_1}
\end{align}
Further,
\begin{align}
\log^2\frac{1+\tanh(\ETA_1/2)}{1+\tanh(\ETA_1'/2)}&=
\brk{\int_{\ETA_1'}^{\ETA_1}\frac{\partial\log(1+\tanh(z/2))}{\partial z}\dd z}^2
=\brk{\int_{\ETA_1\wedge \ETA_1'}^{\ETA_1\vee\ETA_1'}\frac{1-\tanh(z/2)}{2}\dd z}^2,\label{eqLemma_BP1_2}\\
\log^2\frac{1-\tanh(\ETA_1/2)}{1-\tanh(\ETA_1'/2)}&=
\brk{\int_{\ETA_1'}^{\ETA_1}\frac{\partial\log(1-\tanh(z/2))}{\partial z}\dd z}^2
=\brk{\int_{\ETA_1\wedge \ETA_1'}^{\ETA_1\vee\ETA_1'}\frac{1+\tanh(z/2)}{2}\dd z}^2.\label{eqLemma_BP1_3}
\end{align}
Combining \eqref{eqLemma_BP1_2}--\eqref{eqLemma_BP1_3} and applying the Cauchy-Schwarz inequality, we obtain
\begin{align}\nonumber
\Erw\brk{\log^2\frac{1+\vs_1\tanh(\ETA_1/2)}{1+\vs_1\tanh(\ETA_1'/2)}}&=
\frac12\Erw\brk{\brk{\int_{\ETA_1\wedge \ETA_1'}^{\ETA_1\vee\ETA_1'}\frac{1-\tanh(z/2)}{2}\dd z}^2+
\brk{\int_{\ETA_1\wedge \ETA_1'}^{\ETA_1\vee\ETA_1'}\frac{1+\tanh(z/2)}{2}\dd z}^2}\\
&\leq\frac12\Erw\brk{\abs{\ETA_1-\ETA_1'}\int_{\ETA_1\wedge \ETA_1'}^{\ETA_1\vee\ETA_1'}
\bcfr{1-\tanh(z/2)}{2}^2+\bcfr{1+\tanh(z/2)}{2}^2\dd z}\leq\frac12\Erw\brk{\bc{\ETA_1-\ETA_1'}^2}.
\label{eqLemma_BP1_4}
\end{align}
Finally, \eqref{eqLemma_BP1_1} and \eqref{eqLemma_BP1_4} yield
$W_2(\LDE(\rho),\LDE(\rho'))^2\leq d\Erw[(\ETA_1-\ETA_1')^2]/2$,
which implies contraction because $d<2$.
\end{proof}

\begin{proof}[Proof of \Prop~\ref{Prop_BP}]
Together with the Banach fixed point theorem \Lem~\ref{Lemma_BP1} ensures that the $W_2$-limit
$\rho_d=\lim_{\ell\to\infty}\LDE_d^\ell(\delta_0)$
exists.
Therefore, \Lem~\ref{Lemma_loglikelihood} implies that the sequence $(\pi_d^{(\ell)})_{\ell\geq0}$ converges weakly.
In addition, since $\rho_d\in\cW_2(\RR)$, \Lem~\ref{Lemma_loglikelihood} also implies the bound \eqref{eqProp_uniqueness2}.
Finally, to prove~\eqref{eqBFE} we apply \eqref{eqProp_uniqueness2} to obtain
\begin{align*}
\Erw\abs{\log\bc{\prod_{i=1}^{\vd^-}\MU_{\pi_d,i}+\prod_{i=1}^{\vd^+}\MU_{\pi_d,i+\vd^-}}}
	&\leq\log(2)	-\Erw\log{\prod_{i=1}^{\vd^-}\MU_{\pi_d,i}}\leq\log(2)-\frac d2\Erw\log\MU_{\pi_d,1}\leq
		2\log(2)+d\Erw\abs{\log\frac{\MU_{\pi_d}}{1-\MU_{\pi_d}}}<\infty,\\
\Erw\abs{\log(1-\MU_{\pi_d,1}\MU_{\pi_d,2})}&\leq
\Erw\abs{\log(1-\MU_{\pi_d})}\leq
\Erw\abs{\log\frac{\MU_{\pi_d}}{1-\MU_{\pi_d}}}+\log2<\infty,
\end{align*}
thereby completing the proof.
\end{proof}

\section{Proof of \Prop~\ref{Prop_uniqueness}}\label{Sec_Prop_uniqueness}

\subsection{Outline}
The goal is to prove that the marginal of the root variable $o$ of $\vT^{(2\ell)}$ remains asymptotically invariant even upon imposition of an arbitrary (feasible) boundary condition on the variables at distance $2\ell$ from the root $o$.
A priori, a proof of this statement seems challenging because of the very large number of possible boundary conditions.
Indeed, we expect about $d^\ell$ variables at distance $2\ell$. 
But a crucial feature of the $2$-SAT problem is that we can construct a pair of extremal boundary conditions.
One of these maximises the probability that the root is set to one.
The other one minimises that probability.
As a consequence, instead of inspecting all possible boundary conditions, it suffices to show that the marginals on the root $o$ that these two extremal boundary induce asymptotically coincide with the unconditional marginals.
Of course, due to symmetry it actually suffices to consider the `positive' extremal boundary condition that maximally nudges the root towards $+1$.

\begin{figure}
  \centering
    \begin{tikzpicture}[every node/.style = {draw, align=center, minimum width=.6cm, minimum height=.6cm}]
  \node[shape=circle, fill=gray!30] (root) at (0,0) {$+1$};
  \node[shape=rectangle] (cl11) at (-1,-.75) {$ $};
  \node[shape=rectangle] (cl12) at (1,-.75) {$ $};
  \path [-, blue!70!black] (root) edge node[draw=none, pos=.1, left] {$+$} (cl11);
  \path [-, red!70!black] (root) edge node[draw=none, pos=.2, right] {$-$} (cl12);
  
  \node[shape=circle, fill=blue!10] (v11) at (-2,-1.5) {$+1$};
  \node[shape=circle, fill=blue!10] (v12) at (2,-1.5) {$+1$};
  \path [-, red!70!black] (v11) edge node[draw=none, pos=.3, above] {$-$} (cl11);
  \path [-, blue!70!black] (v12) edge node[draw=none, pos=.3, above] {$+$} (cl12);
  
  \node[shape=rectangle] (cl21) at (-2.5,-3) {$ $};
  \node[shape=rectangle] (cl22) at (-1.5,-3) {$ $};
  \path [-, blue!70!black] (v11) edge node[draw=none, pos=.5, left] {$+$} (cl21);
  \path [-, red!70!black] (v11) edge node[draw=none, pos=.5, right] {$-$} (cl22);
  
  \node[shape=rectangle] (cl23) at (1,-3) {$ $};
  \node[shape=rectangle] (cl24) at (2,-3) {$ $};
  \node[shape=rectangle] (cl25) at (3,-3) {$ $};
  \path [-, blue!70!black] (v12) edge node[draw=none, pos=.5, left] {$+$} (cl23);
  \path [-, red!70!black] (v12) edge node[draw=none, pos=.5, left] {$-$} (cl24);
  \path [-, red!70!black] (v12) edge node[draw=none, pos=.5, right] {$-$} (cl25);
  
  \node[shape=circle, fill=orange!30] (v21) at (-2.5,-4.5) {$-1$};
  \node[shape=circle, fill=orange!30] (v22) at (-1.5,-4.5) {$-1$};
  \path [-, red!70!black] (v21) edge node[draw=none, pos=.5, left] {$+$} (cl21);
  \path [-, blue!70!black] (v22) edge node[draw=none, pos=.5, right] {$-$} (cl22);
  
  \node[shape=circle, fill=orange!30] (v23) at (1,-4.5) {$-1$};
  \node[shape=circle, fill=orange!30] (v24) at (2,-4.5) {$+1$};
  \node[shape=circle, fill=orange!30] (v25) at (3,-4.5) {$-1$};
  \path [-, red!70!black] (v23) edge node[draw=none, pos=.5, left] {$+$} (cl23);
  \path [-, blue!70!black] (v24) edge node[draw=none, pos=.5, left] {$+$} (cl24);
  \path [-, blue!70!black] (v25) edge node[draw=none, pos=.5, right] {$-$} (cl25);
  
\end{tikzpicture} 
\caption{The graph $G(\PHI)$ together with extremal boundary condition $\SIGMA^+$. Variables are indicated by circles and clauses by squares. The labels on the edges illustrate the sign with which  variables appears in the clauses. To obtain the extremal boundary condition $\SIGMA^+$ we proceed top-down. The truth values of the children are chosen so as to nudge the parent variables in the direction provided by $\SIGMA^+$.}\label{Fig_Max}
\end{figure}
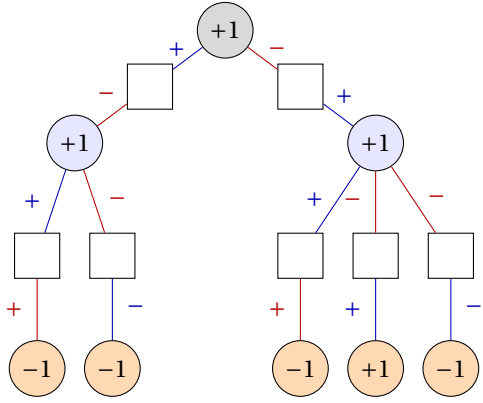

To construct this extremal boundary condition we define a satisfying assignment $\SIGMA^{+}$ by working our way down the tree $\vT^{(2\ell)}$.
We begin by defining $\SIGMA^{+}_o=1$.
Further, suppose for $\ell\geq1$ the values of the variables at distance $2(\ell-1)$ from $o$ have been defined already.
Consider a variable $v\in\partial^{2\ell}o$, its parent clause $a$ and the parent variable $u$ of $a$.
Our aim is to choose $\SIGMA_v^{+}$ so as to `nudge' $u$ towards $\SIGMA_u^{+}$ as much as possible.
To this end we set $\SIGMA^{+}_v$ so as to not satisfy $a$ if setting $u$ to $\SIGMA_u^{+}$ satisfies $a$.
Otherwise we pick the value that satisfies $a$; see Figure~\ref{Fig_Max}.
In formulas,
\begin{align*}
\SIGMA^{+}_v&=\sign(a,v)\vecone\{\sign(a,u)\neq\SIGMA^{+}_u\}-\sign(a,v)\vecone\{\sign(a,u)=\SIGMA^{+}_u\}.
\end{align*}
The following lemma verifies that $\SIGMA^{+}$ is extremal, i.e., that imposing the values provided by $\SIGMA^{+}$ on the boundary variables $\partial^{2\ell}o$ maximises the probability of the truth value $1$ at the root $o$.
The proof can be found in \Sec~\ref{Sec_Lemma_extremal}.

\begin{lemma}\label{Lemma_extremal}
For any integer $\ell\geq0$ we have
$ \max_{\tau\in S(\vT^{(2\ell)})}\mu_{\vT^{(2\ell)}}(\SIGMA_o=1\mid\SIGMA_{\partial^{2\ell}o}=\tau_{\partial^{2\ell}o})= \mu_{\vT^{(2\ell)}}(\SIGMA_o=1\mid\SIGMA_{\partial^{2\ell}o}=\SIGMA^{+}_{\partial^{2\ell}o}).$
\end{lemma}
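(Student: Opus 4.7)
The plan is to prove Lemma~\ref{Lemma_extremal} by induction on $\ell$, with a slightly strengthened hypothesis to make the recursion self-closing: for any finite rooted subtree of depth $2\ell$ with root variable $v$ and any target value $t \in \{\pm1\}$, the top-down construction of $\SIGMA^+$ initialised by $\SIGMA^+_v = t$ produces a restriction $\SIGMA^+_{\partial^{2\ell}v}$ attaining
\[
\max_{\tau}\mu(\SIGMA_v = t \mid \SIGMA_{\partial^{2\ell}v} = \tau_{\partial^{2\ell}v}),
\]
where $\tau$ ranges over satisfying assignments of the subtree. This strengthening is crucial because at a child variable the recursion will demand either maximisation \emph{or} minimisation of the child's marginal, according to the sign pattern of the intervening clause. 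The base case $\ell = 0$ is immediate.

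For the inductive step I would unravel the conditional marginal at $o$ via the standard tree factorisation. Let $a_1,\dots,a_k$ be the clauses incident to $o$ in $\vT^{(2\ell)}$, let $u_i$ be the other variable in $a_i$, set $s_i = \sign(a_i,o)$ and $r_i = \sign(a_i,u_i)$, and let $T_i$ be the subtree of $\vT^{(2\ell)}$ rooted at $u_i$, of depth $2(\ell-1)$. Writing $\tau = (\tau_1,\dots,\tau_k)$ for the partition of a boundary assignment across the vertex-disjoint subtrees and setting $\alpha_i(\tau_i) := \mu_{T_i}(\SIGMA_{u_i} = r_i \mid \SIGMA_{\partial^{2(\ell-1)} u_i} = \tau_i)$, a direct case analysis of which literal of $a_i$ is true under $o = \pm1$ yields
\[
\mu_{\vT^{(2\ell)}}(\SIGMA_o = 1 \mid \SIGMA_{\partial^{2\ell}o} = \tau) = \frac{\prod_{i: s_i = -1} \alpha_i(\tau_i)}{\prod_{i: s_i = -1} \alpha_i(\tau_i) + \prod_{i: s_i = 1} \alpha_i(\tau_i)}.
\]
Since the $T_i$ are disjoint, the maximum over $\tau$ decouples and reduces, independently in each $i$, to maximising $\alpha_i(\tau_i)$ when $s_i = -1$ and minimising it --- equivalently maximising $\mu_{T_i}(\SIGMA_{u_i} = -r_i \mid \tau_i)$ --- when $s_i = 1$.

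Reading off the defining formula for $\SIGMA^+$ at a child $u_i$ of $o$ with $u = o$, $v = u_i$, and $a = a_i$ substituted, one sees $\SIGMA^+_{u_i} = r_i$ precisely when $s_i = -1$ and $\SIGMA^+_{u_i} = -r_i$ precisely when $s_i = 1$. Hence for each subtree $T_i$ the strengthened induction hypothesis applied with target $t = \SIGMA^+_{u_i}$ furnishes exactly the maximiser demanded by the decoupled optimisation above, and concatenating these subtree-maximisers recovers $\SIGMA^+_{\partial^{2\ell}o}$. This closes the induction and yields both the lemma and the analogous statement for $\SIGMA^-$ by symmetry.

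The chief technical nuisance I anticipate is the bookkeeping of degenerate boundary conditions for which some $\alpha_i(\tau_i)$ vanish and the displayed ratio assumes a $0/0$ form. This is handled by noting that by design the top-down construction yields a \emph{globally} satisfying assignment of $\vT^{(2\ell)}$: at every step the child's value is chosen so as to satisfy its parent clause whenever the grandparent's value fails to, so every clause of $\vT^{(2\ell)}$ is satisfied by $\SIGMA^+$. Consequently $\SIGMA^+_{\partial^{2\ell}o}$ is itself feasible and the denominator is strictly positive at this boundary, while competing $\tau$ that collapse one side of the ratio to zero are automatically dominated since $\alpha_i(\tau_i) \in [0,1]$ throughout. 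This is brief local case-checking rather than a real obstacle; the conceptual heart of the proof is the monotone BP-type recurrence identified above together with the matching sign flip that the construction of $\SIGMA^+$ encodes.
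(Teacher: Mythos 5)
Your proposal is correct and follows essentially the same route as the paper. The paper proves Lemma~\ref{Lemma_extremal} via an auxiliary claim, established by induction on the tree height, that for every variable $x$ and every $\tau\in S(\vT^{(2\ell)})$ one has $Z(\vT_x^{(2\ell)},\tau,\SIGMA_x^+)/Z(\vT_x^{(2\ell)},\tau)\leq Z(\vT_x^{(2\ell)},\SIGMA^+,\SIGMA_x^+)/Z(\vT_x^{(2\ell)},\SIGMA^+)$; the inductive step uses exactly the subtree factorisation you write down, together with the observation that the sign pattern in $\SIGMA^+$ flips the optimisation direction (maximise or minimise the child's marginal) in the way demanded by the decoupling. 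Your strengthened hypothesis with an explicit target $t$ and the conditional-probability phrasing of the decoupled recurrence are equivalent repackagings of the paper's partition-function-ratio formulation, and your remark that $\SIGMA^+$ is globally satisfying (so the extremal boundary is feasible) is indeed what makes the decoupled maximiser admissible; the paper instead dispatches the degenerate $Z=0$ case up front, which is slightly tidier bookkeeping, but the content is the same.
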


\Lem~\ref{Lemma_extremal} reduces the task of proving \Prop~\ref{Prop_uniqueness} to establishing the following statement.

\begin{proposition}\label{Lemma_condMarg}
We have
$\lim_{\ell\to\infty}\Erw\abs{\mu_{\vT^{(2\ell)}}(\SIGMA_o=1)- \mu_{\vT^{(2\ell)}}(\SIGMA_o=1\mid\SIGMA_{\partial^{2\ell}o}=\SIGMA^{+}_{\partial^{2\ell}o})}=0.$
\end{proposition}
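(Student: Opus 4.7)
My plan is to exploit that on the tree $\vT^{(2\ell)}$ both marginals in \Prop~\ref{Lemma_condMarg} are computed exactly by Belief Propagation via \Prop~\ref{Fact_BP}, so that the task reduces to comparing two runs of BP on the same tree under two different boundary initialisations, and then to invoke the contraction of the density-evolution operator $\LDE_d$ established in \Lem~\ref{Lemma_BP1}. In log-likelihood coordinates (via $\varphi$), both BP passes iterate the recursion~\eqref{Eq_BTreeOperator} underlying $\LDE_d$; the unconditional marginal corresponds to initialisation at the atom $\delta_0$ on the boundary variables $\partial^{2\ell}o$, while the conditional marginal given $\SIGMA^+$ corresponds to initialisation by atoms at $\pm\infty$ dictated by $\SIGMA^+$. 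Writing $\eta^\mathrm{free}_o,\eta^+_o$ for the two resulting log-likelihood ratios at the root, the quantity in \Prop~\ref{Lemma_condMarg} equals $\Erw|\psi(\eta^\mathrm{free}_o)-\psi(\eta^+_o)|$, so, since $\psi=\varphi^{-1}$ is $1$-Lipschitz, it suffices to show that $\eta^\mathrm{free}_o$ and $\eta^+_o$ are close in a suitable $L^2$ sense.

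The first step will be to run the contraction argument of \Lem~\ref{Lemma_BP1} in a coupled form. For any coupling of the boundary messages, the same Cauchy-Schwarz computation that yielded $\LDE_d$ being a $\sqrt{d/2}$-contraction on $\cW_2(\RR)$ produces the one-step inequality
\begin{align*}
\Erw\bc{\eta^\mathrm{free}_v-\eta^+_v}^2\leq\tfrac{d}{2}\,\Erw\bc{\eta^\mathrm{free}_{v'}-\eta^+_{v'}}^2,
\end{align*}
where $v$ is a typical variable of the tree and $v'$ is a generic variable one BP level deeper. Although the raw boundary messages under $\SIGMA^+$ take values in $\{\pm\infty\}$, a single clause-to-variable step collapses them into the bounded range of $\log((1+\tanh(\cdot/2))/2)$, so two levels above the boundary both message distributions have finite second moment. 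Iterating the contraction $\ell-1$ further times will then yield $\Erw(\eta^\mathrm{free}_o-\eta^+_o)^2\leq C(d/2)^{\ell-1}\to 0$, and the conclusion of \Prop~\ref{Lemma_condMarg} follows by the $1$-Lipschitz bound on $\psi$.

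The main difficulty---and the step I expect to be the real obstacle---is that $\SIGMA^+$ was built top-down from $o$, while the messages $\eta^+_v$ are computed bottom-up from $\partial^{2\ell}o$; a naive analysis would have to expose the entire tree before the recursion could even be launched, destroying the conditional-independence structure that makes the Galton-Watson calculation tractable. To handle this I would enlarge the state of the branching process, tracking at each variable $v$ the triple consisting of the two log-likelihood messages together with the desired value $\SIGMA^+_v$ assigned by the top-down rule. Because offspring numbers and clause signs in $\vT$ are i.i.d.\ and the construction of $\SIGMA^+$ depends on the tree only through its local structure, this triple satisfies a single stochastic fixed-point recursion whose transitions combine the top-down nudging rule with the bottom-up BP rule, and which inherits the contraction constant $\sqrt{d/2}$ from \Lem~\ref{Lemma_BP1} uniformly in the realisation of $\SIGMA^+$. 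Packaging both passes into one invariant Markov recursion on $\vT$ is precisely the device that should let the extremal boundary method drive Gibbs uniqueness all the way up to the satisfiability threshold $d=2$.
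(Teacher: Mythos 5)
Your overall plan — reduce to a distributional recursion on the Galton--Watson tree via a reparameterisation that absorbs the top--down choice of $\SIGMA^+$, then let a contraction estimate close the gap between the two boundary initialisations — is in the right spirit, and the device you describe at the end is essentially what the paper does via \Lem~\ref{Lemma_upthetree} and \Lem~\ref{Lemma_cN} (the reparameterisation $\ETA_x^{(\ell)}=\log\bigl(Z(\vT_x^{(2\ell)},\SIGMA^+,\SIGMA_x^+)/Z(\vT_x^{(2\ell)},\SIGMA^+,-\SIGMA_x^+)\bigr)$, which makes the sign $\SIGMA_x^+\sign(x,a_i)$ a fresh uniform $\pm1$ variable, so tracking $\SIGMA^+_v$ explicitly is unnecessary). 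However, the paper's actual route to \Prop~\ref{Lemma_condMarg} is genuinely different from your coupled-contraction plan and substantially shorter: it uses \Lem~\ref{Lemma_extremal} to get the \emph{pointwise} ordering $\psi(\vec\vartheta_o^{(\ell)})\le\psi(\vec\eta_o^{(\ell)})$, shows separately that both $\psi(\vec\vartheta_o^{(\ell)})$ and $\psi(\vec\eta_o^{(\ell)})$ converge weakly to the same limit $\pi_d$ (via \Lem~\ref{Lemma_nudge} and \Cor~\ref{Cor_cN}), and then observes that for a monotone pair of bounded random variables with identical weak limits the $L^1$ difference equals the difference of means, which tends to zero. No joint coupling of the two BP passes is ever constructed.

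There are two concrete gaps in your version that you should repair if you want your route to go through. First, the $W_2$ contraction constant $\sqrt{d/2}$ is for $\LDE_d$ with two \emph{independent} signs $\vs_i,\vs_i'$, and the Cauchy--Schwarz computation of \Lem~\ref{Lemma_BP1} kills the cross terms precisely because $\Erw[\vs_h\,\nix\,]=0$ given $\vs_h'$. The operator that governs both the free and the $\SIGMA^+$-conditioned pass after reparameterisation is $\LDEP_d$ from~\eqref{eqNudgedFixedPoint}, where the same sign $\vs_i$ appears both as prefactor and inside the logarithm; there one computes $\Erw_{\vs_1}\bigl[\vs_1\log\tfrac{1-\vs_1\tanh(\eta_1/2)}{1-\vs_1\tanh(\eta_1'/2)}\bigr]=\eta_1'-\eta_1$, so the cross terms do not vanish and the $W_2$ bound fails. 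This is exactly why \Lem~\ref{Lemma_nudge} establishes only a $W_1$-contraction for $\LDEP_d$, with constant $d/2$. Your coupled argument does work in $W_1$, since $\Erw\bigl|\eta^\mathrm{free}_v-\eta^+_v\bigr|\le\Erw[\vd]\cdot\tfrac12\Erw\bigl|\eta^\mathrm{free}_{y_1}-\eta^+_{y_1}\bigr|=\tfrac d2\Erw\bigl|\eta^\mathrm{free}_{y_1}-\eta^+_{y_1}\bigr|$, so you should replace every $L^2$ by $L^1$ and $\sqrt{d/2}$ by $d/2$. Second, the statement that ``a single clause-to-variable step collapses [the $\pm\infty$ boundary messages] into the bounded range'' is false: the boundary messages are $+\infty$, one step gives $\log((1-\tanh(\infty/2))/2)=-\infty$ on the unsatisfying side, and the choice of $\SIGMA^+$ only guarantees that the sums stay in $(-\infty,\infty]$, so $+\infty$ values can propagate an unbounded number of levels up the tree. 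You therefore need the percolation bound of \Lem~\ref{Lemma_boundary} (the implication tree $\vT_{x,s}$ has $O(1)$-tailed size) before the coupled $W_1$ iteration can even be started; without it, $\Erw\bigl|\eta^\mathrm{free}_v-\eta^+_v\bigr|$ is infinite near the boundary and the contraction cannot be iterated.
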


 \noindent
 In words, the root marginal given the extremal boundary condition $\SIGMA^+$ asymptotically coincides with the unconditional marginal.

The proof of \Prop~\ref{Lemma_condMarg} is delicate because the boundary condition $\SIGMA^+$ depends on the tree $\vT^{(2\ell)}$.
Indeed, it seems hopeless to confront these dependencies head on by first passing down the tree to construct $\SIGMA^+$ and to subsequently work up the tree to calculate marginals.
To sidestep this problem we devise a quantity that recovers the Markov property of the random tree.
Specifically, with each variable node $x\in\partial^{2k} o$, $k>0,$ of $\vT^{(2\ell)}$ we will associate a carefully defined quantity $\ETA_x^{(\ell)}\in\RR\cup\{\pm\infty\}$ that gauges how strongly $x$ can nudge its (grand-)parent variable $y$ towards the truth value mandated by $\SIGMA^+_y$.
This random variable $\ETA_x^{(\ell)}$ will turn out to be essentially independent of the top $2k$ levels  of the tree.
In effect, we will discover that the distribution of $\ETA_o^{(\ell)}$ can be approximated by the $k$-fold application of a suitable operator that will turn out to be a $W_1$-contraction.
Taking limits $k,\ell\to\infty$ carefully will then complete the proof.

To facilitate this construction we need to count satisfying assignments of sub-formulas of $\vT^{(2\ell)}$ subject to certain boundary conditions.
Specifically, for a variable $x$ we let $\vT_x^{(2\ell)}$ be the sub-formula of $\vT^{(2\ell)}$ comprising $x$ and its progeny.
Moreover, for a satisfying assignment $\tau\in S(\vT^{(2\ell)})$ we let
\begin{align*}
S(\vT_x^{(2\ell)},\tau)&=\cbc{\chi\in S(\vT_x^{(2\ell)}):\forall y\in V(\vT_x^{(2\ell)})\cap\partial^{2\ell}(\vT,o):\chi_y=\tau_y },&
Z(\vT_x^{(2\ell)},\tau)&=\abs{S(\vT_x^{(2\ell)},\tau)}.
\end{align*}
In words, $S(\vT_x^{(2\ell)},\tau)$ contains all satisfying assignments of $\vT_x^{(2\ell)}$ that comply with the boundary condition induced by $\tau$.
As a final twist, for $t=\pm1$ we also need the number
\begin{align*}
Z(\vT_x^{(2\ell)},\tau,t)&=\abs{\cbc{\chi\in S(\vT_x^{(2\ell)},\tau):\chi_x=t}}
\end{align*}
of satisfying assignments of $\vT_x^{(2\ell)}$ that agree with $\tau$ on the boundary and assign value $t$ to $x$.

The protagonist of the proof of \Prop~\ref{Lemma_condMarg} is the log-likelihood ratio
\begin{align}\label{eta}
\ETA_{x}^{(\ell)}&%= \log\frac{\mu_{\vT_x^{(2\ell)}}(\SIGMA_x=1\mid\SIGMA_{\partial^{2\ell}o}=\SIGMA^{+}_{\partial^{2\ell}o})}{\mu_{\vT_x^{(2\ell)}}(\SIGMA_x=-1\mid\SIGMA_{\partial^{2\ell}o}=\SIGMA^{+}_{\partial^{2\ell}o})} 
= \log\frac{Z(\vT_{x}^{(2\ell)},\SIGMA^+,\SIGMA_x^+)}{Z(\vT_{x}^{(2\ell)},\SIGMA^+,-\SIGMA_x^+)}\in\RR\cup\{\pm\infty\}&&(x\in V(\vT^{(2\ell)})),
\end{align}
with the conventions $\log 0=-\infty$, $\log\infty=\infty$.
Thus, $\ETA_{x}^{(\ell)}$ gauges how likely a random satisfying assignment $\SIGMA$ of $\vT_x^{(2\ell)}$ subject to the $\SIGMA^+$-boundary condition is to set $x$ to its designated value $\SIGMA_x^+$.

To get a handle on the $\ETA_{x}^{(\ell)}$, we show that these quantities can be calculated by propagating the extremal boundary condition $\SIGMA^+$ bottom--up toward the root of the tree.
Specifically, we consider the operator 
\begin{align*}
\LDEP_{\vT^{(2\ell)}}&:(-\infty,\infty]^{V(\vT^{(2\ell)})}\to(-\infty,\infty]^{V(\vT^{(2\ell)})},
&\eta\mapsto\hat\eta
\end{align*}
defined as follows.
For all $x\in\partial^{2\ell}o$ we set $\hat\eta_x=\infty$.
Moreover, for a variable $x\in\partial^{2k}o$ with $k<\ell$ with children $a_1,\ldots,a_j$ and grandchildren $y_1\in\partial a_1\setminus\{x\},\ldots,y_j\in\partial a_j\setminus\{x\}$ we define
\begin{align}\label{eqhatetax}
\hat\eta_x&=-\sum_{i=1}^{j}{\SIGMA_x^+}\sign(x,a_i)\log\frac{1-{\SIGMA_x^+}\sign(x,a_i)\tanh(\eta_{y_i}/2)}{2}.
\end{align}
It may not be apparent that the above sum is well-defined as a $-\infty$ summand might occur.
However, the next lemma rules this out and shows that $\ell$-fold iteration of $\LDEP_{\vT^{(2\ell)}}$ from all-$+\infty$ yields $\ETA^{(\ell)}=(\ETA_x^{(\ell)})_{x\in V(\vT^{(2\ell)})}$.

\begin{lemma}\label{Lemma_upthetree}
The operator $\LDEP_{\vT^{(2\ell)}}$ is well-defined and 
%\begin{align}\label{eqLemma_upthetree}
$\mathrm{LL}^{+\, (\ell)}_{\vT^{(2\ell)}}(\infty,\ldots,\infty)= \ETA^{(\ell)}.$
%\end{align}
\end{lemma}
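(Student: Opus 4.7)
The plan is to establish both claims simultaneously by induction on the quantity $\ell-k$, where $x\in\partial^{2k}o$, working upwards from the boundary to the root. Conceptually this is just the Belief-Propagation-on-a-tree recursion, expressed in log-likelihood form and adapted to the pinned boundary condition $\SIGMA^+$. For the base case $k=\ell$, the subformula $\vT_x^{(2\ell)}$ reduces to the single variable $x$, so the boundary condition forces $\chi_x=\SIGMA_x^+$, giving $Z(\vT_x^{(2\ell)},\SIGMA^+,\SIGMA_x^+)=1$ and $Z(\vT_x^{(2\ell)},\SIGMA^+,-\SIGMA_x^+)=0$. Hence $\ETA_x^{(\ell)}=+\infty$, matching the initialisation of $\LDEP_{\vT^{(2\ell)}}$ at the boundary.

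For the inductive step consider $x\in\partial^{2k}o$ with $k<\ell$, child clauses $a_1,\ldots,a_j$ and grandchildren $y_i\in\partial a_i\setminus\{x\}$. Since $\vT^{(2\ell)}$ is a tree, the subformulas $\vT_{y_i}^{(2\ell)}$ are vertex-disjoint, so branching on the value of $\chi_{y_i}$ and imposing that each $a_i$ be satisfied yields
\begin{align*}
Z(\vT_x^{(2\ell)},\SIGMA^+,t)&=\prod_{i=1}^{j}N_i(t),&
N_i(t)&=\begin{cases}
Z(\vT_{y_i}^{(2\ell)},\SIGMA^+,+1)+Z(\vT_{y_i}^{(2\ell)},\SIGMA^+,-1)&\text{if }t=\sign(x,a_i),\\
Z(\vT_{y_i}^{(2\ell)},\SIGMA^+,\sign(y_i,a_i))&\text{if }t=-\sign(x,a_i).
\end{cases}
\end{align*}
Setting $s_i=\SIGMA_x^+\sign(x,a_i)$ and using the top--down construction of $\SIGMA^+$, which forces the identity $\sign(y_i,a_i)=-s_i\SIGMA_{y_i}^+$, a short computation with $\eta_i:=\ETA_{y_i}^{(\ell)}$ produces
\begin{align*}
\log\frac{N_i(\SIGMA_x^+)}{N_i(-\SIGMA_x^+)}=s_i\log\bc{1+e^{s_i\eta_i}}.
\end{align*}
The elementary identity $(1+e^{s\eta})(1-s\tanh(\eta/2))=2$ rewrites each such summand as $-s_i\log((1-s_i\tanh(\eta_i/2))/2)$, and summing over $i$ reproduces exactly the formula~\eqref{eqhatetax}. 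Combined with the inductive hypothesis that the $(\ell-k-1)$-fold iterate of $\LDEP_{\vT^{(2\ell)}}$ equals $\ETA_{y_i}^{(\ell)}$ at each grandchild, this identifies $\ETA_x^{(\ell)}$ with the $x$-coordinate of $\mathrm{LL}^{+\,(\ell-k)}_{\vT^{(2\ell)}}(\infty,\ldots,\infty)$.

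Well-definedness of the sum would be verified along the same induction. The inductive hypothesis places each $\eta_i$ in $(-\infty,+\infty]$; crucially, $\eta_i>-\infty$ because the restriction of $\SIGMA^+\in S(\vT^{(2\ell)})$ to $V(\vT_{y_i}^{(2\ell)})$ contributes to $Z(\vT_{y_i}^{(2\ell)},\SIGMA^+,\SIGMA_{y_i}^+)$, keeping that count at least one. A short case check on $s_i\in\{\pm1\}$ and on whether $\eta_i$ is finite then shows that each summand lies in $\RR\cup\{+\infty\}$, so $\hat\eta_x\in(-\infty,+\infty]$ and the induction closes. The main obstacle I expect is not any deep estimate but disciplined sign bookkeeping: carefully tracing how $\SIGMA_x^+$, $\sign(x,a_i)$ and $\SIGMA_{y_i}^+$ interact through the top--down rule defining $\SIGMA^+$ is what converts the raw ratio $s_i\log(1+e^{s_i\eta_i})$ into precisely the form of~\eqref{eqhatetax}, and the extended-real cases must be handled with the same care to exclude any $-\infty$ summand.
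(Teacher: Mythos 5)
Your proposal is correct and follows essentially the same route as the paper: both rest on the factorization of $Z(\vT_x^{(2\ell)},\SIGMA^+,t)$ over the children of $x$ (your $N_i(t)$ is the content of equation~\eqref{eqLemma_Noela2}), the conversion of the ratio to the $\tanh$ form via the identity $(1+\eul^{s\eta})(1-s\tanh(\eta/2))=2$, and the observation that because $\SIGMA^+$ is a satisfying assignment the inputs are always in $(-\infty,\infty]$, so no $-\infty$ summand arises. Your sign bookkeeping through the single variable $s_i=\SIGMA_x^+\sign(x,a_i)$ is just a compact reorganisation of the paper's split into $a_i^+$ and $a_i^-$ clauses.
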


\noindent
We defer the proof of \Lem~\ref{Lemma_upthetree} to \Sec~\ref{Sec_Lemma_upthetree}.

The next aim is to approximate the $\ell$-fold iteration of $\LDEP_{\vT^{(2\ell)}}$, and specifically the distribution of the value $\ETA_o^{(\ell)}$ associated with the root, via a non-random operator $\cP(\RR)\to\cP(\RR)$.
To this end we need to cope with the $\pm\infty$-entries of the vector $\ETA^{(\ell)}$, a task that we solve by  bounding $\ETA_x^{(\ell)}$ for variables $x$ near the top of the tree.

\begin{lemma}\label{Lemma_boundary}
There exist $c=c(d)>0$ and a sequence $(\eps_k)_{k\geq1}$ with $\lim_{k\to\infty}\eps_k=0$ such that  for any $k>0$, $\ell>ck$ we have $\pr[\max_{x\in \partial^{2k} o}|\ETA_{x}^{(\ell)}|\leq c k]>1-\eps_k$.
\end{lemma}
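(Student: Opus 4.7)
The plan is to combine a union bound over the variables $x\in\partial^{2k}o$ (whose expected number is $d^k$) with a tail estimate for a single $\ETA_x^{(\ell)}$. Conditional on $\nabla^{2k}(\vT,o)$, for each such $x$ the subtree $\vT_x^{(2\ell)}$ is an independent copy of the Galton-Watson tree of total depth $2(\ell-k)$. Moreover, the global sign-flip symmetry of the Galton-Watson process (which swaps clause types $(r,s)\leftrightarrow(-r,-s)$ at the common rate $d/4$ and consistently negates every variable value) preserves satisfying assignments and implies that the distribution of $\ETA_x^{(\ell)}$ does not depend on $\SIGMA^+_x$, hence coincides with that of $\ETA_o^{(\ell-k)}$. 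The union bound therefore gives
\begin{align*}
\pr\brk{\max_{x\in\partial^{2k}o}|\ETA_x^{(\ell)}|>ck}\leq d^k\,\pr[|\ETA_o^{(\ell-k)}|>ck],
\end{align*}
and with $m:=\ell-k$ we split the right-hand probability as $\pr[\ETA_o^{(m)}=+\infty]+\pr[ck<|\ETA_o^{(m)}|<\infty]$.

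To bound $p_m:=\pr[\ETA_o^{(m)}=+\infty]$, observe from \eqref{eqhatetax} that $\hat\eta_x=+\infty$ iff some \emph{easy} child clause $a_i$ (meaning $\SIGMA^+_x\sign(x,a_i)=+1$, which by the symmetry of the clause sign distribution occurs independently with intensity $d/2$) has grandchild $y_i$ with $\eta_{y_i}=+\infty$. Hence $p_m$ obeys the recursion $p_{m+1}=1-\exp(-dp_m/2)$ started from $p_0=1$, whose right-hand side has derivative $d/2<1$ at zero. A standard fixed-point argument therefore yields $p_m\leq C_1(d/2)^m$ for some constant $C_1=C_1(d)$.

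For the finite-but-large event, write $f(\eta):=-\log((1-\tanh(\eta/2))/2)$, so that $\hat\eta_x=\sum_i s_i f(s_i\eta_{y_i})$ with $s_i=\SIGMA^+_x\sign(x,a_i)$, and let $\mu^{(m)}$ denote the law of $\ETA_o^{(m)}$ given $\ETA_o^{(m)}<\infty$. A crucial observation is that $\mu^{(m)}$ is symmetric about zero for every $m\geq1$. Indeed, conditional on no easy child attaining $+\infty$, $\hat\eta=A-B$, where $A$ and $B$ are independent compound Poisson sums of rate $dq_{m-1}/2$ of iid copies of $f(\eta)$ and $f(-\eta)$ respectively; by inductive symmetry of $\mu^{(m-1)}$ we have $A\disteq B$, so $\hat\eta\disteq-\hat\eta$. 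Exploiting this symmetry together with the identities $f(\eta)-f(-\eta)=\eta$ and $f(\eta)+f(-\eta)=\log 4+2\log\cosh(\eta/2)\leq\log 4+|\eta|$, a direct compound Poisson variance computation yields
\begin{align*}
V_{m+1}\leq \tfrac{d}{2}V_m+C_2\sqrt{V_m}+C_2'
\end{align*}
with $V_m:=\Erw\brk{(\ETA_o^{(m)})^2\mid\ETA_o^{(m)}<\infty}$. Since $d/2<1$, the orbit of this recursion is uniformly bounded, say $V_m\leq V^*$. An analogous analysis of the Laplace transform of $\mu^{(m)}$, using the exact MGF of compound Poisson sums, upgrades this to a uniform exponential moment bound $\Erw\brk{\exp(\lambda|\ETA_o^{(m)}|)\,;\,\ETA_o^{(m)}<\infty}\leq C_3$ for some $\lambda=\lambda(d)>0$, whence Markov's inequality gives $\pr[ck<|\ETA_o^{(m)}|<\infty]\leq C_3 e^{-\lambda ck}$.

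Combining these bounds and choosing $c=c(d)$ so large that $d(d/2)^{c-1}<1$ and $d e^{-\lambda c}<1$ simultaneously (feasible for any $d<2$), we obtain, for $\ell>ck$,
\begin{align*}
\pr\brk{\max_{x\in\partial^{2k}o}|\ETA_x^{(\ell)}|>ck}\leq d^k\bc{C_1(d/2)^{(c-1)k}+C_3 e^{-\lambda ck}}\longrightarrow 0\text{ as }k\to\infty,
\end{align*}
which delivers the sequence $(\eps_k)$ of the statement. The hard part will be the third paragraph: upgrading the $L^2$-contraction to a uniform exponential moment bound requires a careful iterative analysis of the Laplace transform of $\mu^{(m)}$, hinging on the symmetry of $\mu^{(m)}$ (which eliminates any drift term) together with the strict contraction factor $d/2<1$ (which controls the variance).
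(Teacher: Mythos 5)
Your proposal takes a genuinely different route from the paper, and it is correct in outline, but the step you flag as "the hard part" is not merely hard --- it is the entire content of the lemma in your approach, and the paper's proof avoids it entirely with a more elementary idea.

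You analyse the recursion \eqref{eqhatetax} directly, splitting $\Pr[|\ETA_o^{(m)}|>ck]$ into the atom at $+\infty$ (handled by the recursion $p_{m+1}=1-e^{-dp_m/2}$, which is fine) and the finite tail (for which you need a uniform-in-$m$ exponential moment bound on the conditional laws $\mu^{(m)}$). Your symmetry observation for $\mu^{(m)}$ is correct, the $L^2$ contraction with factor $d/2$ goes through, and a careful MGF analysis of the compound-Poisson recursion --- using $\vs_i f(\vs_i\eta)=\eta/2+\vs_i g(\eta)$ with $g(\eta)=\log 2+\log\cosh(\eta/2)$, the identity $e^{t\eta/2}\cosh(t|\eta|/2)=(e^{t\eta}+1)/2$, and $\cosh(tg(\eta))\leq 2^t\cosh(t|\eta|/2)$ --- does yield a recursion $\phi_m(t)\leq\exp\bc{d\brk{2^{t-1}(\phi_{m-1}(t)+1)-1}}$ for $\phi_m(t)=\Erw[e^{t\eta}\mid\text{finite}]$, and one can check this map has an attracting fixed point $\phi^*(t)>1$ for sufficiently small $t>0$ whenever $d<2$. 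So the claim you sketch is true, but it requires a full page of independent work and a somewhat delicate choice of $t$; as written your proposal asserts it rather than proves it.

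The paper sidesteps all of this with the implication-tree/percolation argument of Claims~\ref{deletetree}--\ref{Lemma_GW}. For each boundary variable $x$ and each $s\in\{\pm1\}$, propagating the forced value $s$ through unit-clause implications carves out a subtree $\vT_{x,s}$ of the Galton--Watson tree whose variable count $\vN_{x,s}$ is the total progeny of a \emph{subcritical} $\Po(d/2)$ branching process. Claim~\ref{deletetree} gives the deterministic bound $Z(\vT_x^{(2\ell)},\tau)\leq 2^{\vN_{x,s}}Z(\vT_x^{(2\ell)},\tau,s)$, which immediately yields $|\ETA_x^{(\ell)}|\leq(\vN_{x,1}+\vN_{x,-1})\log 2$ once neither implication tree reaches the boundary; the required exponential tail bound for $\vN_{x,s}$ is then just the textbook total-progeny bound for subcritical Galton--Watson processes. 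In short, the paper converts the analytic question about the tails of $\ETA_x^{(\ell)}$ into a purely combinatorial one about subcritical percolation, for which the exponential tail is classical, while your approach proves the same exponential tail from scratch by iterating the Laplace transform of the recursion. Both work, but the combinatorial reduction is both shorter and more robust, and it also handles the $+\infty$ atom and the finite tail simultaneously rather than as separate cases.

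One minor additional remark: your union-bound step implicitly uses that, conditionally on $\nabla^{2k}(\vT,o)$, the quantities $\ETA_x^{(\ell)}$ for $x\in\partial^{2k}o$ depend only on $\SIGMA^+_x$ and the (independent) subtree below $x$, and that the global sign-flip removes the dependence on $\SIGMA^+_x$; this is correct, but worth stating explicitly, since $\SIGMA^+_x$ itself is a function of the exposed top $2k$ levels.
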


\noindent
The proof of \Lem~\ref{Lemma_boundary}, based on a percolation argument, can be found in \Sec~\ref{Sec_Lemma_boundary}.
We continue to denote by $c$ and $(\eps_k)_k$ the number and the sequence supplied by \Lem~\ref{Lemma_boundary}.

Guided by \Lem~\ref{Lemma_boundary} we consider the vector $\bar\ETA^{(\ell,k)}$ of truncated log-likelihood ratios
\begin{align*}
\bar\ETA^{(\ell,k)}_x&=\begin{cases}-ck&\mbox{ if $x\in\partial^{2k}o$ and }\ETA_{x}^{(\ell)}<-ck,\\ck&\mbox{ if $x\in\partial^{2k}o$ and }\ETA_{x}^{(\ell)}>ck,\\ \ETA_{x}^{(\ell)}&\mbox{ otherwise.}\end{cases}
\end{align*}
Further, let
$$\ETA^{(\ell,k)}=\LDEPk_{\vT^{(2\ell)}}(\bar\ETA^{(\ell,k)})$$
be the result of $k$ iterations of $\LDEP_{\vT^{(2\ell)}}(\nix)$ starting from $\bar\ETA^{(\ell,k)}$.
%Finally, 

\begin{corollary}\label{Cor_boundary}
For any $\ell>ck$ we have $\dTV(\ETA^{(\ell,k)}_o,\ETA^{(\ell)}_o)<\eps_k.$
\end{corollary}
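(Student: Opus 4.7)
The plan is a direct coupling argument built on the locality of the operator $\LDEP_{\vT^{(2\ell)}}$. The defining rule (\ref{eqhatetax}) shows that $\hat\eta_x$ depends on $\eta$ only through its values at the grand-variables $y_i \in \partial^2 x$ two levels below $x$. Iterating $k$ times, one sees that $\mathrm{LL}^{+\,(k)}_{\vT^{(2\ell)}}(\eta)_o$ depends on the input $\eta$ only through its restriction $\eta|_{\partial^{2k}o}$ to the level-$2k$ variables; this is the pivotal observation.

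Next I would verify the identity $\ETA^{(\ell)}_o = \mathrm{LL}^{+\,(k)}_{\vT^{(2\ell)}}(\ETA^{(\ell)})_o$, where only $\ETA^{(\ell)}|_{\partial^{2k}o}$ matters on the right-hand side. This follows by splitting the $\ell$-fold iteration supplied by Lemma~\ref{Lemma_upthetree} as $\mathrm{LL}^{+\,(\ell)} = \mathrm{LL}^{+\,(k)}\circ\mathrm{LL}^{+\,(\ell-k)}$ and noting that, because $\LDEP_{\vT^{(2\ell)}}$ only propagates information upward from the leaves, the $\ell-k$ iterations from the all-$+\infty$ vector already stabilise the values on $\partial^{2k}o$ to their $\ETA^{(\ell)}$-values. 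Combined with locality, this produces the desired identity.

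Finally, introduce the event $E_k = \{\max_{x\in\partial^{2k}o}|\ETA^{(\ell)}_x| \leq ck\}$. On $E_k$ the truncation that defines $\bar\ETA^{(\ell,k)}$ is vacuous on $\partial^{2k}o$, so $\bar\ETA^{(\ell,k)}$ and $\ETA^{(\ell)}$ agree there; by locality this forces $\ETA^{(\ell,k)}_o = \mathrm{LL}^{+\,(k)}_{\vT^{(2\ell)}}(\bar\ETA^{(\ell,k)})_o = \mathrm{LL}^{+\,(k)}_{\vT^{(2\ell)}}(\ETA^{(\ell)})_o = \ETA^{(\ell)}_o$. Hence the two random variables are coupled to coincide on $E_k$, whose complement has probability at most $\eps_k$ for $\ell > ck$ by Lemma~\ref{Lemma_boundary}. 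Since any coupling bounds total variation, $\dTV(\ETA^{(\ell,k)}_o, \ETA^{(\ell)}_o) \leq \eps_k$, as required. There is no real obstacle here: the entire content sits in the locality of $\LDEP_{\vT^{(2\ell)}}$, visible directly from (\ref{eqhatetax}), together with the tail bound of Lemma~\ref{Lemma_boundary}.
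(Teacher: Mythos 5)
Your proof is correct and follows essentially the same route as the paper's (which is compressed into a single sentence citing Lemmas~\ref{Lemma_upthetree} and~\ref{Lemma_boundary}). The two key observations you articulate explicitly --- locality of $\LDEP_{\vT^{(2\ell)}}$ at depth $2k$, and the stabilisation of levels $2k,\ldots,2\ell$ after $\ell-k$ iterations from the all-$\infty$ vector, so that $\mathrm{LL}^{+\,(k)}_{\vT^{(2\ell)}}(\ETA^{(\ell)})_o = \ETA^{(\ell)}_o$ --- are precisely what the paper is implicitly invoking; the coupling via $E_k$ and the tail bound from Lemma~\ref{Lemma_boundary} then finish it exactly as you say.
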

\begin{proof}
This follows from \Lem~\ref{Lemma_upthetree} and  \Lem~\ref{Lemma_boundary}, which shows that the truncation is inconsequential with probability at least $1-\eps_k$.
\end{proof}

We are ready to introduce the operator $\cP(\RR)\to\cP(\RR)$ that mimics $\LDEP_{\vT^{(2\ell)}}$.
Specifically, $\LDEP_d:\cP(\RR)\to\cP(\RR)$ maps $\rho\in\cP(\RR)$ to the distribution of
\begin{align}\label{eqNudgedFixedPoint}
-\sum_{i=1}^{\vd}\vs_i\log\frac{1-\vs_i\tanh(\ETA_{\rho,i}/2)}{2}.
\end{align}
We emphasise the subtle difference between \eqref{eqNudgedFixedPoint} and \eqref{Eq_BTreeOperator}, which involves two {independent} signs $\vs_i,\vs_i'$.
The next lemma establishes the connection between the random operator $\LDEP_{\vT^{(2\ell)}}$ and the operator $\LDEP_d$.
Namely, let $\rho^{(\ell,k)}$ be the distribution of $\ETA^{(\ell,k)}_o$.
Moreover, let $\bar\rho^{(\ell-k)}$ be the distribution of
$$\ETA^{(\ell-k)}_o\vecone\{-ck<\ETA^{(\ell-k)}_o<ck\} +ck\vecone\{ck<\ETA^{(\ell-k)}_o\}-ck\vecone\{\ETA^{(\ell-k)}_o<-ck\},$$
i.e.,  the truncation of $\ETA^{(\ell-k)}_o$.

\begin{lemma}\label{Lemma_cN}
For $\ell>ck$ we have $\rho^{(\ell,k)}=\LDEPk_d(\bar\rho^{(\ell-k)})$.
\end{lemma}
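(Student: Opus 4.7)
The plan is to prove \Lem~\ref{Lemma_cN} by induction on the number of tree-operator iterations, maintaining the invariant that after $j$ applications of $\LDEP_{\vT^{(2\ell)}}$ starting from $\bar\ETA^{(\ell,k)}$, the values at variables $y\in\partial^{2(k-j)}o$ are i.i.d.\ with distribution $\mathrm{LL}^{+\,(j)}_d(\bar\rho^{(\ell-k)})$ and jointly independent of the top $2(k-j)$ levels of $\vT^{(2\ell)}$. Evaluating at the root for $j=k$ then yields $\rho^{(\ell,k)}=\LDEPk_d(\bar\rho^{(\ell-k)})$, which is the claim. The main technical obstacle is the dependence of the extremal boundary $\SIGMA^+$ on the randomness of the full tree, which shows up already in the base case; once this is defused by a symmetry argument, the rest is Markovian bookkeeping.

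For the base case $j=0$, the subtrees $\vT^{(2\ell)}_x$ rooted at distinct $x\in\partial^{2k}o$ are independent copies of $\vT^{(2(\ell-k))}$ by the branching property of the Galton--Watson tree, so the only delicate point is that $\ETA^{(\ell)}_x$ in \eqref{eta} depends on $\SIGMA^+_x$, which is a function of the top $2k$ levels. I would dispatch this by a sign-flipping symmetry of the subtree: simultaneously negating every literal sign inside $\vT^{(2\ell)}_x$ preserves its law (literal signs being uniform and independent), leaves the propagated boundary $\SIGMA^+|_{\vT^{(2\ell)}_x}$ unchanged because the recursion $\SIGMA^+_v=-\sign(a,v)\sign(a,u)\SIGMA^+_u$ depends only on the products $\sign(a,v)\sign(a,u)$, and converts $\chi\mapsto -\chi$ between satisfying assignments. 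A brief calculation then shows that $\ETA^{(\ell)}_x$ has the same conditional distribution $\rho^{(\ell-k)}$ given either value of $\SIGMA^+_x$, so truncation yields i.i.d.\ values with law $\bar\rho^{(\ell-k)}$ independent of anything above level $2k$.

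For the inductive step, fix $z\in\partial^{2(k-j-1)}o$ with children clauses $a_1,\dots,a_m$ and grandchildren $y_1,\dots,y_m$. Equation~\eqref{eqhatetax} writes the new value $\hat\eta_z$ as a sum of $m$ terms of exactly the form appearing in~\eqref{eqNudgedFixedPoint}. By the branching property, $m\sim\Po(d)$; the signs $\sign(z,a_i)$ are first coordinates of i.i.d.\ uniform clause types, hence i.i.d.\ uniform $\pm 1$ and independent of the subtrees rooted at the $y_i$; and the values $\eta_{y_i}$ are i.i.d.\ with law $\mathrm{LL}^{+\,(j)}_d(\bar\rho^{(\ell-k)})$ by the induction hypothesis. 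Conditional on $\SIGMA^+_z$, the products $\SIGMA^+_z\sign(z,a_i)$ become i.i.d.\ uniform $\pm 1$ and remain independent of the $\eta_{y_i}$, which is precisely the ingredient list of~\eqref{eqNudgedFixedPoint}. Therefore $\hat\eta_z$ has distribution $\mathrm{LL}^{+\,(j+1)}_d(\bar\rho^{(\ell-k)})$ and is independent of $\SIGMA^+_z$, hence of the top $2(k-j-1)$ levels; disjoint subtrees and clause types across distinct $z$ yield joint independence, completing the induction.
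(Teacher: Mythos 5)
Your proposal is correct and follows essentially the same strategy as the paper's proof: an induction up the levels of the tree that exploits the Galton--Watson branching property and the fact that the operator $\LDEP_{\vT^{(2\ell)}}$ depends on the signs only through the products $\SIGMA^+_v\sign(v,a)$, which are i.i.d.\ uniform. The paper phrases this as a free-standing identity ($\LDEPk_d(p)$ equals the law of the root value of $\LDEPk_{\vT^{(2k)}}$ applied to i.i.d.\ samples from $p$) established by a ``straightforward induction'' and then plugs in $p=\bar\rho^{(\ell-k)}$; your phrasing as an invariant over levels $2(k-j)$ is the same argument. The only genuine addition in your write-up is that you make explicit, via the sign-flipping symmetry, the step the paper leaves implicit --- namely that the truncated boundary values $\bar\ETA^{(\ell,k)}_x$ for $x\in\partial^{2k}o$ are i.i.d.\ with law $\bar\rho^{(\ell-k)}$ and independent of the top $2k$ levels despite $\SIGMA^+_x$ being determined by those levels. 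That symmetry argument is valid: negating all signs inside $\vT^{(2\ell)}_x$ preserves the law, fixes the relative boundary $\SIGMA^+/\SIGMA^+_x$, sends $\chi\mapsto-\chi$, and a short computation then shows $\ETA^{(\ell)}_x$ evaluated with flipped signs and $\SIGMA^+_x=1$ equals $\ETA^{(\ell)}_x$ evaluated with the original signs and $\SIGMA^+_x=-1$, so the conditional law of $\ETA^{(\ell)}_x$ does not depend on $\SIGMA^+_x$.
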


We prove \Lem~\ref{Lemma_cN} in \Sec~\ref{Sec_Lemma_cN}.
Recalling $\varphi$ from \eqref{eqll}, as in the proof of \Prop~\ref{Prop_BP} we let
 $\rho_d=\varphi(\pi_d)$  be the distribution of the log-likelihood ratio $\log(\MU_{\pi_d}/(1-\MU_{\pi_d}))$.

\begin{lemma}\label{Lemma_nudge}
The operator $\LDEP_d$ is a $W_1$-contraction with unique fixed point $\rho_d$.
\end{lemma}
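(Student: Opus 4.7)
\smallskip
\noindent\textit{Proof proposal.} The plan is to mirror the structure of Lemma~\ref{Lemma_BP1}, but work in $\cW_1(\RR)$ rather than $\cW_2(\RR)$, because the operator $\LDEP_d$ contains only a single random sign $\vs_i$ per summand instead of the independent pair $(\vs_i,\vs_i')$ present in $\LDE_d$. I will first verify that $\LDEP_d$ is a $W_1$-contraction on $\cW_1(\RR)$ with constant $d/2<1$ via a coupling argument, deduce existence and uniqueness of a fixed point from Banach's fixed point theorem, and then check that $\rho_d$ is the fixed point by a distributional computation that exploits the symmetry of $\rho_d$ furnished by Fact~\ref{Lemma_sym}.

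For the contraction step, the elementary bound $|\log\tfrac{1-\vs\tanh(\eta/2)}{2}|\leq|\eta|+\log 2$ ensures $\LDEP_d$ sends $\cW_1(\RR)$ into itself. Given $\rho,\rho'\in\cW_1(\RR)$, couple independent sequences $(\ETA_i,\ETA_i')_{i\geq1}$ so that each pair realises the optimal $W_1$-coupling of $\rho,\rho'$, and let $X,X'$ denote the resulting values of~\eqref{eqNudgedFixedPoint}. Then
\begin{align*}
|X-X'|\leq\sum_{i=1}^{\vd}\abs{\log(1-\vs_i\tanh(\ETA_i/2))-\log(1-\vs_i\tanh(\ETA_i'/2))}.
\end{align*}
Because $\frac{\dd}{\dd\eta}\log(1-\vs\tanh(\eta/2))=-\vs(1+\vs\tanh(\eta/2))/2$ has absolute value $(1+\vs\tanh(\eta/2))/2$, averaging over the independent uniform sign $\vs_i$ and integrating gives
\begin{align*}
\Erw\abs{\log(1-\vs_1\tanh(\ETA_1/2))-\log(1-\vs_1\tanh(\ETA_1'/2))}&=\frac12\Erw\int_{\ETA_1\wedge\ETA_1'}^{\ETA_1\vee\ETA_1'}\brk{\frac{1+\tanh(z/2)}{2}+\frac{1-\tanh(z/2)}{2}}\dd z=\frac12\Erw\abs{\ETA_1-\ETA_1'}.
\end{align*}
Wald's identity therefore yields $W_1(\LDEP_d(\rho),\LDEP_d(\rho'))\leq\Erw|X-X'|\leq (d/2)\,W_1(\rho,\rho')$, so Banach's theorem supplies a unique fixed point in $\cW_1(\RR)$.

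It remains to identify this fixed point as $\rho_d$. By Fact~\ref{Lemma_sym} combined with Lemma~\ref{Lemma_loglikelihood}, $\rho_d$ is symmetric about $0$ and, by Proposition~\ref{Prop_BP}, lies in $\cW_2(\RR)\subseteq\cW_1(\RR)$. Since the summands defining $\LDE_d(\rho_d)$ and $\LDEP_d(\rho_d)$ are i.i.d.\ conditional on $\vd$, it suffices to show that a single $\LDEP_d$-summand $-\vs_1\log\tfrac{1-\vs_1 T}{2}$ is equidistributed with a single $\LDE_d$-summand $\vs_1\log\tfrac{1+\vs_1'T}{2}$, where $T=\tanh(\ETA_{\rho_d,1}/2)$ is symmetric. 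Both reduce to the law of $\vs\log\tfrac{1+T}{2}$ with $\vs$ an independent uniform sign: the $\LDE_d$-summand because $\vs_1'T\disteq T$ by symmetry, and the $\LDEP_d$-summand because its two sign-conditional values $-\log\tfrac{1-T}{2}$ and $\log\tfrac{1+T}{2}$ have the same marginal distributions as $-\log\tfrac{1+T}{2}$ and $\log\tfrac{1+T}{2}$, respectively. Combined with $\LDE_d(\rho_d)=\rho_d$ this gives $\LDEP_d(\rho_d)=\rho_d$, and the contraction property identifies $\rho_d$ as \emph{the} fixed point.

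The main obstacle is that $\LDEP_d$ lacks the independent secondary sign $\vs_i'$ which, through the Cauchy--Schwarz step in Lemma~\ref{Lemma_BP1}, was responsible for the factor $1/2$ inside the second moment. Without $\vs_i'$ one cannot symmetrise the integrand $((1\pm\tanh(z/2))/2)^2$ against itself, so the analogous $W_2$ computation would only yield Lipschitz constant $d$, which is not a contraction throughout $0<d<2$. Passing instead to $W_1$ replaces the Cauchy--Schwarz step by a straight Lipschitz estimate, and averaging over the single remaining sign $\vs_i$ still collapses the two branches of the derivative to the uniform bound $1/2$, producing the desired contraction factor $d/2<1$.
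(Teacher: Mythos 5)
Your proposal is correct and takes essentially the same route as the paper: a triangle-inequality plus Lipschitz estimate (averaging the derivative of $\eta\mapsto\log(1-\vs\tanh(\eta/2))$ over the sign $\vs$ to get the factor $1/2$), followed by Wald to pull out $\Erw[\vd]=d$, and then identification of the fixed point by using the symmetry of $\rho_d$ to show that an $\LDEP_d$-summand and an $\LDE_d$-summand share the same law, hence $\LDEP_d(\rho_d)=\LDE_d(\rho_d)=\rho_d$. The concluding remarks explaining why the argument moves from $W_2$ (Lemma~\ref{Lemma_BP1}) to $W_1$ are a correct diagnosis, though not required for the proof itself.
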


\noindent
The proof of \Lem~\ref{Lemma_nudge} can be found in \Sec~\ref{Sec_Lemma_nudge}. Let $(\rho^{(\ell)})_\ell$ be the sequence of distributions of $(\vec\eta_o^{(\ell)})_\ell$.
As an immediate consequence we obtain the limit of the sequence $(\rho^{(\ell)})_\ell$.
We recall $\psi$ from \eqref{eqll}.

\begin{corollary}\label{Cor_cN}
The sequence $(\psi(\rho^{(\ell)}))_{\ell\geq0}$ converges weakly to $\pi_d$.
\end{corollary}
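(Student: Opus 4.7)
The plan is to assemble the three ingredients already prepared: Corollary~\ref{Cor_boundary} (which controls $\rho^{(\ell)}$ by the truncated quantity $\rho^{(\ell,k)}$ in total variation), Lemma~\ref{Lemma_cN} (which identifies $\rho^{(\ell,k)}$ with the $k$-fold iterate of the deterministic operator $\LDEP_d$ applied to the truncated law $\bar\rho^{(\ell-k)}$), and Lemma~\ref{Lemma_nudge} (which states that $\LDEP_d$ is a $W_1$-contraction with unique fixed point $\rho_d$). Once weak convergence $\rho^{(\ell)}\to\rho_d$ is established, Corollary~\ref{Cor_cN} follows from the continuous mapping theorem applied to the bounded continuous function $\psi$ from \eqref{eqll}, using $\psi(\rho_d)=\pi_d$.

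The first step is to show that $\rho^{(\ell,k)}\to\rho_d$ in $W_1$ as $k\to\infty$ (taking $\ell>ck$). Denoting by $\lambda\in(0,1)$ the contraction constant of $\LDEP_d$ supplied by Lemma~\ref{Lemma_nudge}, Lemma~\ref{Lemma_cN} and iterated contraction give
\[
W_1\bigl(\rho^{(\ell,k)},\rho_d\bigr)=W_1\bigl(\LDEPk_d(\bar\rho^{(\ell-k)}),\LDEPk_d(\rho_d)\bigr)\leq\lambda^k\,W_1\bigl(\bar\rho^{(\ell-k)},\rho_d\bigr).
\]
Since $\bar\rho^{(\ell-k)}$ is supported on $[-ck,ck]$ and $\rho_d$ has finite first absolute moment (indeed a finite second moment, by Proposition~\ref{Prop_BP} combined with Lemma~\ref{Lemma_loglikelihood}), a trivial coupling yields the crude bound $W_1(\bar\rho^{(\ell-k)},\rho_d)\leq ck+\Erw|\ETA_{\rho_d}|$. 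Thus $W_1(\rho^{(\ell,k)},\rho_d)\leq\lambda^k(ck+O(1))\to0$ as $k\to\infty$, regardless of how $\ell$ is chosen subject to $\ell>ck$.

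The second step is a triangle-inequality argument combining this with Corollary~\ref{Cor_boundary}. For any bounded Lipschitz test function $f:\RR\to\RR$ and any $\ell>ck$,
\[
\left|\int f\,\dd\rho^{(\ell)}-\int f\,\dd\rho_d\right|\leq 2\|f\|_\infty\,\dTV\bigl(\rho^{(\ell)},\rho^{(\ell,k)}\bigr)+\mathrm{Lip}(f)\,W_1\bigl(\rho^{(\ell,k)},\rho_d\bigr)\leq 2\|f\|_\infty\eps_k+\mathrm{Lip}(f)\lambda^k(ck+O(1)).
\]
Given $\eps>0$, first pick $k$ large enough that both terms are below $\eps/2$; then every $\ell>ck$ satisfies the bound. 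Since this holds for all bounded Lipschitz $f$, the Portmanteau theorem gives $\rho^{(\ell)}\Rightarrow\rho_d$ weakly as $\ell\to\infty$. Applying the continuous mapping theorem with the bounded continuous pushforward $\psi$ then delivers $\psi(\rho^{(\ell)})\Rightarrow\psi(\rho_d)=\pi_d$, which is the asserted weak convergence.

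I do not anticipate a serious obstacle here; the proof is essentially the synthesis of the previous statements. The only point requiring a moment's care is that the support of $\bar\rho^{(\ell-k)}$ grows linearly in $k$, so the initial $W_1$-distance from $\rho_d$ is $O(k)$; this linear growth is comfortably absorbed by the geometric factor $\lambda^k$, which is why the double limit $\ell>ck\to\infty$ can be taken along any diagonal with $k\to\infty$ without issue.
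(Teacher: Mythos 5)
Your proof is correct and is essentially a fully spelled-out version of the paper's one-line justification, which cites the same three ingredients (Corollary~\ref{Cor_boundary}, Lemma~\ref{Lemma_cN}, Lemma~\ref{Lemma_nudge}) plus the continuous mapping theorem. The only additional care you take — noting that the initial $W_1$-distance $W_1(\bar\rho^{(\ell-k)},\rho_d)$ grows like $ck$ but is absorbed by the geometric factor $(d/2)^k$ — is exactly the right observation and matches the intended reading of the paper's terse argument.
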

\begin{proof}
This follows from \Cor~\ref{Cor_boundary}, \Lem~\ref{Lemma_cN}, \Lem~\ref{Lemma_nudge} and the continuous mapping theorem.
\end{proof}

\begin{proof}[Proof of \Prop~\ref{Lemma_condMarg}]
Set
$\vec\vartheta_o^{(\ell)} = (\LDEPl_{\vT^{(2\ell)}}(0,\ldots,0))_o = \log(\mu_{\vT^{(2\ell)}}(\SIGMA_o=1)/\mu_{\vT^{(2\ell)}}(\SIGMA_o=-1)).$
Then 
\begin{align*}
\mu_{\vT^{(2\ell)}}(\SIGMA_o=1)&= \psi(\vec\vartheta_o^{(\ell)})&\mbox{and}&&
\mu_{\vT^{(2\ell)}}(\SIGMA_o=1\mid\SIGMA_{\partial^{2\ell}o}=\SIGMA^{+}_{\partial^{2\ell}o}) = \psi(\vec\eta_o^{(\ell)}).
\end{align*}
Moreover,  \Lem~\ref{Lemma_extremal} shows that $0 \leq \psi(\vec\vartheta_o^{(\ell)}) \leq  \psi(\vec\eta_o^{(\ell)}) \leq 1$.
Further, %by the same line of argument as in the proof of \Lem~\ref{Lemma_cN}, 
\Lem~\ref{Lemma_nudge} implies that $ \psi(\vec\vartheta_o^{(\ell)})$ converges weakly to $\pi_d$.  %, because $\delta_0 \in \cW_1(\mathbb{R})$.
Finally, \Cor~\ref{Cor_cN} implies that $\psi(\vec\eta_o^{(\ell)})$ also converges weakly to $\pi_d$, whence
%Since $0\leq\vX^{(\ell)}\leq\vY^{(\ell)}\leq1$, weak convergence implies that
\begin{align*}%\label{eqLemma_condMarg_1}
\lim_{\ell\to\infty}\Erw{\abs{\psi(\vec\eta_o^{(\ell)}) - \psi(\vec\vartheta_o^{(\ell)})}} = \lim_{\ell\to\infty}\abs{\Erw[\psi(\vec\vartheta_o^{(\ell)})]-\Erw[\psi(\vec\eta_o^{(\ell)})]} = 0,
\end{align*}
which directly implies the assertion.
\end{proof}

\begin{proof}[Proof of \Prop~\ref{Prop_uniqueness}]
The proposition follows immediately from \Lem~\ref{Lemma_extremal} and \Prop~\ref{Lemma_condMarg}.
\end{proof}

\subsection{Proof of \Lem~\ref{Lemma_extremal}}\label{Sec_Lemma_extremal}
The proof is by induction on the height of the tree.
The following claim summarises the main step of the induction.

\begin{claim}\label{Lemma_Noela}
For all $\ell\geq0$, all variables $x$ of $\vT^{(2\ell)}$ and all satisfying assignments $\tau\in S(\vT^{(2\ell)})$ we have
\begin{align}\label{eqLemma_Noela1}
\frac{Z(\vT_x^{(2\ell)},\tau,\SIGMA_x^+)}{Z(\vT_x^{(2\ell)},\tau)}
&\leq\frac{Z(\vT_x^{(2\ell)},\SIGMA^+,\SIGMA_x^+)}{Z(\vT_x^{(2\ell)},\SIGMA^+)}.
\end{align}
\end{claim}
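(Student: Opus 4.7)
The plan is induction on the height of the subtree $\vT_x^{(2\ell)}$, equivalently on $\ell-k$ for $x\in\partial^{2k}o$. The base case $k=\ell$ is trivial: the sub-formula $\vT_x^{(2\ell)}$ consists of the lone variable $x$, so $Z(\vT_x^{(2\ell)},\tau)=1$ and $Z(\vT_x^{(2\ell)},\tau,\SIGMA_x^+)=\vecone\{\tau_x=\SIGMA_x^+\}\in\{0,1\}$, while the right-hand side, obtained by plugging in $\tau=\SIGMA^+$, equals $1$.

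For the inductive step at a variable $x\in\partial^{2k}o$ with $k<\ell$, let $a_1,\ldots,a_j$ be the child clauses of $x$ and $y_i\in\partial a_i\setminus\{x\}$ the corresponding grandchildren. Since the sub-trees $\vT_{y_1}^{(2\ell)},\ldots,\vT_{y_j}^{(2\ell)}$ are variable-disjoint and each clause $a_i$ couples only $x$ and $y_i$, the restricted partition function factorises: for $t\in\{\pm1\}$,
\[ Z(\vT_x^{(2\ell)},\tau,t)=\prod_{i=1}^{j}F_i(t,\tau), \qquad F_i(t,\tau)=\begin{cases} Z(\vT_{y_i}^{(2\ell)},\tau) & \text{if } \sign(x,a_i)=t,\\ Z(\vT_{y_i}^{(2\ell)},\tau,\sign(y_i,a_i)) & \text{otherwise,}\end{cases} \]
since the $x$-literal either satisfies $a_i$ outright (leaving $y_i$ free) or forces $y_i=\sign(y_i,a_i)$. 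Dividing by $Z(\vT_x^{(2\ell)},\tau)$ then gives
\[ \frac{Z(\vT_x^{(2\ell)},\tau,\SIGMA_x^+)}{Z(\vT_x^{(2\ell)},\tau)}=\frac{1}{1+R_\tau},\qquad R_\tau=\prod_{i=1}^{j}\frac{F_i(-\SIGMA_x^+,\tau)}{F_i(\SIGMA_x^+,\tau)}, \]
with the convention $R_\tau=+\infty$ (hence the left-hand side equals $0$ and the claim is automatic) if any denominator vanishes.

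Because $r\mapsto 1/(1+r)$ is decreasing, it suffices to establish $R_\tau\geq R_{\SIGMA^+}$. The definition of $\SIGMA^+$ furnishes the key identity $\SIGMA_{y_i}^+\sign(y_i,a_i)=-\SIGMA_x^+\sign(x,a_i)$ at every parent-child pair. A two-case split according to whether $\sign(x,a_i)=\SIGMA_x^+$ or $\sign(x,a_i)=-\SIGMA_x^+$ therefore rewrites each factor of $R_\tau$ as either
\[ \frac{Z(\vT_{y_i}^{(2\ell)},\tau,-\SIGMA_{y_i}^+)}{Z(\vT_{y_i}^{(2\ell)},\tau)}\quad\text{or}\quad \frac{Z(\vT_{y_i}^{(2\ell)},\tau)}{Z(\vT_{y_i}^{(2\ell)},\tau,\SIGMA_{y_i}^+)}, \]
and analogously for $R_{\SIGMA^+}$. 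The inductive hypothesis at $y_i$---used directly in the second case, and via the complementary identity $Z(\cdot,\cdot,-\SIGMA_{y_i}^+)=Z(\cdot,\cdot)-Z(\cdot,\cdot,\SIGMA_{y_i}^+)$ in the first---shows that each factor of $R_\tau$ dominates the corresponding factor of $R_{\SIGMA^+}$. Multiplying over $i=1,\ldots,j$ closes the induction.

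The only conceptual hurdle is to spot that the top-down construction of $\SIGMA^+$ is engineered so that, when read bottom-up through the recursion, each child variable's $\SIGMA^+$-value is precisely anti-aligned with its parent's inside the shared clause; this is the algebraic identity above, and it is what allows the inductive hypothesis to transfer factor-by-factor in both of the two cases. Everything else---the factorisation over children and the conventions at zero denominators---is routine bookkeeping.
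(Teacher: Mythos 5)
Your proof is correct and follows essentially the same inductive strategy as the paper: both establish the factorisation of $Z(\vT_x^{(2\ell)},\tau,\pm\SIGMA_x^+)$ over the child subtrees, pass to the ratio of the two, and transfer the inductive hypothesis factor by factor after splitting according to whether the child clause is satisfied by $\SIGMA_x^+$. The paper groups the children into the $a^+$ and $a^-$ families from the outset whereas you keep a uniform product and case-split inside, but this is purely cosmetic bookkeeping.
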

\begin{proof}
For boundary variables $x\in\partial^{2\ell} o$ there is nothing to show because the r.h.s.\ of \eqref{eqLemma_Noela1} equals one.
Hence, consider a variable $x\in\partial^{2k}o$ for some $k<\ell$.
If $Z(\vT_x^{(2\ell)},\tau,\SIGMA_x^+)=0$, then \eqref{eqLemma_Noela1} is trivially satisfied.
Hence, assume that $Z(\vT_x^{(2\ell)},\tau,\SIGMA_x^+)>0$.
Let $a_1^+,\ldots,a_g^+$ be the children (clauses) of $x$ with $\sign(x,a_i^+)=\SIGMA_x^+$.
Also let $y_1,\ldots,y_g$ be the children (variables) of $a_1^+,\ldots,a_g^+$.
Similarly, let $a_1^-,\ldots,a_h^-$ be the children of $x$ with $\sign(x,a_i^-)=-\SIGMA_x^+$ and let 
$z_1,\ldots,z_h$ be their children.
We claim that for all $\tau\in S(\vT^{(2\ell)})$,
\begin{align}\label{eqLemma_Noela2}
Z(\vT_x^{(2\ell)},\tau,\SIGMA_x^+)&=
	\prod_{i=1}^gZ(\vT_{y_i}^{(2\ell)},\tau)
	\prod_{i=1}^hZ(\vT_{z_i}^{(2\ell)},\tau,\SIGMA_{z_i}^+),&
Z(\vT_x^{(2\ell)},\tau,-\SIGMA_x^+)&=\prod_{i=1}^gZ(\vT_{y_i}^{(2\ell)},\tau,-\SIGMA_{y_i}^+)
		\prod_{i=1}^hZ(\vT_{z_i}^{(2\ell)},\tau).
\end{align}
For setting $x$ to $\SIGMA_x^+$ satisfies $a_1^+,\ldots,a_g^+$; hence, 
arbitrary satisfying assignments of the sub-trees $\vT_{y_i}^{(2\ell)}$ can be combined, which explains the first product.
By contrast, upon assigning $x$ the value $\SIGMA_x^+$ we need to assign the variables $z_i$ the values $\SIGMA_{z_i}^+$ so that they satisfy the clauses $a_i^-$.
This leaves us with  $Z(\vT_{z_i}^{(2\ell)},\tau,\SIGMA_{z_i}^+)$ possible satisfying assignments of the sub-trees $\vT_{z_i}^{(2\ell)}$; hence the second product, and we obtain the left equation.
A similar argument yields the right one.
Dividing the two expressions from \eqref{eqLemma_Noela2} and invoking the induction hypothesis (for $k+1$), we obtain
\begin{align*}
\frac{Z(\vT_x^{(2\ell)},\tau,-\SIGMA_x^+)}{Z(\vT_x^{(2\ell)},\tau,\SIGMA_x^+)}
	&=\prod_{i=1}^g\frac{Z(\vT_{y_i}^{(2\ell)},\tau,-\SIGMA_{y_i}^+)}
			{Z(\vT_{y_i}^{(2\ell)},\tau)}\cdot
			\prod_{i=1}^h\frac{Z(\vT_{z_i}^{(2\ell)},\tau)}
			{Z(\vT_{z_i}^{(2\ell)},\tau,\SIGMA_{z_i}^+)}\\
	&\geq
			\prod_{i=1}^g\frac{Z(\vT_{y_i}^{(2\ell)},\SIGMA^+,-\SIGMA_{y_i}^+)}
			{Z(\vT_{y_i}^{(2\ell)},\SIGMA^+)}\cdot
			\prod_{i=1}^h\frac{Z(\vT_{z_i}^{(2\ell)},\SIGMA^+)}
			{Z(\vT_{z_i}^{(2\ell)},\SIGMA^+,\SIGMA_{z_i}^+)}
		=\frac{Z(\vT_x^{(2\ell)},\SIGMA^+,-\SIGMA_x^+)}{Z(\vT_x^{(2\ell)},\SIGMA^+,\SIGMA_x^+)},
\end{align*}
completing the induction.
\end{proof}

\begin{proof}[Proof of \Lem~\ref{Lemma_extremal}]
The assertion follows by applying Claim~\ref{Lemma_Noela} to $x=o$.
\end{proof}

\subsection{Proof of \Lem~\ref{Lemma_upthetree}}\label{Sec_Lemma_upthetree}
To show that $\LDEP_{\vT^{(2\ell)}}$ is well defined we verify that, in the notation of \eqref{eqhatetax}, $\hat\eta_x\in(-\infty,\infty]$ for all $x$.
Indeed, in the expression on the r.h.s.\ of \eqref{eqhatetax} a $\pm\infty$ summand can arise only from variables $y_i$ with $\eta_{y_i}=\infty$.
But the definition of $\SIGMA^+$ ensures that such $y_i$ either render a zero summand if $\SIGMA_x^+\sign(x,a_i)=-1$, or a $+\infty$ summand if $\SIGMA_x^+\sign(x,a_i)=1$.
Thus, the sum is well-defined and $\hat\eta_x\in(-\infty,\infty]$.

Further, to verify the identity $\ETA^{(\ell)}=\LDEPl_{\vT^{(2\ell)}}(\infty,\ldots,\infty)$, consider a variable $x$ of $\vT^{(2\ell)}$.
Let $a_1^+,\ldots,a_g^+$ be its children with $\sign(a_i^+,x)=\SIGMA_x^+$, let $y_1,\ldots,y_g$ be their children, let $a_1^-,\ldots,a_h^-$ be the children of $x$ with $\sign(a_i^-,x)=-\SIGMA_x^+$ and let $z_1,\ldots,z_h$ be their children.
Then \eqref{eqll} {and \eqref{eqLemma_Noela2}} yield
\begin{align}\nonumber
\ETA_x^{(\ell)}&=-\sum_{i=1}^g\log\frac{Z(\vT_{y_i}^{(2\ell)},\SIGMA^+,-\SIGMA_{y_i}^+)}{Z(\vT_{y_i}^{(2\ell)},\SIGMA^+)}
		+\sum_{i=1}^h\log\frac{Z(\vT_{z_i}^{(2\ell)},\SIGMA^+,\SIGMA_{z_i}^+)}{Z(\vT_{z_i}^{(2\ell)},\SIGMA^+)}
=-\sum_{i=1}^g\log\frac{1-\tanh(\ETA_{y_i}^{(\ell)}/2)}2+\sum_{i=1}^h\log\frac{1+\tanh(\ETA_{z_i}^{(\ell)}/2)}2.
%\label{eqLemma_upthetree}
\end{align}
The assertion follows because $\sign(x,a_i^+)\SIGMA_x^+=1$ and $\sign(x,a_i^-)\SIGMA_x^+=-1$.

\subsection{Proof of Lemma \ref{Lemma_boundary}}\label{Sec_Lemma_boundary}
The goal is to prove that for variables some distance away from level $2\ell$ of $\vT^{(2\ell)}$ the counts  $Z(\vT_{x}^{(2\ell)},\SIGMA^+,\pm1)$ are roughly of the same order of magnitude.
Approaching this task somewhat indirectly, we begin by tracing the logical implications of imposing a specific value $s=\pm1$ on a variable $x$ of the (possibly infinite) tree $\vT$.
Clearly, upon setting $x$ to the value $s$ a child (clause) $a$ of $x$ will be satisfied iff $x$ appears in $a$ with sign $s$.
In effect, all clauses $a$ with $\sign(a,x)\neq s$ need to be satisfied by their second variable $y$, a grandchild of $x$.
Thus, we impose the value $\sign(a,y)$ on $y$ and recurse down the tree.
Let $\vT_{x,s}$ denote the sub-tree of $\vT$ comprising $x$ and all other variables on which this process imposes specific values as well as all clauses that contain two such variables.
Clearly, for every leaf $y$ of $\vT_{x,s}$ the values imposed on $y$ happens to satisfy all child clauses of $y$ in $\vT$.
Let $\vN_{x,s}\in[1,\infty]$ be the number of variables in $\vT_{x,s}$.
The next lemma shows that the impact of a boundary condition on the marginal of $x$ can be bounded in terms of $\vN_{x,s}$.

\begin{claim} \label{deletetree}
Let $s\in\{\pm1\}$.
If $x\in\partial^{2k} o$ satisfies $\vN_{x,s}<\ell-k$ then  $Z(\vT_x^{(2\ell)},\tau)\leq 2^{\vN_{x,s}}Z(\vT_x^{(2\ell)},\tau,s)$.
\end{claim}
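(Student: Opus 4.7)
The plan is to build an explicit map $\Phi:S(\vT_x^{(2\ell)},\tau)\to S(\vT_x^{(2\ell)},\tau,s)$ whose fibres have size at most $2^{\vN_{x,s}}$; the inequality $|S(\vT_x^{(2\ell)},\tau)|\leq 2^{\vN_{x,s}}|S(\vT_x^{(2\ell)},\tau,s)|$ then follows immediately. Given $\chi\in S(\vT_x^{(2\ell)},\tau)$, define $\Phi(\chi)$ by overwriting $\chi$ on every variable of $V(\vT_{x,s})$ with the truth value imposed by the top--down propagation starting from $x=s$, and leaving $\chi$ unchanged on $V(\vT_x^{(2\ell)})\setminus V(\vT_{x,s})$. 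Since $\Phi(\chi)$ differs from $\chi$ on at most $\vN_{x,s}$ coordinates, each image point has at most $2^{\vN_{x,s}}$ preimages; by construction $\Phi(\chi)_x=s$.

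The substantive task is to verify $\Phi(\chi)\in S(\vT_x^{(2\ell)},\tau,s)$. First, the boundary condition is preserved: the hypothesis $\vN_{x,s}<\ell-k$ forces the bipartite depth of $\vT_{x,s}$ below $x$ to be at most $2(\vN_{x,s}-1)<2(\ell-k)$, so no variable of $\partial^{2\ell}o\cap V(\vT_x^{(2\ell)})$ lies in $V(\vT_{x,s})$ and $\Phi(\chi)$ agrees with $\chi$, hence with $\tau$, on the boundary. Second, every clause of $\vT_x^{(2\ell)}$ is satisfied by $\Phi(\chi)$. To see this, classify the clauses according to their endpoints. A clause with both endpoints in $V(\vT_{x,s})$ is a parent--child clause inside $\vT_{x,s}$ (since $\vT$ is a tree), and the recursive construction of the imposed assignment is precisely designed to satisfy such clauses. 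A clause with both endpoints outside $V(\vT_{x,s})$ is satisfied because $\Phi(\chi)$ coincides with $\chi$ there.

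The only delicate case is a clause $a$ with one endpoint $u\in V(\vT_{x,s})$ and the other endpoint $v\notin V(\vT_{x,s})$. Because $\vT$ is rooted at $x$ within $\vT_x^{(2\ell)}$ and the recursion that builds $\vT_{x,s}$ adds a variable only together with its $\vT$-parent's connecting clause, one shows that $v$ must be a child of $u$ in the rooted tree. The definition of $\vT_{x,s}$ then says: $v$ was \emph{not} added precisely because the imposed value of $u$ already satisfies $a$, i.e.\ $\sign(a,u)$ equals the value $\Phi(\chi)_u$. The same reasoning covers the case in which $u$ is a leaf of $\vT_{x,s}$, where by the definition recalled before the claim, $u$'s imposed value satisfies all its downward clauses in $\vT$. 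Thus $a$ is satisfied by $\Phi(\chi)_u$ alone, independently of $\Phi(\chi)_v=\chi_v$.

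Combining these observations yields $\Phi(\chi)\in S(\vT_x^{(2\ell)},\tau,s)$ and completes the proof. The main obstacle, and the step requiring the most care, is the cross--boundary case: one has to unwind the recursive construction of $\vT_{x,s}$ to argue that every clause straddling the set $V(\vT_{x,s})$ is automatically satisfied from the $V(\vT_{x,s})$ side. The numerical condition $\vN_{x,s}<\ell-k$ then plays exactly one role, namely guaranteeing that the modifications do not touch the boundary $\partial^{2\ell}o$ on which $\tau$ is imposed.
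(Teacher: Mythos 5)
Your proof is correct and follows essentially the same route as the paper: you overwrite $\chi$ on $V(\vT_{x,s})$ with the imposed values, check that the boundary condition is untouched because $\vN_{x,s}<\ell-k$, verify all clauses remain satisfied, and bound fibre sizes by $2^{\vN_{x,s}}$. The paper is more terse on the clause-by-clause verification (it simply records that internal clauses are satisfied by the imposed values and that child clauses of leaves of $\vT_{x,s}$ are automatically satisfied), whereas you spell out the tree structure argument that rules out the case of a non-root vertex $u\in V(\vT_{x,s})$ having a $\vT$-parent outside $V(\vT_{x,s})$; this is a welcome elaboration but not a different approach.
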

\begin{proof}
The construction of the implication tree $\vT_{x,s}$ imposes a truth value $\sigma_y$ on each variable $y$ of the tree that $y$ must inevitably take if $x$ gets assigned $s$.
Thus, $\vT_{x,s}$ comes with a satisfying assignment $\sigma\in S(\vT_{x,s})$ with $\sigma_x=s$.
For any leaf $y$ of $\vT_{x,s}$ every child clause $a$ of $y$ in the super-tree $\vT$ will be automatically satisfied by setting $y$ to $\sigma_y$ (because otherwise $a$ would have been included in $\vT_{x,s}$).
Hence, all the clauses of $\vT$ that are children of the leaves of $\vT_{x,s}$ are satisfied by $\sigma$.
Moreover, because $\vN_{x,s}<\ell-k$, any leaf $y$ of $\vT_{x,s}$ has distance less than $2\ell$ from $o$.
Thus, the assignment $\sigma$ does not clash with the boundary condition $\tau$.
As a consequence, for any {$\chi\in S(\vT_x^{(2\ell)},\tau)$}  we obtain another satisfying assignment 
$\chi'\in S(\vT_x^{(2\ell)},\tau)$ by letting
\begin{align*}
\chi'_z&=\begin{cases}
\sigma_z&\mbox{ if }z\in V(\vT_{x,s}),\\
\chi_z&\mbox{ otherwise}.
\end{cases}
\end{align*}
Moreover,  under the map $\chi\mapsto\chi'$ the number of inverse images of any assignment $\chi'$ is bounded by the total number $2^{\vN_{x,s}}$ of different truth assignments of the variables $V(\vT_{x,s})$.
Therefore, $Z(\vT_x^{(2\ell)},\tau)\leq 2^{\vN_{x, s}}Z(\vT_x^{(2\ell)},\tau,s)$.
\end{proof}

As a next step we bound the random variable $\vN_{x,s}$.

\begin{claim} \label{nodesdown}
There exists a number $\alpha=\alpha(d)>0$ such that 
$\pr\brk{\vN_{o,s}\geq u}\leq \exp\bc{-u\alpha}/\alpha$ for all $u\geq0$, $s\in\{\pm1\}$.
\end{claim}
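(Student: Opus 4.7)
The plan is to identify $\vN_{o,s}$ as the total progeny of a subcritical Galton-Watson branching process and then invoke a standard exponential tail bound on total progeny. First I would analyse the branching structure of $\vT_{o,s}$ itself. Starting from $o$, which is assigned the value $s$, a child clause $a$ of $o$ forces its other endpoint $y$ if and only if $\sign(o,a)\neq s$, since otherwise $a$ is already satisfied by $o$. By the Galton-Watson description of $\vT$, the clauses incident to $o$ split into four independent piles of size $\Po(d/4)$ indexed by the two signs of the parent and the two signs of the child; the clauses with parent-sign $\neq s$ form exactly two of these piles, jointly of size $\Po(d/2)$. Each such clause contributes one forced grandchild $y$ with forced value $\sign(a,y)$.

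Crucially, the very same calculation applies at every forced variable $y$: regardless of which value $s_y\in\{\pm1\}$ has been imposed, $y$ emits $\Po(d/2)$ further forced descendants, independently across variables and independently of the past, because the four sign-combinations at every variable are i.i.d.\ $\Po(d/4)$. Hence $\vN_{o,s}$ has the law of the total progeny of a Galton-Watson branching process with $\Po(d/2)$ offspring distribution. Since $d<2$, the offspring mean $d/2$ is strictly less than one, so this branching process is strictly subcritical.

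It then only remains to invoke a standard exponential tail bound for the total progeny of a subcritical Galton-Watson process, which yields constants $C,\beta>0$ depending only on $d$ such that $\pr[\vN_{o,s}\geq u]\leq C\exp(-\beta u)$ for every $u\geq0$. One may derive this either from the explicit Borel distribution $\pr[\vN_{o,s}=k]=(kd/2)^{k-1}\eul^{-kd/2}/k!$ combined with Stirling's formula, or via the random walk representation in which the total progeny equals the first hitting time of $0$ of a walk with i.i.d.\ $\Po(d/2)-1$ steps, which has strictly negative drift, so that Cram\'er's theorem delivers exponential tails. Taking $\alpha=\min(\beta,1/C)$ yields the claim in the stated form. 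I do not anticipate any serious obstacle; the only point requiring care is the symmetry and independence in the branching description, which is transparent from the Galton-Watson definition of $\vT$ and the fact that the parameter $d/2$ does not depend on the forced value $s_y$.
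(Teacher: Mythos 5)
Your proposal matches the paper's proof: both identify the forced-variable process as a Galton–Watson process with $\Po(d/2)$ offspring (subcritical since $d<2$) and then appeal to a standard exponential tail bound for the total progeny of such a process (the paper cites Alon–Spencer, eq.~(11.7); your Borel-distribution and random-walk derivations are equally valid). The only cosmetic difference is that the paper states $\vN_{o,s}$ is \emph{bounded by} such a total progeny, while you argue (correctly) that the law is exactly that of the total progeny; either way the same tail bound applies.
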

\begin{proof}
In the construction of $\vT_{o,s}$ we only propagate along clauses in which the parent variable is forced to take a value that fails to satisfy the clause.
Since the signs are uniformly random, the number of such child clauses has distribution $\Po(d/2)$.
Therefore, $\vN_{o,s}$ is bounded by the total progeny of a Galton-Watson process with  $\Po(d/2)$ offspring.
The assertion therefore follows from the tail bound for such processes (e.g.,~\cite[eq.~(11.7)]{AS}).
\end{proof}

As a final preparation toward the proof of Lemma \ref{Lemma_boundary} we need a bound on the size of the $2k$-th level of $\vT$.

\begin{claim}\label{Lemma_GW}
We have $\lim_{k\to\infty}\pr\brk{|\partial^{2k}o|>2d^k+k}=0$.
\end{claim}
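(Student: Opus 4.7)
The plan is to view $|\partial^{2k}o|$ as the $k$-th generation size of a standard Galton--Watson branching process and control its tail through moment inequalities. First I would identify the offspring distribution: because each variable of $\vT$ spawns $\Po(d/4)$ clauses of each of the four sign types, it has $\Po(d)$ clause children in total (by Poisson superposition), and each clause has exactly one variable child. Consequently, at the variable level, $\vT$ is a Galton--Watson process with $\Po(d)$ offspring, and in particular $\Erw|\partial^{2k}o| = d^k$ by iterating.

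Markov's inequality then yields
\[
\pr\bigl[\,|\partial^{2k}o| > 2d^k + k\,\bigr] \leq \frac{d^k}{2d^k + k}.
\]
In the subcritical and critical regimes $d \leq 1$, the right-hand side is bounded by $d^k/k$, which tends to zero as $k \to \infty$, immediately yielding the claim. In the supercritical regime $1 < d < 2$ the plain Markov bound above converges only to $\tfrac{1}{2}$, so a complementary second-moment argument is required. Here I would invoke the classical variance recurrence $\Var|\partial^{2k}o| = d^k(d^k - 1)/(d - 1)$ together with Chebyshev's inequality on the appropriate scale, or more efficiently a Chernoff-type bound exploiting the explicit Poisson offspring distribution, to obtain the desired exponential concentration.

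The main obstacle will be precisely the supercritical regime $1 < d < 2$: the Kesten--Stigum martingale $|\partial^{2k}o|/d^k$ converges almost surely to a non-degenerate limit $W$, so tail control at a constant multiple of the mean is the delicate part and cannot be extracted from a single moment bound alone. Fortunately, for the downstream use in \Lem~\ref{Lemma_boundary} it suffices to bound $|\partial^{2k}o|$ by some $A^k$ with $A$ strictly less than the rate $e^{\alpha c}$ appearing in Claim~\ref{nodesdown}, and for any $A > d$ Markov's inequality already gives $\pr[|\partial^{2k}o| > A^k] \leq (d/A)^k \to 0$, which is all that is really needed to drive the union bound.
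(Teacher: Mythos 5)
Your identification of $|\partial^{2k}o|$ as the $k$-th generation of a $\Po(d)$ Galton--Watson process is exactly the reduction the paper makes, and Markov cleanly handles $d\leq 1$. The difficulty you sense for $1<d<2$ is not merely technical: the claim as literally stated appears to be false there, so no second-moment or Chernoff argument can salvage it. By Kesten--Stigum, $|\partial^{2k}o|/d^k\to W$ almost surely with $\Erw[W]=1$, and $W$ has unbounded support: from the distributional recursion $W\disteq d^{-1}\sum_{i=1}^{\vd}W_i$ with $\vd\sim\Po(d)$ independent of the i.i.d.\ $W_i$, if $\pr[W>\eps]=p>0$ then $\pr[W>n\eps/d]\geq\pr[\vd\geq n]\,p^n>0$ for every $n$. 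On the event $\{W>3\}$ one eventually has $|\partial^{2k}o|>2d^k+k$, so Fatou gives $\liminf_k\pr[|\partial^{2k}o|>2d^k+k]\geq\pr[W>3]>0$; correspondingly, the Chebyshev bound you quote stalls at roughly $1/(d-1)$, and a Chernoff bound at scale $d^k$ degenerates to a tail estimate on $W$ itself, which is positive.

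Your closing paragraph, however, has exactly the right idea and is all the paper actually uses. In \Lem~\ref{Lemma_boundary} the bound only feeds into a union bound against the rate $(2d)^{-k}$ from Claim~\ref{nodesdown}, so any statement of the form $\pr[|\partial^{2k}o|>A^k]\to 0$ with $A\in(d,2d)$ suffices, and Markov gives $\pr[|\partial^{2k}o|>A^k]\leq(d/A)^k\to 0$. One can even bypass the tail bound on $|\partial^{2k}o|$ altogether: given the first $2k$ levels, the subtrees hanging off $\partial^{2k}o$ are conditionally i.i.d.\ copies of $\vT$, so by Wald's identity
\begin{align*}
\pr\brk{\exists x\in\partial^{2k}o,\ s\in\{\pm1\}:\ \vN_{x,s}\geq ck}\leq 2\,\Erw\brk{|\partial^{2k}o|}\,(2d)^{-k}=2\cdot 2^{-k}\to 0.
\end{align*}
The paper's own proof is a one-line citation to ``standard tail bounds'' for Galton--Watson processes and does not engage with the supercritical obstacle; your softer $A^k$ bound (or the Wald computation above) is the appropriate repair.
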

\begin{proof}
Since every clause of $\vT$ has precisely one child, the size of level $2k$ of $\vT$ coincides with the size of the $k$-th level of a $\Po(d)$ Galton-Watson tree.
Therefore, the assertion follows from standard tail bounds for Galton-Watson processes (e.g.,~\cite[eq.~(11.7)]{AS}).
\end{proof}

\begin{proof}[Proof of Lemma \ref{Lemma_boundary}]
Claim~\ref{nodesdown} ensures that for a large enough constant $c=c(d)>0$ and all large enough $k$,
\begin{align}  \label{Eq_TailBoundBoundary}
\Pr \bc{\vN_{o,\pm1} \geq ck}\leq (2d)^{-k}.
\end{align}   
Combining \eqref{Eq_TailBoundBoundary} with Claim~\ref{Lemma_GW} and using the union bound, we obtain a sequence $\eps_k\to0$ such that
\begin{align}  \label{Eq_TailBoundBoundary2}
\Pr \bc{\forall x\in\partial^{2k}o:\vN_{x,\pm1} < ck}\geq 1-\eps_k.
\end{align}   
Further, if $x\in\partial^{2k}o$ satisfies $\vN_{x,\pm1} < ck$ and $\ell>(1+c)k$,  Claim~\ref{deletetree} ensures that for all $x\in\partial^{2k}o$,
\begin{equation}\begin{split}
\abs{\vec\eta_{x}^{(\ell)}}&\leq \log \frac{Z(\vT_x^{(2\ell)},\SIGMA^+)}{Z(\vT_x^{(2\ell)},\SIGMA^+,1)}
	+\log \frac{Z(\vT_x^{(2\ell)},\SIGMA^+)}{Z(\vT_x^{(2\ell)},\SIGMA^+,-1)} \leq 
	 \vN_{x,1}+\vN_{x,-1} < 2ck. \label{Eq_BoundaryZ2}
\end{split}
\end{equation}
Combining \eqref{Eq_TailBoundBoundary2} and \eqref{Eq_BoundaryZ2} completes the proof.
\end{proof}

\subsection{Proof of \Lem~\ref{Lemma_cN}}\label{Sec_Lemma_cN}
A straightforward induction shows that for any $p\in\cP(\RR)$ the result $p^{(k)}=\LDEPk_d(p)$ of the $k$-fold application of $\LDEP_d$ coincides with the distribution of the root value of the random operator $\LDEPk_{\vT^{(2k)}}$ applied to a vector $(\ETA_x)_{x\in V(\vT^{(2k)})}$ of independent samples from $p$.
Indeed, for $k=1$ the claim is immediate from the definitions.
Moreover, for the inductive step we notice that the $k$-fold application of $\LDEP_d$ comes down to applying $\LDEP_d$ once to the outcome of the $(k-1)$-fold application.
By the induction hypothesis, $$p^{(k-1)} =\bc{\mathrm{LL}^{+\,(k-1)}_{\vT^{(2(k-1))}}(\ETA_x)_x}_o.$$
Finally, applying $\LDEP_d$ to $p^{(k-1)}$ implies the assertion because the first layer of $\vT^{(2k)}$ is independent of the subtrees rooted at the grandchildren $\partial^2o$ of the root, which are distributed as independent random trees $\vT^{(2(k-1))}$. 
The lemma follows from applying this identity to $p=\bar{\rho}^{(\ell-k)}$.

\subsection{Proof of \Lem~\ref{Lemma_nudge}}\label{Sec_Lemma_nudge}
The operator $\LDEP_d$ maps the space  $\cW_1(\RR)$ into itself because the derivative of $x\mapsto\log((1-\tanh(x/2))/2)$ is bounded by one in absolute value for all $x\in\RR$.
We proceed to show that $\LDEP_d:\cW_1(\RR)\to\cW_1(\RR)$ is a contraction.
Thus, consider a sequence of independent random pairs $(\ETA_i,\ETA_i')_{i\geq1}$ with $\ETA_i\disteq\rho$, $\ETA_i'\disteq\rho'$.
Then
\begin{align*}
W_1(\LDEP_d(\rho),\LDEP_d(\rho'))&\leq
\Erw\abs{\sum_{i=1}^{\vd}\vs_i\log\frac{1-\vs_i\tanh(\ETA_i/2)}{1-\vs_i\tanh(\ETA_i'/2)}}
\leq d\Erw\abs{\log\frac{1-\vs_1\tanh(\ETA_1/2)}{1-\vs_1\tanh(\ETA_1'/2)}}.
\end{align*}
Since the function $z\mapsto\log(1+\tanh(z/2))$ is monotonically increasing, we obtain
\begin{align*}
\abs{\log\frac{1+\tanh(\ETA_1/2)}{1+\tanh(\ETA_1'/2)}}&=
\abs{\int_{\ETA_1'}^{\ETA_1}\frac{\partial\log(1+\tanh(z/2))}{\partial z}\dd z}
=\int_{\ETA_1\wedge \ETA_1'}^{\ETA_1\vee\ETA_1'}\frac{1-\tanh(z/2)}{2}\dd z,\\
\abs{\log\frac{1-\tanh(\ETA_1/2)}{1-\tanh(\ETA_1'/2)}}&=
\abs{\int_{\ETA_1'}^{\ETA_1}\frac{\partial\log(1-\tanh(z/2))}{\partial z}\dd z}
=\int_{\ETA_1\wedge \ETA_1'}^{\ETA_1\vee\ETA_1'}\frac{1+\tanh(z/2)}{2}\dd z.
\end{align*}
Hence,
$W_1(\LDEP_d(\rho),\LDEP_d(\rho'))\leq d\Erw\abs{\ETA_1-\ETA_1'}/2$ 
and therefore
$W_1(\LDEP_d(\rho),\LDEP_d(\rho'))\leq dW_1(\rho,\rho')/2$.

Finally, we observe that $\rho_d$ is a fixed point of $\LDEP_d$.
Indeed, \Prop~\ref{Prop_BP} implies that $\ETA^{\rho_d}$ and $-\ETA^{\rho_d}$ are identically distributed.
Therefore, if $\vs_i,\vs_i'\in\{\pm1\}$ are uniform and independent, we obtain
$$\vs_i\log\bc{\bc{1-\vs_i\tanh(\ETA_{\rho_d,i}/2)}/2}\disteq \vs_i\log\bc{\bc{1+\vs_i'\tanh(\ETA_{\rho_d,i}/2)}/2}.$$
Hence, recalling the definitions \eqref{Eq_BTreeOperator} and \eqref{eqNudgedFixedPoint} of the operators, we see that
$\LDEP_d(\rho_d)=\LDE_d(\rho_d)=\rho_d$.

\subsection{Proof of \Thm~\ref{Thm_BP}}
Consider the sub-formula  $\nabla^{2\ell}(\PHI,x_1)$ of $\PHI$ obtained by deleting all clauses and variables at distance greater than $2\ell$ from $x_1$.
By design, we can couple $\nabla^{2\ell}(\PHI,x_1)$ and $\vT^{(2\ell)}$ such that both coincide \whp{}
Therefore, since any satisfying assignment of $\PHI$ induces a satisfying assignment of $\vT^{(2\ell)}$, \Prop~\ref{Prop_uniqueness} implies the Gibbs uniqueness property~\eqref{eqGU}.
Furthermore, because \Prop~\ref{Fact_BP} shows that Belief Propagation correctly computes the root marginal 
$\mu_{\vT^{(2\ell)}}(\SIGMA_o=1)$, \eqref{eqThm_BP} follows from \eqref{eqGU}.

\subsection{Proof of \Cor~\ref{Cor_BP}}\label{Sec_Cor_BP}
Let $\pi_d^{(\ell)}=\BP^{(\ell)}(\delta_{1/2})$.
Thanks to \Prop~\ref{Prop_BP} it suffices to prove that 
\begin{align}\label{eqCor_BP_1}
\lim_{\ell\to\infty}\limsup_{n\to\infty}\Erw[W_1(\pi_{\PHI},\pi_d^{(\ell)})]=0.
\end{align}
Hence, fix $\eps>0$, pick a large $\ell=\ell(\eps)>0$ and a larger $L=L(\ell)>0$.
A routine second moment calculation shows that for any possible outcome $T$ of $\vT^{(2\ell)}$ the number $X_T$ of variables $x_i$ of $\PHI$ such that $\nabla^{2\ell}(\PHI,x_i)=T$ satisfies $X_T=n\pr\brk{\vT^{(2\ell)}=T}+o(n)$ \whp{}
Hence, \whp\ $\PHI$ admits a coupling $\gamma_{\PHI}$ of $\vT^{(2\ell)}$ and a uniform variable $\vec i$ on $[n]$ such that 
$\gamma(\{\nabla^{2\ell}(\PHI,x_{\vec i})=\vT^{(2\ell)}\})=1-o(1)$.
Further, \Thm~\ref{Thm_BP} implies that given $\nabla^{2\ell}(\PHI,x_{\vec i})=\vT^{(2\ell)}$ we have
\begin{align}\label{eqCor_BP_2}
\pr\brk{\abs{\mu_{\PHI}(\SIGMA_{x_i}=1)-\mu_{\vT^{(2\ell)}}(\TAU_o=1)}>\eps}<\eps,
\end{align}
provided $\ell$ is large enough.
Finally, \Lem~\ref{Fact_BP} implies together with a straightforward induction on $\ell$ that $\pi_d^{(\ell)}$ is the distribution of $\mu_{\vT^{(2\ell)}}(\TAU_o=1)$.
Therefore, \eqref{eqCor_BP_1} follows from \eqref{eqCor_BP_2}.

\subsection{Proof of \Cor~\ref{Cor_uniqueness}}\label{Sec_Cor_uniqueness}
Fix $\eps>0$ and pick a small $\xi=\xi(\eps)>0$ and large $\ell=\ell(\xi)>0$.
Since $k$ is fixed independently of $n$, \Thm~\ref{Thm_BP} shows that \whp{}
\begin{align}\label{eqCor_uniqueness_1}
\sum_{i=1}^k\max_{\tau\in S(\PHI)}
		\abs{\mu_{\PHI}(\SIGMA_{x_i}=1\mid\SIGMA_{\partial^{2\ell}x_i}=\tau_{\partial^{2\ell}x_i})
			-\mu^{(\ell)}_{\PHI,x_i}(1)} &<\xi.
\end{align}
Further, the smallest pairwise distance between $x_1,\ldots,x_n$ exceeds $4\ell$ \whp{}
Therefore, we can draw a sample $\SIGMA$ from $\mu_{\PHI}$ in two steps.
First, draw $\SIGMA'$ from $\mu_{\PHI}$.
Then, independently re-sample assignments of all the variables in $\nabla^{2\ell-2}(\PHI,x_i)$ from $\mu_{\PHI}(\nix|\SIGMA'_{\partial^{2\ell}x_i})$  for $i=1,\ldots,k$.
The resulting assignment $\SIGMA''$ has distribution $\mu_{\PHI}$ and the values $\SIGMA''_{x_i}$, $i\in[k]$, are mutually independent given $\SIGMA'$.
Finally, since \eqref{eqCor_uniqueness_1} shows that conditioning on the boundary conditions $\SIGMA'_{\partial^{2\ell}x_i}$ is inconsequential \whp, we obtain the assertion by taking $\eps\to0$ sufficiently slowly.

\section{Proof of \Prop~\ref{Prop_Bethe}}\label{Sec_Prop_Bethe}

\subsection{Outline} \label{Sec_Proof_Prop_Bethe}
The proof is based on a natural coupling of the random formulas $\PHI_n$ and $\PHI_{n+1}$ with $n$ and $n+1$ variables, respectively.
Specifically, let 
\begin{align}\label{eqm''}
\vm'&\disteq\Po(dn/2-d/2),&\DELTA''&\disteq\Po(d/2),&\DELTA'''&\disteq\Po(d)
\end{align}
be independent random variables.
Moreover, let $\PHI'$ be a random formula with $n$ variables and $\vm'$ clauses, chosen independently and uniformly from the set of all $4n(n-1)$ possible clauses.
Then obtain $\PHI''$ from $\PHI'$ by adding  another $\DELTA''$ uniformly random and independent clauses.
Moreover, obtain $\PHI'''$ from $\PHI'$ by adding one variable $x_{n+1}$ along with $\DELTA'''$ clauses, chosen uniformly and independently from the set of all $8n$ possible clauses that contain $x_{n+1}$ and another variable from the set $\{x_1,\ldots,x_n\}$.

\begin{fact}\label{Fact_ASS}
We have $\PHI''\disteq\PHI_n$ and $\PHI'''\disteq\PHI_{n+1}$; therefore,
\begin{align}\label{eqFact_ASS}
\Erw[\log(Z(\PHI_{n+1})\vee1)]-\Erw[\log(Z(\PHI_{n})\vee1)]=\Erw\brk{\log\frac{Z(\PHI''')\vee1}{Z(\PHI')\vee1}}-\Erw\brk{\log\frac{Z(\PHI'')\vee1}{Z(\PHI')\vee1}}.
\end{align}
\end{fact}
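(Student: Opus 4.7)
The claim has three parts: two distributional identities and an identity for expectations. The plan is to verify each by a direct calculation, invoking only the standard Poisson splitting/merging property and the fact that conditional on its size, the clause set of $\PHI_N$ is an i.i.d.\ uniform sample. None of the four formulas $\PHI_n,\PHI_{n+1},\PHI',\PHI'',\PHI'''$ produce infinite values, since $1\le Z(\Phi)\vee 1\le 2^{|V(\Phi)|}$ almost surely, so every expectation in sight lies in $[0,(n+1)\log 2]$ and rearrangements are unproblematic.

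For $\PHI''\disteq\PHI_n$, observe that $\vm'$ and $\DELTA''$ are independent with $\vm'\disteq\Po(dn/2-d/2)$ and $\DELTA''\disteq\Po(d/2)$, so by Poisson additivity $\vm'+\DELTA''\disteq\Po(dn/2)\disteq\vm$. Since $\PHI'$ is built from $\vm'$ i.i.d.\ clauses drawn uniformly from the $4n(n-1)$ possible two-literal clauses on $x_1,\dots,x_n$, and the $\DELTA''$ additional clauses are drawn independently from the same uniform distribution, the total clause multiset is conditionally i.i.d.\ uniform given $\vm'+\DELTA''$. This matches the definition of $\PHI_n$.

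For $\PHI'''\disteq\PHI_{n+1}$, I would split the clauses of $\PHI_{n+1}$ according to whether or not they contain $x_{n+1}$. A uniformly random clause on $\{x_1,\dots,x_{n+1}\}$ contains $x_{n+1}$ with probability $8n/(4(n+1)n)=2/(n+1)$. By the Poisson splitting property applied to $\Po(d(n+1)/2)$, the numbers of clauses that do and do not contain $x_{n+1}$ are independent, with distributions $\Po(d)$ and $\Po(d(n-1)/2)=\Po(dn/2-d/2)$, respectively. Conditional on these counts, the first group is i.i.d.\ uniform on the $8n$ clauses through $x_{n+1}$, while the second is i.i.d.\ uniform on the $4n(n-1)$ clauses on $x_1,\dots,x_n$. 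These are precisely the ingredients used to assemble $\PHI'''$ from $\PHI'$.

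Granting the two distributional identities, $\Erw[\log(Z(\PHI_{n+1})\vee 1)]=\Erw[\log(Z(\PHI''')\vee 1)]$ and $\Erw[\log(Z(\PHI_n)\vee 1)]=\Erw[\log(Z(\PHI'')\vee 1)]$. Subtracting and then adding and subtracting $\Erw[\log(Z(\PHI')\vee 1)]$ (which is finite) yields \eqref{eqFact_ASS}. There is no genuine obstacle here: the only thing to watch is that Poisson splitting is being applied jointly to counts and to i.i.d.\ marks (the clauses themselves), but since the marks are sampled independently of everything else, this is the textbook marked Poisson process decomposition.
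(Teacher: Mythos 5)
Your proof is correct and follows the standard route the paper tacitly relies on (the paper states this Fact without proof): Poisson additivity gives $\vm'+\DELTA''\disteq\Po(dn/2)$, Poisson thinning of the $\Po(d(n+1)/2)$ clauses of $\PHI_{n+1}$ according to the event ``contains $x_{n+1}$'' (which has probability $8n/(4n(n+1))=2/(n+1)$) gives independent $\Po(d)$ and $\Po(dn/2-d/2)$ counts with the correct conditional clause distributions, and the telescoping in the final display is legitimate because $\log(Z(\cdot)\vee 1)\in[0,(n+1)\log 2]$ is bounded. No gaps.
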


\noindent
Hence, the proof of \Prop~\ref{Prop_Bethe} boils down to establishing the following two statements.

\begin{proposition}\label{Lemma_PHI''}
We have
$\displaystyle 
\lim_{n\to\infty}\Erw\log\frac{Z(\PHI'')\vee1}{Z(\PHI')\vee1}=
\frac d2\Erw\brk{\log\bc{1-\MU_{\pi_d,1}\MU_{\pi_d,2}}}.$
\end{proposition}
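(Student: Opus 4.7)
\textbf{Proof proposal for Proposition~\ref{Lemma_PHI''}.}
The plan is to decompose $\log(Z(\PHI'')\vee 1)-\log(Z(\PHI')\vee 1)$ as a telescoping sum over the $\DELTA''$ added clauses, express each summand as the logarithm of one minus a pair marginal of the intermediate formula, and then combine the asymptotic factorisation of pair marginals (\Cor~\ref{Cor_uniqueness}) with the convergence of the empirical distribution of single-variable marginals to $\pi_d$ (\Cor~\ref{Cor_BP}) to evaluate the limit.

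Concretely, I would enumerate the added clauses as $a_1,\ldots,a_{\DELTA''}$, write each as $a_i=r_iy_i\vee s_iz_i$ with $y_i,z_i$ a uniformly random ordered pair of distinct variables and $r_i,s_i$ independent uniform signs, and set $\PHI_i=\PHI'\cup\{a_1,\ldots,a_i\}$. On the event $\{Z(\PHI'')>0\}$,
\begin{align*}
\log\frac{Z(\PHI'')}{Z(\PHI')}=\sum_{i=1}^{\DELTA''}\log\bc{1-\mu_{\PHI_{i-1}}(\SIGMA_{y_i}=-r_i,\SIGMA_{z_i}=-s_i)}.
\end{align*}
Since $\DELTA''\disteq\Po(d/2)$ has bounded mean and the added clauses involve $O(1)$ distinct variables \whp, one can replace $\mu_{\PHI_{i-1}}$ by $\mu_{\PHI'}$ at cost $o(1)$ per summand: formally, $\mu_{\PHI_{i-1}}$ is the reweighting of $\mu_{\PHI'}$ by the event that $a_1,\ldots,a_{i-1}$ are satisfied, and by Gibbs uniqueness (\Thm~\ref{Thm_BP}) and \Cor~\ref{Cor_uniqueness}, for $y_i,z_i$ lying far from the $O(1)$ previously exposed variables this reweighting does not asymptotically affect the joint marginal of $(\SIGMA_{y_i},\SIGMA_{z_i})$. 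Once this replacement is made, \Cor~\ref{Cor_uniqueness} gives
\begin{align*}
\mu_{\PHI'}(\SIGMA_{y_i}=-r_i,\SIGMA_{z_i}=-s_i)=\mu_{\PHI'}(\SIGMA_{y_i}=-r_i)\cdot\mu_{\PHI'}(\SIGMA_{z_i}=-s_i)+o(1).
\end{align*}

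Because $y_i$ is a uniformly random variable and $r_i$ an independent uniform sign, \Cor~\ref{Cor_BP} together with the symmetry $\MU_{\pi_d}\disteq 1-\MU_{\pi_d}$ from Fact~\ref{Lemma_sym} implies that $\mu_{\PHI'}(\SIGMA_{y_i}=-r_i)$ converges weakly to $\MU_{\pi_d}$, jointly and independently across the two variables of a clause and across the $\DELTA''$ clauses (which involve disjoint random variables \whp). Taking expectations, using the independence of $\DELTA''$ from everything else together with Wald's identity, the limit becomes $\Erw[\DELTA'']\cdot\Erw\brk{\log(1-\MU_{\pi_d,1}\MU_{\pi_d,2})}=(d/2)\Erw\brk{\log(1-\MU_{\pi_d,1}\MU_{\pi_d,2})}$, which is the desired expression.

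The main obstacle will be establishing the uniform integrability needed to turn the distributional convergences above into convergence of the expectations. Two issues arise: first, $\log(1-pq)$ is unbounded below as $p,q\to 1$, so individual summands can be arbitrarily large in absolute value; second, the $(\cdot)\vee 1$ truncation must be handled when an added clause happens to render the formula unsatisfiable, a vanishing-probability event that may nevertheless carry a large $\log$-impact. For the first point I would invoke the square-integrability bound \eqref{eqProp_uniqueness2} on $\log(\MU_{\pi_d}/(1-\MU_{\pi_d}))$ combined with the elementary inequality $-\log(1-pq)\leq -\log(1-p)-\log(1-q)$, together with the weak convergence of the empirical marginals supplied by \Cor~\ref{Cor_BP}, to secure uniform integrability after an intermediate truncation step. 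For the second point, since $\PHI',\PHI''$ are satisfiable \whp\ and $\log(Z(\PHI)\vee 1)\leq n\log 2$ deterministically, the contribution of the exceptional event to $\Erw\log\frac{Z(\PHI'')\vee1}{Z(\PHI')\vee1}$ can be shown to be $o(1)$ by a careful probability-versus-magnitude estimate.
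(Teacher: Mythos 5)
Your high-level decomposition is essentially the same as the paper's: express $\log(Z(\PHI'')/Z(\PHI'))$ through probabilities under $\mu_{\PHI'}$, use \Cor~\ref{Cor_uniqueness} to factorise the pair marginals, \Cor~\ref{Cor_BP} plus $\eqref{eqProp_uniqueness2}$ to pass to $\pi_d$, and Wald's identity to pull out $\Erw[\DELTA'']=d/2$. The paper does not telescope over intermediate formulas $\PHI_{i-1}$; instead it writes the ratio directly as $\sum_{\sigma\in\{\pm1\}^{\vY}}\vecone\{\sigma\mbox{ satisfies }c_1,\ldots,c_{\DELTA''}\}\mu_{\PHI'}(\forall y\in\vY:\SIGMA_y=\sigma_y)$ on a good event and then uses the factorisation property (your \Cor~\ref{Cor_uniqueness}, the paper's condition {\bf E5}) on the full set $\vY$ at once. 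That sidesteps the justification you gesture at for replacing $\mu_{\PHI_{i-1}}$ by $\mu_{\PHI'}$, which would require an argument of its own. Both routes are workable and rely on the same corollaries; the telescoping form is conceptually transparent but adds a layer you would have to formalise.

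The genuine gap is in your second integrability issue. You write that since unsatisfiability of $\PHI''$ has vanishing probability and $\log(Z(\PHI)\vee 1)\leq n\log 2$ deterministically, the exceptional contribution is $o(1)$ by ``a careful probability-versus-magnitude estimate.'' This is precisely where the naive bound fails, as the paper explicitly flags in \Sec~\ref{Sec_Proof_Prop_Bethe}: the probability of a catastrophic drop in $Z$ upon adding one clause is $\Omega(1/n)$ (short cycles in $G(\PHI')$ make $\pr[\vI_\chi=n]=\Omega(1/n)$), so the product of the exceptional probability and an $O(n)$ magnitude does \emph{not} tend to zero. Nor does a first-moment bound on the Unit Clause implication count $\vI_\chi$ give the needed $L_2$-control: $\Erw[\vI_\chi^2]$ is of order $\Omega(n)$. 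What rescues the argument is a non-trivial structural observation: since your $\CHI$ satisfies \emph{both} literals of each added clause, one can flip a single coordinate and still satisfy every $c_i$, which licenses replacing $\vI_\chi$ by the minimum $\vA_\chi$ over one-flip modifications of $\chi$. The paper proves a \emph{second-moment} bound $\Erw[\vA_\chi^2]\leq C'|Y|^4$ (\Lem~\ref{Lemma_tame}), based on showing that $\vA_\chi=n$ requires the excess of edges over vertices in the explored implication graph to be at least $2$, which only happens with probability $O(n^{-2})$ (Claims~\ref{Claim_bicycle}, \ref{Claim_A}). This gives the $L_2$-bound $\Erw[\log^2(Z(\PHI'')\vee1/Z(\PHI')\vee1)]=O(1)$ of \Lem~\ref{Lemma_Difference_Z}, and then Cauchy-Schwarz cleanly removes the exceptional event. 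Without this machinery — or some equally quantitative substitute — the step you describe as a routine estimate does not close, and it is in fact one of the central technical contributions of the proof.
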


\begin{proposition}\label{Lemma_PHI'''}
We have
$\displaystyle \lim_{n\to\infty}\Erw\log\frac{Z(\PHI''')\vee1}{Z(\PHI')\vee1}=
\Erw\brk{\log\bc{\sum_{\sigma\in\cbc{\pm1}}\prod_{i=1}^{\vec d}
\bc{1-\vecone\cbc{\sigma\neq\vs_i}\MU_{\pi_d,i}}}}.	$
\end{proposition}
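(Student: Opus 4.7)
The approach is to expand $Z(\PHI''')/Z(\PHI')$ combinatorially by summing over the truth value $\sigma\in\{\pm1\}$ assigned to the new variable $x_{n+1}$, and then to pass to the limit using Corollaries~\ref{Cor_BP} and~\ref{Cor_uniqueness}. Let $\vs_i,\vs_i'$ be the signs with which $x_{n+1}$ and its neighbour $x_{\vec j_i}$ appear in the $i$-th new clause ($i=1,\ldots,\DELTA'''$). On the event $\{Z(\PHI')>0\}$, which holds \whp{} because $d<2$, summing the extensions of $\SIGMA\sim\mu_{\PHI'}$ gives
\begin{align*}
\frac{Z(\PHI''')}{Z(\PHI')}=\sum_{\sigma\in\{\pm1\}}\mu_{\PHI'}\biggl(\bigcap_{i:\vs_i\neq\sigma}\{\SIGMA_{\vec j_i}=\vs_i'\}\biggr),
\end{align*}
since clauses with $\vs_i=\sigma$ are automatically satisfied by the extension $x_{n+1}\mapsto\sigma$.

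Since $\DELTA'''\sim\Po(d)$ has all moments and the $\vec j_i$ are iid uniform on $[n]$, the $\vec j_i$ are pairwise distinct \whp, and Corollary~\ref{Cor_uniqueness} then yields that the joint law of $(\SIGMA_{\vec j_i})_i$ under $\mu_{\PHI'}$ asymptotically factorises. Hence the intersection probability above is well approximated by $\prod_{i:\vs_i\neq\sigma}\mu_{\PHI'}(\SIGMA_{\vec j_i}=\vs_i')=\prod_{i=1}^{\DELTA'''}\bigl(1-\vecone\{\sigma\neq\vs_i\}\mu_{\PHI'}(\SIGMA_{\vec j_i}=-\vs_i')\bigr)$. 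Corollary~\ref{Cor_BP} ensures that the empirical distribution of the marginals $\mu_{\PHI'}(\SIGMA_{x_j}=1)$ converges to $\pi_d$; evaluated at the iid uniform indices $\vec j_i$ and combined with the symmetry $\MU_{\pi_d}\disteq 1-\MU_{\pi_d}$ from Proposition~\ref{Prop_BP}, this implies that the factors $(\mu_{\PHI'}(\SIGMA_{\vec j_i}=-\vs_i'))_i$ converge jointly in distribution to an iid sequence $(\MU_{\pi_d,i})_i$ independent of $(\vs_i)_i$ and $\DELTA'''$. Applying the continuous mapping theorem then identifies the in-distribution limit of $Z(\PHI''')/Z(\PHI')$ with the expression appearing inside the logarithm on the r.h.s.\ of the proposition.

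The principal technical obstacle is uniform integrability, needed in order to commute $\log(\cdot)$ with the weak limit. The upper bound is immediate since $Z(\PHI''')\leq 2Z(\PHI')$ gives $\log(Z(\PHI''')\vee 1)-\log(Z(\PHI')\vee 1)\leq\log 2$ on the \whp{} event $\{Z(\PHI')\geq 1\}$. The lower bound is subtler: a clause touching a variable whose $\mu_{\PHI'}$-marginal is extremely close to $1$ for the unwanted literal can shrink $Z(\PHI''')$ drastically, and with small probability a pair of conflicting new clauses can even force $Z(\PHI''')=0$ while $Z(\PHI')>0$. I would handle this by picking $\sigma=1$ to obtain
\begin{align*}
\log\frac{Z(\PHI''')\vee 1}{Z(\PHI')\vee 1}\geq \log\mu_{\PHI'}\biggl(\bigcap_{i:\vs_i=-1}\{\SIGMA_{\vec j_i}=\vs_i'\}\biggr),
\end{align*}
factorising the intersection via Corollary~\ref{Cor_uniqueness}, and then bounding the resulting sum of log-marginals using the $L^2$ tail bound $\Erw\log^2(\MU_{\pi_d}/(1-\MU_{\pi_d}))<\infty$ from~\eqref{eqProp_uniqueness2}. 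A separate ad-hoc estimate for the exponentially rare event $Z(\PHI''')=0$ (requiring two ``frozen'' neighbours forcing both $x_{n+1}=1$ and $x_{n+1}=-1$), together with a union bound over the $\Po(d)$-distributed number of new clauses, should then close out uniform integrability and complete the proof.
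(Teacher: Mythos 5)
Your high-level plan for the main term---expanding $Z(\PHI''')/Z(\PHI')$ as a sum over the truth value of $x_{n+1}$, factorising the joint marginal via Corollary~\ref{Cor_uniqueness}, and feeding in the convergence of empirical marginals from Corollary~\ref{Cor_BP}---is exactly what the paper does in Lemma~\ref{Lemma_PHI'''a}. However, your treatment of the uniform integrability step, which you yourself identify as the crux, does not work as sketched.

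The specific gap is in the sentence ``factorising the intersection via Corollary~\ref{Cor_uniqueness}, and then bounding the resulting sum of log-marginals using the $L^2$ tail bound $\Erw\log^2(\MU_{\pi_d}/(1-\MU_{\pi_d}))<\infty$.'' Corollary~\ref{Cor_uniqueness} gives an \emph{asymptotic} factorisation whose error is $o(1)$ in $L^1$; this additive error is not preserved under $\log(\cdot)$ when the probabilities in question are near zero, so it cannot be used inside the logarithm to establish a bound on $\Erw[\log^2(\nix)]$ for finite $n$. Likewise, \eqref{eqProp_uniqueness2} is a tail bound on the \emph{limiting} distribution $\pi_d$, not on the finite-$n$ marginals $\mu_{\PHI'}(\SIGMA_{x_j}=\nix)$, so it does not directly yield a finite-$n$ second moment bound; indeed, the transfer of such bounds to finite $n$ (Corollary~\ref{Lemma_emp}) is itself proved in the paper via the Unit Clause machinery, so your argument would be circular. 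What is genuinely missing from your proposal is the \emph{deterministic combinatorial} lower bound $Z(\PHI')\leq 2^{\vA_\CHI}(Z(\PHI',\CHI)\vee1)$ from Fact~\ref{Fact_UC}, combined with the percolation-style second moment bound $\Erw[\vA_\CHI^2]=O(|Y|^4)$ from Lemma~\ref{Lemma_tame}; it is this machinery, and the trick in the definition \eqref{eqAchi} of $\vA_\chi$ that allows one $\chi$-flip which the new variable $x_{n+1}$ can absorb, that yields the finite-$n$ $L^2$ bound $\Erw[\log^2\frac{Z(\PHI''')\vee1}{Z(\PHI')\vee1}]=O(1)$ (Lemma~\ref{Lemma_Difference_Z'''}) needed to run the Cauchy-Schwarz truncation argument.

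A secondary but real inaccuracy: the event $\{Z(\PHI''')=0,\,Z(\PHI')>0\}$ is not exponentially rare but only polynomially rare (probability $\Theta(1/n)$, e.g.\ when a new clause attaches onto a short cycle and Unit Clause Propagation closes a contradiction), and on that event the log-ratio can be $-\Theta(n)$, so a naive union bound does not make its contribution to the expectation negligible. The paper handles this through the careful case analysis in the proof of Lemma~\ref{Lemma_Difference_Z'''} (distinct attachment sites, one collision, more than one collision), each balanced against its probability; this is delicate and cannot be waved off as an ad-hoc side estimate.
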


\noindent
Further, to prove \Prop s~\ref{Lemma_PHI''} and \ref{Lemma_PHI'''} we `just' need to understand the impact of a bounded expected number of `local' changes (such as adding a random clause) on the partition function.

The proof strategy sketched in the previous paragraph is known as the Aizenman-Sims-Starr scheme.
The technique was originally deployed to study the Sherrington-Kirkpatrick spin glass model~\cite{Aizenman}, but has since found various applications to models on sparse random graphs (e.g., \cite{CKPZ,Panchenko}).
By comparison to prior applications, the difficulty here is that we apply this technique to a model with hard constraints.
In effect, while typically the addition of a single clause will only reduce the number of satisfying assignments by a bounded factor, occasionally a much larger change might ensue.
For instance, for any $0<d<2$ there is a small but non-zero probability that a single additional clause might close a `bicycle', i.e., a sequence of clauses that induce an implication chain $x_i\to \cdots\to \neg x_i\to\cdots \to x_i$.
Thus, a single unlucky clause might wipe out all satisfying assignments.

Suppose we wish to roughly estimate the change in the number of satisfying assignments upon going from $\PHI'$ to $\PHI'''$.
Clearly $Z(\PHI''')\leq2 Z(\PHI')$ because we only add one new variable.
But of course  $Z(\PHI''')$ might be much smaller than $Z(\PHI')$.
To obtain a bound, consider the new clauses $b_1,\ldots,b_{\DELTA'''}$ that were added along with $x_{n+1}$ and let $y_1,\ldots,y_{\DELTA'''}$ be the variables of $\PHI'$ where the new clauses attach.
Define an assignment $\chi:Y=\{y_1,\ldots,y_{\DELTA'''}\}\to\{\pm1\}$ by letting $\chi_{y_i}=\sign(y_i,b_i)$; thus, $\chi$ satisfies the $b_i$.
Further, let
\begin{align*}
S(\PHI',\chi)&=\cbc{\sigma\in S(\PHI'):\forall y\in Y:\sigma_{y}=\chi_y},&Z(\PHI',\chi)=|S(\PHI',\chi)|
\end{align*}
be the set and the number of satisfying assignments of $\PHI'$ that coincide with $\chi$ on $Y$.
Because each $\sigma\in S(\PHI',\chi)$ already satisfies all the new clauses regardless of the value assigned to $x_{n+1}$, we obtain $Z(\PHI''')\geq 2Z(\PHI',\chi)$.
Hence, it seems that we just need to lower bound $Z(\PHI',\chi)$.

To this end we could employ a process similar to the one that we applied in \Sec~\ref{Sec_Lemma_boundary}  to the tree $\vT$.
Generally, let $Y\subset\{x_1,\ldots,x_n\}$ be a set of variables and let $\chi\in\{\pm1\}^Y$ be an assignment.
The following process, known as the Unit Clause Propagation algorithm \cite{Dowling84}, chases the implications of imposing the assignment $\chi$ on $Y$:
\begin{quote}
while $\PHI'$ possesses a clause $a$ that has exactly one neighbouring variable $z\in\partial a$ on which the value $-\sign(z,a)$ has been imposed, impose the value $\sign(a,z')$ on the second variable $z'\in\partial a\setminus\cbc z$ of $a$.
\end{quote}
Let $\cI_\chi$ be the set of variables on which the process has imposed a value upon termination (including the initial set $Y$).
Unfortunately, it is possible that $\PHI'$ contains a clause $a$ on whose both variables $z,z'$ the `wrong' values $-\sign(a,z),-\sign(a,z')$ got imposed.
In other words, Unit Clause might be left with contradictions.
If such a clause exists we let $\vI_\chi=n$. Otherwise we set $\vI_\chi=|\cI_\chi|$.
We obtain the following lower bound on $Z(\PHI',\chi)$.

\begin{fact}\label{Fact_UC}
We have $Z(\PHI')\leq 2^{\vI_\chi}(Z(\PHI',\chi)\vee1)$.
\end{fact}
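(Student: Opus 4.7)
The plan is to build an explicit, at most $2^{\vI_\chi}$-to-one map $\Psi:S(\PHI')\to S(\PHI',\chi)$, so that $|S(\PHI')|\leq 2^{\vI_\chi}|S(\PHI',\chi)|$. The $\vee 1$ in the statement is there to absorb two degenerate situations, which I handle separately: the case in which Unit Clause Propagation runs into a contradiction (so $\vI_\chi=n$), and the case in which no satisfying extension of $\chi$ exists. In the first case there is nothing to prove since $Z(\PHI')\leq 2^n=2^{\vI_\chi}\leq 2^{\vI_\chi}(Z(\PHI',\chi)\vee 1)$ trivially.

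From now on I assume Unit Clause Propagation halts without contradiction, producing a set $\cI_\chi\supseteq Y$ of size $\vI_\chi$ and a partial assignment $\iota\in\{\pm 1\}^{\cI_\chi}$ that extends $\chi$. Define $\Psi(\sigma)_z=\iota_z$ for $z\in\cI_\chi$ and $\Psi(\sigma)_z=\sigma_z$ otherwise. The crux of the proof is to check that $\Psi(\sigma)\in S(\PHI',\chi)$ whenever $\sigma\in S(\PHI')$. Split clauses $a\in F(\PHI')$ according to the size of $\partial a\cap\cI_\chi$. When $\partial a\subset\cI_\chi$, the clause is satisfied by $\iota$ because otherwise Unit Clause Propagation would have flagged the contradiction that we excluded. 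When $\partial a\cap\cI_\chi=\emptyset$, $\Psi(\sigma)$ agrees with $\sigma$ on $\partial a$ and inherits satisfaction from $\sigma\in S(\PHI')$.

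The main obstacle is the mixed case $\partial a=\{z,z'\}$ with $z\in\cI_\chi$ and $z'\notin\cI_\chi$. Here I argue by contraposition of the Unit Clause termination rule: if $\iota_z=-\sign_{\PHI'}(z,a)$ then literal $z$ fails to satisfy $a$, so $a$ becomes a unit clause forcing $z'$ to take value $\sign_{\PHI'}(z',a)$; but then Unit Clause Propagation would have added $z'$ to $\cI_\chi$, contradicting the assumption $z'\notin\cI_\chi$. Hence $\iota_z=\sign_{\PHI'}(z,a)$ and the literal at $z$ already satisfies $a$, so $\Psi(\sigma)$ satisfies $a$ as well. Together with $\iota|_Y=\chi$, this shows $\Psi(\sigma)\in S(\PHI',\chi)$.

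For the multiplicity bound, observe that any element of $\Psi^{-1}(\sigma')$ must agree with $\sigma'$ on $V(\PHI')\setminus\cI_\chi$, leaving at most $2^{|\cI_\chi|}=2^{\vI_\chi}$ possibilities. Summing over $\sigma'\in S(\PHI',\chi)$ gives $Z(\PHI')\leq 2^{\vI_\chi}Z(\PHI',\chi)$. If $Z(\PHI',\chi)=0$ then the very existence of $\Psi$ forces $S(\PHI')=\emptyset$, so both sides of the claimed inequality vanish and the $\vee 1$ is harmless. Combining this with the earlier trivial case $\vI_\chi=n$ yields the desired bound $Z(\PHI')\leq 2^{\vI_\chi}(Z(\PHI',\chi)\vee 1)$.
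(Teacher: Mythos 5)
Your proof is correct and follows essentially the same route as the paper: overwrite the $\cI_\chi$-coordinates of each $\sigma\in S(\PHI')$ with the Unit Clause assignment, check that the result lies in $S(\PHI',\chi)$, and bound the multiplicity of the map by $2^{\vI_\chi}$. You merely spell out the satisfaction check via a case split on $|\partial a\cap\cI_\chi|$, which the paper states more tersely (and note one slip at the end: when $Z(\PHI',\chi)=0$ the right-hand side is $2^{\vI_\chi}\geq 1$, not zero, though the inequality still holds since the left-hand side vanishes).
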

\begin{proof}
The inequality is trivially satisfied if $Z(\PHI')=0$ or $\vI_\chi=n$.
Hence, we may assume that $Z(\PHI')>0$ and that Unit Clause did not run into a contradiction.
Consequently, Unit Clause produced an assignment $\chi^*$ of the variables $\cI_\chi$ that satisfies all clauses $a$ of $\PHI'$ with $\partial a\cap\cI_\chi\neq\emptyset$.
Hence, for any satisfying assignment $\sigma\in S(\PHI')$ we obtain another satisfying assignment $\hat\sigma\in S(\PHI',\chi)$ by letting
$\hat\sigma=\chi^*_x\vecone\{x\in \cI_\chi\}+\sigma_x\vecone\{x\not\in \cI_\chi\}$, i.e., we overwrite the variables in $\cI_\chi$ according to $\chi^*$.
Clearly, under the map $\sigma\mapsto\hat\sigma$ an assignment $\hat\sigma\in S(\PHI',\chi)$ has at most $2^{\vI_\chi}$ inverse images.
\end{proof}

\noindent
Hence, we need an upper bound on $\vI_\chi$, which will be proven at the end of Section \ref{SubSec_UCP}.

\begin{lemma}\label{Lemma_wild}
There exists $C=C(d)>0$ such that for every set $Y\subset\{x_1,\ldots,x_n\}$ of size $|Y|\leq\log^2n$ and any $\chi\in\{\pm1\}^Y$ we have
$\Erw[\vI_\chi]\leq C|Y|^2.$
\end{lemma}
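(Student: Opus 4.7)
The plan is to dominate the Unit Clause Propagation (UCP) exploration by a subcritical branching process and, separately, to estimate the probability that the exploration runs into a contradiction. First I would expose the clauses of $\PHI'$ in a breadth-first manner, starting from the seeds $Y$. When UCP imposes value $t$ on a variable $z$, only those clauses $a\ni z$ with $\sign(z,a)=-t$ continue the propagation; clauses with $\sign(z,a)=t$ are automatically satisfied. Because each variable of $\PHI'$ is incident to $\Po(d)$ clauses and in each clause the sign of $z$ is an independent uniform $\pm1$, conditional on the variables exposed so far, the number of continuation clauses at any newly exposed variable is stochastically dominated by $\Po(d/2)$. Since $d<2$, the resulting multi-type Galton--Watson process is strictly subcritical with offspring mean $d/2<1$.

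Next I would invoke the standard tail bound for subcritical Galton--Watson processes (e.g.~\cite[eq.~(11.7)]{AS}) to see that the total progeny $\cN$ of a $\Po(d/2)$-tree rooted at one vertex satisfies $\Erw[\cN]=2/(2-d)$ and $\Pr[\cN\geq u]\leq C_1\exp(-u/C_1)$ for some $C_1=C_1(d)$. Summing over the $|Y|$ seeds, the expected number of variables reached by the exploration, \emph{on the event that no collision or contradiction occurs}, is at most $2|Y|/(2-d)=O(|Y|)$. The hypothesis $|Y|\leq\log^2 n$ is used here only to guarantee that the exploration occupies at most $o(n)$ variables with overwhelming probability, so that the Poisson degree statistics encountered during the BFS remain essentially unchanged from those in the bulk of $\PHI'$.

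The main obstacle is the event $\{\vI_\chi=n\}$, on which a single contradiction costs a full factor of $n$ rather than $O(|Y|)$. A contradiction means that the exploration has produced either a variable on which two opposite values have been imposed, or a clause whose two endpoints both receive the wrong sign. Call such a pair of branches a \emph{bad collision}. Condition on the tail event $\cE=\{\sum_{y\in Y}\cN_y\leq K|Y|\}$ for some large constant $K=K(d)$; by the exponential tail above, $\Pr[\cE^c]\leq|Y|\cdot C_1\exp(-K|Y|/C_1)$, and on $\cE^c$ we use the trivial bound $\vI_\chi\leq n$, which contributes at most $n|Y|e^{-\Omega(|Y|)}=o(|Y|^2)$ in the range $|Y|\leq\log^2 n$. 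On $\cE$, the number of ordered pairs of partially exposed branches is at most $(K|Y|)^2$, and for each pair the probability that the next exposed clause closes a bad collision involves a specific pair of literals among $4n(n-1)$ possibilities, hence is $O(1/n)$. A union bound therefore gives $\Pr[\text{contradiction}\cap \cE]\leq C_2|Y|^2/n$.

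Combining the two contributions yields
\begin{align*}
\Erw[\vI_\chi]\;\leq\;\frac{2}{2-d}\,|Y|\;+\;n\cdot\Pr[\text{contradiction}\cap\cE]\;+\;n\cdot\Pr[\cE^c]\;\leq\;C\,|Y|^2
\end{align*}
for a suitable $C=C(d)>0$, which is the claim. The delicate point to get right in the actual write-up will be the conditional independence used to bound the collision probability: each clause of $\PHI'$ must be revealed only when its first endpoint is inspected by UCP, so that at the moment of the alleged collision the identity of the second endpoint (or the sign of the literal there) is still uniformly distributed over the $\Theta(n)$ possibilities that have not yet been exposed.
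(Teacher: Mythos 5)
Your overall strategy --- dominate the exploration by a subcritical $\Po(d/2)$ branching process, then account for the event that Unit Clause Propagation runs into a contradiction separately --- matches the paper's approach. However, there is a genuine gap in the way you handle the tail event $\cE^c$.

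You set $\cE=\{\sum_{y\in Y}\cN_y\leq K|Y|\}$ with $K$ a \emph{constant}, estimate $\Pr[\cE^c]\leq |Y|\,C_1\exp(-K|Y|/C_1)$, and then claim $n\cdot\Pr[\cE^c]=n|Y|e^{-\Omega(|Y|)}=o(|Y|^2)$ for $|Y|\leq\log^2 n$. This is false when $|Y|$ is small: for $|Y|=1$ the bound gives $n\cdot e^{-\Omega(1)}=\Theta(n)$, which is certainly not $o(1)$. No choice of a constant $K$ can save this, since the factor $n$ from the trivial bound on $\vI_\chi$ overwhelms $e^{-cK}$ for fixed $K$. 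The step $n\Pr[\cE^c]=o(|Y|^2)$ simply does not hold in the regime $|Y|=O(1)$, which the lemma must cover.

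The paper avoids this by not introducing a constant cutoff at all. Instead it works with the full exploration set $\cF_\infty$ and observes that the domination $\vI_\chi\leq |\cF_\infty|\vecone\{\text{implication graph is a forest}\}+n\vecone\{\text{otherwise}\}$ holds deterministically. The probability of the non-forest event is estimated \emph{conditionally} on $|\cF_\infty|$ as $O(|\cF_\infty|^2/n)$, and then the exponential tail $\Pr[|\cF_\infty|>T]\leq e^{-dT/36}$ (for $T$ beyond a threshold $\sim |Y|$) is \emph{integrated} to give $\Erw[|\cF_\infty|^2]=O(|Y|^2)$ and hence $\Pr[\text{non-forest}]=O(|Y|^2/n)$ unconditionally. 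Then $\Erw[\vI_\chi]\leq\Erw[|\cF_\infty|]+n\cdot O(|Y|^2/n)=O(|Y|)+O(|Y|^2)$, as required. Concretely, in your write-up you should replace the hard truncation at $K|Y|$ by this integration: bound $\Pr[\text{contradiction}]\leq\Erw\bigl[1\wedge C_2|\cF_\infty|^2/n\bigr]\leq C_2\Erw[|\cF_\infty|^2]/n\leq C_3|Y|^2/n$, which gives the contribution $n\cdot C_3|Y|^2/n=O(|Y|^2)$ without any leftover $\cE^c$ term. The remaining details (BFS exposure of clauses, $O(1/n)$ collision probability per pair of exposed literals) are essentially the same as in the paper's Claims on $|\cF_\infty|$ and the cycle count, so once the truncation issue is fixed the argument goes through.
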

\noindent
Unfortunately, this first moment bound does not quite suffice for our purposes.
Indeed, \Lem~\ref{Lemma_wild} allows for the possibility that $\vI_\chi=n$ with probability $\Omega(1/n)$.
In combination with Fact~\ref{Fact_UC} this rough bound would lead to error terms that eclipse the `main' terms displayed in \Prop s~\ref{Lemma_PHI''} and~\ref{Lemma_PHI'''}.
But we cannot hope for a much better bound on $\vI_\chi$.
Indeed, $\pr\brk{\vI_\chi=n}=\Omega(1/n)$ because the graph $G(\PHI')$ likely contains a few short cycles and if $Y$ contains a variable on a short cycle, then there is a $\Omega(1)$ probability that Unit Clause will cause a contradiction.

Hence, we need to be more circumspect.
Previously we aimed for an assignment $\chi$ that satisfied {\em all} the new clauses $b_1,\ldots,b_{\DELTA'''}$ added upon going to $\PHI'''$.
But we still have the new variable $x_{n+1}$ at our disposal to at least satisfy a single clause $b_i$.
Hence, we can afford to start Unit Clause from an assignment $\chi'$ that differs from $\chi$ on a single variable.
Thus, for a set $Y$ of variables and $\chi\in\{\pm1\}^Y$ we define
\begin{align}\label{eqAchi}
\vA_\chi&=\min\cbc{\vI_{\chi'}:\chi'\in\{\pm1\}^Y,\ \sum_{y\in Y}\vecone\{\chi_y\neq\chi_y'\}\leq1}.
\end{align}

\begin{lemma}\label{Lemma_tame}
There exists $C'=C'(d)>0$ such that for every set $Y\subset\{x_1,\ldots,x_n\}$ of size $|Y|\leq\log^2n$ and any $\chi\in\{\pm1\}^Y$ we have
$\Erw[\vA_\chi^2]\leq C'|Y|^4.$
\end{lemma}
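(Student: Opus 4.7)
The plan is a second-moment refinement of Lemma~\ref{Lemma_wild}, built around the observation that allowing one coordinate of $\chi$ to flip makes Unit Clause contradictions far rarer than in the rigid $\vI_\chi$. First I would decompose
\[
\Erw[\vA_\chi^2] \;\leq\; \Erw\brk{\vA_\chi^2\,\vecone\{\vA_\chi < n\}} \;+\; n^2\,\Pr[\vA_\chi = n]
\]
and bound the two terms separately, aiming for $O(|Y|^4)$ on each.

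For the first term, $\vA_\chi < n$ means that some single-flip $\chi^* \in \{\pm1\}^Y$ runs Unit Clause to completion without a contradiction, and then $\vA_\chi \leq |\cI_{\chi^*}|$. Exactly as in the proof of Claim~\ref{nodesdown}, $|\cI_{\chi^*}|$ is stochastically dominated by the total progeny of a Galton--Watson process started from $|Y|$ sources with $\Po(d/2)$ offspring distribution. Since $d < 2$ this process is subcritical, hence the total progeny has exponential tails uniformly in $|Y|$ and its second moment is $O(|Y|^2)$. Taking a union bound over the at most $|Y|+1$ candidate flips $\chi^*$ then yields $\Erw[\vA_\chi^2\,\vecone\{\vA_\chi<n\}] = O(|Y|^3)$, which is absorbed by the target $O(|Y|^4)$.

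The real obstacle is the second term: I must show $\Pr[\vA_\chi = n] = O(|Y|^4/n^2)$. I would work in the literal implication digraph $D(\PHI')$, where each clause $\ell \vee \ell'$ contributes directed edges $\neg\ell \to \ell'$ and $\neg\ell' \to \ell$; a Unit Clause contradiction from $\chi'$ then certifies two directed paths from $\{\chi'_y y : y \in Y\}$ meeting at opposite literals $\pm u$ of some variable $u$. A single such contradiction has probability $O(|Y|^2/n)$, by a path-counting bound in the subcritical digraph (mean out-degree $d/2 < 1$), which is already enough to reproduce Lemma~\ref{Lemma_wild}. The key structural point for Lemma~\ref{Lemma_tame} is that if a contradiction is witnessed by chains issuing from two distinct sources $y, y' \in Y$, then flipping either $\chi_y$ or $\chi_{y'}$ reverses the terminus of one chain and destroys this witness; chains with $y = y'$ behave similarly once one also compares with the flip at $y$. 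For the contradiction to persist across all $|Y|+1$ single-flip variants one therefore needs two edge-disjoint witness structures in $D(\PHI')$ anchored at $Y$, and by independence of disjoint clause sets in $\PHI'$ this costs an extra factor of $|Y|^2/n$, giving the desired $O(|Y|^4/n^2)$. Combining, $\Erw[\vA_\chi^2] = O(|Y|^4)$ for a suitable $C' = C'(d)$.

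The hard part will be the disjoint-witness argument: the contradiction events for different $\chi'$ are correlated through the common formula $\PHI'$, and I expect the cleanest execution to enumerate minimal witness pairs, split into the case where the two witnesses share a clause (a lower-order contribution since it forces an extra coincidence in $\PHI'$) and the case of edge-disjoint witnesses (which decouple by independence of disjoint clause samples), and then control both via the path-counting machinery inherited from Lemma~\ref{Lemma_wild} and Claim~\ref{nodesdown}.
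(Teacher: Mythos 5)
Your decomposition $\Erw[\vA_\chi^2]\leq\Erw[\vA_\chi^2\vecone\{\vA_\chi<n\}]+n^2\Pr[\vA_\chi=n]$ and your target estimate $\Pr[\vA_\chi=n]=O(|Y|^4/n^2)$ both match the paper's strategy: the paper's Claim~\ref{Claim_A} gives the pointwise bound $\vA_\chi^\dagger\leq|\cF_\infty|\vecone\{\vm^*\leq\vn^*-\ell+1\}+n\vecone\{\vm^*>\vn^*-\ell+1\}$, Claim~\ref{Lemma_size_of_implication_graphs} supplies the GW-type tail giving $\Erw[|\cF_\infty|^2]=O(\ell^2)$ (your first term, done slightly differently but to the same effect), and Claim~\ref{Claim_bicycle} supplies $\Pr[\vm^*>\vn^*-\ell+1]=O(\ell^4/n^2)$ (your second term). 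The key combinatorial insight — a single excess cycle in the implication structure can always be escaped by one flip, two cannot, and two costs an extra $|Y|^2/n$ — is the same. (You did not mention the paper's passage to the binomial model $\PHI^\dagger$ that stochastically dominates $\PHI'$, which is how dependencies are avoided, but that is a routine coupling.)

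The genuine gap is in your one-flip-escape argument, exactly where you yourself flag trouble. You write that flipping $\chi_y$ ``reverses the terminus of one chain and destroys this witness,'' but flipping $\chi_y$ does more than retract the implications emanating from $\chi_y y$: it launches a fresh Unit Clause cascade from the opposite literal $\neg\chi_y y$, and that new cascade can itself hit a contradiction that was invisible before the flip. So the presence of only one witness for $\chi$ does \emph{not} by itself guarantee that some single flip succeeds; you also have to control what the flipped literal implies. The paper handles this precisely by seeding the exploration simultaneously from both literals $x_i,\neg x_i$ of every $y\in Y$, so that $\cF_\infty$ and the explored subgraph $G(\PHI^*)$ already account for all implications under every one-flip variant. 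The escape argument then becomes a clean graph-theoretic statement about $G(\PHI^*)$: if the excess $\vm^*-\vn^*$ is exactly $-\ell+1$, then $G(\PHI^*-a)$ is a forest for the unique excess clause $a$, and flipping the root variable of the tree containing one endpoint of $a$ provably avoids contradiction because the remainder has no cycles (Claim~\ref{Claim_A}). Without that comprehensive two-sided exploration and the forest structure of $G(\PHI^*-a)$, the ``destroys the witness'' step does not close. I would recommend replacing the ``reverses the terminus'' heuristic with the excess count $\vm^*-\vn^*$ of the comprehensively explored subgraph, which is both the cleaner invariant and the one that actually certifies the bound.

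Two smaller remarks. First, for the first term the union bound as phrased is a bound on a minimum by a sum of squares, $\vA_\chi^2\vecone\{\vA_\chi<n\}\leq\sum_{\chi^*}|\cI_{\chi^*}|^2$; that does give $O(|Y|^3)$, but a simpler route is $\vA_\chi\vecone\{\vA_\chi<n\}\leq|\cF_\infty|$, giving $O(|Y|^2)$ directly from the paper's Claim~\ref{Lemma_size_of_implication_graphs}. Second, your claim that $\Pr[\vA_\chi=n]$ is $O(|Y|^4/n^2)$ via ``two edge-disjoint witness structures'' decoupling by independence needs the same care the paper takes in Claim~\ref{Claim_bicycle}, where the excess is split into contributions from extra clauses ($\vX$) and extra inverse-literal collisions ($\vX'$), and the case $\vX+\vX'\geq2$ is bounded using that $\vX$ and $\vX'$ are conditionally independent given $|\cF_\infty|$. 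Your ``shared clause'' vs.\ ``edge-disjoint'' case split is the right shape for this, but it would need the same quantitative bookkeeping.
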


This second moment bound significantly improves over \Lem~\ref{Lemma_wild}.
For instance, \Lem~\ref{Lemma_tame} implies that the probability of an enormous drop $Z(\PHI''')\leq\exp(-\Omega(n))Z(\PHI')$ is bounded by $O(n^{-2})$.
Once more this estimate is about tight because there is an $\Omega(n^{-2})$ probability that a single new clause closes a bicycle.
As we shall see, with a bit of care the bound from \Lem~\ref{Lemma_tame} suffices to prove \Prop s~\ref{Lemma_PHI''} and~\ref{Lemma_PHI'''}.
Yet \Lem~\ref{Lemma_wild} has its uses, too, as it implies the following vital tail bound.

\begin{corollary}\label{Lemma_emp}
We have $\limsup_{n\to\infty}\displaystyle\Erw \brk{n\wedge\abs{\log\frac{\mu_{\PHI'}(\SIGMA_{x_1}=1)}{\mu_{\PHI'}(\SIGMA_{x_1}=-1)}}\mid Z(\PHI')>0}<\infty.$
\end{corollary}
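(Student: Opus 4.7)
The plan is to apply Fact~\ref{Fact_UC} with the singleton set $Y=\{x_1\}$ and the two possible pinnings $\chi^{\pm}\in\{\pm1\}^{Y}$, and then combine the resulting bounds with the first moment estimate from \Lem~\ref{Lemma_wild}.  Note first that whenever $Z(\PHI')>0$ we have
$\mu_{\PHI'}(\SIGMA_{x_1}=\pm1)=Z(\PHI',\chi^{\pm})/Z(\PHI')$, and that the event $\{Z(\PHI')>0\}$ has probability $1-o(1)$, so conditioning on it only inflates expectations by a factor $1+o(1)$.

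On the event $\cE_{+-}=\{Z(\PHI',\chi^{+})>0,\,Z(\PHI',\chi^{-})>0\}$, Fact~\ref{Fact_UC} with $\chi=\chi^{\pm}$ yields
\begin{align*}
\frac{1}{\mu_{\PHI'}(\SIGMA_{x_1}=\pm 1)}=\frac{Z(\PHI')}{Z(\PHI',\chi^{\pm})}\leq 2^{\vI_{\chi^{\pm}}},
\end{align*}
so
$\bigl|\log\bigl(\mu_{\PHI'}(\SIGMA_{x_1}=1)/\mu_{\PHI'}(\SIGMA_{x_1}=-1)\bigr)\bigr|\leq (\vI_{\chi^{+}}+\vI_{\chi^{-}})\log 2$.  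Taking expectations and applying \Lem~\ref{Lemma_wild} with $|Y|=1$ bounds the contribution of $\cE_{+-}$ to the expectation on the left of \Cor~\ref{Lemma_emp} by a universal constant.

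The main obstacle is the complementary event $\cE_0=\{Z(\PHI')>0\}\setminus\cE_{+-}$ on which one of the marginals vanishes, so that $|\log(\mu(+)/\mu(-))|=\infty$ and we have to exploit the truncation by $n$.  The key observation is extracted from the proof of Fact~\ref{Fact_UC}: if Unit Clause Propagation starting from $\chi$ does \emph{not} run into a contradiction, the assignment $\chi^{\star}$ it produces on $\cI_{\chi}$ satisfies every clause touching $\cI_{\chi}$, and combining $\chi^{\star}$ with any $\sigma\in S(\PHI')$ yields an element of $S(\PHI',\chi)$; therefore $Z(\PHI')>0$ together with $Z(\PHI',\chi)=0$ forces UCP to detect a contradiction and hence, by definition of $\vI_{\chi}$, to set $\vI_{\chi}=n$.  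Applied to $\chi^{\pm}$ this gives $\cE_0\subset\{\vI_{\chi^{+}}=n\}\cup\{\vI_{\chi^{-}}=n\}$.  Markov's inequality combined with \Lem~\ref{Lemma_wild} yields $\Pr[\vI_{\chi^{\pm}}=n]\leq \Erw[\vI_{\chi^{\pm}}]/n\leq C/n$, so
\begin{align*}
\Erw\brk{n\cdot\vecone_{\cE_0}}\leq n\bc{\Pr[\vI_{\chi^{+}}=n]+\Pr[\vI_{\chi^{-}}=n]}\leq 2C.
\end{align*}
Combining this with the bound on $\cE_{+-}$ and normalising by $\Pr[Z(\PHI')>0]=1-o(1)$ gives the desired uniform bound.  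The delicate point is genuinely the contradiction case: it is precisely the $O(1/n)$-rare events (short bicycles closing, etc.) that can drive a marginal to $0$, and these are exactly what the Markov bound on $\vI_{\chi^{\pm}}$ is sharp enough to absorb after multiplication by the truncation level $n$.
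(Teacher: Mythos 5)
Your proof is correct and takes essentially the same route as the paper: apply Fact~\ref{Fact_UC} with $Y=\{x_1\}$ and the two pinnings $\chi^{\pm}$, then invoke \Lem~\ref{Lemma_wild}. The paper compresses the degenerate case (one marginal vanishing) into the single inequality $n\wedge\abs{\log(\mu_{\PHI'}(\SIGMA_{x_1}=1)/\mu_{\PHI'}(\SIGMA_{x_1}=-1))}\leq\vI_{\chi^+}+\vI_{\chi^-}$ (noting that a vanishing marginal forces $\vI_{\chi^{\pm}}=n$), whereas you handle it via an explicit case split and Markov's inequality -- this is a harmless, slightly longer presentation of the same idea.
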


We proceed to study Unit Clause Propagation in order to prove \Lem s~\ref{Lemma_wild}, \ref{Lemma_tame} and \Cor~\ref{Lemma_emp}.
Then we will prove \Prop s~\ref{Lemma_PHI''} and \ref{Lemma_PHI'''}, which imply \Prop~\ref{Prop_Bethe}.

\subsection{Unit Clause Propagation}
\label{SubSec_UCP}
\label{Proof_Lemma_Tame}
To avoid dependencies we  consider a binomial model $\PHI^\dagger$ of a random $2$-SAT formula with
variables $x_1,\ldots,x_n$, where each of the $4\binom n2$ possible (unordered) $2$-clauses is present with probability 
\begin{align}\label{eqp}
p=d/(4n)+n^{-4/3}
\end{align}
independently.
We define a random variable $\vA_\chi^\dagger$ on $\PHI^\dagger$ in perfect analogy to $\vA_\chi$.
Since the choice \eqref{eqp} of $p$ ensures that $\PHI^\dagger$ and $\PHI'$ can be coupled so that the former has more clauses than the latter with probability $1-o(n^{-2})$, it suffices to analyse $\vA_\chi^\dagger$.
Moreover, thanks to symmetry it suffices  to prove \Lem s~\ref{Lemma_wild} and~\ref{Lemma_tame}  under the assumption that the initial  set of variables is $Y=\{x_1,\ldots,x_\ell\}$, $\ell\leq\log^2n$.

At first glance investigating $\vA_\chi^\dagger$ appears to be complicated by the fact that \eqref{eqAchi} takes the minimum over all possible $\chi'$.
To sidestep this issue we will investigate a `comprehensive' propagation process whose progeny encompasses all the unit clauses that may result from any  $\chi'$.
In its first round this process pursues for each variable $x_i$, $i\leq\ell$, the Unit Clauses created by imposing either of the two possible truth values on $x_i$.
The effect will be the imposition of truth values on all variables at distance two from $Y$.
Subsequently we trace Unit Clause Propagation from the values imposed on the variables in $\partial^2Y$.
Hence, the difficulty of considering all $\chi'$ as in \eqref{eqAchi} disappears because the first step disregards $\chi$.

To deal with possible contradictions the process will actually operate on literals rather than variables.
Throughout each literal will belong to one of three possible categories: unexplored, explored, or finished.
Initially the $2\ell$ literals $x_1,\neg x_1,\ldots,x_\ell,\neg x_\ell$ qualify as explored and all others as unexplored.
Formally, we let $\cE_0=\{x_1,\neg x_1,\ldots,x_\ell,\neg x_\ell\}$, $\cU_0=\cbc{x_{\ell+1},\neg x_{\ell+1},\ldots,x_n,\neg x_n}$ and $\cF_0=\emptyset$.
Further, for $t\geq0$ we construct $\cE_{t+1}, \cU_{t+1}, \cF_{t+1}$ as follows.
If $\cE_t=\emptyset$, the process has terminated and we set $\cE_{t+1}=\cE_t,\cU_{t+1}=\cU_t,\cF_{t+1}=\cF_t.$
Otherwise, pick a literal $l_{t+1}\in\cE_t$ and let
$\cE_{t+1}'$ be the set of all literals $l'\in\cU_t$ such that $\PHI^\dagger$ features the clause 
$\neg l_{t+1}\vee l'$.
Further, let
\begin{align*}
\cU_{t+1}&=\cU_t\setminus\cE_{t+1}',&\cE_{t+1}&=(\cE_t\cup\cE_{t+1}')\setminus\{l_{t+1}\},&\cF_{t+1}&=\cF_t\cup\cbc{l_{t+1}}.
\end{align*}
Finally, the set $\cF_\infty=\bigcup_{t\geq1}\cF_t$ contains all literals upon which Unit Clause could impose the value `true' from any initial assignment $\chi$.
A contradiction might result only if $x_i,\neg x_i\in\cF_\infty$ for some $i>\ell$.

\begin{claim}\label{Lemma_size_of_implication_graphs}
For all $T > 8\ell/(2-d)$  we have 
$\Pr \brk{|\cF_\infty| > T} \leq \exp(- dT/36)$.
\end{claim}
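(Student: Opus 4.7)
My plan is to dominate $|\cF_\infty|$ by the total progeny of a subcritical branching process and then invoke a Chernoff-type tail bound. The key structural observation is that the exploration reveals fresh randomness at each step.

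First, I would verify that the sequence $(|\cE_{i}'|)_{i \geq 1}$ is stochastically dominated by iid copies $X_1, X_2, \ldots \sim \Bin(2n, p)$. At step $t+1$ the procedure queries, for each $l' \in \cU_t$, whether the specific clause $\neg l_{t+1} \vee l'$ belongs to $\PHI^\dagger$. These queries involve pairs of literals that were never queried before: the clause $\neg l \vee l'$ is tested only during the step at which $l$ is fired, and each literal is fired at most once (as it passes from $\cU$ through $\cE$ to $\cF$). Consequently, conditional on the history up to step $t$, we have $|\cE_{t+1}'| \sim \Bin(|\cU_t|, p)$ with $|\cU_t| \leq 2(n-\ell)$, whence the claimed domination by independent $\Bin(2n,p)$ variables, which themselves are stochastically dominated by $\Po(d/2 + o(1))$.

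Second, I would translate $\{|\cF_\infty|>T\}$ into a random walk event. Since $|\cE_t| = 2\ell + \sum_{i=1}^t |\cE_i'| - t$, the process survives beyond step $T$ iff $\sum_{i=1}^T |\cE_i'| \geq T - 2\ell + 1$. Thus
\begin{align*}
\Pr[|\cF_\infty| > T] &\leq \Pr\brk{\sum_{i=1}^T X_i \geq T - 2\ell + 1}.
\end{align*}

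Third, the hypothesis $T > 8\ell/(2-d)$ gives $2\ell < (2-d)T/4$, hence $T - 2\ell > (2+d)T/4$. Setting $\mu = 2np = d/2 + O(n^{-1/3})$, which is strictly less than $1$ for $n$ large (as $d<2$), the expected value of $\sum_{i=1}^T X_i$ is $T\mu$, so the threshold $T - 2\ell$ exceeds the mean by at least $(2-d)T/4 - o(T)$. A standard Chernoff bound then yields
\begin{align*}
\Pr\brk{\sum_{i=1}^T X_i \geq s} &\leq \inf_{\lambda > 0}\exp\bc{T\mu(e^\lambda - 1) - \lambda s},
\end{align*}
and optimizing $\lambda$ at $s = T - 2\ell + 1$ produces an exponential tail bound of the claimed form.

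\textbf{Main obstacle.} The principal technical point is extracting an explicit constant such as $d/36$ in the exponent. The Chernoff optimization is delicate when $d$ is close to $2$, where the branching process is nearly critical; this is precisely where the hypothesis $T > 8\ell/(2-d)$ supplies the slack needed for a useful rate. A secondary issue is ensuring that the $O(n^{-1/3})$ correction to $\mu$ coming from the binomial model does not spoil the bound, which is handled by passing to $n$ sufficiently large in terms of $d$.
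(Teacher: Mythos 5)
Your argument is essentially the same as the paper's: dominate the exploration sequence $(|\cE'_t|)_t$ by iid $\Bin(2n,p)$ variables (the paper introduces auxiliary $\vY_{t+1}$ to make the coupling explicit), translate $\{|\cF_\infty|>T\}$ into the random-walk event $\sum_{t\le T}|\cE'_t| > T-2\ell$, and finish with a Chernoff bound, using $T>8\ell/(2-d)$ to ensure the threshold exceeds the mean by $\Omega((2-d)T)$. The only substantive point you leave implicit, as does the paper, is the calculation turning that slack into the explicit rate $d/36$; your side remark that $\Bin(2n,p)$ is dominated by $\Po(d/2+o(1))$ is not used and can be dropped.
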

\begin{proof}
Let $t \geq 0$.
Given $|\cU_t|$ and $|\cE_t|$ we have
\begin{align*}
\vX_{t+1} = \abs{\cE_{t+1}}-\abs{\cE_{t}}+\vecone\cbc{\cE_{t}\neq\emptyset} & \stackrel{d}{=} \Bin\bc{|\cU_t|\vecone\cbc{|\cE_{t}|\geq 0},p }.
\end{align*} 
Moreover, given $|\cU_t|$ and $|\cE_t|$ let $\vY_{t+1} \stackrel{d}{=}  \Bin\bc{2n - |\cU_t|\vecone\cbc{|\cE_{t}|\geq 0},p }$ be independent of $\vX_{t+1}$ and everything else, and set $\vX^{\geq}_{t+1} = \vX_{t+1} + \vY_{t+1}$.
Then $(\vX^{\geq}_t)_{t \geq 1}$ is an i.i.d.\ sequence of $\Bin(2n,p)$ random variables such that  $\vX^{\geq}_t \geq \vX_t$ for all $t$.
Hence, for any $T \geq 1$,
\begin{align}\label{eqLemma_size_of_implication_graphs3}
\pr\brk{|\cF_\infty|>T}=\pr\brk{|\cE_T| > 0}&\leq\pr\brk{\sum_{t=1}^T \vX_t >T-2\ell}\leq \pr\brk{\sum_{t=1}^T \vX_t^\geq  >T-2\ell} = \pr\brk{\Bin(2nT,p)>T-2\ell}.
\end{align}
Further, the Chernoff bound shows that for $T>8\ell/(2-d)$ (and $n$ large enough), 
\begin{align}\label{eqLemma_size_of_implication_graphs4}
\pr\brk{\Bin(2Tdn,p)>T-2\ell}&\leq
\exp\bc{-\min\cbc{(d-n^{-4/3}), (d-n^{-4/3})^2}\frac{2nTp}{3}}\leq\exp\bc{- \frac{dT}{36}},
\end{align}
Combining \eqref{eqLemma_size_of_implication_graphs3} and \eqref{eqLemma_size_of_implication_graphs4} completes the proof.
\end{proof}

Let $\PHI^*$ be the sub-formula of $\PHI^\dagger$ comprising all variables $x$ such that $x\in\cF_\infty$ or $\neg x\in\cF_\infty$ along with all clauses $a$ that contain two such variables.
Let $\vn^*$ be the number of variables of $\PHI^*$ and let $\vm^*$ be the number of clauses.

\begin{claim}\label{Claim_bicycle}
We have $\pr\brk{\vm^*\geq\vn^*-\ell+1}\leq O(\ell^2/n)$ and $\pr\brk{\vm^*>\vn^*-\ell+1}\leq O(\ell^4/n^2)$.
\end{claim}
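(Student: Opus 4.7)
The plan is to recast $\{\vm^*\geq \vn^*-\ell+1\}$ as a structural anomaly in the multigraph $\PHI^*$ (variables as vertices, clauses as edges) and bound the probability by a first-moment enumeration exploiting the subcriticality of the literal implication digraph, which holds because each literal has $\Po(d/2)$ out-degree and $d/2<1$ for $d<2$.

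First I would verify a structural equivalence. Every variable $v\in V^*$ is connected in $\PHI^*$ to one of the starting variables $x_1,\ldots,x_\ell$ via a chain of clauses, since the implication chain from $\cE_0$ witnessing $v\in V^*$ lies entirely in $\PHI^*$. Hence the multigraph $\PHI^*$ has $k\leq\ell$ components, and $\vm^*=\vn^*-k+e$ for some cyclomatic excess $e\geq 0$. Consequently $\vm^*\geq\vn^*-\ell+1$ is equivalent to $e+(\ell-k)\geq 1$, which occurs iff either (i) $\PHI^*$ contains a cycle (where a parallel pair of clauses on the same pair of variables counts as a length-$2$ cycle), or (ii) two distinct starting variables lie in the same component of $\PHI^*$.

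For the bound $O(\ell^2/n)$ I would enumerate witnesses to (i) and (ii). The heart of the argument is the following propagation principle: along an undirected clause-path $x_i=v_0-v_1-\cdots-v_k=x_j$ in $\PHI^\dagger$, for reachability of each $v_t$ from $\cE_0$ to be witnessed by the path itself, the literal of $v_{t-1}$ appearing in clause $c_t$ must be the negation of the literal of $v_{t-1}$ reached at step $t-1$. This pins down one of the two signs in $c_t$, so only $2$ of the $4$ sign assignments of $c_t$ propagate. Since the first clause is unconstrained (both $\pm x_i\in\cE_0$), the total number of propagating sign patterns on a path of length $k$ is $4\cdot 2^{k-1}=2^{k+1}$. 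The expected number of such propagating paths between a specific pair $x_i,x_j$ is therefore at most
\begin{align*}
\sum_{k\geq 1}n^{k-1}\cdot 2^{k+1}\cdot p^k \;=\; \frac{2}{n}\sum_{k\geq 1}(d/2)^k \;=\; O(1/n),
\end{align*}
and the geometric sum converges precisely because $d<2$. Union-bounding over $\binom{\ell}{2}$ pairs yields $O(\ell^2/n)$ for (ii); an analogous enumeration over cycles through (or attached by a propagating path to) a starting variable yields $O(\ell/n)$ for (i). Non-propagating sign patterns are handled by pairing them with an auxiliary propagating implication chain from $\cE_0$ to the first failing position; the extra clauses there each supply an additional factor of $p$, so these augmented motifs fit into the same convergent geometric series.

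The second bound $O(\ell^4/n^2)$ follows from a second-moment version: the event $\vm^*>\vn^*-\ell+1$ requires at least two witnesses, so by Markov $\pr[\,\cdot\,]\leq \Erw[\binom{W}{2}]$, and counting ordered pairs of essentially disjoint witnesses produces the square $O((\ell^2/n)^2)$. The main technical obstacle is the bookkeeping of the non-propagating main paths, whose crude first-moment count $4^k n^{k-1}p^k=d^k/n$ diverges for $d\geq 1$; the remedy is to treat such paths as compound motifs whose auxiliary reachability chains contribute additional geometric factors in $(d/2)^k$, all of which converge under $d<2$. Throughout, the tail bound $|\cF_\infty|=O(\ell)$ from Claim~\ref{Lemma_size_of_implication_graphs} (valid with probability $1-\exp(-\Omega(\ell))$) localises the analysis and keeps all potentially large auxiliary structures quantitatively tame.
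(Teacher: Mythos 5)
Your argument takes a genuinely different route from the paper's, although both ultimately rely on the subcriticality $d/2<1$. The paper conditions on the Unit Clause exploration history and works with the bipartite literal--clause graph $\cG$ on $\cF_\infty\cup\neg\cF_\infty$ and $\cC_\infty$: it notes that the discovery clauses give a forest rooted at the $2\ell$ seed literals, so the cyclomatic excess of $G(\PHI^*)$ can be charged to the number $\vX$ of unprobed extra clauses among the explored literals (which, given the history, are still i.i.d.\ $\Be(p)$, giving $\pr[\vX\geq j\mid|\cF_\infty|]=O(|\cF_\infty|^{2j}/n^j)$) plus the number $\vX'$ of extra inverse-literal pairs that get contracted (a balls-into-bins estimate). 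Combining this with the tail bound from Claim~\ref{Lemma_size_of_implication_graphs} gives both inequalities. You instead do an unconditional first-moment union bound over explicit witness motifs, and your structural reduction (a cycle, or two seed variables in the same component of $\PHI^*$) is correct and matches the paper's case analysis.

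The gap lies in your treatment of the non-propagating clauses, which is exactly where the difficulty is: the crude count $\sum_k 4^k n^{k-1}p^k=\sum_k d^k/n$ diverges once $d\geq 1$, and the event ``$x_i,x_j$ in the same component of $\PHI^*$'' only guarantees that each intermediate vertex is reachable in the exploration, not that the chosen undirected path witnesses that reachability. Your remedy --- attaching auxiliary implication chains at each failing position --- is the right instinct, but the sentence ``fit into the same convergent geometric series'' is not an argument. You would need to verify that each failing position at $v_t$ buys a genuine extra factor of $O(1/n)$ (the auxiliary chain ends at the prescribed $v_t$, so a chain with $m$ clauses contributes $n^{m-1}p^m$, not $n^mp^m$), that the factor $2^k$ from choosing the set of failing positions does not undo the $(d/2)^k$ decay, and that overlaps among the main path, the auxiliary chains and the seed set are controlled. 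Similarly, the second-moment step is merely asserted: it is not established that $\vm^*>\vn^*-\ell+1$ forces two essentially disjoint witnesses, nor how to bound the contribution of overlapping pairs of motifs. The paper's conditioning on the exploration process sidesteps all of this bookkeeping, which is presumably why that route was chosen; your route is salvageable, but the hard part of it has been deferred rather than done.
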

\begin{proof}
We set up a graph representing the literals involved in the exploration process and the clauses that contain such literals.
Specifically, let $\neg\cF_\infty=\{\neg l:l\in\cF_\infty\}$ contain all negations of literals in $\cF_\infty$.
Moreover, let $\cG$ be the graph whose vertices are the literals $\cF_\infty\cup\neg\cF_\infty$ as well as all clauses $a$ of $\PHI^\dagger$ that consist of two literals from $\cF_\infty\cup\neg\cF_\infty$.
Let $\cC_\infty$ be the set of such clauses $a$.
For each clause $a\in\cC_\infty$ the graph $\cG$ contains two edges joining $a$ and its two constituent literals.
The graph $G(\PHI^*)$ that we are ultimately interested in results from $\cG$ by contracting pairs of inverse literals $l,\neg l\in \cF_\infty\cup\neg\cF_\infty$.

A large excess $\vm^*-\vn^*$ can either caused by the presence of atypically many clauses in $\cG$ or by excess pairs of inverse litetals that get contracted.
We first address the gain in clauses due to inclusion of $\neg\cF_\infty$ and all induced clauses.
The exploration process discovers each literal $\lambda\in \cF_\infty\setminus\{x_1,\neg x_1,\ldots,x_\ell,\neg x_\ell\}$ via a clause $\neg l_t\vee \lambda$, {where $\neg l_t \in \cE_{t-1}$}.
Thus, $|\cC_\infty|\geq|\cF_\infty|-2\ell$.
Hence, the random variable $\vX=|\cC_\infty|-|\cF_\infty|+2\ell$ accounts for the number of excess clauses that are present among the literals $\cF_\infty\cup\neg\cF_\infty$ but that were not probed by the process.
We highlight that $\vX$ also counts clauses that contain two literals from the seed set $\{x_1,\neg x_1,\ldots,x_\ell,\neg x_\ell\}$.
Because clauses appear in $\PHI^\dagger$ independently with probability $p=O(d/n)$, we obtain the bounds
\begin{align}\label{eqCase1}
\pr[\vX\geq1\mid |\cF_\infty|]&\leq O(|\cF_\infty|^2/n),&
\pr[\vX\geq2\mid |\cF_\infty|]&\leq O(|\cF_\infty|^4/n^2).
\end{align}

Secondly, we investigate the loss in nodes due to contraction.
Hence, $\vn^*=|\cF_\infty\cup\neg\cF_\infty|/2$.
By construction, the seeds $x_1,\neg x_1,\ldots,x_\ell,\neg x_\ell$ come in pairs.
Let $\vX'=\frac12|\cF_\infty\cap\neg\cF_\infty|-\ell$ count the number of excess inverse literal pairs that we need to contract.
Since the process is oblivious to the identities of the variables underlying the literals, given its size the set $\cF_\infty\setminus\{x_1,\neg x_1,\ldots,x_\ell,\neg x_\ell\}$ is a uniformly random subset of the set $\{x_i,\neg x_i:\ell <i\leq n\}$ of non-seed literals.
Therefore, a routine balls-into-bins argument shows that
\begin{align}\label{eqCase2}
\pr[\vX'\geq1\mid |\cF_\infty|]&\leq O(|\cF_\infty|^2/n),&
\pr[\vX'\geq2\mid |\cF_\infty|]&\leq O(|\cF_\infty|^4/n^2).
\end{align}

Finally, in order to estimate $\vm^*-\vn^*$ we consider four separate cases.
\begin{description}
\item[Case 1: $\vX = \vX' = 0$]
Since $\vX=0$ the graph $\cG$ is a forest with $2\ell$ components rooted at $x_1,\neg x_1,\ldots,x_\ell,\neg x_\ell$.
Moreover, since $\vX'=0$ we have $\cF_\infty\cap\neg\cF_\infty=\{x_1,\neg x_1,\ldots,x_\ell,\neg x_\ell\}$.
Therefore, $G(\PHI^*)$ is obtained from $\cG$ by identifying the pairs $x_i,\neg x_i$ for $i=1,\ldots,\ell$.
Hence, $G(\PHI^*)$ is a forest with $\ell$  components, and thus
\begin{align}\label{eqCase_1}
\vm^*=\vn^*-\ell.
\end{align}
\item[Case 2: $\vX = 1$, $\vX'=0$]
Obtain $\hat\cG$ from $\cG$ by adding one new vertex $r$ whose neighbours are $x_1,\neg x_1,\ldots,x_\ell,\neg x_\ell$.
Then $\hat\cG$ is unicyclic because $\vX=1$.
Let $\tilde\cG$ be the graph obtained from $\hat\cG$ by deleting the vertex $r$ along with one (arbitrary) clause $a$ from the cycle of $\hat\cG$.
Then $\tilde\cG$ is a forest with $2\ell$ components.
Therefore, by the same token as in Case~1, $G(\PHI^*-a)$ is a forest with $\ell$ components. 
Hence, $G(\PHI^*)$, obtained by inserting clause $a$ into  $G(\PHI^*-a)$, either contains a single cycle or consists of exactly $\ell-1$ components.
Thus, by \eqref{eqCase1}
\begin{align}\label{eqCase_2}
\vm^*&\leq\vn^*-\ell+1,&\pr\brk{\vX=1,\,\vX'=0\mid|\cF_\infty|}&=O(|\cF_\infty|^2/n).
\end{align}
\item[Case 3: $\vX = 0$, $\vX'=1$]
The graph $\hat\cG$, defined as in Case~2, is a tree because $\vX=0$.
Suppose $(\cF_\infty\cap\neg\cF_\infty)\setminus\{x_1,\neg x_1,\ldots,x_\ell,\neg x_\ell\}=\{y,\neg y\}$.
Let $a$ be a clause on the unique path from $y$ to $\neg y$ in $\hat\cG$.
Then the same argument as in Case~1 shows that  $G(\PHI^*-a)$ is a forest with $\ell$ components.
Therefore, $G(\PHI^*)$ either contains a unique cycle or has precisely $\ell-1$ components.
Consequently, \eqref{eqCase2} yields
\begin{align}\label{eqCase_3}
\vm^*&\leq\vn^*-\ell+1,&\pr\brk{\vX=0,\,\vX'=1\mid|\cF_\infty|}&=O(|\cF_\infty|^2/n).
\end{align}
\item[Case 4: $\vX  + \vX' \geq 2$]
In this case we do not have a bound on $\vm^*-\vn^*$, but we claim that
\begin{align}\label{eqCase_4}
\pr\brk{\vX+\vX'\geq2\mid|\cF_\infty|}&=O(|\cF_\infty|^4/n^2).
\end{align}
Indeed, \eqref{eqCase1} and \eqref{eqCase2} readily imply that  $\pr\brk{\vX\vee\vX'\geq2\mid|\cF_\infty|}=O(|\cF_\infty|^4/n^2)$.
Further, since $\vX$ is independent of $\vX'$ given $\cF_\infty$, \eqref{eqCase1} and \eqref{eqCase2} also yield the bound 
$\pr\brk{\vX=\vX'=1\mid|\cF_\infty|}=O(|\cF_\infty|^4/n^2)$.
\end{description}
The assertion follows by combining \eqref{eqCase_1}--\eqref{eqCase_4} with Claim~\ref{Lemma_size_of_implication_graphs}. 
\end{proof}

\begin{claim}\label{Claim_A}
For all $\chi\in\{\pm1\}^{\{x_1,\ldots,x_\ell\}}$ we have
$\vA_\chi^\dagger \leq|\cF_\infty|\vecone\cbc{\vm^*\leq\vn^*-\ell+1}+n\vecone\cbc{\vm^*>\vn^*-\ell+1}$.
\end{claim}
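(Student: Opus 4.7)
The plan is to split along the indicator. If $\vm^* > \vn^*-\ell+1$ the bound $\vA_\chi^\dagger \leq n$ is immediate from \eqref{eqAchi}: choosing $\chi'=\chi$ gives $\vA_\chi^\dagger \leq \vI_\chi \leq n$. The substance of the claim is therefore the bound $\vA_\chi^\dagger \leq |\cF_\infty|$ under $\vm^*\leq \vn^*-\ell+1$.

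To establish this, I would first observe that whenever UCP launched from some $\chi'$ terminates without contradiction, every variable in $\cI_{\chi'}$ underlies a literal of $\cF_\infty$ (since UCP from $\chi'$ only imposes values on literals that the comprehensive exploration starting from the whole seed set $Y$ would also discover). Hence $\vI_{\chi'}=|\cI_{\chi'}|\leq \vn^*\leq|\cF_\infty|$, and the task collapses to exhibiting one contradiction-free $\chi'$ at Hamming distance at most one from $\chi$.

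For such a $\chi'$ I would invoke the structural dichotomy already harvested in the proof of Claim~\ref{Claim_bicycle}: the inequality $\vm^*\leq \vn^*-\ell+1$ corresponds precisely to Cases 1, 2 and 3 there, in which the auxiliary graph $\hat\cG$ obtained by adjoining a fictitious root $r$ to all seed literals is either a forest or unicyclic. In Case~1, $G(\PHI^*)$ decomposes into $\ell$ trees rooted at the seed variables; UCP from any $\chi$ propagates each seed value along its tree without two branches ever returning conflicting assignments, so $\chi'=\chi$ works. In Cases~2 and~3, $\hat\cG$ carries exactly one additional cycle; if UCP from $\chi$ is already contradiction-free we again take $\chi'=\chi$, and otherwise the unique contradiction traces back to the closure of this cycle and can be undone by flipping one seed variable.

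The main obstacle is making this last step rigorous. I would remove a cycle-clause $a$ as in the proof of Cases 2 and 3 of Claim~\ref{Claim_bicycle} so that $G(\PHI^*-a)$ becomes a forest with $\ell$ components. Under UCP from any $\chi$, each variable of $\PHI^*-a$ then receives a uniquely determined truth value, and a contradiction in $\PHI^*$ can arise only when these values violate the deleted clause $a$. A careful inspection of the forest-paths joining the two endpoints of $a$ back to their seed roots shows that whether $a$ is satisfied depends on the seed value of a single variable—namely the one whose tree component contains the cycle—so flipping that seed value in $\chi$ disarms the conflict and produces the desired contradiction-free $\chi'$.
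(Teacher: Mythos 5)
Your proof is correct and follows essentially the same route as the paper's: observe that a contradiction-free run of UCP from any $\chi'$ stays inside $V(\PHI^*)$, so $\vI_{\chi'}\leq\vn^*\leq|\cF_\infty|$; then, for $\vm^*\leq\vn^*-\ell+1$, reduce to a forest $G(\PHI^*-a)$ rooted at the seeds by deleting one clause $a$, note that any contradiction must sit at $a$, and flip one seed to disarm it. One imprecision worth tidying: in the subcase where the extra edge merges two forest components rather than closing a cycle (this arises when $\vX'=1$ and the path from $y$ to $\neg y$ in $\hat\cG$ passes through the auxiliary root $r$, giving $G(\PHI^*)$ just $\ell-1$ components and no cycle), your prescription ``the one whose tree component contains the cycle'' does not single out a seed; the remedy the paper uses---flip the seed whose $G(\PHI^*-a)$-component contains one fixed endpoint of $a$---works uniformly across both subcases and is the right formulation.
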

\begin{proof}
The graph $G(\PHI^*)$ consists of at most $\ell$ components (one for each of the initial variables $x_1,\ldots,x_\ell$).
Hence, $\vm^*\geq\vn^*-\ell$ and $G(\PHI^*)$ is acyclic if $\vm^*=\vn^*-\ell$.
Moreover, if $G(\PHI^*)$ is acyclic then $\vA_\chi^\dagger\leq|\cF_\infty|$ by construction.

Thus, we are left to consider the case $\vm^*=\vn^*-\ell+1$.
Then $\PHI^*$ contains a clause $a$ such that $G(\PHI^*-a)$ is a forest with $\ell$ components rooted at $x_1,\ldots,x_\ell$.
Assume without loss that $a=x_{n-1}\vee x_n$.
Then by construction we have $\{x_{n-1},\neg x_{n-1}\}\cap\cF_\infty\neq\emptyset$ and $\{x_n,\neg x_n\}\cap\cF_\infty\neq\emptyset$.
Further, unless $\neg x_{n-1},\neg x_n\in\cF_\infty$ we have $\vA_\chi\leq\vI_\chi\leq|\cF_\infty|$ as in the first case.
Hence, assume that $\neg x_{n-1},\neg x_n\in\cF_\infty$.
Let $i\in[\ell]$ be such that $x_n$ belongs to the connected component of $x_i$ in $G(\PHI^*-a)$ and obtain $\chi'$ from $\chi$ by flipping the value assigned to $x_i$.
Because $G(\PHI^*-a)$ is a forest, we conclude that $\vA_\chi^\dagger\leq\vI_\chi\wedge\vI_{\chi'}\leq|\cF_\infty|$.
\end{proof}

\begin{proof}[Proof of \Lem~\ref{Lemma_tame}]
The choice of the clause probability $p$ ensures that $\vA_\chi^\dagger$ stochastically dominates $\vA_\chi$.
Therefore, the assertion follows from Claims~\ref{Lemma_size_of_implication_graphs}--\ref{Claim_A}.
\end{proof}

\begin{proof}[Proof of \Lem~\ref{Lemma_wild}]%\label{Sec_Lemma_wild}
The choice of the clause probability $p$ and the construction of the set $\cF_\infty$ guarantee that $\vI_\chi$ is stochastically dominated by the random variable $|\cF_\infty|\vecone\cbc{\vm^*\leq\vn^*-\ell}+n\vecone\cbc{\vm^*>\vn^*-\ell}$.
Hence, Claims~\ref{Lemma_size_of_implication_graphs}--\ref{Claim_A} imply the desired bound.
\end{proof}

\begin{proof}[{Proof of \Cor~\ref{Lemma_emp}}]
Let $Y=\{x_1\}$ and $\chi^+_{x_1}=1$, $\chi^-_{x_1}=-1$.
Assume that $\PHI'$ is satisfiable.
Then Fact~\ref{Fact_UC} implies that 
\begin{align*}
n\wedge\abs{\log\frac{\mu_{\PHI'}(\SIGMA_{x_1}=1)}{\mu_{\PHI'}(\SIGMA_{x_1}=-1)}}\leq \vI_{\chi^-}+\vI_{\chi^+}.
\end{align*}
Therefore, the assertion follows from \Lem~\ref{Lemma_wild}.
\end{proof}

\subsection{Proof of \Prop~\ref{Lemma_PHI''}}
Let $c_1,\ldots,c_{\vec\Delta''}$ be the new clauses added to $\PHI''$ and let $\vec Y=\{\vy_1,\vz_1,\ldots,\vy_{\DELTA''},\vz_{\DELTA''}\}$ be the set of variables that occur in these clauses.
We begin by deriving the following rough bound.

\begin{lemma}\label{Lemma_Difference_Z}
We have
$\displaystyle\Erw \brk{\log^{2} \frac{Z(\PHI'')\vee1}{Z(\PHI')\vee1}} = O(1).$
\end{lemma}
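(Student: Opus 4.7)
Since $\PHI''$ extends $\PHI'$ by the $\vec\Delta''$ new clauses, $Z(\PHI'') \le Z(\PHI')$, and thus $R := \log((Z(\PHI') \vee 1)/(Z(\PHI'') \vee 1)) \ge 0$; the goal is to show $\Erw[R^2] = O(1)$. The plan is to condition on the Poisson variable $\vec\Delta''$ (which has all finite moments) and, for each realisation, to exhibit a boundary condition $\chi$ on $\vec Y$ that satisfies every new clause and then appeal to Fact~\ref{Fact_UC} to relate $R$ to the Unit Clause Propagation quantities from Section~\ref{SubSec_UCP}.

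When the $2\vec\Delta''$ literal slots of the new clauses involve $2\vec\Delta''$ distinct variables---an event whose complement has conditional probability $O(\vec\Delta''^2/n)$---set $\chi_{\vy_i} := \sign(\vy_i, c_i)$ and $\chi_{\vz_i} := \sign(\vz_i, c_i)$, so $\chi$ makes each new clause true. Any satisfying assignment of $\PHI'$ extending $\chi$ hence satisfies $\PHI''$, giving $Z(\PHI'') \ge Z(\PHI', \chi)$, and Fact~\ref{Fact_UC} yields $R \le \vI_\chi \log 2$. Running the comprehensive exploration of Section~\ref{SubSec_UCP} with $Y = \vec Y$, on the acyclic event Claim~\ref{Claim_A} gives $\vI_\chi \le |\cF_\infty|$ and Claim~\ref{Lemma_size_of_implication_graphs} furnishes sub-exponential tails, so $\Erw[|\cF_\infty|^2 \mid \vec\Delta''] = O(\vec\Delta''^2)$. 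On the multi-excess complement, which by the sharper half of Claim~\ref{Claim_bicycle} has probability $O(\vec\Delta''^4/n^2)$, the trivial bound $R \le n \log 2$ contributes only $O(\vec\Delta''^4)$. Averaging against the Poisson distribution of $\vec\Delta''$ then delivers $\Erw[R^2] = O(1)$.

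The main obstacle is the intermediate ``single-excess'' regime $\vm^* = \vn^* - |Y| + 1$, which has probability only $O(\vec\Delta''^2/n)$; pairing this with the crude $n^2 \log^2 2$ bound would contribute an unacceptable $O(n \vec\Delta''^2)$. Overcoming this requires observing that a single excess edge in the implication digraph produces a UCP contradiction only when the resulting cycle genuinely forms a bicycle (traverses both $y$ and $\neg y$), a strictly rarer event, or alternatively switching to the one-flip quantity $\vA_\chi$ of Lemma~\ref{Lemma_tame}: with $Y := \{\vy_1, \ldots, \vy_{\vec\Delta''}\}$, $\chi_{\vy_i} := \sign(\vy_i, c_i)$, and the flipped assignment $\chi^*$ extended by additionally fixing $\vz_j := \sign(\vz_j, c_j)$ at the flipped index $j$ to re-satisfy $c_j$, one secures a modified boundary condition satisfying every new clause with $R \le (\vA_\chi + 1) \log 2$; Lemma~\ref{Lemma_tame} then provides the required $O(\vec\Delta''^4)$ second-moment bound.
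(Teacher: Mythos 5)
You have the right diagnosis but not quite the right cure. You correctly identify that the single-excess regime $\vm^*=\vn^*-|Y|+1$ is the bottleneck---its probability $O(\vec\Delta''^{2}/n)$ cannot absorb a worst-case $R^2\leq n^2\log^2 2$---and you correctly sense that the one-flip quantity $\vA_\chi$ of Lemma~\ref{Lemma_tame} is the right tool. But your construction for invoking it, with seed set $Y:=\{\vy_1,\ldots,\vy_{\vec\Delta''}\}$ and the flipped assignment patched by additionally fixing $\vz_j:=\sign(\vz_j,c_j)$, does not give $R\leq(\vA_\chi+1)\log 2$: the extra constraint on $\vz_j$ can itself set off an arbitrarily long Unit Clause cascade, so the Unit Clause count of the extended boundary condition is not within $O(1)$ of $\vI_{\chi'}$ and hence is not controlled by $\vA_\chi$ as defined. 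Your first alternative (``a single excess edge produces a contradiction only when the cycle is a bicycle, a strictly rarer event'') is left unquantified, and conditional on the single-excess event the contradiction probability is in fact $\Theta(1)$, so this observation alone does not bring the contribution down to $O(1)$.

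The paper's fix is simpler and is exactly what you were reaching for. Keep the full seed set $\vY=\{\vy_1,\vz_1,\ldots,\vy_{\vec\Delta''},\vz_{\vec\Delta''}\}$ with $\chi$ satisfying \emph{both} literals of every new clause, precisely as you set it up initially. The crucial extra observation is that any one-flip $\chi'$ of $\chi$ \emph{still} satisfies every $c_i$, because the other literal of $c_i$ remains true under $\chi'$. Hence $Z(\PHI'')\geq Z(\PHI',\chi')$ for every such $\chi'$, and combining with Fact~\ref{Fact_UC} and minimising over one-flips $\chi'$ gives $Z(\PHI')\leq 2^{\vA_\chi}(Z(\PHI'')\vee 1)$ directly; Lemma~\ref{Lemma_tame} then finishes the $|\vY|=2\vec\Delta''$ case with no manual split by cycle excess, since that split is already absorbed into Claim~\ref{Claim_A}. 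Separately, your plan does not really address the complementary event $|\vY|<2\vec\Delta''$: its probability $O(\vec\Delta''^2/n)$ paired with $R^2\leq n^2\log^2 2$ contributes $\Theta(n\vec\Delta''^{2})$, which is not $O(1)$. The paper decomposes further---for $|\vY|=2\vec\Delta''-1$ there is a $\CHI$, stochastically independent of $\PHI'$, satisfying all new clauses, and one uses $R^2\leq (n\log 2)(\vI_{\CHI}\log 2)$ together with Lemma~\ref{Lemma_wild}; for $|\vY|<2\vec\Delta''-1$, whose probability is $O(n^{-2})$, the trivial bound does suffice.
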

\begin{proof}
If $\PHI'$ is unsatisfiable then so is $\PHI''$ and thus $(Z(\PHI'')\vee1)/(Z(\PHI')\vee1)=1$.
Hence, we may assume that $Z(\PHI')\geq1$.
If $|\vY|=2\vec\Delta''$, the new clauses attach to disjoint sets of variables.
Consider the truth value assignment $\CHI\in\{\pm1\}^{\vY}$ that satisfies both literals in each of the clauses $c_1,\ldots,c_{\vec\Delta''}$.
Fact~\ref{Fact_UC} shows that
\begin{align}\label{eqLemma_Difference_Z_1}
Z(\PHI'')\vee1&\geq Z(\PHI',\CHI)\vee1\geq 2^{-\vA_{\CHI}}Z(\PHI').
\end{align}
Combining \eqref{eqLemma_Difference_Z_1} with \Lem~\ref{Lemma_tame} and recalling that $\DELTA''\disteq\Po(d/2)$, we obtain
\begin{align}\label{eqLemma_Difference_Z_2}
\Erw\brk{\vecone\cbc{Z(\PHI')\geq1,\ |\vY|=2\DELTA''}\log^2\frac{Z(\PHI'')\vee1}{Z(\PHI')\vee
1}}&\leq \Erw\brk{\vecone\cbc{Z(\PHI')\geq1,\ |\vY|=2\DELTA''}\vA_{\CHI}^2}=O(1).
\end{align}

Next, consider the event $|\vY|=2\vec\Delta''-1$.
Because $c_1,\ldots,c_{\vec\Delta''}$ are drawn independently, we have
\begin{align}	\label{eqLemma_Difference_Z_3a}
\pr\brk{|\vY|=2\vec\Delta''-1\mid\DELTA''}&\leq O((\vec\Delta'')^2/n).
\end{align}
Moreover, 
because the signs of the clauses $c_1,\ldots,c_{\DELTA''}$ are independent of $\PHI'$,
given $|\vY|=2\vec\Delta-1$ there exists an assignment $\CHI\in\{\pm1\}^{\vY}$, stochastically independent of $\PHI'$, that satisfies $c_1,\ldots,c_{\vec\Delta''}$.
Fact~\ref{Fact_UC} yields $Z(\PHI'')\vee1\geq Z(\PHI',\CHI)\geq2^{-\vI_{\CHI}}Z(\PHI')$.
Therefore, since $\log((Z(\PHI'')\vee1)/(Z(\PHI')\vee1))\leq n$, \Lem~\ref{Lemma_wild} and \eqref{eqLemma_Difference_Z_3a} imply
\begin{align}%\nonumber
\Erw\brk{\vecone\cbc{Z(\PHI')\geq1,\ |\vY|=2\vec\Delta''-1}\log^2\frac{Z(\PHI'')\vee1}{Z(\PHI')\vee1}}
&\leq n\Erw\brk{\vecone\cbc{|\vY|=2\vec\Delta''-1}\vI_{\CHI}}
	= O(1).
	\label{eqLemma_Difference_Z_3}
\end{align}

Finally, consider the event $|\vY|<2\vec\Delta''-1$.
Due to the independence of $c_1,\ldots,c_{\vec\Delta''}$, this event occurs with probability $O(n^{-2})$.
Hence, the deterministic bound $(Z(\PHI'')\vee1)/(Z(\PHI')\vee1)\geq2^{-n}$ implies
\begin{align}\label{eqLemma_Difference_Z_4}
\Erw\brk{\vecone\cbc{Z(\PHI')\geq1,\ |\vY|<2\vec\Delta''-1}\log^2\frac{Z(\PHI'')\vee1}{Z(\PHI')\vee1}}=O(1).
\end{align}
The assertion follows from \eqref{eqLemma_Difference_Z_2}, \eqref{eqLemma_Difference_Z_3} and \eqref{eqLemma_Difference_Z_4}.
\end{proof}

\begin{lemma}\label{Lemma_Lambda_bound}
There exists a number $K>0$ such that for every $\eps>0$ we have
$$\limsup_{n\to\infty}\Erw\brk{\bc{\sum_{i=1}^{\vec\Delta''}\Lambda_\eps(1-\mu_{\PHI'}(\SIGMA_{\vec y_i}=-\sign(c_i,\vec y_i))\mu_{\PHI'}(\SIGMA_{\vz_i}=-\sign(c_i,\vec z_i))}^2\mid Z(\PHI')>0}\leq K.$$
\end{lemma}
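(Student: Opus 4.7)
The plan is to reduce the squared sum to a single-term second moment, then split that moment according to whether the argument of $\Lambda_\eps$ vanishes. First observe that $\DELTA''\sim\Po(d/2)$ and the sequence $((\vy_i,\vz_i,\sign(c_i,\vy_i),\sign(c_i,\vz_i)))_{i\geq 1}$ is i.i.d.\ and jointly independent of $\PHI'$. Writing $p_i=\mu_{\PHI'}(\SIGMA_{\vy_i}=-\sign(c_i,\vy_i))$ and $q_i=\mu_{\PHI'}(\SIGMA_{\vz_i}=-\sign(c_i,\vz_i))$, Cauchy--Schwarz $(\sum_{i=1}^{\DELTA''}a_i)^2\leq \DELTA''\sum a_i^2$ together with $\Erw[(\DELTA'')^2]=d/2+d^2/4$ gives
\begin{align*}
\Erw\brk{\bc{\sum_{i=1}^{\DELTA''}\Lambda_\eps(1-p_iq_i)}^2\mid Z(\PHI')>0}\leq\bc{\frac{d}{2}+\frac{d^2}{4}}\Erw[\Lambda_\eps^2(1-p_1q_1)\mid Z(\PHI')>0],
\end{align*}
so it suffices to bound the single-term second moment uniformly in $n$ and $\eps$.

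Since $|\Lambda_\eps(z)|\leq|\log z|$ for $z\in(0,1]$ while $|\Lambda_\eps(0)|=|\log\eps|$, we decompose
\begin{align*}
\Erw[\Lambda_\eps^2(1-p_1q_1)\mid Z(\PHI')>0]\leq\Erw[\log^2(1-p_1q_1)\vecone\{p_1q_1<1\}\mid Z(\PHI')>0]+(\log\eps)^2\pr[p_1q_1=1\mid Z(\PHI')>0].
\end{align*}
On $\{p_1q_1<1\}$ at least one of $p_1,q_1$ is strictly less than one; supposing $p_1<1$, the single-variable partial assignment $\chi^+_{\vy_1}\colon\vy_1\mapsto\sign(c_1,\vy_1)$ extends to a satisfying assignment of $\PHI'$, so Fact~\ref{Fact_UC} applied with $Y=\{\vy_1\}$ and $\chi=\chi^+_{\vy_1}$ delivers $1-p_1=\mu_{\PHI'}(\SIGMA_{\vy_1}=\sign(c_1,\vy_1))\geq 2^{-\vI_{\chi^+_{\vy_1}}}$. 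Using $1-p_1q_1\geq 1-p_1$ and symmetry in $\vy_1$ and $\vz_1$,
\[
\log^2(1-p_1q_1)\vecone\{p_1q_1<1\}\leq(\log 2)^2\bigl(\vI_{\chi^+_{\vy_1}}^2\vecone\{p_1<1\}+\vI_{\chi^+_{\vz_1}}^2\vecone\{p_1=1,\,q_1<1\}\bigr).
\]
On $\{p_1<1\}$ the assignment $\chi^+_{\vy_1}$ is extendable, so Unit Clause Propagation cannot produce a contradiction and $\vI_{\chi^+_{\vy_1}}$ is bounded by the progeny $|\cF_\infty|$ of the comprehensive propagation of Section~\ref{SubSec_UCP} with $|Y|=1$, for which Claim~\ref{Lemma_size_of_implication_graphs} supplies an exponential tail. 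Hence $\Erw[\vI_{\chi^+_{\vy_1}}^2\vecone\{p_1<1\}\mid Z(\PHI')>0]=O(1)$ uniformly in $n$, and the first summand is $O(1)$ uniformly in $n$ and $\eps$.

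The main obstacle is the leftover $(\log\eps)^2\,\pr[p_1q_1=1\mid Z(\PHI')>0]$: no $\eps$-uniform pointwise bound on $\Lambda_\eps$ is available, and the $|\log\eps|^2$ blow-up must be absorbed by a global cancellation. The event $\{p_1q_1=1\}$ occurs precisely when both $\vy_1$ and $\vz_1$ are $\mu_{\PHI'}$-\emph{frozen} in directions incompatible with $c_1$; writing $\vec F=\{j\in[n]:\mu_{\PHI'}(\SIGMA_{x_j}=1)\in\{0,1\}\}$ for the set of frozen variables, a short computation using the independence of the attaching data from $\PHI'$ yields $\pr[p_1q_1=1\mid\PHI']=|\vec F|(|\vec F|-1)/(4n(n-1))$. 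Since $\pi_d$ is supported on the open interval $(0,1)$, the Portmanteau theorem applied to the closed set $\{0,1\}\subset[0,1]$, combined with \Cor~\ref{Cor_BP} and a coupling of $\PHI'$ with $\PHI_n$ (which differ by $\Po(d/2)$ clauses), implies $|\vec F|/n=\pi_{\PHI'}(\{0,1\})\to 0$ in probability, hence in $L^2$ by boundedness. Thus $\limsup_{n}\pr[p_1q_1=1\mid Z(\PHI')>0]=0$, so the $(\log\eps)^2$-term vanishes in the limsup for every fixed $\eps>0$, completing the proof.
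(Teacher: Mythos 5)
Your proof is correct, but it takes a genuinely different route from the paper's. The paper's proof, after the same Cauchy--Schwarz reduction to a single term, simply bounds $\Lambda_\eps(1-p_1 q_1)^2\leq\Lambda_\eps(1-p_1)^2$, rewrites the expectation as an empirical average $\frac1n\sum_i\Lambda_\eps(1-\mu_{\PHI'}(\SIGMA_{x_i}=1))^2$, invokes \Cor~\ref{Cor_BP} with the \emph{bounded continuous} test function $u\mapsto\Lambda_\eps(1-u)^2$ to pass to $\Erw[\Lambda_\eps(1-\MU_{\pi_d})^2]$, and then caps this uniformly in $\eps$ by $\Erw[\log^2(1-\MU_{\pi_d})]<\infty$ via~\eqref{eqProp_uniqueness2}. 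You instead argue pointwise: on $\{p_1q_1<1\}$ you control $|\log(1-p_1q_1)|$ by a Unit-Clause implication count and invoke the exponential tail of Claim~\ref{Lemma_size_of_implication_graphs} (transferred from $\PHI^\dagger$ to $\PHI'$ by the coupling of \Sec~\ref{SubSec_UCP}), getting an $\eps$-free $O(1)$ bound; and on $\{p_1q_1=1\}$ you show the frozen-variable density $|\vec F|/n$ vanishes via Portmanteau applied to \Cor~\ref{Cor_BP}, so the $(\log\eps)^2$ contribution disappears in the $n\to\infty$ limsup for each fixed $\eps$. Both approaches lean on \Cor~\ref{Cor_BP}, but you use it only to kill the frozen-point mass, whereas the paper uses it to identify the limit; conversely, you replace the paper's reliance on the $L^2$-bound~\eqref{eqProp_uniqueness2} for $\pi_d$ by a direct combinatorial moment bound on the Unit-Clause propagation. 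Your route is heavier on machinery borrowed from other parts of \Sec~\ref{SubSec_UCP} but more constructive; the paper's is shorter because $\Lambda_\eps^2$ is bounded continuous and boundedness obviates any case split on $p_1q_1=1$. Two small points worth recording explicitly: (i) the transfer of the tail bound to $\PHI'$ needs the coupling with $\PHI^\dagger$ that fails with probability $o(n^{-2})$, which is harmless because $\vI_{\chi^+_{\vy_1}}\leq n$ always; and (ii) the conditioning on $Z(\PHI')>0$ costs only a $1/\pr[Z(\PHI')>0]=1+o(1)$ factor throughout, as you implicitly use.
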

\begin{proof}
Since $\DELTA"\disteq\Po(d/2)$ and the pair $(\vy_1,\vz_1)$ is uniformly random, due to Cauchy-Schwarz it suffices to prove 
$\limsup_{n\to\infty}\Erw\brk{\Lambda_\eps(1-\mu_{\PHI'}(\SIGMA_{x_1}=1)\mu_{\PHI'}(\SIGMA_{x_2}=1))^2\mid Z(\PHI')>0}\leq K$
for every $\eps>0$.
We observe that
\begin{align}\nonumber
\limsup_{n\to\infty}\ &\Erw\brk{\Lambda_\eps(1-\mu_{\PHI'}(\SIGMA_{x_1}=1)\mu_{\PHI'}(\SIGMA_{x_2}=1))^2\mid Z(\PHI')>0}
\leq \limsup_{n\to\infty}\Erw\brk{\Lambda_\eps(1-\mu_{\PHI'}(\SIGMA_{x_1}=1))^2\mid Z(\PHI')>0}\\
&=\limsup_{n\to\infty}\Erw\brk{\frac1n\sum_{i=1}^n\Lambda_\eps(1-\mu_{\PHI'}(\SIGMA_{x_i}=1))^2\mid Z(\PHI')>0}.
\label{eqLemma_Lambda_bound_1}
\end{align}
Moreover, $\PHI'$ has $\vm'\disteq\Po(dn/2-d/2)$ clauses, while $\PHI=\PHI_n$ has $\vm\disteq\Po(dn/2)$ clauses.
Since $\dTV(\vm',\vm)=o(1)$, the formulas $\PHI'$, $\PHI$ can be coupled such that both coincide \whp{}
Hence, for any fixed $\eps>0$ we have
\begin{align}\label{eqLemma_Lambda_bound_2}
\Erw\brk{\frac1n\sum_{i=1}^n\Lambda_\eps(1-\mu_{\PHI'}(\SIGMA_{x_i}=1))^2\mid Z(\PHI')}=
\Erw\brk{\frac1n\sum_{i=1}^n\Lambda_\eps(1-\mu_{\PHI}(\SIGMA_{x_i}=1))^2\mid Z(\PHI')}+o(1).
\end{align}
Further,  since for every $\eps>0$ the function $u\in[0,1]\mapsto \Lambda_\eps(1-u)^2$ is  continuous, \Cor~\ref{Cor_BP} implies that
\begin{align}\label{eqLemma_Lambda_bound_3}
\frac1n\sum_{i=1}^n\Lambda_\eps(1-\mu_{\PHI}(\SIGMA_{x_i}=1))^2&\quad\stacksign{$n\to\infty$}{\longrightarrow}\quad
\Erw\brk{\Lambda_\eps(1-\MU_{\pi_d})^2}&&\mbox{in probability}.
\end{align}
Since \eqref{eqProp_uniqueness2} shows that
 $\Erw\brk{\Lambda_\eps(1-\MU_{\pi_d})^2}\leq\Erw\brk{\log^2(1-\MU_{\pi_d})}<\infty$, 
the assertion follows from \eqref{eqLemma_Lambda_bound_1}--\eqref{eqLemma_Lambda_bound_3}.
\end{proof}

\begin{lemma}\label{Lemma_PHI''a}
For any $\delta>0$ there exists $\eps>0$ such that
\begin{align*}
\limsup_{n\to\infty}\abs{\Erw\brk{\log\frac{Z(\PHI'')\vee1}{Z(\PHI')\vee1}}-\frac{d}{2} \Erw\brk{\Lambda_\eps \bc{ 1-\mu_{\PHI',x_{1}}(\vs_1)\mu_{\PHI',x_{2}}(\vs_2) }\mid Z(\PHI')}}&<\delta.
\end{align*}
\end{lemma}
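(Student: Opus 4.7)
The strategy is to rewrite the partition function ratio as a Gibbs probability under $\mu_{\PHI'}$ and then exploit the asymptotic independence of bounded collections of variables supplied by \Cor~\ref{Cor_uniqueness}. On the event $\cbc{Z(\PHI')>0}$, which holds \whp, we have
\[\frac{Z(\PHI'')}{Z(\PHI')}=\mu_{\PHI'}\bigl(\SIGMA\text{ satisfies }c_1,\ldots,c_{\DELTA''}\bigr),\]
where $c_i=\vs_1^{(i)}\vy_i\vee\vs_2^{(i)}\vz_i$ are the new clauses. I would introduce the good event $\cE_M=\cbc{\DELTA''\leq M}\cap\cbc{|\vY|=2\DELTA''}$ for a large constant $M=M(\delta)$, on which all $2\DELTA''$ variables are pairwise distinct. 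Because $\DELTA''\disteq\Po(d/2)$ and pairs of new clauses share a variable with probability $O(1/n)$, one has $\pr[\cE_M^c\mid Z(\PHI')>0]\to0$ uniformly in $n$ as $M\to\infty$.

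On $\cE_M$ the at most $2M$ variables $\vy_1,\vz_1,\ldots,\vy_{\DELTA''},\vz_{\DELTA''}$ are distinct, so \Cor~\ref{Cor_uniqueness} (applied in $\PHI'$, which is essentially $\PHI_n$) gives, after summing over the $4^{\DELTA''}$ sign patterns and noting that the only assignment to $(\vy_i,\vz_i)$ violating $c_i$ is $(-\vs_1^{(i)},-\vs_2^{(i)})$,
\[\mu_{\PHI'}\bigl(\SIGMA\text{ satisfies }c_1,\ldots,c_{\DELTA''}\bigr)=\prod_{i=1}^{\DELTA''}\bigl(1-\mu_{\PHI'}(\SIGMA_{\vy_i}=-\vs_1^{(i)})\,\mu_{\PHI'}(\SIGMA_{\vz_i}=-\vs_2^{(i)})\bigr)+o(1)\]
in $L^1$. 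Taking logarithms, relabelling each $-\vs_j^{(i)}$ as an equidistributed independent sign, and truncating via $\Lambda_\eps$, one obtains on $\cE_M$
\[\log\frac{Z(\PHI'')\vee1}{Z(\PHI')\vee1}\approx\sum_{i=1}^{\DELTA''}\Lambda_\eps\bigl(1-\mu_{\PHI'}(\SIGMA_{\vy_i}=\vs_1^{(i)})\,\mu_{\PHI'}(\SIGMA_{\vz_i}=\vs_2^{(i)})\bigr)\]
up to an error that vanishes as first $n\to\infty$ and then $\eps\to0$, the latter being controlled by Lemma~\ref{Lemma_Lambda_bound}. The contribution from $\cE_M^c$ is absorbed via Cauchy--Schwarz, applying Lemma~\ref{Lemma_Difference_Z} to the left-hand side and Lemma~\ref{Lemma_Lambda_bound} to the right-hand side. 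Finally, taking expectations and invoking Wald's identity together with the permutation invariance of $\PHI'$ (so that each pair $(\vy_i,\vz_i)$ has the marginal law of $(x_1,x_2)$) converts the random sum into $\Erw[\DELTA'']=d/2$ copies of the single-pair expression stated in the lemma.

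The main obstacle is the blowup of $\log(1-u)$ as $u\uparrow1$: a single new clause may be violated under $\mu_{\PHI'}$ with probability arbitrarily close to one, so the bare $L^1$-approximation from \Cor~\ref{Cor_uniqueness} cannot pass through the non-Lipschitz logarithm. The truncation $\Lambda_\eps$ neutralises the singularity, but this only helps provided the truncation error is uniform in $n$. Securing this uniformity requires matching $L^2$ bounds on both sides of the approximation---Lemma~\ref{Lemma_Difference_Z} for the partition-function ratio and Lemma~\ref{Lemma_Lambda_bound} for the sum of truncated log-marginals---which together with the tail bound $\pr[\cE_M^c\mid Z(\PHI')>0]\to0$ legitimate the iterated limit $n\to\infty$ followed by $\eps\to0$ and close the argument.
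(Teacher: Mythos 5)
The proposal follows the paper's strategy quite closely: rewrite $Z(\PHI'')/Z(\PHI')$ as the $\mu_{\PHI'}$-probability of satisfying the $\DELTA''$ fresh clauses, restrict to a good event on which the new clauses attach to distinct variables and $\DELTA''$ is bounded, invoke the asymptotic independence of \Cor~\ref{Cor_uniqueness} to factorise, use permutation invariance and Wald's identity to obtain the $d/2$ prefactor, and control the bad event with Cauchy--Schwarz via the matching $L^2$ bounds of \Lem s~\ref{Lemma_Difference_Z} and~\ref{Lemma_Lambda_bound}. This is all in order.

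However there is a genuine gap on the good event itself. You define $\cE_M=\cbc{\DELTA''\leq M}\cap\cbc{|\vY|=2\DELTA''}$ and then assert that, on $\cE_M$, taking logarithms of the approximate identity $Z(\PHI'')/Z(\PHI')=\prod_i(1-\mu\mu)+o(1)$ (an $o(1)$ in $L^1$) and truncating via $\Lambda_\eps$ produces an error vanishing as $n\to\infty$, then $\eps\to0$. This step does not go through as stated: if some $1-\mu_{\PHI'}(\SIGMA_{\vy_i}=\cdot)\mu_{\PHI'}(\SIGMA_{\vz_i}=\cdot)$ is close to (or equal to) zero, the product is dominated by the $L^1$-error term, the logarithm is unstable, and $\log$ cannot simply be replaced by $\Lambda_\eps$. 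The $L^2$ bounds you invoke control the contribution from $\cE_M^c$, not the behaviour of the logarithm \emph{inside} $\cE_M$. What is missing is the paper's condition~{\bf E4}, which explicitly adds to the good event that every relevant marginal $\mu_{\PHI'}(\SIGMA_y=\pm1)$ stays below $1-2\eta$ for a suitably chosen $\eta$ satisfying $\eta>\eps$. This is precisely what makes the factorisation exact up to $o(1)$ under the logarithm and lets one replace $\log$ by $\Lambda_\eps$ \emph{identically} on the good event; the probability that {\bf E4} fails is in turn bounded using \Cor~\ref{Cor_BP} and the tail estimate \eqref{eqProp_uniqueness2}, which requires the hierarchy $\xi>\eta>\eps$ of small parameters. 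You correctly flag the blow-up of $\log(1-u)$ as $u\uparrow1$ as the chief obstacle, but the remedy of truncation plus $L^2$/Cauchy--Schwarz handles only the complement of the good event; without including an {\bf E4}-type restriction in the event itself the passage through the logarithm on $\cE_M$ is not justified.
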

\begin{proof}
Choose small enough $\xi=\xi(\delta)>\eta=\eta(\xi)>\eps=\eps(\eta)>0$, assume that $n>n_0(\eps)$ is sufficiently large and let $(\gamma_n)_n$ be a sequence of positive reals, depending on $\xi$ and $\eta$, that tends to zero sufficiently slowly.
Let $\cE=\cE_n$ be the event that the following five statements hold.
\begin{description}
\item[E1] $Z(\PHI')>0$.
\item[E2] $|\vY|=2\vec\Delta''$.
\item[E3] $\vec\Delta''<\xi^{-1/4}$.
\item[E4] for all $y\in\vY$ and all $s\in\cbc{\pm1}$ we have $\mu_{\PHI'}(\SIGMA_y=s)<1-2\eta.$
\item[E5] $\sum_{\sigma\in\{\pm1\}^{\vY}}\abs{\mu_{\PHI}(\forall y\in\vY:\SIGMA_{y}=\sigma_y)-\prod_{y\in\vY}\mu_{\PHI}(\SIGMA_{y}=\sigma_y)}<\gamma_n$.
\end{description}
The first two events {\bf E1}, {\bf E2} occur with probability $1-o(1)$ as $n\to\infty$.
Moreover, $\pr[\mbox{\bf E3}]>1-\xi$ if $\xi$ is small enough.
Further, since \Cor~\ref{Cor_BP} shows that $\pi_{\PHI}$ converges to $\pi_d$ weakly in probability, the tail bound \eqref{eqProp_uniqueness2} implies that
$\pr\brk{\mbox{\bf E4}\mid \vec\Delta''<\xi^{-1/4}}>1-\xi,$
provided that $\eta$ is small enough.
Additionally, \Cor~\ref{Cor_uniqueness} implies $\pr\brk{\mbox{\bf E5}\mid\mbox{\bf E1--E4}}=1-o(1)$ if $\gamma_n\to0$ slowly enough.
Consequently, 
\begin{align}\label{eqcE}
\pr\brk{\cE}&>1-4\xi.
\end{align}
Combining \Lem~\ref{Lemma_Difference_Z}, \eqref{eqcE} and the Cauchy-Schwarz inequality, we obtain
\begin{align}\label{eqcE1}
\abs{\Erw\brk{(1-\vecone\cE)\log\frac{Z(\PHI'')}{Z(\PHI')}}}\leq\delta/3+o(1).
\end{align}
Similarly, by \Lem~\ref{Lemma_Lambda_bound}, \eqref{eqcE} and Cauchy-Schwarz,
\begin{align}\label{eqcE2}
\abs{
\Erw\brk{(1-\vecone\cE)\sum_{i=1}^{\vec\Delta''}
		\Lambda_\eps\bc{1-\mu_{\PHI'}(\SIGMA_{\vy_i}=-\sign(\vy_i,c_i))
	\mu_{\PHI'}(\SIGMA_{\vz_i}=-\sign(\vz_i,c_i))}}}
\leq\delta/3+o(1).
\end{align}
Further, because the distribution of $\PHI'$ is invariant under permutations of the variables $x_1,\ldots,x_n$ and $\Erw[\DELTA'']=d/2$,
\begin{align}\nonumber
\Erw&\brk{\sum_{i=1}^{\vec\Delta''}\Lambda_\eps\bc{1-\mu_{\PHI'}(\SIGMA_{\vy_i}=-\sign(\vy_i,c_i))
	\mu_{\PHI'}(\SIGMA_{\vz_i}=-\sign(\vz_i,c_i))}\mid Z(\PHI')>0}\\&=
	\frac{d}{2} \Erw\brk{\Lambda_\eps \bc{ 1-\mu_{\PHI'}(\SIGMA_{x_1}=\vs_1)\mu_{\PHI'}(\SIGMA_{x_2}=\vs_2) }\mid Z(\PHI')>0}.\label{eqcE3}
\end{align}
Moreover, on the event $\cE$ we have
\begin{align*}
\frac{Z(\PHI'')}{Z(\PHI')}&
	=\sum_{\sigma\in\cbc{\pm1}^{\vY}}\vecone\cbc{\sigma\mbox{ satisfies }c_1,\ldots,c_{\vec\DELTA''}}
			\mu_{\PHI'}\bc{\forall y\in\vY:\SIGMA_y=\sigma_y}\\
&=\sum_{\sigma\in\cbc{\pm1}^{\vY}}\vecone\cbc{\sigma\mbox{ satisfies }c_1,\ldots,c_{\vec\DELTA''}}
	\prod_{y\in\vY}\mu_{\PHI'}\bc{\SIGMA_y=\sigma_y}+o(1)&&\mbox{[due to {\bf E3,E5}]}\\
&=\prod_{i=1}^{\DELTA''}\bc{1-\mu_{\PHI'}\bc{\SIGMA_{\vy_i}=-\sign(\vy_i,c_i)}\mu_{\PHI'}\bc{\SIGMA_{\vz_i}=-\sign(\vz_i,c_i)}}+o(1).
\end{align*}
Therefore, by {\bf E4}
\begin{align}\nonumber
\Erw\brk{\vecone\cE\log\frac{Z(\PHI'')}{Z(\PHI')}}
&=\Erw\brk{\vecone\cE\sum_{i=1}^{\vec\Delta''}
		\log\bc{1-\mu_{\PHI'}\bc{\SIGMA_{\vy_i}=-\sign(\vy_i,c_i)}\mu_{\PHI'}\bc{\SIGMA_{\vz_i}=-\sign(\vz_i,c_i)}}}+o(1)\\
&=\Erw\brk{\vecone\cE\sum_{i=1}^{\vec\Delta''}
		\Lambda_\eps\bc{1-\mu_{\PHI'}\bc{\SIGMA_{\vy_i}=-\sign(\vy_i,c_i)}\mu_{\PHI'}\bc{\SIGMA_{\vz_i}=-\sign(\vz_i,c_i)}}}+o(1).
		\label{eqLemma_PHI''a_1}
\end{align}
Finally, the assertion follows from \eqref{eqcE1}--\eqref{eqLemma_PHI''a_1}.
\end{proof}

\begin{proof}[Proof of \Prop~\ref{Lemma_PHI''}]
\Prop~\ref{Prop_BP} shows that $\MU_{\pi_d,1}$ and $1-\MU_{\pi_d,1}$ are identically distributed.
Since $\Lambda_\eps$ is continuous and bounded, \Cor~\ref{Cor_BP} therefore implies that
\begin{align}\nonumber
\lim_{n\to\infty}\Erw\brk{\Lambda_\eps \bc{ 1-\mu_{\PHI',x_{1}}(\vs_1)\mu_{\PHI',x_{2}}(\vs_2) }}&=
\Erw\brk{\Lambda_\eps \bc{ 1-\bc{\frac{1-\vs_1}{2}+\vs_1\MU_{\pi_d,1}}\bc{\frac{1-\vs_2}{2}+\vs_2\MU_{\pi_d,2}}}}\\
&=\Erw\brk{\Lambda_\eps \bc{ 1-\MU_{\pi_d,1}\MU_{\pi_d,2}}}. \label{eqLemma_PHI''_1}
\end{align}
for every $\eps>0$.
Further, since $\Lambda_\eps(1-\MU_{\pi_d,1}\MU_{\pi_d,2})$ decreases monotonically to  $\log(1-\MU_{\pi_d,1}\MU_{\pi_d,2})$ as $\eps\to0$, the monotone convergence theorem and \eqref{eqBFE} yield
\begin{align}\label{eqLemma_PHI''_2}
\lim_{\eps\to0}\Erw\brk{\Lambda_\eps \bc{ 1-\MU_{\pi_d,1}\MU_{\pi_d,2}}}=\Erw\log\bc{ 1-\MU_{\pi_d,1}\MU_{\pi_d,2}}.
\end{align}
Combining \eqref{eqLemma_PHI''_1} and~\eqref{eqLemma_PHI''_2} and \Lem~\ref{Lemma_PHI''a} completes the proof.
\end{proof}

\subsection{Proof of \Prop~\ref{Lemma_PHI'''}}
The steps that we follow are analogous to the ones from the proof of \Prop~\ref{Lemma_PHI''}.
Recall that $\PHI'''$ is obtained from $\PHI'$ by adding one variable $x_{n+1}$ along with random adjacent clauses $b_1,\ldots,b_{\DELTA'''}$, where $\DELTA'''$ is a Poisson variable with mean $d$.
Let $\vy_1,\ldots,\vy_{\DELTA'''}\in\{x_1,\ldots,x_n\}$ be the variables of $\PHI'$ where the new clauses attach and let $\vY=\{\vy_1,\ldots,\vy_{\DELTA'''}\}$.
We begin with the following $L_2$-bound.

\begin{lemma}\label{Lemma_Difference_Z'''}
We have $\displaystyle\limsup_{n\to\infty}\Erw \brk{\log^{2} \frac{Z(\PHI''')\vee1}{Z(\PHI')\vee1}}<\infty.$
\end{lemma}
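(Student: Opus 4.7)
The plan is to mirror the proof of \Lem~\ref{Lemma_Difference_Z} with modifications suited to adding a variable rather than a handful of clauses. The upper bound is immediate: since $\PHI'''$ merely adds one variable to $\PHI'$ we have $Z(\PHI''')\leq 2Z(\PHI')$ and hence $\log((Z(\PHI''')\vee 1)/(Z(\PHI')\vee 1))\leq \log 2$. For the lower bound I would partition the probability space according to the size of $\vY$ and, separately, dispose of the tail event $\DELTA'''>\log n$, whose super-polynomial Poisson tail makes its contribution to $\Erw[\log^2(\cdot)]$ negligible despite the deterministic bound $|\log((Z(\PHI''')\vee 1)/(Z(\PHI')\vee 1))|\leq (n+1)\log 2$.

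On the dominant event $\{|\vY|=\DELTA'''\}$ all new clauses attach to distinct variables. Here I would define $\tilde\chi\in\{\pm1\}^{\vY}$ by $\tilde\chi_{\vy_i}=\sign(\vy_i,b_i)$, so that $\tilde\chi$ satisfies each $b_i$ through $\vy_i$ regardless of the value of $x_{n+1}$. Crucially, for any $\chi'\in\{\pm1\}^{\vY}$ differing from $\tilde\chi$ at a single coordinate $\vy_j$, choosing $x_{n+1}=\sign(x_{n+1},b_j)$ still satisfies $b_j$, while all other $b_i$ remain satisfied through their $\vy_i$. Thus every such $\chi'$ yields $Z(\PHI''')\geq Z(\PHI',\chi')$, and Fact~\ref{Fact_UC} combined with taking the minimum over $\chi'$ gives $Z(\PHI''')\vee 1\geq 2^{-\vA_{\tilde\chi}}Z(\PHI')$. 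Together with the upper bound this implies $\log^2((Z(\PHI''')\vee 1)/(Z(\PHI')\vee 1))\leq (\log 2)^2(\vA_{\tilde\chi}+1)^2$ on $\{Z(\PHI')\geq 1\}\cap\{|\vY|=\DELTA'''\}$, and \Lem~\ref{Lemma_tame} together with the finiteness of $\Erw[(\DELTA''')^4]$ caps the expected contribution at $O(1)$.

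The event $\{|\vY|=\DELTA'''-1\}$, whose conditional probability given $\DELTA'''$ is $O((\DELTA''')^2/n)$, contains exactly one pair $b_i,b_j$ with $\vy_i=\vy_j$. A short case analysis on whether the two colliding clauses agree in the sign imposed on the shared variable, and if not, on the sign imposed on $x_{n+1}$, shows that one can still produce a partial assignment $\chi$ on some subset of $\vY$ of size at most $\DELTA'''$ together with a specific value of $x_{n+1}$ that jointly satisfy all the $b_i$. I would then bound $\log^2\leq O(n(\vI_\chi+1))$, apply the first-moment bound \Lem~\ref{Lemma_wild}, and absorb the factor $n$ against the $1/n$ coming from the collision probability, exactly as in the proof of \Lem~\ref{Lemma_Difference_Z}. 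On the residual event $\{|\vY|\leq\DELTA'''-2\}$, whose probability is $O((\DELTA''')^4/n^2)$, the deterministic bound $\log^2\leq((n+1)\log 2)^2$ alone suffices. Summing the three regimes and averaging against the Poisson distribution of $\DELTA'''$ yields the claimed $O(1)$.

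The main (mild) obstacle I anticipate is carrying out the case analysis in the $|\vY|=\DELTA'''-1$ regime cleanly: verifying that in every sub-case of collision (equal or unequal signs on the shared variable; equal or unequal signs on $x_{n+1}$) a feasible pair $(\chi,x_{n+1})$ exists with $\chi$ defined on a subset of $\vY$ of size $O(\DELTA''')$ so that \Lem~\ref{Lemma_wild} applies uniformly. Aside from this bookkeeping, the argument is a direct adaptation of the techniques already established for \Lem~\ref{Lemma_Difference_Z}.
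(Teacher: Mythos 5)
Your proposal is correct and follows essentially the same route as the paper: the same three-way decomposition by $|\vY|$, the same construction of the assignment $\CHI$ that satisfies all new clauses, the same invocation of Fact~\ref{Fact_UC}, and the same applications of \Lem~\ref{Lemma_tame} (second moment) and \Lem~\ref{Lemma_wild} (first moment, buying one factor of $n$ against the collision probability). The only additions you make beyond the paper's writeup are (i) explicitly discarding the super-polynomially unlikely event $\DELTA'''>\log n$ so that \Lem s~\ref{Lemma_wild},~\ref{Lemma_tame} apply within their $|Y|\leq\log^2 n$ regime, a point the paper leaves implicit, and (ii) spelling out the short collision case analysis for $|\vY|=\DELTA'''-1$, which the paper compresses into the sentence that some $\CHI$ satisfies all but one new clause with $x_{n+1}$ covering the last; both elaborations are correct and harmless.
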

\begin{proof}
If $\PHI'$ is unsatisfiable, then so is $\PHI'''$ and thus $(Z(\PHI''')\vee1)/(Z(\PHI')\vee1)=1$.
Hence, we may assume that $Z(\PHI')\geq1$.
We now consider three scenarios.
First, suppose that $|\vY|=\vec\Delta'''$, i.e., the new clauses attach to distinct variables of $\PHI'$.
Then define an assignment $\CHI\in\{\pm1\}^{\vY}$ by setting each $y\in\vY$ to the value that satisfies the unique clause among $b_1,\ldots,b_{\DELTA'''}$ in which $y$ occurs.
We claim that
\begin{align}\label{eqLemma_Difference_Z'''_1}
Z(\PHI''')\vee1&\geq2^{-\vA_{\CHI}}Z(\PHI').
\end{align}
Indeed, if $\CHI'\in\{\pm1\}^{\vY}$ differs from $\CHI$ on only one variable, then we can always satisfy all clauses $b_1,\ldots,b_{\DELTA'''}$ by setting $x_{n+1}$ appropriately.
Therefore, \eqref{eqLemma_Difference_Z'''_1} follows from Fact~\ref{Fact_UC} and the definition \eqref{eqAchi} of $\vA_{\CHI}$.
Combining \eqref{eqLemma_Difference_Z'''_1} with \Lem~\ref{Lemma_tame}, we obtain
\begin{align}\label{eqLemma_Difference_Z'''_2}
\Erw\brk{\vecone\cbc{Z(\PHI')\geq1,\ |\vY|=\vec\Delta'''}\log^2\frac{Z(\PHI''')\vee1}{Z(\PHI')\vee
1}}&\leq \Erw\brk{\vecone\cbc{|\vY|=\vec\Delta'''}\vA_{\CHI}^2}=O(1).
\end{align}

Second, consider the case $|\vY|=\vec\Delta'''-1$.
Because $b_1,\ldots,b_{\vec\Delta'''}$ are drawn independently, we have
\begin{align}\label{eqLemma_Difference_Z'''_3a}
\pr\brk{|\vY|=\vec\Delta'''-1\mid\DELTA'''}&=O((\vec\Delta''')^2/n).
\end{align}
Further, there exists an assignment $\CHI\in\{\pm1\}^{\vY}$ under which all but one of the clauses $b_1,\ldots,b_{\vec\Delta'''}$ are satisfied.
This assignment is independent of $\PHI'$ because the signs of $b_1,\ldots,b_{\DELTA'''}$ are.
Since we can use the new variable $x_{n+1}$ to satisfy the last clause as well, Fact~\ref{Fact_UC} implies the bound $(Z(\PHI''')\vee1)/Z(\PHI')\geq2^{-\vI_{\CHI}}$.
Therefore, \Lem~\ref{Lemma_wild} and \eqref{eqLemma_Difference_Z'''_3a} yield
\begin{align}%\nonumber
\Erw\brk{\vecone\cbc{Z(\PHI')\geq1,\ |\vY|=\vec\Delta'''-1}
	\log^2\frac{Z(\PHI'')\vee1}{Z(\PHI')\vee1}}
	&\leq \Erw\brk{\vecone\cbc{Z(\PHI')\geq1,\ |\vY|=\vec\Delta'''-1}\vI_{\CHI}^2}\nonumber\\
&\leq n\Erw\brk{\vecone\cbc{|\vY|=\vec\Delta'''-1}\vI_{\CHI}}=O(1).
	\label{eqLemma_Difference_Z'''_3}
\end{align}

Finally, because $b_1,\ldots,b_{\vec\Delta'''}$ are drawn independently, the event $\{|\vY|<\DELTA'''-1\}$ has probability $O(n^{-2})$.
Therefore, the deterministic bound $(Z(\PHI''')\vee1)/(Z(\PHI')\vee1)\geq2^{-n}$ ensures that
\begin{align}\label{eqLemma_Difference_Z'''_4}
\Erw\brk{\vecone\cbc{Z(\PHI')\geq1,\ |\vY|<\vec\Delta'''-1}
\log^2\frac{Z(\PHI''')\vee1}{Z(\PHI')\vee1}}=O(1).
\end{align}
The assertion follows from \eqref{eqLemma_Difference_Z'''_2}, \eqref{eqLemma_Difference_Z'''_3} and \eqref{eqLemma_Difference_Z'''_4}.
\end{proof}

\begin{lemma}\label{Lemma_Lambda_bound'''}
There exists $K>0$ such that for every $\eps>0$ we have
\begin{align*}
\limsup_{n\to\infty}
\Erw\brk{\Lambda_\eps
	\bc{\sum_{s\in\{\pm1\}}\prod_{i=1}^{\vec\Delta'''}\bc{1-\vecone\cbc{s\neq\sign(x_{n+1},b_i)}
			\mu_{\PHI'}(\SIGMA_{\vy_i}=-\sign(\vy_i,b_i))}}^2\mid Z(\PHI')>0}\leq K.
\end{align*}
\end{lemma}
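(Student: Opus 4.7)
The plan is to pass to the weak limit of the random variable $T$ inside $\Lambda_\eps$ and bound the limiting expectation uniformly in $\eps$, adapting the template of \Lem~\ref{Lemma_Lambda_bound}. Write $p_i = \mu_{\PHI'}(\SIGMA_{\vy_i} = -\sign(\vy_i, b_i)) \in [0,1]$ and let $T = \sum_{s\in\{\pm 1\}}\prod_{i=1}^{\vec\Delta'''}\bc{1 - \vecone\cbc{s \neq \sign(x_{n+1}, b_i)} p_i} \in [0,2]$. For each fixed $\eps > 0$ the function $\Lambda_\eps$ is continuous and bounded on $[0,2]$, so $\Lambda_\eps(T)^2$ is uniformly bounded for that $\eps$.

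First I would identify the weak limit of $T$. Conditional on $\vec\Delta''' = k$, the random variable $T$ is a continuous function of the iid uniform signs $\sign(x_{n+1}, b_i)$ and of the marginals $p_i$ at independent uniform random variables $\vy_i \in V_n$. Via the coupling $\dTV(\vm, \vm') = o(1)$, \Cor~\ref{Cor_BP} yields that the empirical distribution $\pi_{\PHI'}$ of marginals converges weakly in probability to $\pi_d$. Conditional on $\PHI'$ the $p_i$ are iid samples from $\pi_{\PHI'}$ (up to a sampling-without-replacement correction of $O(k^2/n)$), so the joint distribution of $(p_1, \ldots, p_k)$ converges weakly to the product measure $\pi_d^k$. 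Integrating over the $\Po(d)$ distribution of $\vec\Delta'''$ and applying the continuous mapping theorem give that $T$ converges in distribution to $T^* = \sum_{s \in \{\pm 1\}}\prod_{i=1}^{\vd}\bc{1 - \vecone\cbc{s \neq \vs_i}\MU_{\pi_d, i}}$. Since $\Lambda_\eps$ is continuous and bounded on $[0,2]$ and $\pr[Z(\PHI') > 0] = 1 - o(1)$, bounded convergence yields
\begin{align*}
\limsup_{n\to\infty}\Erw\brk{\Lambda_\eps(T)^2 \mid Z(\PHI') > 0} \leq \Erw\brk{\Lambda_\eps(T^*)^2}.
\end{align*}

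Next I would show $\Erw\brk{\Lambda_\eps(T^*)^2} \leq K$ for a constant $K = K(d)$ independent of $\eps$. Because $\pi_d \in \cP(0,1)$ by \Prop~\ref{Prop_BP}, each factor $1 - \MU_{\pi_d, i}$ is strictly positive a.s., so both products in $T^*$ are strictly positive and $T^* > 0$ a.s. Applying the elementary inequality $\max(A,B) \geq \sqrt{AB}$ to the two products in the sum gives $T^* \geq \sqrt{\prod_{i=1}^{\vd}(1 - \MU_{\pi_d, i})}$, whence $|\log T^*| \leq \log 2 + \frac{1}{2}\sum_{i=1}^{\vd}|\log(1 - \MU_{\pi_d, i})|$ a.s. On the a.s.\ event $\{T^* > 0\}$ we have $|\Lambda_\eps(T^*)| \leq |\log T^*| + \log 2$, hence $\Lambda_\eps(T^*)^2 \leq 2(\log T^*)^2 + 2(\log 2)^2$. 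By Cauchy-Schwarz on the sum and independence of $\vd$ from the $\MU_{\pi_d, i}$,
\begin{align*}
\Erw\brk{\Lambda_\eps(T^*)^2} \leq O(1) + \Erw[\vd^2] \cdot \Erw\brk{\log^2(1 - \MU_{\pi_d})}.
\end{align*}
The bound $|\log(1-\mu)| \leq \log 2 + |\log(\mu/(1-\mu))|$ valid for $\mu \in (0,1)$, combined with the tail bound \eqref{eqProp_uniqueness2}, gives $\Erw\brk{\log^2(1-\MU_{\pi_d})} < \infty$, yielding the desired uniform constant.

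The main obstacle is the rigorous verification of the distributional convergence $T \to T^*$, specifically the joint convergence of the marginals $(p_1,\ldots,p_k)$ at independently sampled variables. While \Cor~\ref{Cor_BP} gives convergence of the empirical measure of individual marginals, passing to joint convergence relies on the observation that, given $\PHI'$, the $p_i$ are essentially iid samples from $\pi_{\PHI'}$; since $\pi_{\PHI'}$ converges weakly in probability to the deterministic $\pi_d$, the joint distribution of the $p_i$ converges to the product $\pi_d^k$ after integrating out $\PHI'$. Keeping track of the $\Po(d)$ mixture over $\vec\Delta'''$ and the $o(1)$ collision corrections is then routine bookkeeping.
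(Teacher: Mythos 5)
Your proposal is correct, and the overall plan mirrors the paper's: reduce the claim to a finiteness statement via \Cor~\ref{Cor_BP} and the tail bound~\eqref{eqProp_uniqueness2}. The route through the details differs in two respects. First, you pass to the distributional limit of the random variable $T$ inside $\Lambda_\eps$ (using the joint convergence of $(p_1,\ldots,p_k)$ to iid samples from $\pi_d$) and then bound $\Erw[\Lambda_\eps(T^*)^2]$; the paper instead bounds the finite-$n$ expectation directly, reducing it to $\Erw[\Lambda_\eps(\mu_{\PHI'}(\SIGMA_{\vy_1}=1))^2]$ via $T\geq\prod_i(1-p_i)$, sign symmetry and Cauchy--Schwarz, and only then invokes \Cor~\ref{Cor_BP} on that scalar quantity. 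Taking the weak limit first requires joint convergence of the marginals at several sampled variables, whereas the paper's ordering only needs the one-dimensional statement of \Cor~\ref{Cor_BP}; the latter is a bit more economical, though both arguments are sound. Second, your AM--GM step $T^*=A+B\geq 2\sqrt{AB}=2\sqrt{\prod_i(1-\MU_{\pi_d,i})}$ is a neat symmetric lower bound; the paper instead simply keeps one of the two summands, $T\geq\prod_i(1-p_i)$, which is weaker by a constant factor but equally sufficient (and by the symmetry $\MU_{\pi_d}\disteq 1-\MU_{\pi_d}$ the two bounds land on the same quantity $\Erw[\log^2\MU_{\pi_d}]$). One small correction: the new clauses $b_1,\ldots,b_{\DELTA'''}$ are drawn independently and uniformly from all $8n$ admissible clauses, i.e.\ \emph{with} replacement, so given $\PHI'$ the $p_i$ are genuinely iid; the ``sampling-without-replacement correction of $O(k^2/n)$'' you mention is not needed. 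Also note, as in the paper, the bound implicitly requires $\eps$ to be bounded away from infinity (say $\eps\leq 1$), since for $\eps>2$ one has $\Lambda_\eps(T)\equiv\log\eps$; this is the intended regime, as the lemma is applied on the way to $\eps\to 0$.
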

\begin{proof}
Since $\DELTA'''\disteq\Po(d/2)$, $\vy_1,\ldots,\vy_{\DELTA'''}$ and the signs $\sign(b_i,\vy_i)$ are uniformly random, we obtain
\begin{align}\nonumber
\Erw&\brk{\Lambda_\eps
	\bc{\sum_{s\in\{\pm1\}}\prod_{i=1}^{\vec\Delta'''}\bc{1-\vecone\cbc{s\neq\sign(x_{n+1},b_i)}
			\mu_{\PHI'}(\SIGMA_{\vy_i}=-\sign(\vy_i,b_i))}}^2\mid Z(\PHI')>0}\\
&\leq 1+\Erw\brk{\Lambda_\eps
	\bc{\prod_{i=1}^{\vec\Delta'''}\mu_{\PHI'}(\SIGMA_{\vy_{i}}=1)}^2\mid Z(\PHI')>0}\leq 1+d\Erw\brk{\Lambda_\eps\bc{\mu_{\PHI'}(\SIGMA_{\vy_1}=1)}^2\mid Z(\PHI')>0}.\label{eqLemma_Lambda_bound_1'''}
\end{align}
Further,  the formulas $\PHI'$, $\PHI$ can be coupled such that both coincide \whp\ (cf.\ the proof of \Lem~\ref{Lemma_Lambda_bound}).
Therefore, \Cor~\ref{Cor_BP} implies that for every $\eps>0$,
\begin{align}\label{eqLemma_Lambda_bound_2'''}
\Erw\brk{\Lambda_\eps\bc{\mu_{\PHI'}(\SIGMA_{\vy_1}=1)}^2\mid Z(\PHI')>0}=\Erw\brk{\Lambda_\eps\bc{\MU_{\pi_{\PHI}}}^2\mid Z(\PHI)>0}+o(1)
	=\Erw\brk{\Lambda_\eps\bc{\MU_{\pi_{d}}^2}}+o(1)\leq\Erw\brk{\log^2\MU_{\pi_{d}}}+o(1).
\end{align}
Since \eqref{eqProp_uniqueness2} implies that $\Erw\brk{\log^2\MU_{\pi_{d}}}<\infty$, the assertion follows from \eqref{eqLemma_Lambda_bound_1'''}--\eqref{eqLemma_Lambda_bound_2'''}.
\end{proof}

\begin{lemma}\label{Lemma_PHI'''a}
For any $\delta>0$ there exists $\eps_0>0$ such that for every $0<\eps<\eps_0$,
\begin{align*}
\abs{\Erw\brk{\log\frac{Z(\PHI''')\vee1}{Z(\PHI')\vee1}}-
	\Erw\brk{\Lambda_\eps
	\bc{\sum_{s\in\{\pm1\}}\prod_{i=1}^{\vd}\bc{1-\vecone\cbc{s\neq\vs_i}
			\mu_{\PHI'}(\SIGMA_{x_i}=\vs_{i}')}}\mid Z(\PHI')>0}}&<\delta + o(1).
\end{align*}
\end{lemma}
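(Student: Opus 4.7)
The plan is to follow the template of \Lem~\ref{Lemma_PHI''a}, accounting for the additional free variable $x_{n+1}$, which produces the outer sum over $s\in\{\pm1\}$ in the target expression. Fix $\delta>0$, choose parameters $\xi\gg\eta\gg\eps>0$ and a sequence $\gamma_n\to 0$ slowly, and define the good event $\cE$ by the five requirements:
\textbf{(i)} $Z(\PHI')>0$; \textbf{(ii)} $|\vY|=\DELTA'''$, i.e.\ the new clauses touch pairwise distinct variables; \textbf{(iii)} $\DELTA'''<\xi^{-1/4}$; \textbf{(iv)} $\mu_{\PHI'}(\SIGMA_y=s)<1-2\eta$ for every $y\in\vY$ and $s\in\{\pm1\}$; \textbf{(v)} the joint $\mu_{\PHI'}$-distribution of $(\SIGMA_y)_{y\in\vY}$ is $\gamma_n$-close in total variation to the product of its marginals. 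Mirroring the argument given for \Lem~\ref{Lemma_PHI''a}, \textbf{(i)}--\textbf{(iii)} hold with probability $1-\xi-o(1)$ by Poisson tail bounds, \textbf{(iv)} with probability $\geq 1-\xi$ conditional on \textbf{(iii)} by \Cor~\ref{Cor_BP} and the tail bound~\eqref{eqProp_uniqueness2}, and \textbf{(v)} with probability $1-o(1)$ conditional on \textbf{(i)}--\textbf{(iv)} by \Cor~\ref{Cor_uniqueness}. Hence $\pr[\cE]>1-4\xi$.

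On $\cE$, summing over assignments of $\{x_{n+1}\}\cup\vY$ and observing that clause $b_i$ is violated iff simultaneously $s\neq\sign(x_{n+1},b_i)$ and $\sigma_{\vy_i}=-\sign(\vy_i,b_i)$, the ratio decomposes as
\begin{align*}
\frac{Z(\PHI''')}{Z(\PHI')}
&=\sum_{s\in\{\pm1\}}\sum_{\sigma\in\{\pm1\}^{\vY}}\vecone\{(\sigma,s)\text{ satisfies }b_1,\ldots,b_{\DELTA'''}\}\,\mu_{\PHI'}(\forall y\in\vY:\SIGMA_y=\sigma_y)\\
&=\sum_{s\in\{\pm1\}}\prod_{i=1}^{\DELTA'''}\bigl(1-\vecone\{s\neq\sign(x_{n+1},b_i)\}\,\mu_{\PHI'}(\SIGMA_{\vy_i}=-\sign(\vy_i,b_i))\bigr)+o(1),
\end{align*}
where the factorisation uses \textbf{(ii)} and \textbf{(v)}. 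By \textbf{(iv)} each factor is bounded below by $2\eta$, and by \textbf{(iii)} there are at most $\xi^{-1/4}$ factors, so the full sum exceeds $2(2\eta)^{\xi^{-1/4}}$; choosing $\eps<(2\eta)^{\xi^{-1/4}}$ we may replace $\log$ by its $\Lambda_\eps$-truncation, obtaining
\begin{align*}
\vecone_\cE\log\frac{Z(\PHI''')\vee1}{Z(\PHI')\vee1}=\vecone_\cE\,\Lambda_\eps\Bigl(\sum_{s\in\{\pm1\}}\prod_{i=1}^{\DELTA'''}\bigl(1-\vecone\{s\neq\sign(x_{n+1},b_i)\}\mu_{\PHI'}(\SIGMA_{\vy_i}=-\sign(\vy_i,b_i))\bigr)\Bigr)+o(1).
\end{align*}

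For the complement $\cE^c$, \Lem~\ref{Lemma_Difference_Z'''} combined with Cauchy--Schwarz and $\pr[\cE^c]<4\xi+o(1)$ yields $\Erw[(1-\vecone_\cE)|\log((Z(\PHI''')\vee1)/(Z(\PHI')\vee1))|]\leq\delta/3+o(1)$ for $\xi$ small; the analogous bound on the $\Lambda_\eps$-side follows from \Lem~\ref{Lemma_Lambda_bound'''}. Finally, conditionally on \textbf{(ii)} the signs $\sign(x_{n+1},b_i)$ and $\sign(\vy_i,b_i)$ are independent uniform $\pm 1$, and the $\vy_i$ are a uniformly random ordered tuple of distinct variables from $\{x_1,\ldots,x_n\}$. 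By permutation invariance of the distribution of $\PHI'$ we may relabel $\vy_i\mapsto x_i$ without changing the expectation, and $\DELTA'''\disteq\vd$; these substitutions turn the $\sign$-dependencies into the uniform $\vs_i,\vs_i'$ of the target expression and yield the stated identity, up to a final $\delta/3$ contributed by the conditioning on $\{Z(\PHI')>0\}$, which carries no mass on the left-hand side since the ratio equals $1$ there.

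The main obstacle is the truncation step: the product-of-marginals replacement in \textbf{(v)} introduces an additive $o(1)$ error inside each of up to $\xi^{-1/4}$ factors, so one must verify that the approximant remains strictly above $\eps$ to keep $\log=\Lambda_\eps$ valid. Taking $\eta$ a function of $\xi$ satisfying $\eta\gg\eps$ and exploiting the hard bound on $\DELTA'''$ from \textbf{(iii)} resolves the issue; a parallel bit of care is needed to match the $\Erw[\,\cdot\,|\,Z(\PHI')>0]$ on the right-hand side of the lemma with the unconditioned expectation on the left, which follows since both sides vanish on $\{Z(\PHI')=0\}$ and $\pr[Z(\PHI')>0]=1-o(1)$.
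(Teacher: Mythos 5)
Your proof follows the same structure and argument as the paper's: the same five-part good event $\cE$ (the paper's {\bf E1}--{\bf E5}), the same factorisation of $Z(\PHI''')/Z(\PHI')$ over assignments of $\vY\cup\{x_{n+1}\}$ using {\bf E2}, {\bf E5}, the same lower bound $(2\eta)^{\xi^{-1/4}}$ from {\bf E3}, {\bf E4} to justify replacing $\log$ by $\Lambda_\eps$, the same Cauchy--Schwarz treatment of $\cE^c$ via \Lem~\ref{Lemma_Difference_Z'''} and \Lem~\ref{Lemma_Lambda_bound'''}, and the same permutation-invariance relabelling $\vy_i\mapsto x_i$ together with the identification of the random clause signs with the uniform $\vs_i,\vs_i'$. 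This is essentially the paper's proof, and it is correct.
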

\begin{proof}
Choose small enough $\xi=\xi(\delta)>\eta=\eta(\xi)>\eps=\eps(\eta)>0$, assume that $n>n_0(\eps)$ is sufficiently large and let $(\gamma_n)_n$ be a sequence of numbers $\gamma_n>0$ that tends to zero slowly.
Let $\cE=\cE_n$ be the event that the following five statements are satisfied.
\begin{description}
\item[E1] $Z(\PHI')>0$.
\item[E2] $|\vY|=\vec\Delta'''$.
\item[E3] $\vec\Delta'''<\xi^{-1/4}$.
\item[E4] for all $y\in\vY$ we have $\mu_{\PHI'}(\SIGMA_y=1)\vee\mu_{\PHI'}(\SIGMA_y=-1)<1-2\eta.$
\item[E5] $\sum_{\sigma\in\{\pm1\}^{\vY}}\abs{\mu_{\PHI}(\forall y\in\vY:\SIGMA_{y}=\sigma_y)-\prod_{y\in\vY}\mu_{\PHI}(\SIGMA_{y}=\sigma_y)}<\gamma_n$.
\end{description}
As in the proof of \Lem~\ref{Lemma_PHI''a} we obtain $\pr\brk{\cE}>1-4\xi.$
Hence, \Lem s~\ref{Lemma_Difference_Z'''} and~\ref{Lemma_Lambda_bound'''}  and the Cauchy-Schwarz inequality yield
\begin{align}\label{eqcE1'''}
\abs{\Erw\brk{(1-\vecone\cE)\log\frac{Z(\PHI''')}{Z(\PHI')}}}&\leq\delta/3+o(1),\\
\abs{
\Erw\brk{(1-\vecone\cE)\Lambda_\eps
	\bc{\sum_{s\in\{\pm1\}}\prod_{i=1}^{\vec\Delta'''} \bc{1-\vecone\cbc{s\neq\sign(x_{n+1},b_i)}
			\mu_{\PHI'}(\SIGMA_{\vy_i}=-\sign(\vy_i,b_i))}}\mid Z(\PHI')>0}}
&\leq\delta/3+o(1).\label{eqcE2'''}
\end{align}
Moreover, because the distribution of $\PHI'$ is invariant under variable permutations,
\begin{align}\nonumber
\Erw&\brk{
\Lambda_\eps
	\bc{\sum_{s\in\{\pm1\}}\prod_{i=1}^{\vec\Delta'''}\bc{1-\vecone\cbc{s\neq\sign(x_{n+1},b_i)}
			\mu_{\PHI'}(\SIGMA_{\vy_i}=-\sign(\vy_i,b_i))}}\mid Z(\PHI')>0}\\
			&=\Erw\brk{\Lambda_\eps
	\bc{\sum_{s\in\{\pm1\}}\prod_{i=1}^{\vd}\bc{1-\vecone\cbc{s\neq\vs_i}
			\mu_{\PHI'}(\SIGMA_{x_i}=\vs_{i}')}}\mid Z(\PHI')>0}
	+o(1).\label{eqcE3'''}  
\end{align}
Further, on $\cE$ we obtain
\begin{align}\nonumber
\frac{Z(\PHI''')}{Z(\PHI')}&
	=\sum_{\sigma\in\cbc{\pm1}^{\vY\cup\{x_{n+1}\}}}\vecone\cbc{\sigma\mbox{ satisfies }b_1,\ldots,b_{\vec\DELTA'''}}
			\mu_{\PHI'}\bc{\forall y\in\vY:\SIGMA_y=\sigma_y}\\
&=\sum_{\sigma\in\cbc{\pm1}^{\vY\cup\{x_{n+1\}}}}\vecone\cbc{\sigma\mbox{ satisfies }b_1,\ldots,b_{\vec\DELTA'''}}
	\prod_{y\in\vY}\mu_{\PHI'}\bc{\SIGMA_y=\sigma_y}+o(1)&&\mbox{[due to {\bf E3,E5}]}\nonumber\\
&=
\sum_{s\in\{\pm1\}}\prod_{\substack{i\in[\vec\Delta''']\\\sign(x_{n+1},b_i)=-s}}\mu_{\PHI'}(\SIGMA_{\vy_i}=\sign(\vy_i,b_i))\enspace;
\label{eqLemma_PHI'''a_1}
\end{align}
to elaborate, in the last step $s$ represents the value assigned to $x_{n+1}$ and the product ensures that the clauses $b_i$ in which $x_{n+1}$ occurs with sign $-s$ are satisfied by assigning their second variable $\vy_i$ the value $\sign(\vy_i,b_i)$.
Further, \eqref{eqLemma_PHI'''a_1}, {\bf E3} and {\bf E4} yield
\begin{align}\nonumber
\Erw\brk{\vecone\cE\log\frac{Z(\PHI''')}{Z(\PHI')}}
&=\Erw\brk{\vecone\cE
		\log\bc{\sum_{s\in\{\pm1\}}\prod_{i=1}^{\DELTA'''}\bc{1-\vecone\cbc{\sign(x_{n+1},b_i)=-s}\mu_{\PHI'}(\SIGMA_{\vy_i}=-\sign(\vy_i,b_i))}}}+o(1)\\
&=\Erw\brk{\vecone\cE
		\Lambda_\eps\bc{\sum_{s\in\{\pm1\}}\prod_{i=1}^{\DELTA'''}\bc{1-\vecone\cbc{\sign(x_{n+1},b_i)=-s}\mu_{\PHI'}(\SIGMA_{\vy_i}=-\sign(\vy_i,b_i))}}}+o(1)
		\label{eqLemma_PHI'''a_1a}
\end{align}
Finally, the assertion follows from  \eqref{eqcE1'''}, \eqref{eqcE2'''}, \eqref{eqcE3'''} and~\eqref{eqLemma_PHI'''a_1a}.
\end{proof}

\begin{proof}[Proof of \Prop~\ref{Lemma_PHI''}]
Because $\MU_{\pi_d,1}\disteq 1-\MU_{\pi_d,1}$ by \Prop~\ref{Prop_BP}, \Cor~\ref{Cor_BP} shows that for every $\eps>0$,
\begin{align}\label{eqLemma_PHI'''_1}
	\lim_{n\to\infty}\Erw\brk{\Lambda_\eps
	\bc{\sum_{s\in\{\pm1\}}\prod_{i=1}^{\vd}\bc{1-\vecone\cbc{s\neq\vs_i}\mu_{\PHI'}(\SIGMA_{x_i}=\vs_{i}')}\mid Z(\PHI')>0}
	}&=
	\Erw\brk{\Lambda_\eps\bc{\sum_{s\in\{\pm1\}}\prod_{i=1}^{\vd}\bc{1-\vecone\cbc{s\neq\vs_i}\MU_{\pi_d,i}}}}.
\end{align}
Further, the dominated convergence theorem and \eqref{eqBFE} yield
\begin{align}\label{eqLemma_PHI'''_2}
\lim_{\eps\to0}\Erw\brk{\Lambda_\eps\bc{\sum_{s\in\{\pm1\}}\prod_{i=1}^{\vd}\bc{1-\vecone\cbc{s\neq\vs_i}\MU_{\pi_d,i}}}}
=\Erw\log\bc{\sum_{s\in\{\pm1\}}\prod_{i=1}^{\vd}\bc{1-\vecone\cbc{s\neq\vs_i}\MU_{\pi_d,i}}}.
\end{align}
To complete the proof we combine \eqref{eqLemma_PHI'''_1}, \eqref{eqLemma_PHI'''_2} and \Lem~\ref{Lemma_PHI''a}.
\end{proof}

 \section{Proof of \Prop~\ref{Prop_conc}}\label{Sec_Prop_conc}

\noindent
Tools such as Azuma's inequality do not apply to the number $Z(\PHI)$ of satisfying assignments because adding or removing even a single clause could change $Z(\PHI)$ by an exponential factor.
Therefore, we prove \Prop~\ref{Prop_conc} by way of a `soft' version of the random $2$-SAT problem.
Specifically, for a real $\beta>0$ we define $Z_\beta(\PHI)$ via \eqref{eqZbeta}.
Thus, instead of dismissing assignments $\sigma\not\in S(\PHI)$ outright, we charge an $\exp(-\beta)$ penalty factor  for each violated clause.
Because the constraints are soft, showing that $\log Z_\beta(\PHI)$ concentrates is a cinch.

\begin{lemma}\label{Lemma_Azuma}
For all $t,\beta>0$ we have
$\displaystyle\pr\brk{\abs{\log Z_\beta(\PHI)-\Erw[\log Z_\beta(\PHI)]}>t\mid \vm}\leq2\exp\bc{-\frac{t^2}{2\vm\beta^2}}$.
\end{lemma}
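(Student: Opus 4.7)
The plan is to view $\log Z_\beta(\PHI)$ as a function of the i.i.d.\ clauses that make up $\PHI$ and apply the Azuma--Hoeffding inequality via a Doob martingale. Since $\vm$ is being conditioned on, I will freeze $\vm=m$ so that $\PHI$ consists of $m$ independent uniformly random clauses $\vec c_1,\dots,\vec c_m$ drawn from the set of all $4n(n-1)$ possible $2$-clauses.

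The crucial structural observation is a bounded-differences property: swapping any single clause perturbs $\log Z_\beta(\PHI)$ by at most $\beta$. Indeed, for any $\sigma\in\{\pm1\}^n$ and any pair of formulas $\Phi,\Phi'$ differing in exactly one clause, the corresponding summands in the definition \eqref{eqZbeta} differ by a multiplicative factor lying in $[\eul^{-\beta},\eul^{\beta}]$, since only one indicator $\vecone\{\sigma\mbox{ violates }\cdot\}$ can flip. Summing over $\sigma$ therefore yields $\eul^{-\beta}Z_\beta(\Phi)\le Z_\beta(\Phi')\le \eul^{\beta}Z_\beta(\Phi)$, whence $|\log Z_\beta(\Phi')-\log Z_\beta(\Phi)|\le\beta$. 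This is the place where the softness of the constraints is essential; for the hard partition function $Z(\PHI)$ the same swap could change $\log Z$ by as much as $\Omega(n)$, which is precisely why the naive Azuma argument fails for \Prop~\ref{Prop_conc} and one must interpolate through $Z_\beta$.

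With this Lipschitz bound in hand I will introduce the Doob martingale $M_i=\Erw[\log Z_\beta(\PHI)\mid \vm=m,\vec c_1,\dots,\vec c_i]$ for $i=0,1,\dots,m$, so that $M_0=\Erw[\log Z_\beta(\PHI)\mid \vm=m]$ and $M_m=\log Z_\beta(\PHI)$. Because the clauses are independent, the standard re-sampling argument (couple $(\vec c_1,\dots,\vec c_m)$ with an independent copy $\vec c_i'$ of the $i$-th clause) together with the Lipschitz bound above yields $|M_i-M_{i-1}|\le\beta$ almost surely for every $i$. Applying the Azuma--Hoeffding inequality to this martingale then gives
\[
\pr\brk{|\log Z_\beta(\PHI)-\Erw[\log Z_\beta(\PHI)\mid\vm]|>t\,\big|\,\vm=m}\le 2\exp\bc{-\frac{t^2}{2m\beta^2}},
\]
which is exactly the claim after reinserting $\vm$ in place of $m$. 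There is no real obstacle: the entire point of passing to the soft partition function is that this concentration becomes a routine application of bounded-differences, in sharp contrast to the analogous statement for $Z(\PHI)$.
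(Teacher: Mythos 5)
Your proof is exactly the paper's argument, spelled out in more detail: both observe that swapping a single clause changes $\log Z_\beta(\PHI)$ by at most $\beta$ (since each summand in \eqref{eqZbeta} changes by a factor in $[\eul^{-\beta},\eul^{\beta}]$) and then apply the Azuma--Hoeffding inequality to the Doob martingale over the $\vm=m$ i.i.d.\ clauses. One small remark: what Azuma gives directly, and what you correctly derive, is the bound with the \emph{conditional} mean $\Erw[\log Z_\beta(\PHI)\mid\vm]$, whereas the paper's display writes the unconditional $\Erw[\log Z_\beta(\PHI)]$; this is evidently an imprecision in the paper's statement, and the conditional version is both what the one-line proof sketch supports and what is used downstream in \Cor~\ref{Cor_PT}.
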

\begin{proof}
Since adding or removing a single clause can alter $Z_\beta(\PHI)$ by at most a factor $\exp(\pm\beta)$, the assertion follows from Azuma's inequality.
\end{proof}

The following statement, whose proof relies on the interpolation method from mathematical physics, will enable us to link the random variables $\log Z_\beta(\PHI)$ and $\log Z(\PHI)$.
For a probability measure $p\in\cP(0,1)$ and $\beta >0$ let
\begin{align}\nonumber
\fB_\beta(p)&=
\Erw\brk{\log\sum_{s=\pm1}
	\prod_{i=1}^{\vd}\bc{1-\vecone\{\vs_i\neq s\}\frac{1-\exp(-\beta)}2\bc{1-\vs_i'+2\vs_i'\MU_{p,i}}}}\\
		&\qquad-\frac d2\Erw\brk{\log\bc{1-\frac{1-\exp(-\beta)}4\bc{1-\vs_1+2\vs_1\MU_{p,1}}\bc{1-\vs_2+2\vs_2\MU_{p,2}}}}.
		\label{eqBB}
\end{align}
These two expectations exist and are finite because $0\leq \beta<\infty$.
(More precisely, their absolute values are bounded by $\log 2 + \beta d$ and $\beta$, respectively.)

\begin{lemma}[{\cite[\Thm~1]{PanchenkoTalagrand}}]\label{Lemma_PT}
For any $p\in\cP(0,1)$ and any $0\leq\beta<\infty$ we have $\lim_{n\to\infty}\frac1n\Erw\log Z_\beta(\PHI)\leq \fB_\beta(p)$.
\end{lemma}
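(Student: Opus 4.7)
The plan is to apply the interpolation method of Franz--Leone~\cite{FranzLeone}, in the form refined for sparse random $k$-SAT by Panchenko and Talagrand~\cite{PanchenkoTalagrand}. I would construct a one-parameter family of random soft formulas $\{\PHI(t):t\in[0,1]\}$ satisfying $\PHI(1)\disteq\PHI$ while at $t=0$ all the $2$-clauses are replaced by ``mean-field'' single-variable weights whose normalising constant reproduces the first term of $\fB_\beta(p)$. Concretely, use $\vm_t\sim\Po(dtn/2)$ uniform random $2$-clauses, and, independently for each variable $x_i$, $\vec k_{i,t}\sim\Po(d(1-t))$ single-variable \emph{pseudo-clauses}. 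Each pseudo-clause carries an independent triple $(\vs,\vs',\MU_p)$ with $\vs,\vs'$ uniform signs and $\MU_p\sim p$, and contributes the soft weight $1-\vecone\{\sigma_{x_i}\neq\vs\}\tfrac{1-e^{-\beta}}{2}(1-\vs'+2\vs'\MU_p)$, which is exactly the partial trace (over a Bernoulli-$\MU_p$ phantom neighbour) of a standard $2$-SAT soft clause. Write $\phi_n(t)=n^{-1}\Erw\log Z_\beta(\PHI(t))$. Then $\phi_n(1)=n^{-1}\Erw\log Z_\beta(\PHI)$, while at $t=0$ the variables decouple and $\phi_n(0)$ converges as $n\to\infty$ to the first summand of $\fB_\beta(p)$ by a direct Poisson computation.

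Next I would compute $\phi_n'(t)$ via the Poisson rate identity for soft partition functions: inserting (removing) a clause of rate $\lambda$ contributes $\pm\lambda\,\Erw\log\langle\mbox{new factor}\rangle_t$, where $\langle\cdot\rangle_t$ is the Gibbs average at time $t$. This gives a formula of the shape $\phi_n'(t)=\tfrac{d}{2}A(t)-d\,B(t)$, where
$$A(t)=\Erw\log\bigl\langle 1-\tfrac{1-e^{-\beta}}{4}(1-\vs_1\sigma_{\vI})(1-\vs_2\sigma_{\J})\bigr\rangle_t,\qquad B(t)=\Erw\log\bigl\langle 1-\tfrac{1-e^{-\beta}}{4}(1-\vs\sigma_{\vI})(1-\vs'+2\vs'\MU_p)\bigr\rangle_t,$$
with $\vI,\J$ independent uniform indices in $[n]$. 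The target is the derivative inequality
$$\phi_n'(t)\le -\tfrac{d}{2}\,\Erw\log\bigl(1-\tfrac{1-e^{-\beta}}{4}(1-\vs_1+2\vs_1\MU_{p,1})(1-\vs_2+2\vs_2\MU_{p,2})\bigr)+o(1),$$
whose right-hand side is exactly the second summand of $\fB_\beta(p)$; integrating from $0$ to $1$ and sending $n\to\infty$ would then close the bound.

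The main obstacle is establishing this derivative inequality. I would attack it by expanding every logarithm through the absolutely convergent series $\log(1-x)=-\sum_{k\ge 1}x^k/k$, legitimate here because $|x|\le 1-e^{-\beta}<1$, and comparing the resulting series term by term. At order $k$ the $A(t)$ contribution is the $k$-th moment of the product $(1-\vs_1\sigma_{\vI})(1-\vs_2\sigma_{\J})$ under $k$ independent Gibbs replicas of $\langle\cdot\rangle_t$, whereas the $B(t)$ term and the target right-hand side involve the same polynomial with some or all of the $\sigma$-factors replaced by independent draws from the trial distribution $p$. The algebraic miracle specific to $2$-body interactions---and hence to $2$-SAT---is that, after averaging over the uniform signs $\vs_\cdot,\vs'_\cdot$, each order-$k$ difference collapses into a non-negative combination of squares of the form $\Erw\bigl[(\text{empirical overlap polynomial}-\text{trial moment})^2\bigr]$, which pins the sign of every term. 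This is the Franz--Leone convex interpolation identity, executed in detail for $k$-SAT in~\cite[Thm~1]{PanchenkoTalagrand}; once sign-definiteness is secured term by term, dominated convergence lets me re-sum the series and obtain the claimed bound.
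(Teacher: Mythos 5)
The paper does not prove Lemma~\ref{Lemma_PT}: it is stated as a direct citation of \cite[Thm~1]{PanchenkoTalagrand}, specialised to $k=2$, and used as a black box, so there is no paper proof to compare against. Your sketch is a faithful outline of the interpolation argument in that reference --- the rate-compensated family of soft models (real $2$-clauses at rate $\propto t$, single-variable pseudo-clauses at rate $\propto 1-t$), the Poisson-rate derivative $\phi_n'(t)=\tfrac d2 A(t)-dB(t)$, the matching of $\phi_n(0)$ with the first summand of $\fB_\beta(p)$, and the $\log$-series expansion with term-by-term sign control are all set up correctly. The one thing to correct is the assertion that the sign-definiteness of the order-$m$ terms is ``an algebraic miracle specific to $2$-body interactions'': for general $k$-SAT the relevant order-$m$ combination takes the form $\alpha^k - k\alpha\beta^{k-1} + (k-1)\beta^k$, with $\alpha,\beta\geq 0$ the replicated Gibbs and trial factors, and this is nonnegative for \emph{every} $k\geq 1$ by the AM--GM inequality; it happens to collapse to the perfect square $(\alpha-\beta)^2$ only when $k=2$. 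Since Panchenko--Talagrand prove Theorem~1 for all $k\geq 2$, your argument's actual scope is broader than you claim, but this misattribution does not affect the validity of the proof.
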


\noindent
Combining \Lem s~\ref{Lemma_Azuma} and~\ref{Lemma_PT}, we obtain the following bound for `hard' $2$-SAT.

\begin{corollary}\label{Cor_PT}
For any $\beta>0$ we have $\lim_{n\to\infty}\pr\brk{\log Z(\PHI)>n\fB_\beta(\pi_d)+n^{2/3}}=0$.
\end{corollary}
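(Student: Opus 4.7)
The plan hinges on the elementary monotonicity $Z(\PHI) \leq Z_\beta(\PHI)$ valid for every $\beta > 0$. Inspecting the definition \eqref{eqZbeta} shows that every summand lies in $(0,1]$ and equals $1$ precisely when $\sigma$ is a satisfying assignment, whence $Z_\beta(\PHI) \geq Z(\PHI)$. Accordingly, it suffices to verify that $\pr[\log Z_\beta(\PHI) > n\fB_\beta(\pi_d) + n^{2/3}] = o(1)$, thereby converting a question about the hard partition function into one about the soft version, to which the two lemmas just proved are tailored.

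The first step is to control the expectation. Specialising \Lem~\ref{Lemma_PT} to $p=\pi_d$ yields $\Erw\log Z_\beta(\PHI) \leq n\fB_\beta(\pi_d) + o(n)$, so for all $n$ large enough
\begin{align*}
\Erw\log Z_\beta(\PHI) + n^{2/3}/2 \leq n\fB_\beta(\pi_d) + n^{2/3}.
\end{align*}

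The second step is to bound the fluctuations via \Lem~\ref{Lemma_Azuma}. A routine Chernoff estimate for the Poisson variable $\vm$ shows that $\pr[\vm > dn] = o(1)$, so with probability $1-o(1)$ we are on the event $\vm \leq dn$. Conditioning on this event and choosing $t = n^{2/3}/2$ in \Lem~\ref{Lemma_Azuma} gives, deterministically on $\{\vm\leq dn\}$,
\begin{align*}
\pr\brk{\log Z_\beta(\PHI) > \Erw\log Z_\beta(\PHI) + n^{2/3}/2 \,\big|\, \vm} \leq 2\exp\bc{-\frac{n^{4/3}/4}{2dn\beta^2}} = 2\exp\bc{-\frac{n^{1/3}}{8d\beta^2}} = o(1).
\end{align*}
A union bound combining this conditional estimate with the tail bound on $\vm$ then yields $\pr[\log Z_\beta(\PHI) > n\fB_\beta(\pi_d) + n^{2/3}] = o(1)$, which suffices via the comparison from the first paragraph.

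No substantial obstacle is expected: both pillars of the proof, namely the interpolation upper bound on $\Erw\log Z_\beta(\PHI)$ and the Azuma concentration around this mean, are already in hand, and the monotone comparison $Z(\PHI) \leq Z_\beta(\PHI)$ bridges them cleanly. The only calibration needed is to ensure that the $n^{2/3}$ window comfortably dominates both the $o(n)$ slack in the interpolation bound and the $O(\sqrt{n\beta^2})$ concentration fluctuations, which it does for any fixed $\beta$.
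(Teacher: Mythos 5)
Your proof follows the same route as the paper's one-sentence argument: compare $Z(\PHI)\le Z_\beta(\PHI)$, bound the soft mean via Lemma~\ref{Lemma_PT}, and concentrate via Lemma~\ref{Lemma_Azuma}. The structure is right, but one step does not follow as written. From the literal statement of Lemma~\ref{Lemma_PT} you get only $\Erw\log Z_\beta(\PHI)\leq n\fB_\beta(\pi_d)+o(n)$, and a generic $o(n)$ remainder (e.g.\ of order $n^{0.9}$) is \emph{not} dominated by $n^{2/3}/2$, so the deduction ``so for all $n$ large enough $\Erw\log Z_\beta(\PHI)+n^{2/3}/2\leq n\fB_\beta(\pi_d)+n^{2/3}$'' is a non sequitur as stated. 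What rescues the argument --- and is tacitly used in the paper as well --- is that the Panchenko--Talagrand interpolation bound underlying Lemma~\ref{Lemma_PT} is actually non-asymptotic, giving an $O(1)$ error rather than a mere $o(n)$; invoking that sharper form makes the step valid. A second, smaller point: Lemma~\ref{Lemma_Azuma} arises from Azuma applied to the clause-exposure martingale and therefore really controls deviations from the \emph{conditional} mean $\Erw[\log Z_\beta(\PHI)\mid\vm]$. To pass to the unconditional mean you should add that $m\mapsto\Erw[\log Z_\beta(\PHI)\mid\vm=m]$ is $\beta$-Lipschitz while $\vm\disteq\Po(dn/2)$ fluctuates by $O(\sqrt n)$ \whp, so the discrepancy $|\Erw[\log Z_\beta(\PHI)\mid\vm]-\Erw\log Z_\beta(\PHI)|$ is $O(\beta\sqrt n)=o(n^{2/3})$ \whp{} and is absorbed by your $n^{2/3}/2$ slack. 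Your Chernoff truncation $\vm\le dn$ is fine for bounding the denominator in the Azuma exponent.
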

\begin{proof}
 We have $Z_\beta(\PHI)\geq Z(\PHI)$ and
\Lem s~\ref{Lemma_Azuma} and~\ref{Lemma_PT} imply  $\lim_{n\to\infty}\pr\brk{\log Z_\beta(\PHI)>n\fB_\beta(\pi_d)+n^{2/3}}=0$.
\end{proof}

\begin{proof}[Proof of \Prop~\ref{Prop_conc}]
We begin by observing that the limit $\lim_{\beta\to\infty}\fB_\beta(\pi_d)$ exists and is finite. First, there is the pointwise and monotone convergence of the integrands:
\begin{align}
\log\sum_{s=\pm1}
	\prod_{i=1}^{\vd}\bc{1-\vecone\{\vs_i\neq s\}\frac{1-\exp(-\beta)}2\bc{1-\vs_i'+2\vs_i'\MU_{\pi_d,i}}} &\stacksign{$\beta \to \infty$}{\longrightarrow} \log\sum_{s=\pm1}
	\prod_{i=1}^{\vd}\bc{1- \frac{\vecone\{\vs_i\neq s\}}{2} \bc{1-\vs_i'+2\vs_i'\MU_{\pi_d,i}}},\label{eqProp_betalim1}\\
\log\bc{1-\frac{1-\exp(-\beta)}4\bc{1-\vs_1+2\vs_1\MU_{\pi_d,1}}\bc{1-\vs_2+2\vs_2\MU_{\pi_d,2}}} &\stacksign{$\beta \to \infty$}{\longrightarrow}   \log\bc{1-\frac{1}4\bc{1-\vs_1+2\vs_1\MU_{\pi_d,1}}\bc{1-\vs_2+2\vs_2\MU_{\pi_d,2}}}.\label{eqProp_betalim2}
\end{align}
Further, since $\MU_{\pi_d}\disteq1-\MU_{\pi_d}$ by \Prop~\ref{Prop_BP} and because $1 - \vs + 2 \vs \MU_{\pi_d}$ equals either $2 \MU_{\pi_d}$ or $2 (1 - \MU_{\pi_d})$, we obtain
\begin{align}
\log\sum_{s=\pm1}
	\prod_{i=1}^{\vd}\bc{1- \frac{\vecone\{\vs_i\neq s\}}{2} \bc{1-\vs_i'+2\vs_i'\MU_{\pi_d,i}}} &\stackrel{d}{=} \log\bc{\prod_{i=1}^{\vd^-}\MU_{\pi_d,i}+\prod_{i=1}^{\vd^+}\MU_{\pi_d,i+\vd^-}}, \label{eqProp_rewritelim1}\\
 \frac d 2 \log\bc{1-\frac{1}4\bc{1-\vs_1+2\vs_1\MU_{\pi_d,1}}\bc{1-\vs_2+2\vs_2\MU_{\pi_d,2}}}  &\stackrel{d}{=} \frac d2\log\bc{1-\MU_{\pi_d,1}\MU_{\pi_d,2}}.\label{eqProp_rewritelim2}
\end{align}
Moreover, \Prop~\ref{Prop_BP} shows that the monotone limits are integrable and therefore an application of the monotone convergence theorem to (\ref{eqProp_betalim1}) and (\ref{eqProp_betalim2}), followed by the simplifications  (\ref{eqProp_rewritelim1}), (\ref{eqProp_rewritelim1}), yields the identity
\begin{align*}%\label{eqProp_conc0}
\lim_{\beta\to\infty}\fB_\beta(\pi_d) = \Erw\brk{\log\bc{\prod_{i=1}^{\vd^-}\MU_{\pi_d,i}+\prod_{i=1}^{\vd^+}\MU_{\pi_d,i+\vd^-}}
-\frac d2\log\bc{1-\MU_{\pi_d,1}\MU_{\pi_d,2}}} = \fB_\infty(\pi_d) < \infty.
\end{align*}
Further, \Cor~\ref{Cor_Bethe} shows that
$\fB_\infty(\pi_d)=\lim_{n\to\infty}n^{-1}\Erw[\log(Z(\PHI)\vee1)].$
Therefore, \Cor~\ref{Cor_PT} implies that 
\begin{align}\label{eqProp_conc3}
\pr\brk{n^{-1}\log(Z(\PHI)\vee1)>\fB_\infty(\pi_d)+\eps}&=o(1)\qquad\mbox{for any $\eps>0$.}
\end{align}

To complete the proof, we upper bound
\begin{align}\label{eqProp_conc4}
n^{-1}\Erw\abs{\log(Z(\PHI) \vee 1)-\Erw[\log(Z(\PHI)\vee 1)]} \leq \Erw\abs{n^{-1}\log(Z(\PHI) \vee 1)-\fB_\infty(\pi_d)} + \abs{\fB_\infty(\pi_d) - \Erw[\log(Z(\PHI)\vee 1)]}.
\end{align} 
Due to \Cor~\ref{Cor_Bethe}, the second term on the r.h.s.\ of \eqref{eqProp_conc4} tends to zero.
On the other hand, \eqref{eqProp_conc3} and \Cor~\ref{Cor_Bethe} yield that for any $\eps >0$,
\begin{align*}
 \Erw\abs{n^{-1}\log(Z(\PHI) \vee 1)-\fB_\infty(\pi_d)} \leq  \Erw\brk{\fB_\infty(\pi_d) - n^{-1}\log(Z(\PHI) \vee 1)} + 2\eps + o(1) = 2\eps + o(1),
\end{align*}
 as desired.
\end{proof}

\subsection*{Acknowledgment}
We thank Andreas Galanis and Leslie Goldberg for helpful discussions.

\end{document}